\definecolor{dark-red}{rgb}{0.5,0.15,0.15}
\definecolor{dark-blue}{rgb}{0.15,0.15,0.6}
\definecolor{dark-green}{rgb}{0.15,0.6,0.15}
\numberwithin{equation}{section}
\newtheorem{thmx}{Theorem}
\newtheorem{Thm}[equation]{Theorem}
\newtheorem*{Thm*}{Theorem}
\newtheorem*{MainThm*}{Main Theorem}
\newtheorem{Prop}[equation]{Proposition}
\newtheorem{Lem}[equation]{Lemma}
\newtheorem{Cor}[equation]{Corollary}
\newtheorem*{Que*}{Question}
\theoremstyle{remark}
\newtheorem{Def}[equation]{Definition}
\newtheorem{Not}[equation]{Notation}
\newtheorem{Exa}[equation]{Example}
\newtheorem{Cons}[equation]{Construction}
\newtheorem{Hyp}[equation]{Hypothesis}
\newtheorem{Rec}[equation]{Recollection}
\newtheorem{Rem}[equation]{Remark}
\tikzset{
    labelrotatebelow/.style={anchor=north, rotate=90, inner sep=1.0mm}
}
\tikzset{
    labelrotateabove/.style={anchor=south, rotate=90, inner sep=1.0mm}
}
\newcommand{\nc}{\newcommand}
\nc{\dmo}{\DeclareMathOperator}
\renewcommand{\emptyset}{\varnothing}
\nc{\overbar}[1]{\mkern 1.5mu\overline{\mkern-1.5mu#1\mkern-1.5mu}\mkern 1.5mu}
\nc{\kappaaux}{g}
\nc{\kappaCh}{{\kappaaux(\cat C_h)}}
\nc{\kappam}{{\kappaaux({\mathfrak m})}}
\nc{\kappaP}{\Gamma_{\cat P}\unit}
\nc{\kappaQ}{{\kappaaux(\cat Q)}}
\nc{\kappaCP}{{\kappaaux_{\cat C}(\cat P)}}
\nc{\kappaDP}{{\kappaaux_{\cat D}(\cat P)}}
\nc{\kappaCQ}{{\kappaaux_{\cat C}(\cat Q)}}
\nc{\kappaDQ}{{\kappaaux_{\cat D}(\cat Q)}}
\nc{\kappaphiB}{{\kappaaux(\phi(\cat B))}}
\nc{\kappaphiQ}{{\kappaaux(\varphi(\cat Q))}}
\dmo{\Sub}{Sub}
\dmo{\Proj}{Proj}
\dmo{\LMod}{LMod}
\dmo{\cell}{cell}
\nc{\Prst}{{\cat P}\mathrm{r^{st}}}
\nc{\Mack}[2]{\mathrm{Mack}_{#1}(#2)}
\dmo{\fin}{{fin}}
\dmo{\Sphere}{\mathbb{S}}
\dmo{\Alg}{Alg}
\dmo{\CAlg}{CAlg}
\nc{\HA}{{\rmH \hspace{-0.2em}\bbA}}
\nc{\HZ}{{\rmH \hspace{-0.2em}\bbZ}}
\nc{\HZbar}{{\rmH \hspace{-0.2em}\underline{\bbZ}}}
\nc{\Fp}{{\bbF_{\hspace{-0.1em}p}}}
\nc{\HFp}{{\rmH \hspace{-0.15em}\bbF_{\hspace{-0.1em}p}}}
\nc{\mathfrakp}{\mathfrak{p}}
\nc{\mathfrakq}{\mathfrak{q}}
\nc{\mathfrakS}{\mathfrak{S}}
\nc{\mathfrakT}{\mathfrak{T}}
\nc{\Z}{\mathbb{Z}}
\nc{\cF}{\mathcal{F}}
\nc{\hspec}[1]{\Spc^\mathrm{h}({#1})}
\dmo{\Id}{Id}
\dmo{\Loc}{Loc}
\dmo{\Spc}{Spc}
\nc{\thickid}{\mathrm{thickid}}
\nc{\thick}[1]{\mathrm{thick}\langle #1 \rangle}
\nc{\thickt}[1]{\mathrm{thick}_\otimes\langle #1 \rangle}
\nc{\loct}[1]{\mathrm{Loc}_\otimes\langle #1 \rangle}
\nc{\loc}[1]{\mathrm{Loc}\langle #1 \rangle}
\dmo{\End}{End}
\dmo{\Mor}{Mor}
\dmo{\Hom}{Hom}
\dmo{\id}{id}
\dmo{\im}{im}
\dmo{\Ker}{Ker}
\dmo{\ind}{ind}
\dmo{\Ind}{Ind}
\dmo{\CoInd}{coind}
\dmo{\res}{res}
\dmo{\infl}{infl}
\dmo{\triv}{triv}
\dmo{\Tel}{Tel} 
\dmo{\grMod}{grMod}%
\dmo{\Mod}{Mod}%
\dmo{\opname}{op}
\dmo{\SH}{\mathcal{S}\mathcal{H}}
\dmo{\smallb}{b}
\dmo{\Spec}{Spec}
\dmo{\supp}{supp}
\dmo{\Supp}{Supp}
\dmo{\cosupp}{cosupp}
\dmo{\Cosupp}{Cosupp}
\dmo{\hsupp}{hsupp}
\dmo{\esupp}{esupp}
\dmo{\qsupp}{qsupp}
\dmo{\eqsupp}{eqsupp}
\dmo{\Pinj}{Pinj}
\dmo{\Inj}{Inj}
\renewcommand{\mod}{\mathrm{mod}}
\nc{\bbL}{\mathbb{L}}
\nc{\bbA}{\mathbb{A}}
\nc{\bbE}{\mathbb{E}}
\nc{\bbN}{\mathbb{N}}
\nc{\bbQ}{\mathbb{Q}}
\nc{\bbZ}{\mathbb{Z}}
\nc{\bbF}{\mathbb{F}}
\nc{\bbS}{\mathbb{S}}
\nc{\cat}[1]{\mathscr{#1}}
\nc{\cA}{\mathcal{A}}
\nc{\cB}{\mathcal{B}}
\nc{\cC}{\mathcal{C}}
\nc{\cD}{\mathcal{D}}
\nc{\cE}{\mathcal{E}}
\nc{\cU}{\mathcal{U}}
\nc{\CB}{\mathsf{CB}}
\nc{\sD}{\mathsf{D}}
\nc{\ihom}{{\underline{\hom}}}
\nc{\iHom}{\mathcal{H}\mathrm{om}}
\nc{\Mid}{\,\big|\,}
\nc{\SET}[2]{\big\{\,#1\Mid#2\,\big\}}
\nc{\unit}{\mathbb{1}}
\nc{\xra}{\xrightarrow}
\dmo{\Sp}{Sp}
\dmo{\Ho}{Ho}
\dmo{\Fin}{Fin}
\dmo{\add}{add}
\dmo{\Fun}{Fun}
\dmo{\Ext}{Ext}
\dmo{\Map}{Map}
\dmo{\Span}{Span}
\dmo{\N}{N}
\dmo{\Cat}{Cat}
\dmo{\colim}{colim}
\dmo{\hocolim}{hocolim}
\dmo{\Ch}{Ch}
\dmo{\Gr}{Gr}
\nc{\CA}{\cat A}
\nc{\CU}{\cat U}
\dmo{\Ab}{Ab}
\dmo{\Set}{Set}
\dmo{\ev}{ev}
\dmo{\Spcl}{Spcl}
\nc{\Funadd}{\Fun_{\add}}
\dmo{\proj}{proj}
\dmo{\cof}{cof}
\dmo{\deftensor}{Def^\otimes}
\dmo{\defcoid}{Def^{coid}}
\dmo{\dual}{dual}
\dmo{\Perf}{Perf}
\dmo{\tel}{tel}
\dmo{\rk}{rk}
\dmo{\FIdl}{\mathrm{Thick}_\otimes^{\mathrm{fg}}}
\dmo{\Idl}{\mathrm{Thick}_\otimes}
\dmo{\PIdl}{Spec}
\dmo{\Fd}{Fd}
\nc{\hatS}{\widehat{\mathcal{S}}}
\nc{\CS}{\mathcal{S}}
\dmo{\LS}{LS}
\nc{\hatLS}{\widehat{\mathrm{LS}}}
\dmo{\glo}{glo}
\dmo{\Pic}{Pic}
\dmo{\gl}{gl}
\nc{\fan}[1]{\mathscr{F}_{#1}}
\dmo{\GL}{GL}
\dmo{\fib}{fib}
\dmo{\Out}{Out}
\dmo{\Fan}{Fan}
\dmo{\cons}{cons}
\dmo{\Cov}{Cov}
\newcommand{\Spgl}[1]{\Sp_{#1\text{-}\gl}}
\newcommand{\Glo}[1]{\mathrm{Glo}_{#1}}
\newcommand{\Orb}[1]{\mathrm{Orb}_{#1}}
\newcommand{\Epi}[1]{\mathrm{Epi}_{#1}}
\newcommand{\PrL}{\mathrm{Pr}^{\mathrm{L}}_{\mathrm{st}}}
\renewcommand{\leq}{\leqslant}
\renewcommand{\geq}{\geqslant}
\nc{\ua}{\mathord{\uparrow}}
\newcommand{\laxlim}    {\operatorname*{laxlim}}
\newcommand{\laxlimdag}    {\operatorname*{laxlim^\dagger}}
\newcommand{\sfD}{\mathsf{D}}
\newcommand{\sfA}{\mathsf{A}}
\DeclareMathOperator{\op}{op}
\newcommand{\A}[1]{\mathsf{A}(#1)} 
\newcommand{\D}[1]{\mathsf{D}(#1)} 
\newcommand{\tCFG}{\widetilde{\cat{FG}}}
\newcommand{\CFG}{\cat{FG}}
\newcommand{\hCG}	{\widehat{\cat{FG}}}
\newcommand{\hCU}	{\widehat{\cat{U}}}
\newcommand{\fab}{\cat{A}} 
\newcommand{\fabp}{\cat{A}(p)} 
\newcommand{\fabpex}[1]{\cat{A}(p)^{\leq #1}} 
\newcommand{\fabprk}[1]{\cat{A}(p)_{\leq #1}} 
\newcommand{\fabrk}[1]{\cat{A}_{\leq #1}} 
\newcommand{\hfabprk}[1]{\widehat{\cat{A}}(p)_{\leq #1}} 
\newcommand{\Cprime}{\cat C^{\mathrm{pr}}}
\newcommand{\bbm}       {\left[\begin{matrix}}
\newcommand{\ebm}       {\end{matrix}\right]}
\newcommand{\bsm}       {\left[\begin{smallmatrix}}
\newcommand{\esm}       {\end{smallmatrix}\right]}
\newcommand{\img}       {\operatorname{image}}
\newcommand{\ip}[1]     {\langle #1\rangle}
\newcommand{\asym}	{\operatorname{Asym}}
\newcommand{\pri}       {\mathfrak{p}}
\newcommand{\VI}{\mathsf{VI}}
\newcommand{\VIMod}{\VI\mathrm{-Mod}}
\DeclareMathOperator{\type}{type}
\newcommand{\smallbullet}{} 
\DeclareRobustCommand\smallbullet{%
  \mathord{\mathpalette\smallbullet@{0.7}}%
}
\newcommand{\smallbullet@}[2]{%
  \vcenter{\hbox{\scalebox{#2}{$\m@th#1\bullet$}}}%
}
\newcommand{\largebullet}{} 
\DeclareRobustCommand\largebullet{%
  \mathord{\mathpalette\largebullet@{1.5}}%
}
\newcommand{\largebullet@}[2]{%
  \vcenter{\hbox{\scalebox{#2}{$\m@th#1\bullet$}}}%
}
\newcommand*\emptycirc[1][1ex]{\tikz\draw[thick] (0,0) circle (#1);} 
\newcommand*\halfcirc[1][1ex]{%
  \begin{tikzpicture}
  \draw[fill] (0,0)-- (-90:#1) arc (-90:90:#1) -- cycle;
  \draw[thick] (0,0) circle (#1);
  \end{tikzpicture}}
\newcounter{enum-resume-hack}
\Crefname{Thm}{Theorem}{Theorems}
\Crefname{Prop}{Proposition}{Propositions}
\Crefname{Lem}{Lemma}{Lemmas}
\Crefname{thmx}{Theorem}{Theorems}
\begin{document}

\title[The spectrum for families of bounded rank and VI-modules]{The spectrum of global representations \\ \smaller{} for families of bounded rank and VI-modules}

\author[Barrero]{Miguel Barrero}
\author[Barthel]{Tobias Barthel}
\author[Pol]{Luca Pol}
\author[Strickland]{Neil Strickland}
\author[Williamson]{Jordan Williamson}

\date{\today}

\makeatletter
\patchcmd{\@setaddresses}{\indent}{\noindent}{}{}
\patchcmd{\@setaddresses}{\indent}{\noindent}{}{}
\patchcmd{\@setaddresses}{\indent}{\noindent}{}{}
\patchcmd{\@setaddresses}{\indent}{\noindent}{}{}
\makeatother

\address{Miguel Barrero, Department of Mathematics, University of Aberdeen, Fraser Noble Building, Aberdeen AB24 3UE, UK}
\email{miguel.barrero@abdn.ac.uk}
\urladdr{https://sites.google.com/view/mbarrero}

\address{Tobias Barthel, Max Planck Institute for Mathematics, Vivatsgasse 7, 53111 Bonn, Germany}
\email{tbarthel@mpim-bonn.mpg.de}
\urladdr{https://sites.google.com/view/tobiasbarthel/}

\address{Luca Pol, Fakult{\"a}t f{\"u}r Mathematik, Universit{\"a}t Regensburg, Universit{\"a}tsstraße 31, 93053 Regensburg, Germany}
\email{luca.pol@mathematik.uni-regensburg.de}
\urladdr{https://sites.google.com/view/lucapol/}

\address{Neil P. Strickland,
 School of Mathematical and Physical Sciences,
 Hicks Building, 
 Sheffield S3 7RH, 
 UK
}
\email{N.P.Strickland@sheffield.ac.uk}
\urladdr{https://strickland1.org}

\address{Jordan Williamson, Department of Algebra, Faculty of Mathematics and Physics, Charles University in Prague, Sokolovsk\'{a} 83, 186 75 Praha, Czech Republic}
\email{williamson@karlin.mff.cuni.cz}
\urladdr{https://jordanwilliamson1.github.io/}

\begin{abstract}
A global representation is a compatible collection of representations of the outer automorphism groups of the finite groups belonging to a family $\mathscr{U}$. These arise in classical representation theory, in the study of representation stability, as well as in global homotopy theory. In this paper we begin a systematic study of the derived category $\mathsf{D}(\mathscr{U};k)$ of global representations over fields $k$ of characteristic zero, from the point-of-view of tensor-triangular geometry. We calculate its Balmer spectrum for various infinite families of finite groups including elementary abelian $p$-groups, cyclic groups, and finite abelian $p$-groups of bounded rank. We then deduce that the Balmer spectrum associated to the family of finite abelian $p$-groups has infinite Krull dimension and infinite Cantor--Bendixson rank, illustrating the complex phenomena we encounter. As a concrete application, we provide a complete tt-theoretic classification of finitely generated derived VI-modules. Our proofs rely on subtle information about the growth behaviour of global representations studied in a companion paper, as well as novel methods from non-rigid tt-geometry. 
\end{abstract}

\subjclass[2020]{18F99, 18G80, 20C99, 55P91; 18A25}


\maketitle

\vspace{-4ex}
\setcounter{tocdepth}{1}
\tableofcontents

\section*{Introduction}\label{sec:intro}
This paper initiates the study of \emph{global representation theory} from the perspective of tensor-triangular geometry, and is the second in a series of papers starting with \cite{BBPSWstructural}, which builds on \cite{PolStrickland2022}.

\subsection*{Overview and main results}\label{ssec:intro_overview}

A global representation over a fixed field $k$ is a compatible collection of representations for the outer automorphism group of each finite group belonging to a family $\cat U$. By viewing $\cat U$ as a category with morphisms given by conjugacy classes of surjective group homomorphisms, we can organize this data into a convenient abelian category of global representations 
\[
\mathsf{A}(\cat U;k)\coloneqq \Fun(\cat U^{\op},\Mod{k})
\]
which simultaneously generalizes:
    \begin{itemize}
        \item \emph{ordinary representation theory}, since any finite group can be realized as the outer automorphism group of a finite group;
        \item \emph{the category of VI-modules} appearing in the representation theory of the finite general linear groups, which can be recovered by taking $\cat U$ to be the family of elementary abelian $p$-groups;
        \item \emph{rational global homotopy theory} for global families $\cat U$.
\end{itemize}
Despite the ubiquity of global representations as witnessed by the above examples, the general structure of $\mathsf{A}(\cat U;k)$ remains mysterious. For example, a full classification of isomorphism types of finitely generated global representations appears to be out of reach even for simple infinite families $\cat U$. Based on the homological algebra established in the first paper \cite{BBPSWstructural} in this series, the focus of this paper is therefore the geometry and global structure of the derived category of $\mathsf{A}(\cat U;k)$: 
    \[
        \sfD(\cat U;k)\coloneqq\sfD(\mathsf{A}(\cat U;k)) \simeq \Fun(\cat U^{\op},\sfD(k)),
    \]
which forms a tensor-triangulated category via the pointwise tensor product of global representations. Our perspective on $\sfD(\cat U;k)$ is geometric in the sense of tensor-triangular geometry \cite{Balmer2005} as controlled by the Balmer spectrum $\Spc(\sfD(\cat U;k)^c)$. Throughout we will work over a fixed field $k$ of characteristic zero and we will usually drop it from the notation.

\subsubsection*{Classification of derived VI-modules}

In order to first illustrate our results in an important special case, consider the abelian category $\VIMod$ of VI-modules, i.e., functors on the category of finite dimensional $\bbF_p$-vector spaces and injective homomorphisms with values in $\Mod{k}$. In other words, VI-modules are compatible systems of $\mathrm{GL}_n(\bbF_p)$-representations; as such, they appear both in the study of representation stability and as admissible representations of the infinite general linear group $\mathrm{GL}_{\infty}(\bbF_p)$. The structure of this category has been studied in detail by Putman--Sam \cite{PutmanSam2017}, Nagpal \cite{Nagpal1,Nagpal2}, and others, but a full classification of isomorphism classes of finitely generated VI-modules seems to be out of reach. 

In this context, our results admit a particularly explicit and clean formulation. To explain this, define the \emph{type} of a VI-module $M$ as 
    \[
        \type(M) = \SET{n \in \bbN}{M(\bbF_p^{\oplus n}) \neq 0}.
    \]
This definition extends naturally to the derived category $\sfD(\VIMod)$, which forms a tensor-triangulated category under pointwise tensor product. Finally, say two (derived) VI-modules $M$ and $N$ are tt-equivalent if they can be built from each other using triangles, retracts, and tensoring; that is, $M$ and $N$ are tt-equivalent if they generate the same thick ideal in $\sfD(\VIMod)$. As an application of our methods, we are able to classify all tt-equivalence classes of (derived) VI-modules:

\begin{thmx}\label{thmx:VImod}
    The assignment $M \mapsto \type(M)$ induces a bijection
        \[
            \begin{tikzcd}[ampersand replacement=\&,column sep=small]
                {\begin{Bmatrix}
                    \text{tt-equivalence classes } \langle M\rangle_{\otimes}\colon \\
                    M \in \sfD(\VIMod) \text{ compact}
                \end{Bmatrix}}
                    \& 
                {\begin{Bmatrix}
                    \text{cofinite subsets of }\\
                    \text{of } \bbN, \text{ or } \emptyset
                \end{Bmatrix}}.
                    \arrow["\sim", from=1-1, to=1-2]
            \end{tikzcd}
        \]
\end{thmx}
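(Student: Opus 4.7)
The plan is to deduce the theorem from two ingredients: Balmer's general bijection between tt-equivalence classes of compact objects and principal closed subsets of the spectrum with quasi-compact complement, and the spectrum computation carried out later in the paper for the family $\cat U$ of elementary abelian $p$-groups, since under the equivalence stated in the introduction one has $\VIMod \simeq \sfA(\cat U;k)$ and hence $\sfD(\VIMod) \simeq \sfD(\cat U;k)$. Once we know the set of Balmer primes and which subsets of primes are realized as supports of compact objects, the theorem follows by matching this with the type invariant.

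First I would translate $\type$ into Balmer support. For each $n \in \bbN$ the evaluation functor $\ev_n \colon \sfD(\VIMod) \to \sfD(k[\mathrm{GL}_n(\bbF_p)])$ is a tt-functor, and since $\mathrm{char}(k) = 0$ and $\mathrm{GL}_n(\bbF_p)$ is finite, its target is a semisimple tt-category with a unique Balmer prime. Pulling this prime back defines a point $\pri_n \in \Spc(\sfD(\VIMod)^c)$ with the property
\[
n \in \type(M) \iff \ev_n(M) \neq 0 \iff M \notin \pri_n,
\]
so $\type(M) = \SET{n \in \bbN}{\pri_n \in \supp(M)}$. In particular, $\type$ is determined by $\supp$.

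Next I would invoke the paper's computation of $\Spc(\sfD(\cat U;k)^c)$ for $\cat U$ the family of elementary abelian $p$-groups to identify the underlying set of primes with $\{\pri_n : n \in \bbN\}$, and to identify the Thomason closed subsets with quasi-compact complement with the collection $\{\SET{\pri_n}{n \in S} : S \subseteq \bbN \text{ cofinite}\} \cup \{\emptyset\}$. Balmer's reconstruction theorem then gives
\[
\begin{tikzcd}[ampersand replacement=\&,column sep=small]
{\begin{Bmatrix} \text{tt-equivalence classes} \\ \langle M\rangle_\otimes, M \in \sfD(\VIMod)^c \end{Bmatrix}}
\arrow["\sim"]{r} \&
{\begin{Bmatrix} \text{closed subsets with q.c.}\\ \text{complement} \end{Bmatrix}}
\arrow["\sim"]{r} \&
{\begin{Bmatrix} \text{cofinite subsets of } \bbN \\ \text{or } \emptyset \end{Bmatrix}}
\end{tikzcd}
\]
sending $\langle M \rangle_\otimes$ to $\supp(M)$ to $\type(M)$, as desired.

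The main obstacle is the realizability half of the spectrum computation: one must show that every cofinite subset $S \subseteq \bbN$ actually arises as $\type(M)$ for some compact $M$. The representables $P_d = k[\Hom_{\VI}(\bbF_p^{\oplus d},-)]$ give the basic types $[d,\infty)$, but obtaining general cofinite subsets requires producing compact objects that selectively kill a prescribed finite set of evaluations. This is precisely where working in the derived category and the finer homological input from the companion paper \cite{BBPSWstructural} become essential; in practice one forms iterated cones/shifts of maps between representables in order to carve out any prescribed finite exceptional set, and then appeals to the fact that distinct cofinite types give distinct supports by Step~1.
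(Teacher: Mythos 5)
Your overall strategy---identify $\sfD(\VIMod)$ with $\sfD(\cat{E}_p)$ via Pontryagin duality, compute the Balmer spectrum, and apply the tt-classification---is indeed the paper's route (\cref{thm-elementary} and \cref{cor:VImod}). However there are two issues, one minor and one substantive.

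The minor one: the underlying set of the spectrum is not $\{\pri_n : n \in \bbN\}$. There is an additional prime, the zero ideal $(0)=\pri_\infty$, which is the unique closed point; it must be there, since otherwise the spectrum would be an infinite discrete space, contradicting quasi-compactness of Balmer spectra. The closed Thomason subsets with quasi-compact open complement are $\{\pri_n : n\in S\}\cup\{(0)\}$ for $S\subseteq\bbN$ cofinite, plus $\emptyset$. This does not change the final count, but the topology you wrote down is not a spectral topology.

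The substantive one: you have misplaced the technical difficulty. Realizability is easy: setting $m_s=\cof(e_{s,k}\to\unit)$, one computes directly that $\hsupp(m_s)=\pi_0\cat{E}_p\setminus\{s\}$, and $\bigotimes_{s\in S}m_s$ then realizes any cofinite $S$; no input from \cite{BBPSWstructural} is needed here. What \emph{does} require \cref{thm:nontorsion} from the companion paper is the opposite direction. Because $\sfD(\cat{E}_p)^c$ is not rigid (\cref{prop-rigidly-comp-gen}), it is not automatic that all thick ideals are radical, and without this ``standardness'' (\cref{def:standardtt}) Balmer's classification only relates \emph{radicals} of thick ideals to Thomason subsets---so the map $\langle M\rangle_\otimes\mapsto\supp(M)$ need not be injective on tt-equivalence classes. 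Concretely, one must prove that $\hsupp(X)\subseteq\hsupp(Y)$ implies $X\in\thickt{Y}$, that every nonzero compact has cofinite homological support, and that $(0)$ is prime; all three rest on the torsion-freeness statement in \cref{thm:nontorsion}, and they are exactly the points your sketch glosses over.
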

The derived category of VI-modules can be realized as $\sfD(\cat E_p)$ for the family $\cat E_p$ of elementary abelian $p$-groups; under this identification, \cref{thmx:VImod} translates into a computation of the spectrum $\Spc(\sfD(\cat E_p)^c)$ which we carry out in \cref{thm-elementary}.

\subsubsection*{The family of abelian $p$-groups of bounded $p$-rank}
As indicated above, classification results for more general families $\cat{U}$ of finite groups rely fundamentally on a tt-geometric approach to $\sfD(\cat U)$  via its spectrum. The Balmer spectrum $\Spc(\sfD(\cat U)^c)$ serves a dual purpose: on the one hand, it provides a classification of the thick ideals of compact derived global representations of $\cat U$. On the other hand, it offers a geometric point of view on the derived category: geometric operations on the topological space $\Spc(\sfD(\cat U)^c)$ are reflected in categorical operations on $\sfD(\cat U)$. For instance, open-closed decompositions of the spectrum manifest themselves as semi-orthogonal decompositions of the category.

An important example is given by the family $\fabprk{r}$ of abelian $p$-groups of $p$-rank at most $r$; in particular, when $r=1$, we get the family of cyclic $p$-groups $\fabprk{1} = \cat C_p$. \Cref{Thm-abelian-p-groups-rank-r} provides a complete description of the spectrum: 

\begin{thmx}\label{thmx:boundedrank}
    Fix a prime number $p$ and a positive integer $r$, and write $\fabprk{r}$ for the family of abelian $p$-groups of $p$-rank at most $r$. Let $\bbN^+$ be the one-point compactification of $\bbN$. The spectrum $\Spc(\sfD(\fabprk{r})^c)$ is Hausdorff and homeomorphic to 
        \[
            \hatS_{\leq r}=\{v=(v_1,\ldots, v_r)\in (\bbN^+)^r \mid \infty \geq v_1 \geq \ldots \geq v_r \geq 0\},
        \]
    where $\hatS_{\leq r}$ inherits the subspace topology from $(\bbN^+)^r$. Furthermore, the Cantor--Bendixson rank of the spectrum is precisely $r$.
\end{thmx}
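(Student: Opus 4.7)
The plan is to set up a homeomorphism $\Phi\colon \hatS_{\leq r} \xrightarrow{\sim} \Spc(\sfD(\fabprk{r})^c)$, then read off the Cantor--Bendixson rank combinatorially. Each finite vector $v = (v_1, \dots, v_r)$ corresponds to the abelian $p$-group $A_v = \prod_i \bbZ/p^{v_i}$ (ignoring zero entries), and I would define the prime $\mathfrak{p}_v$ as the kernel of evaluation $\ev_{A_v}\colon \sfD(\fabprk{r})^c \to \sfD(k[\Aut(A_v)])^c$; in characteristic zero this target has a one-point spectrum, so $\mathfrak{p}_v$ is genuinely prime. For vectors $v$ with infinite entries, $\mathfrak{p}_v$ is constructed as a suitable limit along the directed system of finite approximations $v^{(n)} \nearrow v$, with existence and primality guaranteed by the structural and growth-theoretic control established in \cite{BBPSWstructural}.

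The main step is to show $\Phi$ is a bijection and a homeomorphism, which I would approach by induction on $r$. The inclusion $\fabprk{r-1} \hookrightarrow \fabprk{r}$ induces a restriction tt-functor, yielding a Verdier sequence
\[
\Ker(\res) \hookrightarrow \sfD(\fabprk{r}) \xrightarrow{\res} \sfD(\fabprk{r-1})
\]
which in turn gives, via Balmer's functoriality, an open-closed decomposition of $\Spc(\sfD(\fabprk{r})^c)$. By induction, the closed part equals $\Spc(\sfD(\fabprk{r-1})^c) \cong \hatS_{\leq r-1}$, matching the subset $\{v \in \hatS_{\leq r} : v_r = 0\}$, and the open complement must be shown to correspond bijectively to $\{v_r \geq 1\}$. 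Analysis of the open stratum combines the cyclic case $r = 1$, which contributes the $\bbN^+$-factor in the $v_r$-direction, with the inductive description of the first $r-1$ coordinates. The topology is then matched by identifying the basic Balmer closed sets (supports of objects built from evaluations at specific finite groups) with the basic closed sets of the subspace topology from $(\bbN^+)^r$; Hausdorffness follows because every finite-entry singleton is realised as both an open and a closed subset.

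The Cantor--Bendixson computation is then combinatorial: the derived set of $\hatS_{\leq r}$ equals $\{v : v_1 = \infty\}$, which is homeomorphic to $\hatS_{\leq r-1}$ via $(v_1, \dots, v_r) \mapsto (v_2, \dots, v_r)$, using $v_1 \geq v_2 \geq \cdots$. Iterating this $r$ times collapses the space to the single point $(\infty, \dots, \infty)$, so the Cantor--Bendixson rank is exactly $r$. The hardest part, I expect, is the exhaustion argument in the previous step: namely, ruling out exotic primes and ensuring every prime of $\Spc(\sfD(\fabprk{r})^c)$ arises from our construction. This is the key place where the growth-rate machinery of the companion paper, together with the non-rigid tt-geometric methods referenced in the abstract, must enter decisively, since the infinite-entry limit primes are essential yet cannot be detected purely by evaluation at finite groups.
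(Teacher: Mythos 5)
Your approach by induction on rank differs substantially from the paper's. The paper filters $\fabprk{r}$ by \emph{exponent} rather than rank: each $\fabprk{r}^{\leq l} = \{G : p^l G = 0\}$ is an essentially finite \emph{reflective} subcategory, and Gallauer's continuity theorem for Balmer spectra then identifies $\Spc(\D{\fabprk{r}}^c)$ with the inverse limit $\lim_l \pi_0\fabprk{r}^{\leq l}$ of finite discrete spaces. Because that limit is profinite, the spectrum is automatically Hausdorff, and the exhaustion problem you flag—ruling out exotic primes—never arises: every prime must have the form $\pri_G$ for $G$ a finitely generated pro-$p$ group simply because this is true at each finite stage. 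By contrast, your proposed rank induction lacks these two ingredients. First, $\fabprk{r-1}$ is downwards closed in $\fabprk{r}$, but it is \emph{not} reflective there (a rank-$r$ abelian $p$-group has no smallest normal subgroup whose quotient has rank $\leq r-1$), so the reflective-filtration machinery that drives the paper's argument is unavailable. Second, the Verdier sequence from \cref{cor:verdierquotient} yields an \emph{embedding} $\Spc(\D{\fabprk{r-1}}^c) \hookrightarrow \Spc(\D{\fabprk{r}}^c)$, but the image is only the complement of the Thomason subset $\bigcup_X \supp(X)$ for $X$ ranging over the kernel of restriction; this complement is closed in the constructible topology, not a priori in the Balmer topology, so your ``open--closed decomposition'' would require a separate argument (e.g., first establishing Hausdorffness, which you instead derive from the decomposition—a circularity).

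The central gap is exactly where you locate it yourself: you give no mechanism for showing that \emph{every} prime of $\D{\fabprk{r}}^c$ arises from a (profinite) group. Your construction of $\pri_v$ for vectors with infinite entries ``as a suitable limit along the directed system of finite approximations'' is not a definition but a gesture at one; the paper's \cref{defn-profinite-groupprimes} does this precisely via a filtered colimit in the abelian category, and its primality is then manifest. Likewise, your Hausdorffness argument (``every finite-entry singleton is both open and closed'') only shows that the isolated points are closed; to conclude a spectral space is Hausdorff you need \emph{all} singletons closed, in particular the non-isolated ones with $v_1 = \infty$, which your induction has not yet handled. Your Cantor--Bendixson computation, by contrast, is correct and essentially equivalent to the paper's (the paper instead uses the description $\hatS_{\leq r} \cong (\coprod_{n\in\bbN}\hatS_{\leq r-1})^+$ of \cref{cor:abelian-p-groups-rank-r}, but your direct derived-set calculation gives the same conclusion), so that portion is sound once the homeomorphism to $\hatS_{\leq r}$ is in place.
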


Here, the Cantor--Bendixson rank is an ordinal-valued measure of the complexity of a topological space and its internal structure; see \cref{rec:CBrank} for a definition. 

\subsubsection*{Towards the family of abelian $p$-groups}

Ultimately, one of our goals is to give a complete description of the spectrum for the family of finite abelian groups. Although this seems currently out of reach, as a ``meta-theorem'' collecting several of the results proven in this paper, the next theorem provides ample evidence of the intricate phenomena that we encounter:

\begin{thmx}\label{thmx:main}
   Fix a prime number $p$ and let $\cat A(p)$ be the family of finite abelian $p$-groups. The Balmer spectrum $\Spc(\sfD(\cat A(p))^c)$ has infinite Krull dimension and infinite Cantor–Bendixson rank.
\end{thmx}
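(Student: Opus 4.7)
The plan is to deduce the theorem from \cref{thmx:boundedrank} by realizing the spectra of the bounded-rank sub-families as subspaces of $\Spc(\sfD(\cat A(p))^c)$, and then invoking the monotonicity of Krull dimension and Cantor--Bendixson rank under subspace inclusion.

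First, for every $r \geq 1$, the family $\fabprk{r}$ is closed under taking quotients in $\cat A(p)$, since any quotient of an abelian $p$-group of rank $\le r$ still has rank $\le r$. Because morphisms in $\cat A(p)$ are conjugacy classes of surjections, this says precisely that $\fabprk{r}^{\op}$ is a sieve inside $\cat A(p)^{\op}$. The associated restriction functor
\[
    \iota_r^{\ast}\colon \sfD(\cat A(p)) \longrightarrow \sfD(\fabprk{r})
\]
is therefore a symmetric monoidal exact functor whose left Kan extension $\iota_{r,!}$---concretely, extension by zero on groups outside $\fabprk{r}$---is fully faithful, with $\iota_r^{\ast}\iota_{r,!}\simeq \id$. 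It follows that $\iota_r^{\ast}$ identifies $\sfD(\fabprk{r})$ with the Verdier quotient $\sfD(\cat A(p))/\cat J_r$, where $\cat J_r$ is the thick $\otimes$-ideal of derived global representations vanishing on every group in $\fabprk{r}$. Since $\cat J_r$ is generated as a localizing $\otimes$-ideal by the compact objects represented by abelian $p$-groups of rank $>r$, this quotient descends to compact objects.

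Standard tt-geometry then gives that the continuous map
\[
    \varphi_r \coloneqq \Spc(\iota_r^{\ast}) \colon \Spc(\sfD(\fabprk{r})^c) \hookrightarrow \Spc(\sfD(\cat A(p))^c)
\]
is a homeomorphism onto the subspace of primes containing the image of $\cat J_r$ in $\sfD(\cat A(p))^c$. By \cref{thmx:boundedrank}, this embeds the space $\hatS_{\le r}$---of Krull dimension $r$ and Cantor--Bendixson rank $r$---as a subspace of $\Spc(\sfD(\cat A(p))^c)$. Both invariants are monotone under arbitrary subspace inclusions: transfinite induction shows that $X^{(\alpha)} \subseteq Y^{(\alpha)} \cap X$ for any subspace $X \subseteq Y$ (an isolated point of $Y$ lying in $X$ is isolated in $X$), and any strict chain of irreducible closed subsets of $X$ extends to one in $Y$. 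Letting $r\to\infty$ then forces $\Spc(\sfD(\cat A(p))^c)$ to have both Cantor--Bendixson rank and Krull dimension at least $r$ for every $r\ge 1$, hence infinite.

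The main obstacle is verifying that $\iota_r^{\ast}$ genuinely sits inside a well-behaved tt-localization with $\iota_{r,!}$ fully faithful and compact-object generation of $\cat J_r$ assured, so that the resulting spectral map is a topological embedding. Because $\sfD(\cat U)$ is not rigid, the usual tt-geometric machinery must be deployed with care, and the critical inputs are the sieve property of $\fabprk{r}^{\op}\subseteq\cat A(p)^{\op}$ together with the structural analysis of restriction functors and the non-rigid tt-geometric techniques developed in the companion paper \cite{BBPSWstructural}.
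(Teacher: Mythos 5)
The Cantor--Bendixson half of your argument is correct and matches the paper's approach (\cref{coro-CB-rank}): the down-closed inclusion $\fabprk{r}\subseteq\fabp$ gives a spectral embedding $\Spc(\sfD(\fabprk{r})^c)\hookrightarrow\Spc(\sfD(\fabp)^c)$ by \cref{cor:verdierquotient}, CB rank is monotone under subspace inclusion, and \cref{cor:CBrank_boundedrank} computes $\CB(\hatS_{\le r})=r$.

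The Krull dimension half fails, however. You assert that $\hatS_{\le r}$ has Krull dimension $r$, but \cref{thmx:boundedrank} explicitly states that $\Spc(\sfD(\fabprk{r})^c)$ is \emph{Hausdorff}, and in fact it is profinite (\cref{cor-ess-finite-invlim}). For a spectral space, Hausdorff forces all prime ideals to be pairwise incomparable, so the Krull dimension is $0$. Embedding a zero-dimensional space into $\Spc(\sfD(\fabp)^c)$ tells you nothing about specialization chains in the ambient space. Intuitively, your embeddings capture the profinite group primes, which are all maximal (\cref{cor:groupprimes_generic}); to get a long chain you need to leave that locus entirely.

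The paper proves infinite Krull dimension by a genuinely different mechanism, namely \emph{family primes}. It exhibits the strictly ascending chain of multiplicative global subfamilies
\[
\{1\}\subsetneq \cat E_p \subsetneq \fabp^{\le 2}\subsetneq \cdots \subsetneq \fabp^{\le l}\subsetneq\cdots\subseteq\fabp,
\]
applies \cref{cor:familyprime} to each to produce prime ideals $\mathfrakp_{\fabpex{l}}=\SET{X}{X(G)\simeq 0\ \forall G\in\fabpex{l}}$, and shows in \cref{cor:infinitedimensional} that these form a strictly descending chain in $\Spc(\sfD(\fabp)^c)$: the generators $e_G$ for $G$ of $p$-exponent exactly $p^l$ separate consecutive terms. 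This construction produces a chain of length $\omega$ directly in $\Spc(\sfD(\fabp)^c)$ and is independent of the bounded-rank spectra. Also note, as a small point, that for a down-closed subcategory it is $i_*$ (not $i_!$) which is given by extension by zero (\cref{prop-incl-preserve-compacts}(b)); the functor $i_!$ is still fully faithful and strong monoidal here, but the model you gave for it is wrong.
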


In comparison with the tensor-triangular geometry of representations or rational equivariant spectra for a fixed group, we find the complexity appearing $\Spc(\sfD(\cat A(p))^c)$ rather striking. We offer a moral explanation for this richness further below in the introduction, through the lens of non-rigid tt-geometry. The proof of \cref{thmx:main} relies on a study of different subfamilies of $\cat A(p)$, filtering either by $p$-rank or by $p$-exponent. 
For any $r\geq 0$, we have an embedding on Balmer spectra
    \[
        \Spc(\sfD(\fabprk{r})^c) \to\Spc(\sfD(\fabp)^c).
    \]
In light of \cref{thmx:boundedrank}, we conclude that $\Spc(\sfD(\fabp)^c)$ has infinite Cantor--Bendixson rank, thereby verifying the second part of \cref{thmx:main}. A sketch explanation for the infinite Krull dimension of $\Spc(\sfD(\cat A(p))^c)$ is given after \cref{thmx:familyprimes}. Finally, we remark that we have a conjectural description of the set underlying the spectrum $\Spc(\sfD(\fabp)^c)$; we plan to return to this problem in a future work. With that in mind, in fact, the overall goal of this paper is twofold:
    \begin{enumerate}
        \item study the abstract tt-geometry of $\Spc(\sfD(\cat U)^c)$ in general;
        \item describe the spectrum completely for various important families of finite groups (abelian and non-abelian). 
    \end{enumerate}

\subsection*{Interlude: Global homotopy theory}
The inspiration to our approach to understanding the tensor-triangular geometry of global representations comes from global homotopy theory. Equivariant homotopy theory studies algebraic invariants of topological spaces equipped with an action of a fixed group $G$. Global homotopy theory provides a framework which captures such equivariant structures that exist \emph{globally}, that is in a uniform way for all groups in a collection $\cat U$ of groups. In particular, Schwede~\cite{Schwedebook} introduced a category of \emph{global spectra} $\Spgl{\cat U}$ in which global cohomology theories are representable, in analogy to how the classical stable homotopy category encodes cohomology theories for (non-equivariant) topological spaces. The category $\Spgl{\cat U}$ can be constructed from assembling the categories $\Sp_G$ of $G$-equivariant spectra for all $G \in \cat U$, in a sense made precise in \cite{LNP}.

Discarding higher chromatic structures while retaining much of the group-theoretic information, \emph{rational global spectra} can be thought of as global cohomology theories taking values in $\bbQ$-vector spaces. Our next theorem refines a result of Schwede~\cite[Equation 4.5.36]{Schwedebook} and Wimmer~\cite[Theorem 3.2.20]{Wimmerthesis}, and shows that there is a deep connection between global representation theory and global homotopy theory, see \cref{thm-algebraic-model}; however, for the purposes of the present paper, we emphasize that this connection is only a source of motivation for our approach.

\begin{thmx}\label{thmx:globalalgmodel}
Let $\cat{U}$ be a family of finite groups which is closed under subgroups and quotients. There is a symmetric monoidal equivalence of $\infty$-categories 
\[
\Phi \colon \Spgl{\cat{U}}^\mathbb{Q} \xrightarrow{\sim} \mathsf{D}(\cat{U};\bbQ)
\]
which is compatible with the geometric fixed points functors in the sense that for all $G \in \cat U$ and $X \in \Spgl{\cat U}^\bbQ$, we have $\pi_*(\Phi^G X)\cong H_*(\Phi(X))(G)$.
\end{thmx}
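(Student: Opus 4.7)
The plan is to promote the model-categorical equivalences of Schwede and Wimmer to a symmetric monoidal $\infty$-categorical statement, while also verifying compatibility with geometric fixed points. My approach identifies both $\Spgl{\cat U}^\bbQ$ and $\sfD(\cat U;\bbQ)$ as presentably stable symmetric monoidal $\infty$-categories with matching compact generators, builds the candidate $\Phi$ out of geometric fixed points, and then compares on generators.

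First I would construct the comparison functor. For $X \in \Spgl{\cat U}^\bbQ$, the assignment $G \mapsto \Phi^G X$ is functorial on $\cat U^{\op}$: functoriality along surjective homomorphisms comes from inflation, while rationally the inner automorphism action on $\Phi^G X$ is trivial, so the assignment descends to conjugacy classes of surjections. The rational K\"unneth property of the geometric fixed-point functors makes this lax symmetric monoidal, and one checks strong monoidality on suspension spectra of global classifying spaces, which generate $\Spgl{\cat U}^\bbQ$ under colimits. By design, this construction makes the compatibility statement $\pi_*(\Phi^G X)\cong H_*(\Phi(X))(G)$ hold essentially tautologically.

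Next, I would match compact generators and verify full faithfulness. On the spectral side, the compact generators are (splittings of) suspension spectra of global classifying spaces for $G \in \cat U$; on the algebraic side, they are the representable functors $\Hom_{\cat U}(-,G)\otimes\bbQ \in \sfD(\cat U;\bbQ)$. Schwede's computation of the rational global Burnside category identifies mapping spaces between such suspension spectra with the $\bbQ$-linearization of $\cat U(G,H)$, which by the Yoneda embedding matches the algebraic side. Hence $\Phi$ is fully faithful on a set of compact generators and, preserving colimits, is an equivalence of underlying stable $\infty$-categories. The hypothesis that $\cat U$ is closed under subgroups and quotients enters twice: closure under quotients ensures functoriality of $\Phi(X)$ along all morphisms of $\cat U^{\op}$, and closure under subgroups ensures that the desired compact generators genuinely lie in $\Spgl{\cat U}^\bbQ$ rather than some larger ambient global category.

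The main obstacle is the symmetric monoidal coherence. The Schwede--Wimmer equivalences are typically formulated at the level of homotopy categories or of stable model categories, and upgrading them to a symmetric monoidal equivalence of $\infty$-categories requires either a direct construction of $\Phi$ as a symmetric monoidal functor, for instance via Day convolution on $\cat U$ combined with the universal property of presentably symmetric monoidal stable $\infty$-categories in Lurie's framework, or else a careful rectification argument. A secondary subtlety is verifying that the lax symmetric monoidal structure arising from the K\"unneth formula is strong on all generators, since the rational decomposition of the suspension spectrum of a global classifying space into pieces indexed by subquotients must interact compatibly with the product structure on $\cat U$ and with the pointwise tensor product on $\sfD(\cat U;\bbQ)$.
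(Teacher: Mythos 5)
Your outline matches the paper's strategy in broad strokes: build $\Phi$ from geometric fixed points, descend to the derived category, match compact generators using Schwede's and Wimmer's computations, and conclude by full faithfulness plus generation. The final compatibility statement is indeed essentially tautological from the construction. You also correctly identify that closure under quotients is what makes the assignment $G \mapsto \Phi^G X$ functorial on $\cat U^{\op}$.

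However, the step you flag as ``the main obstacle'' --- promoting $\Phi$ to a \emph{symmetric monoidal} functor of $\infty$-categories --- is precisely the crux, and you leave it unresolved, offering only candidate routes (Day convolution, rectification). Neither is what the paper does. The paper's mechanism is the partially lax limit description of $\Spgl{\cat F}$ from \cite{LNP}: because $\Spgl{\cat F}$ is \emph{defined} as a partially lax limit of $\Sp_\bullet \colon \Glo{\cat F}^{\op} \to \CAlg(\PrL)$, there is a clean universal property governing symmetric monoidal colimit-preserving functors out of it. The geometric fixed points are assembled into a natural transformation $\Sp_\bullet \Rightarrow \Delta(\Sp)$ of $\CAlg(\PrL)$-valued diagrams (using the $\mathcal{P}_G$-torsion/local decomposition), and then passed through partially lax limits. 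This produces the strong symmetric monoidal $\Phi$ directly, without Day convolution or rectification, and is the decisive technical move you would still need to supply.

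A smaller imprecision: you reduce from surjections to conjugacy classes of surjections by saying ``rationally the inner automorphism action on $\Phi^G X$ is trivial.'' The actual issue is not inner automorphisms of $G$ but the mapping spaces in the $\infty$-categorical global orbit category $\Glo{\cat F}$, which decompose into pieces $BC(\alpha)$ indexed by conjugacy classes of homomorphisms $\alpha$. The paper's Proposition on $\pi_{\cat F}^*$ shows that, rationally, these classifying spaces are contractible for finite groups, so that $\Fun(\Epi{\cat F}^{\op},\Sp^{\bbQ}) \simeq \Fun(\cat F^{\op},\Sp^{\bbQ})$. Your heuristic is in the right direction but needs to be run on the full mapping spaces of the orbit $\infty$-category, not merely on automorphism groups, in order to justify replacing an $\infty$-categorical indexing category by its homotopy $1$-category.
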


This connection to equivariant homotopy theory suggests an approach to understanding the tensor-triangular geometry of $\Spgl{\cat{U}}$ one group at a time in line with the strategies applied in~\cite{BalmerSanders17,BHNNNS2019,BGH2020,BCHNP}.

\subsection*{Methodology}\label{ssec:intro_methodology}
The conventional approach to understanding the Balmer spectrum, which works in virtually all known examples, is by probing a given rigidly-compactly generated tt-category $\cat T$ through a jointly (nil-)conservative family of tt-functors $f_i^*\colon \cat{T} \to \cat{S}_i$ for $\cat{S}_i$ that are better understood. For example in equivariant stable homotopy theory, one is naturally led to consider the family of geometric fixed points functors, while in algebra one examines the base-change functors to the residue fields. The underlying reason that this works so well is that such a family induces a surjective map on spectra 
    \[
        \xymatrix{\bigsqcup_{i \in I}\Spc(\cat{S}_i^c) \ar@{->>}[r] & \Spc(\cat{T}^c),}
    \]
see \cite{BCHS2024}. This provides a powerful tool for describing all the points of the Balmer spectrum, while the remaining challenge lies in understanding the containment relations between primes and the topology of the spectrum.

Translated to global representation theory via \cref{thmx:globalalgmodel}, we specialize this ansatz by taking the jointly conservative family of evaluation functors 
    \[
        (\ev_G\colon\sfD(\cat{U}) \to \sfD(\{G\}), \quad X \mapsto X(G))_{G \in \pi_0 \cat U}.
    \]
The resulting map on spectra 
    \begin{equation}\tag{$\dagger$}\label{eq:intro_groupprimemap}
        \begin{split}
            \pi_0 \cat U  =  \bigsqcup_{G\in\pi_0\cat U}\Spc(\sfD(\{G\})^c) & \to \Spc(\D{\cat U}^c), \\
            G & \mapsto \mathfrakp_G = \{X \mid H_*(X(G))=0\}
        \end{split}
    \end{equation}
gives rise to a collection of prime ideals in $\D{\cat U}^c$ which we refer to as \emph{group primes}. We show that this map is always injective; based on the above discussion, one might optimistically expect this map to also be surjective. The special case where the family $\cat U$ is essentially finite has already been studied in the literature \cite{Xu2014,Wang2019}. Here, the expectation turns out to be correct: the Balmer spectrum coincides with the finite discrete space $\pi_0\cat U$, see \cref{thm:spcfinite}. Surprisingly, we prove in \cref{cor:spcfinitep_characterization} that for finite $p$-groups, finiteness is in fact also \emph{necessary} for surjectivity:

\begin{thmx}\label{thmx:finitespc}
Fix a prime number $p$ and let $\cat U$ be a family of finite $p$-groups closed under quotients. Then the map \eqref{eq:intro_groupprimemap} is surjective if and only if $\cat{U}$ is essentially finite. In this case the map \eqref{eq:intro_groupprimemap} is a homeomorphism.
\end{thmx}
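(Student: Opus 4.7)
The ``if'' direction and the final homeomorphism assertion are immediate consequences of \cref{thm:spcfinite} applied to the essentially finite family $\cat U$. The substance of the theorem is the converse, so the plan is to assume that $\cat U$ is not essentially finite and construct a prime of $\sfD(\cat U)^c$ that does not lie in the image of \eqref{eq:intro_groupprimemap}.

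The construction is via ultrafilters on the infinite set $\pi_0\cat U$. For an ultrafilter $\cF$ on $\pi_0\cat U$ set
\[
\mathfrakp_{\cF}\;:=\;\SET{X\in\sfD(\cat U)^c}{\supp(X)\notin\cF}, \qquad \supp(X):=\SET{G\in\pi_0\cat U}{H_*(X(G))\neq 0}.
\]
Since $k$ has characteristic zero, the Künneth formula gives $\supp(X\otimes Y)=\supp(X)\cap\supp(Y)$; one also has $\supp(\cof(f))\subseteq\supp(X)\cup\supp(Y)$ and $\supp$ is monotone under retracts. Combined with the defining properties of an ultrafilter (upward closure, closure under finite intersections, ultra property), these facts make $\mathfrakp_\cF$ into a prime thick tensor-ideal. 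Properness is automatic: since $\cat U$ is non-empty and closed under quotients, the trivial group lies in $\cat U$, so the unit $\unit$ coincides with the projective $P_1$ and is in particular compact, and $\supp(\unit)=\pi_0\cat U$ lies in every filter. Principal ultrafilters recover precisely the group primes $\mathfrakp_G$.

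Since $\pi_0\cat U$ is infinite, non-principal ultrafilters exist, and for such an $\cF$ the plan is to show $\mathfrakp_\cF\neq\mathfrakp_{G_0}$ for every $G_0\in\pi_0\cat U$. It suffices to produce a compact object $X_{G_0}$ with $\supp(X_{G_0})$ finite and containing $G_0$: then $\supp(X_{G_0})\notin\cF$ (non-principal ultrafilters exclude all finite sets), so $X_{G_0}\in\mathfrakp_\cF$, while $X_{G_0}\notin\mathfrakp_{G_0}$. In the model case $\cat U=\cat C_p$ such a compact ``skyscraper'' can be taken as the cofibre $P_{G_0}/P_{G_0'}$, where $G_0'$ is the immediate successor of $G_0$ in the quotient poset; a direct count of surjections $C_{p^m}\twoheadrightarrow C_{p^n}$ shows this cofibre is supported on the single point $\{G_0\}$. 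The main obstacle is extending this to an arbitrary family closed under quotients, where the quotient poset is not totally ordered and no canonical single-step successor exists. The plan is to replace the ``immediate successor'' construction by an inductive argument over a finite quotient-closed subfamily containing $G_0$, using the structural results on projective resolutions and compact generation in $\mathsf{A}(\cat U)$ from the companion paper \cite{BBPSWstructural} to guarantee that the relevant iterated cofibre is compact with the desired finite support. Once such compact skyscrapers are available at every $G_0$, the argument of the previous paragraph delivers a non-group prime $\mathfrakp_\cF$ and hence non-surjectivity.
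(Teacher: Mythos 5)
Your ``if'' direction and the homeomorphism assertion are correct and identical to the paper's (both are immediate from \cref{thm:spcfinite}). The ultrafilter framework for the converse is an interesting alternative route and the construction of $\mathfrakp_\cF$ from an ultrafilter $\cF$ is sound: the properties of $\hsupp$ recorded in \cref{lem:hsupp}, together with the ultrafilter axioms (the ultra property is genuinely needed to make $\mathfrakp_\cF$ closed under cofibres), do make $\mathfrakp_\cF$ a proper thick prime ideal of $\D{\cat U}^c$, and principal ultrafilters recover group primes.

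The gap is the claim that one can always produce, at each $G_0$, a compact ``skyscraper'' $X_{G_0}$ with $\hsupp(X_{G_0})$ \emph{finite} and containing $G_0$. This is not merely a technical obstacle that can be patched by an inductive argument over a finite quotient-closed subfamily --- such objects genuinely need not exist. Take $\cat U=\cat E_p$, the elementary abelian $p$-groups, which is an infinite family of $p$-groups closed under quotients. By \cref{thm:nontorsion} (or step (2) of the proof of \cref{thm-elementary}), every \emph{nonzero} compact object of $\sfD(\cat E_p)^c$ has \emph{cofinite} homological support. There are therefore no compact skyscrapers at all in this example, and your plan for separating $\mathfrakp_\cF$ from the group primes cannot get off the ground.

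The remedy is to run the separation argument in the opposite direction: rather than finding $X\in\mathfrakp_\cF\setminus\mathfrakp_{G_0}$, find a compact $Y_{G_0}\in\mathfrakp_{G_0}$ with $\hsupp(Y_{G_0})$ cofinite, so that $Y_{G_0}\notin\mathfrakp_\cF$ for any non-principal $\cF$. Such a $Y_{G_0}$ does exist for $p$-group families closed under quotients: following \cref{rem:isolation_criterion}, take $Y_{G_0}=P_{G_0}\oplus\bigoplus_i e_{H_i}$ where $P_{G_0}=\cof(e_{G_0,k}\to\unit)$ and the $H_i$ are the finitely many minimal elements of $\cat U_{\not\ll G_0}$. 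Then $\pi_0\cat U\setminus\hsupp(Y_{G_0})\subseteq\pi_0\cat U_{\ll G_0}$ is finite, and $Y_{G_0}(G_0)\simeq 0$. But the finiteness of the set of minimal elements of $\cat U_{\not\ll G_0}$ is exactly the $p$-group lemma underlying \cref{lem:isolation_criterion} and \cref{prop:pgroup_embedding}; once you have that input, the paper's argument is shorter and does not need ultrafilters at all: $\mathfrakp_-$ is an open embedding, so if it were surjective the whole spectrum would be discrete, and a discrete spectral space is finite by quasi-compactness, forcing $\cat U$ to be essentially finite. In short: your construction of $\mathfrakp_\cF$ is a legitimate (and somewhat illuminating) way to exhibit a non-group prime once the needed structural input is in place, but the finite-support skyscraper step as stated is false, and the corrected argument ends up requiring the same group-theoretic fact that powers the paper's cleaner topological proof.
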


This immediately implies that the tt-geometry for infinite families must be much richer as the notion of group prime does not even capture the entire underlying set of the Balmer spectrum. The purpose of this paper is then to study the intricate and surprising new phenomena that appear when considering infinite families.

\subsection*{Phenomenology}\label{ssec:intro_phenomenology}

The failure of surjectivity of the map \eqref{eq:intro_groupprimemap} is explained by the observation that the category $\sfD(\cat U)$ is rarely rigidly-compactly generated. In fact, in \cite{BBPSWstructural} we prove that if $1 \in \cat U$ then there is a containment 
    \[
        \sfD^b(k\text{-}\mod)\simeq\sfD(\cat U)^{\dual} \subseteq \sfD(\cat U)^c
    \]
between dualizables and compact objects, and that this is an equality if and only if $\cat{U}$ contains only the trivial group. As the dualizable objects contain no information about the global structure of the derived category, we are led to consider its subcategory of compact objects $\sfD(\cat U)^c$. This is an essentially small tt-category which generates the whole derived category, see \cref{prop-rigidly-comp-gen,prop-compacts-tt-widely-closed}. 

The known surjectivity criteria in tensor-triangular geometry, such as \cite{Balmer18} or the aforementioned \cite{BCHS2024}, rely on rigidity. \Cref{thmx:finitespc} demonstrates that this is not a mere technicality, but absolutely fundamental. In fact, the standard toolbox of tensor-triangular geometry ceases to be available in the non-rigid context. From this abstract perspective, we introduce new methods that we hope will be useful in a more systematic study of \emph{non-rigid tt-geometry}.

In order to illustrate the various phenomena we encounter in the case of essentially infinite collections $\cat U$ of groups, consider the following three one-parameter families and their corresponding spectra $\Spc(\sfD(\cat U)^c)$:

\begin{figure}[h!]
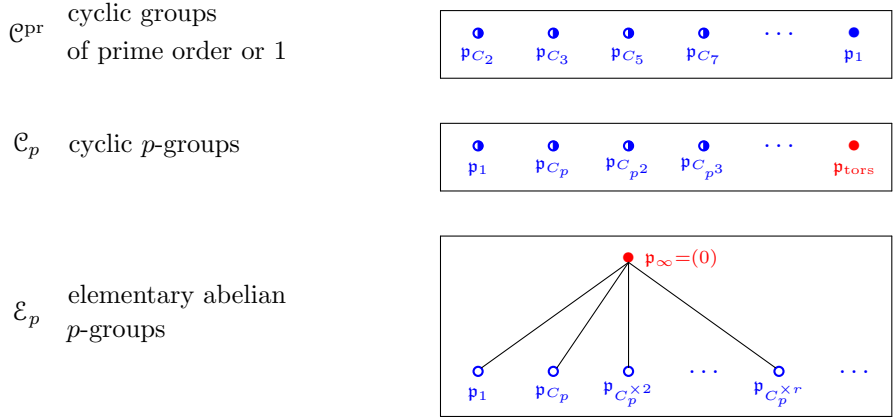

    \centering
\begin{equation*}
\label{eq:Spc(C_p)}%
\xy
(-40,0)*{\Cprime};
(-40,-15)*{\cat{C}_p};
(-40,-37.5)*{\cat{E}_p};
(-20,2.5)*{\text{cyclic groups}\hphantom{hhhhl}};
(-20,-2.5)*{\text{of prime order or 1}};
(-20,-15)*{\text{cyclic $p$-groups}\hphantom{hhh}};
(-20,-35)*{\text{elementary abelian}};
(-20,-40)*{\text{$p$-groups}\hphantom{hhhhhhhh}};
(20,0)*{\color{blue}\halfcirc[0.4ex]};
(30,0)*{\color{blue}\halfcirc[0.4ex]};
(40,0)*{\color{blue}\halfcirc[0.4ex]};
(50,0)*{\color{blue}\halfcirc[0.4ex]};
(60,0)*{\color{blue}\ldots};
(70,0)*{\color{blue}\bullet};
(20,-3)*{\color{blue}\scriptstyle \mathfrakp_{C_2}};
(30,-3)*{\color{blue}\scriptstyle \mathfrakp_{C_3}};
(40,-3)*{\color{blue}\scriptstyle \mathfrakp_{C_5}};
(50,-3)*{\color{blue}\scriptstyle \mathfrakp_{C_7}};
(70,-3)*{\color{blue}\scriptstyle \mathfrakp_{1}};
{\ar@{-} (15,3)*{};(75,3)*{}};
{\ar@{-} (15,3)*{};(15,-6)*{}};
{\ar@{-} (75,3)*{};(75,-6)*{}};
{\ar@{-} (15,-6)*{};(75,-6)*{}};
(20,-15)*{\color{blue}\halfcirc[0.4ex]};
(30,-15)*{\color{blue}\halfcirc[0.4ex]};
(40,-15)*{\color{blue}\halfcirc[0.4ex]};
(50,-15)*{\color{blue}\halfcirc[0.4ex]};
(60,-15)*{\color{blue}\ldots};
(70,-15)*{\color{red}\bullet};
(20,-18)*{\color{blue}\scriptstyle \mathfrakp_{1}};
(30,-18)*{\color{blue}\scriptstyle \mathfrakp_{C_p}};
(40,-18)*{\color{blue}\scriptstyle \mathfrakp_{C_{p^2}}};
(50,-18)*{\color{blue}\scriptstyle \mathfrakp_{C_{p^3}}};
(70,-18)*{\color{red}\scriptstyle \mathfrakp_{\text{tors}}};
{\ar@{-} (15,-12)*{};(75,-12)*{}};
{\ar@{-} (15,-12)*{};(15,-21)*{}};
{\ar@{-} (75,-12)*{};(75,-21)*{}};
{\ar@{-} (15,-21)*{};(75,-21)*{}};
(40,-30)*{\color{red}\bullet};
(20,-45)*{\color{blue}\emptycirc[0.4ex]};
(30,-45)*{\color{blue}\emptycirc[0.4ex]};
(40,-45)*{\color{blue}\emptycirc[0.4ex]};
(50,-45)*{\color{blue}\ldots};
(60,-45)*{\color{blue}\emptycirc[0.4ex]};
(70,-45)*{\color{blue}\ldots};
(47,-30)*{\color{red}\scriptstyle \mathfrakp_{\infty} = (0)};
(20,-48)*{\color{blue}\scriptstyle \mathfrakp_{1}};
(30,-48)*{\color{blue}\scriptstyle \mathfrakp_{C_p}};
(40,-48)*{\color{blue}\scriptstyle \mathfrakp_{C_p^{\times 2}}};
(60,-48)*{\color{blue}\scriptstyle \mathfrakp_{C_p^{\times r}}};
{\ar@{-} (20.5,-44.5)*{};(40,-30.5)*{}};
{\ar@{-} (30.5,-44.5)*{};(40,-30.5)*{}};
{\ar@{-} (40,-44.5)*{};(40,-30.5)*{}};
{\ar@{-} (59.5,-44.5)*{};(40,-30.5)*{}};
{\ar@{-} (15,-27)*{};(75,-27)*{}};
{\ar@{-} (15,-27)*{};(15,-51)*{}};
{\ar@{-} (75,-27)*{};(75,-51)*{}};
{\ar@{-} (15,-51)*{};(75,-51)*{}};
\endxy
\end{equation*}
\caption{Essential phenomena in $\Spc(\sfD(\cat U)^c)$. Blue indicates group primes, while the red bullets correspond to additional primes required to compactify $\pi_0\cat U$.}\label{fig:phenomena}
\end{figure}
\vspace{-1mm}

\subsubsection*{Change of topology} The first phenomenon arises for the family $\Cprime$ of cyclic groups of prime order or the trivial group $1$. In this case, the group prime map induces a continuous bijection
    \[
        \mathfrakp_{(-)}\colon\pi_0\Cprime \to \Spc(\D{\Cprime}^c),
    \]
so every prime is a group prime. (Note that this does not contradict \cref{thmx:finitespc}, as $\Cprime$ is not a family of $p$-groups for fixed $p$.) However, $\mathfrakp_{(-)}$ is not a homeomorphism as the topology changes: the discrete topology on $\pi_0\Cprime$ turns into the one-point compactification of $\Cprime \setminus \{1\}$, with $\mathfrakp_1$ forming the accumulation point. In \cref{fig:phenomena}, the different behaviour of the group primes is indicated by depicting $\mathfrakp_{C_p}$ with semi-filled circle corresponding to closed-open points of the spectrum, while the closed point $\mathfrakp_1$ is shown as a bullet. This phenomenon can be anticipated, as $\pi_0\Cprime$ is discrete and infinite, whereas the spectrum is always quasi-compact. \\

Restricting our attention to collections $\cat U$ of finite $p$-groups closed under quotients for some prime number $p$, we show that the previous phenomenon does not occur: the group prime map \eqref{eq:intro_groupprimemap} is always an open embedding, i.e., $\pi_0\cat U$ forms a discrete subspace of $\Spc(\sfD(\cat U)^c)$, see \cref{prop:pgroup_embedding}. This implies the existence of additional points in $\Spc(\sfD(\cat U)^c)$ whenever $\cat U$ is infinite, which exhibit $\Spc(\sfD(\cat U)^c)$ as a suitable compactification of $\pi_0\cat U$. Here, we distinguish between two different types of compactification, depending on whether the dimension of the space changes or not.

\subsubsection*{Compactification of type A}

Consider the example $\cat U = \cat C_p$ of cyclic $p$-groups. As a special case of \cref{Thm-abelian-p-groups-rank-r}, we show that the group prime map exhibits $\Spc(\sfD(\cat C_p)^c)$ as the one-point compactification of $\pi_0\cat C_p$. In particular, note that the Cantor--Bendixson rank of the space increases from $0$ to $1$. The newly added accumulation point $\mathfrakp_{\mathrm{tors}}$ consists of all global representations that are eventually zero, i.e., vanish when evaluated on all sufficiently large groups. It is our first example of a \emph{profinite group prime}, as explained in more detail below. 

\subsubsection*{Compactification of type B}

The second type of compactification appears for the family $\cat E_p$ of elementary abelian $p$-groups. In this case, a single point $\mathfrakp_{\infty}$ is added to $\pi_0\cat E_p$ to form $\Spc(\sfD(\cat E_p)^c)$, but this time it is in the closure of each of the group primes, $\mathfrakp_{\infty} \in \overline{\{\mathfrakp_{C_p^{\times n}}\}}$ for all $n \geq 0$. Topologically, this means that $\mathfrakp_{\infty}$ is both an accumulation point on the underlying constructible topology and also the only closed point in the spectral topology; as such, $\Spc(\sfD(\cat E_p)^c)$ is homeomorphic to the Hochster-dual of $\Spec(\bbZ)$. In fact, $\mathfrakp_{\infty}$ is simply given by the zero ideal in $\sfD(\cat E_p)^c$; a more general interpretation of these type of points as \emph{family primes} is given in the next subsection.

\subsection*{Family primes and elementary abelian $p$-groups}\label{ssec:familyprimes}

As a consequence of \cref{thmx:finitespc} and as witnessed by the examples above, the group primes do not suffice to capture the entire spectrum of global representations for any infinite family $\cat U$ of finite $p$-groups. We therefore introduce two new types of primes:
    \begin{itemize}
        \item family primes, corresponding to the points in compactifications of type B;
        \item and profinite group primes, corresponding to the points in compactifications of type A.
    \end{itemize}
Our goal is then to prove that these new primes provide a sufficient supply to understand the entire spectrum in numerous examples of families.

Family ideals generalize the construction of group primes by taking any subcollection $\cat V \subseteq\cat U$ and defining  
    \[
        \mathfrakp_{\cat V} =\{ X \in \D{\cat U}^c \mid H_*(X(G))=0, \; \forall G \in \cat V\}.
    \]
This is a thick ideal and our next contribution is to establish a condition under which $\mathfrakp_{\cat V}$ is prime. Building crucially on previous work \cite{BBPSWstructural}, we prove the next result (see \cref{prop:zeroideal,cor:familyprime}) which highlights the special role of multiplicative global families in global representation theory, i.e., families which are closed under finite products, subgroups and quotients.

\begin{thmx}\label{thmx:familyprimes}
    If $\cat V$ is a multiplicative global family in $\cat U$, then $\mathfrakp_{\cat V}$ is prime in $\D{\cat{U}}^c$. In particular, when $\cat U = \cat V$ is itself multiplicative global, then the zero ideal is prime in $\D{\cat U}^c$. 
\end{thmx}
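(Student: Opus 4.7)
The plan is to verify directly that $\mathfrakp_{\cat V}$ satisfies the two standard conditions for a prime tt-ideal in $\D{\cat U}^c$. It is manifestly closed under retracts, shifts and cofibers, and since the monoidal structure is pointwise, $(X \otimes Y)(G) = X(G)\otimes_k Y(G)$, the subcategory $\mathfrakp_{\cat V}$ is automatically a tensor ideal. It is proper because the unit $\unit$ is the constant functor at $k$ (the representable $P_1$), which is nonzero on every $G \in \cat V$. The real content is therefore primality: $X\otimes Y\in \mathfrakp_{\cat V}$ should force $X\in \mathfrakp_{\cat V}$ or $Y\in \mathfrakp_{\cat V}$.

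I would argue primality through supports. Over the characteristic-zero field $k$, the pointwise K\"unneth formula gives
\[
H_*((X\otimes Y)(G)) \;\cong\; H_*(X(G))\otimes_k H_*(Y(G)),
\]
so, writing $\supp(X) := \{G\in\cat U : H_*(X(G))\neq 0\}$, the condition $X\otimes Y\in \mathfrakp_{\cat V}$ is equivalent to $\supp(X)\cap\supp(Y)\cap\cat V = \emptyset$. Suppose for contradiction that $X, Y\notin \mathfrakp_{\cat V}$ and pick $G\in \supp(X)\cap\cat V$ and $H\in \supp(Y)\cap\cat V$. By the multiplicativity of $\cat V$ we have $G\times H\in\cat V$, so to derive a contradiction it suffices to show that $G\times H$ belongs to $\supp(X)\cap\supp(Y)$.

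The crux — and what I expect to be the main obstacle — is therefore the following support-closure property: for every $X\in\D{\cat U}^c$, every $G\in\supp(X)$, and every $H\in\cat U$ with $G\times H\in\cat U$, one has $G\times H\in\supp(X)$. This is immediate for the representables, since $\supp(P_K) = \{L\in\cat U : L\twoheadrightarrow K\}$ is an upper set under surjections and hence closed under products with arbitrary groups in $\cat U$. However, support-closure is \emph{not} preserved by arbitrary cofibers: for instance, the skyscraper functor supported at a single nontrivial $G$ lies in $\mathsf{A}(\cat U)$ but violates the closure, so a purely formal cellular induction on compacts cannot work. I would instead extract this property from the structural and growth-behaviour analysis of compact global representations developed in the companion paper \cite{BBPSWstructural}. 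Once it is available, the argument of the previous paragraph closes. The in-particular statement is then the special case $\cat V = \cat U$, since $\mathfrakp_{\cat U} = \{X\in \D{\cat U}^c : H_*(X(G))=0 \text{ for all } G\in\cat U\} = (0)$, which is prime by what we have just shown.
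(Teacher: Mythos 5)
Your overall strategy --- reduce primality to a statement about supports being closed under products --- is on the right track, but the support-closure property you isolate as ``the crux'' is actually false, even for compact objects, so it cannot be extracted from the companion paper. Concretely, in $\cat E_p$ the compact object $m:=\cof(e_{\bbZ/p,k}\to\unit)$ is supported on every elementary abelian $p$-group \emph{except} $\bbZ/p$ itself (this computation appears in the proof of \cref{thm-elementary}); the trivial group lies in $\hsupp(m)$, but $1\times\bbZ/p\cong\bbZ/p$ does not, so the closure property fails at $G=1$, $H=\bbZ/p$. Your own observation that cofibers don't preserve support-closure already signals this: the defect is real, not merely an inconvenience for cellular induction.

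What the companion paper actually provides (quoted as \cref{thm:nontorsion}) is the \emph{existential} version: for $\cat V$ a multiplicative global family and any nonzero compact $X$, there is \emph{some} $G$ and a torsion-free element $x\in H_*(X)(G)$, meaning $\alpha^*x\neq 0$ for every epimorphism $\alpha\colon G'\to G$. Since the projection $G\times H\to G$ is such an epimorphism whenever $G\times H\in\cat V$, this gives you precisely the product-closure you need, but only at that one well-chosen $G$, not at an arbitrary point of the support. So the fix is to replace ``pick $G\in\supp(X)\cap\cat V$'' by ``pick $G$ as furnished by \cref{thm:nontorsion}''. There is also a second, subtler point you elide: \cref{thm:nontorsion} a priori gives $G\in\cat U$, not $G\in\cat V$. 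The paper sidesteps this by first proving that the zero ideal of $\D{\cat V}^c$ is prime (\cref{prop:zeroideal}), where the torsion-free point automatically lives in $\cat V$, and then pulling back along the monoidal restriction $i^*\colon\D{\cat U}^c\to\D{\cat V}^c$ to get $\mathfrak{p}_{\cat V}=\mathrm{Spc}(i^*)(0)$ (\cref{cor:familyprime}); here one uses that $i^*$ preserves compacts because $\cat V$ is downwards closed (\cref{prop-incl-preserve-compacts}(d)). Your more direct approach would need the analogous reduction, otherwise nothing forces the torsion-free point into $\cat V$.
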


This result also allows us to prove the first part of \cref{thmx:main}. Indeed, there is a strictly ascending filtration of the family $\fabp$ of finite abelian $p$-groups by multiplicative global subfamilies via the $p$-exponent: 
    \begin{equation}\tag{$\ast$}\label{eq:intro_apfiltration}
        \{1\} \subsetneq \cat E_p \subsetneq \fabp^{\leq 2} \subsetneq \ldots \subsetneq \fabp^{\leq l}= \{G \in \fabp \mid p^l G=0\} \subsetneq \ldots \subseteq \fabp.
    \end{equation}
Applying \cref{thmx:familyprimes} to this filtration provides a sequence of prime ideals in $\sfD(\fabp)^c$
    \[
        \mathfrakp_{1} \supsetneq \mathfrakp_{\cat E_p} \supsetneq \mathfrakp_{\leq 2} \supsetneq \ldots \supsetneq \mathfrakp_{\leq l}=\{ X \mid H_*(X(G))=0, \;\; G \in \cat A(p)^{\leq l}\}\supsetneq \ldots \supsetneq (0)
    \]
which turns out to be strictly descending (\cref{cor:infinitedimensional}). In particular, the spectrum $\Spc(\sfD(\fabp)^c)$ has infinite Krull dimension. 

Returning to the example of the family $\cat E_p$ of elementary abelian $p$-groups, we combine group primes and family primes to describe $\Spc(\sfD(\cat E_p)^c)$ and hence obtain a classification of the thick ideals therein, see \cref{thm-elementary}.

\begin{thmx}\label{thmx:elemab}\leavevmode
The prime ideals in $\sfD(\cat E_p)^c$ are the group primes $\mathfrakp_{(\bbZ/p)^{\times n}}$ for $n \in \bbN$ together with the family prime $(0)$. A subset $U \subseteq \Spc(\sfD(\cat E_p)^c)$ is open if and only if it either contains only group primes or if it is equal to the whole set; see \Cref{fig:phenomena}.
\end{thmx}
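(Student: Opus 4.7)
The plan is to first classify the thick ideals of $\sfD(\cat E_p)^c$ using growth behaviour from the companion paper, then identify which of them are prime, and finally compute the topology from the basic opens.

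For the classification, I would invoke \cite{BBPSWstructural}: every nonzero compact $M \in \sfD(\cat E_p)^c$ has cofinite type $\type(M) = \bbN \setminus F_M$ with $F_M$ a finite subset of $\bbN$, every cofinite subset of $\bbN$ is realized as such a type, and thick ideals are determined by their total type $\bigcup_{M \in I} \type(M)$. Consequently, the proper thick ideals of $\sfD(\cat E_p)^c$ are exactly the zero ideal and the family
\[
    I_F = \SET{M \in \sfD(\cat E_p)^c}{M((\bbZ/p)^{\times n}) = 0 \text{ in } \sfD(k) \text{ for all } n \in F}
\]
for each nonempty finite $F \subseteq \bbN$, where $I_F$ has type $\bbN \setminus F$.

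The next step is to identify primes among these. The zero ideal is prime by \Cref{thmx:familyprimes} applied to the multiplicative global family $\cat V = \cat U = \cat E_p$ (using that $(\bbZ/p)^{\times n} \times (\bbZ/p)^{\times m} \cong (\bbZ/p)^{\times(n+m)}$ and that subgroups and quotients of elementary abelian $p$-groups are elementary abelian). Each $\mathfrakp_{(\bbZ/p)^{\times n}} = I_{\{n\}}$ is prime as the preimage of $(0)$ under the tt-functor $\ev_{(\bbZ/p)^{\times n}}\colon \sfD(\cat E_p)^c \to \sfD(k)^c$, and the group primes are pairwise distinct by \eqref{eq:intro_groupprimemap}. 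To rule out $I_F$ with $|F| \geq 2$, pick distinct $n, m \in F$ and compact $M, N$ realizing the cofinite types $\bbN \setminus (F \setminus \{m\})$ and $\bbN \setminus (F \setminus \{n\})$; then $\type(M \otimes N) = \bbN \setminus F$ so $M \otimes N \in I_F$, while $m \in \type(M)$ and $n \in \type(N)$ show neither $M$ nor $N$ lies in $I_F$, contradicting primality.

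For the topology, the basic opens of the Balmer spectrum are $U(M) = \SET{\mathfrakp}{M \in \mathfrakp}$ for compact $M$. A nonzero $M$ with $\type(M) = \bbN \setminus F$ satisfies $U(M) = \SET{\mathfrakp_{(\bbZ/p)^{\times n}}}{n \in F}$, a finite subset of group primes not containing $(0)$; arbitrary unions of such $U(M)$ produce precisely the subsets of $\SET{\mathfrakp_{(\bbZ/p)^{\times n}}}{n \in \bbN}$, while the only basic open containing $(0)$ comes from the zero object and equals the whole spectrum. This yields the claimed topology. The principal obstacle throughout is the initial classification of thick ideals, which genuinely relies on the sharp growth estimates of \cite{BBPSWstructural}; granted those, the remainder is a direct tt-geometric verification.
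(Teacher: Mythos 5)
Your route is valid in outline but contains a genuine gap in the crucial first step, and the gap is precisely where the paper spends most of its effort. You assert, attributing it to \cite{BBPSWstructural}, that ``thick ideals are determined by their total type.'' The companion paper does not prove this; what it supplies is \cref{thm:nontorsion}, the existence of torsion-free elements. The bridge from that statement to control of thick ideals---namely the implication $\hsupp(X)\subseteq\hsupp(Y)\Rightarrow X\in\thickt{Y}$ for compacts, which is exactly what your ``determined by total type'' claim needs---is established \emph{in this paper}, by combining the retraction $e_n \in \thickt{e_m}$ for $n\geq m$ (from \cite[Remark 4.12]{PolStrickland2022}) with \cref{thm:nontorsion} and then feeding the result into \cref{prop:hsupptt+}. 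That last proposition is itself nontrivial: it proceeds by truncating to an essentially finite subfamily, invoking the finite-family classification (\cref{prop:spcfinite}), and controlling the defect of monoidality of the left Kan extension via \cref{lem:downwardhsupp}. None of this is ``a direct tt-geometric verification,'' and misattributing it to the companion paper leaves the heart of the argument unsupported. You would also need property (2) of the paper's proof (cofiniteness of homological support for nonzero compacts), but you do state that and it does follow directly from \cref{thm:nontorsion}.

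Beyond that gap, your organization differs from the paper's in a reasonable way. The paper proves properties (1) and (2), then classifies \emph{prime} ideals directly: given a nonzero prime $\mathfrak{q}$, it tensors together the objects $m_s=\cof(e_{s,k}\to\unit)$ over the finite cosupport, forces some $m_s\in\mathfrak{q}$, and concludes by maximality of group primes (\cref{cor:groupprimes_generic}). You instead classify \emph{all} thick ideals as $(0)$ and the $I_F$, then single out the primes: $(0)$ via \cref{thmx:familyprimes}, the group primes as preimages of the zero ideal under the tt-functor $\ev_{(\Z/p)^{\times n}}$, and you rule out $I_F$ with $|F|\geq 2$ by an explicit failure of primality. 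Your non-primality argument for $I_F$ and your computation of the topology from the Balmer-basic opens $U(M)$ are both correct and are pleasant alternatives to the paper's maximality-based and isolated-points-based arguments respectively. Once the intermediate result above is supplied (with correct attribution to this paper's \cref{prop:hsupptt+} rather than to \cite{BBPSWstructural}), your proof would be complete.
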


As part of the proof, we also show that all thick ideals in $\sfD(\cat E_p)^c$ are radical and can be generated by a single object. The equivalence between $\sfD(\cat E_p)$ and $\sfD(\VIMod)$ then lets us deduce the classification of derived VI-modules in \cref{thmx:VImod}.

\subsection*{Profinite group primes and abelian $p$-groups of bounded $p$-rank}\label{ssec:profiniteprimes} 
In order to discuss the next type of prime ideal that we encounter in global representation theory, we need to leave the world of finite groups and enter the world of finitely generated profinite groups. Given a subcategory of finite groups $\cat U$, we can consider its profinite extension $\hCU$ which consists of those finitely generated profinite groups for which a cofinal system of finite quotients lies in $\cat U$. In other words, we can write any $G \in \hCU$ as a filtered limit $G=\lim_{N \in \mathcal{N}(G;\cat U)} G/N$, where $\mathcal{N}(G;\cat U)$ is the collection of the open normal subgroups of $G$ with quotient belonging to $\cat U$. For any $G\in \hCU$, we construct a prime ideal
\[
\mathfrakp_G=\{X \mid \colim_{N \in \mathcal{N}(G;\cat U)}H_*(X(G/N))=0\} \in \Spc(\D{\cat U}^c)
\]
which we call a \emph{profinite group prime}, together with a profinite group prime map 
    \begin{equation}\tag{$\ddagger$}\label{eq:progroupprimemap}
        \mathfrakp_{(-)}\colon \pi_0 \hCU \to \Spc(\D{\cat U}^c)
    \end{equation}
extending the map from \eqref{eq:intro_groupprimemap}. 

If $\cat U$ admits a \emph{profinite reflective filtration} $\{\cat U[n]\}_{\geq n}$, that is, an exhaustive filtration of subfamilies in which the inclusion $\cat U[n] \hookrightarrow \hCU$ admits a left adjoint for all $n \geq 0$, then the map \eqref{eq:progroupprimemap} turns out to be injective, see \cref{prop:profinitemap-inj}. So, for families with a profinite reflective filtration, profinite group primes provide a closer approximation to the spectrum than only considering ordinary group primes. In \cref{cor-std-filt-profinite} we prove that many kinds of reflective filtrations are profinite.  For instance this holds if each category $\cat{U}[n]$ is essentially finite. For the family of finite abelian $p$-groups, the filtration $\{\cat A(p)^{\leq l}\}_{l\geq 0}$ in \eqref{eq:intro_apfiltration} is profinite reflective. 

Our next goal is then to find conditions on $\cat U$ which guarantee that the profinite group prime map is also surjective. The example of $\cat E_p$ implies that this is not always the case, so we need to focus on a special class of families. Given a positive integer $r\geq 0$, we say that a family of finite groups $\cat U$ is $r$-\emph{submultiplicative} if all groups in $\cat U$ are $r$-generated (that is, generated by at most $r$ elements), $1 \in \cat U$, and if $G \leq G_0 \times G_1$ with $G_0,G_1 \in \cat U$ and $G$ is $r$-generated, then $G \in \cat U$. Examples of this type include:
\begin{enumerate}
    \item $r$-generated finite groups;
    \item $r$-generated abelian groups, which includes the family of cyclic groups as the case $r=1$;
    \item $r$-generated solvable groups;
    \item $r$-generated nilpotent groups of class at most $c$ (for any
   given $c$);
    \item $r$-generated $p$-groups (for any given prime number $p$);
    \item $r$-generated groups $G$ such that all simple composition
   factors of $G$ have order at most $m$ (for any given $m$). 
\end{enumerate}

A special feature of $r$-submultiplicative families $\cat U$ is that they admit profinite reflective filtrations $\{\cat U[n]\}$ in which $\cat U[n]$ is essentially finite for all $n$. We are then able to show that the reflections induce a bijection 
\[
\pi_0\hCU\simeq\lim_n \pi_0 \cat U[n]
\]
and so the source of \eqref{eq:progroupprimemap} carries a natural profinite topology. With this topology in hand, we are able to prove (\cref{Thm-profinite-rgenerated}):

\begin{thmx}\label{thmx:rsubmultiplicative}
    Let $\cat U$ be an $r$-submultiplicative family. Then the profinite group map 
    \[
        \mathfrakp_{-}\colon \pi_0 \hCU \xrightarrow{\sim} \Spc(\D{\cat{U}}^c) 
    \]
    is a homeomorphism. Furthermore, in this case all thick ideals in $\D{\cat{U}}^c$ are radical. 
\end{thmx}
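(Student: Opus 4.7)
The plan is to exploit the profinite reflective filtration $\{\cat U[n]\}_{n\geq 0}$ by essentially finite subfamilies which is available for $r$-submultiplicative families (as noted just before the statement). By \Cref{thm:spcfinite}, each $\Spc(\D{\cat U[n]}^c) = \pi_0\cat U[n]$ is a finite discrete space, and by construction $\pi_0\hCU = \lim_n \pi_0\cat U[n]$ inherits a natural profinite topology. The reflections $\hCU \to \cat U[n]$ induce tt-functors $F_n\colon \D{\cat U}^c \to \D{\cat U[n]}^c$, whose pullbacks on spectra produce the finite-level approximations of the profinite group prime map.

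Injectivity of $\mathfrakp_{-}$ is already in hand from \Cref{prop:profinitemap-inj}, so the first substantive step is surjectivity. Given $\mathfrakp \in \Spc(\D{\cat U}^c)$, I would assemble the pullback primes $\mathfrakq_n := F_n^{-1}(\mathfrakp) \in \pi_0 \cat U[n]$ into a compatible system (functoriality across the filtration is automatic), thereby producing a profinite group $G \in \pi_0\hCU$ with $\mathfrakp = \mathfrakp_G$. The key content is to prove that the family $\{F_n\}$ is jointly conservative on $\D{\cat U}^c$: every nonzero compact object has nonzero image under some $F_n$. Because $\D{\cat U}$ is \emph{not} rigidly compactly generated, the standard surjectivity criterion of \cite{BCHS2024} is unavailable, and this is the main obstacle. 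The $r$-submultiplicative hypothesis is what one should leverage here: it forces compact objects to have ``bounded type'' along the filtration, so that the compact generators of $\D{\cat U}$ extend from some finite level $\cat U[n]$, and any prime annihilating every $F_n$ would then have to contain the unit.

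Once the bijection is established, the topological upgrade is classical: $\pi_0 \hCU$ is compact Hausdorff totally disconnected, while $\Spc(\D{\cat U}^c)$ is spectral, and a continuous bijection from a compact space to a Hausdorff space is a homeomorphism—so it suffices to check Hausdorffness of the target, which follows by separating distinct profinite group primes using tensor-idempotents pulled back from a finite level $\cat U[n]$ where they first differ. For the final assertion on radicality, the profinite structure of $\Spc(\D{\cat U}^c)$ implies that every Thomason subset is closed-open, so Balmer's classification collapses the distinction between thick ideals and radical thick ideals, forcing every thick ideal to be radical. In summary, the only genuinely new ingredient is the joint-conservativity argument for the $F_n$ in the non-rigid setting; everything else is a formal consequence.
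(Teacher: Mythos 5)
Your outline captures the right skeleton—filter by essentially finite reflective pieces, compare against $\lim_n\pi_0\cat U[n]$—but the central step is framed around the wrong notion, and the radicality argument doesn't work as stated.

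The key issue is that you frame surjectivity as a joint-conservativity problem for a family of functors $F_n$. First, note the direction confusion: the reflection $q_n$ induces $q_n^*\colon\D{\cat U[n]}^c\to\D{\cat U}^c$ (not the other way around), and $\Spc(q_n^*)\colon\Spc(\D{\cat U}^c)\to\Spc(\D{\cat U[n]}^c)$ is what you pull primes back along. But the deeper problem is that joint conservativity is the wrong tool here. The paper itself shows (\Cref{cor:spcfinitep_characterization}) that in the non-rigid setting the evaluation functors are jointly conservative on $\D{\cat U}$ and yet fail to be jointly surjective on spectra, so conservativity of the $F_n$—even if you established it—would not by itself force surjectivity of $\mathfrakp_{-}$. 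Your instinct that ``compact objects have bounded type, so generators extend from a finite level'' is correct (this is \Cref{prop-counit}), but you don't push it to its actual conclusion: it shows that the canonical functor $\colim_n\D{\cat U[n]}^c\to\D{\cat U}^c$ is a \emph{tt-equivalence} (\Cref{thm-colimit}). The missing ingredient is then Gallauer's continuity theorem: $\Spc$ carries filtered colimits of tt-categories to inverse limits of spectral spaces. Combining this with $\Spc(\D{\cat U[n]}^c)\cong\pi_0\cat U[n]$ (from \Cref{thm:spcfinite}) and $\pi_0\hCU\cong\lim_n\pi_0\cat U[n]$ (from \Cref{thm-Uhat-is-a-lim}, which uses the $r$-generation hypothesis and is where you must check the profinite group $G$ exists and is unique) immediately gives both the bijection and the topological identification—there is no separate surjectivity, injectivity, or Hausdorff-separation argument to run.

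The radicality claim also has a gap. The profinite topology of the spectrum does not by itself force every thick ideal to be radical: Balmer's classification is a bijection between Thomason subsets and \emph{radical} thick ideals, and nothing in the topology prevents distinct non-radical ideals from sharing a radical. The correct argument is structural: each $\D{\cat U[n]}^c$ is standard (all thick ideals radical) by the essentially finite case (\Cref{thm:spcfinite}), and standardness is preserved under filtered colimits of tt-categories (\Cref{prop:standardtt}(c), via Balmer's criterion $a\in\thickt{a\otimes a}$), so $\D{\cat U}^c\simeq\colim_n\D{\cat U[n]}^c$ is standard. Once you reorganize the proof around the filtered colimit presentation and Gallauer's theorem, both the homeomorphism and the radicality drop out simultaneously and cleanly.
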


This result applies in particular to $\cat{U} = \fabprk{r}$, the family of finite abelian $p$-groups of $p$-rank at most $r$ (the case $r=2$ is depicted in \cref{fig:phenomena2}), and $\cat{U} = \fabrk{r}$, the family of finite abelian groups generated by at most $r$ elements. Unpacking the construction of $\hCU$ in this case then results in \cref{thmx:boundedrank}, giving a concrete geometric picture of the spectrum. Finally, we show the resulting spectrum for $\fabrk{r}$ is just the product of the spectra for $\fabprk{r}$, see \cref{cor-abelian-groups-rank-r}.

\begin{figure}[h!]
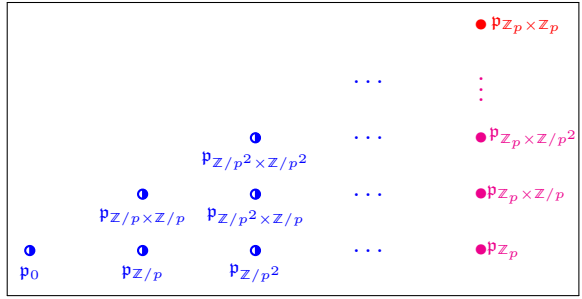

    \centering
        \begin{equation*}
        \xy
            (-25,20)*{\fabprk{2}: \text{ family of}\hphantom{hhh}};
            (-25,15)*{\text{finite abelian $p$-groups}};
            (-25,10)*{\text{of $p$-rank $\leq 2$}\hphantom{hhhhhhh}};
            (0,0)*{\color{blue}\halfcirc[0.4ex]};
            (15,0)*{\color{blue}\halfcirc[0.4ex]};
            (30,0)*{\color{blue}\halfcirc[0.4ex]};
            (45,0)*{\color{blue}\ldots};
            (60,0)*{\color{magenta}\bullet};
            (15,7.5)*{\color{blue}\halfcirc[0.4ex]};
            (30,7.5)*{\color{blue}\halfcirc[0.4ex]};
            (45,7.5)*{\color{blue}\ldots};
            (60,7.5)*{\color{magenta}\bullet};
            (30,15)*{\color{blue}\halfcirc[0.4ex]};
            (45,15)*{\color{blue}\ldots};
            (60,15)*{\color{magenta}\bullet};
            (45,22.5)*{\color{blue}\ldots};
            (60,22.5)*{\color{magenta}\vdots};
            (60,30)*{\color{red}\bullet};
            (0,-3)*{\color{blue}\scriptstyle \mathfrakp_{0}};
            (15,-3)*{\color{blue}\scriptstyle \mathfrakp_{\Z/p}};
            (30,-3)*{\color{blue}\scriptstyle \mathfrakp_{\Z/{p^2}}};
            (63,0)*{\color{magenta}\scriptstyle \mathfrakp_{\Z_p}};
            (15,4.5)*{\color{blue}\scriptstyle \mathfrakp_{\Z/p\times \Z/p}};
            (30,4.5)*{\color{blue}\scriptstyle \mathfrakp_{\Z/p^2\times \Z/p}};
            (66,7.5)*{\color{magenta}\scriptstyle \mathfrakp_{\Z_p \times \Z/p}};
            (30,12)*{\color{blue}\scriptstyle \mathfrakp_{\Z/{p^2}\times \Z/{p^2}}};
            (67,15)*{\color{magenta}\scriptstyle \mathfrakp_{\Z_p \times\Z/{p^2}}};
            (66,30)*{\color{red}\scriptstyle \mathfrakp_{\Z_p \times \Z_p}};
            {\ar@{-} (-3,33)*{};(73,33)*{}};
            {\ar@{-} (-3,33)*{};(-3,-6)*{}};
            {\ar@{-} (73,33)*{};(73,-6)*{}};
            {\ar@{-} (-3,-6)*{};(73,-6)*{}};
        \endxy
        \end{equation*}
    \caption{An illustration of $\Spc(\sfD(\fabprk{2})^c)$. The group primes are in blue and precisely give the isolated points of the spectrum. The remaining primes are profinite, with those in magenta forming accumulation points in the horizontal directions, which in turn converge to the point in red at the top.}
    \label{fig:phenomena2}
\end{figure}

\subsection*{Notation and terminology}\label{ssec:notation}

For the benefit of the reader, we collect some notation here which we use frequently throughout the paper, together with references for further details.

\begin{itemize}
    \item $\cat G$ (resp.~$\cat G_p$) denotes the category of finite groups (resp.~finite $p$-groups) and conjugacy classes of epimorphisms, see \cref{not-families}. Furthermore, we have the full subcategories $\cat E_p$ of elementary abelian $p$-groups, $\cat C_p$ of cyclic $p$-groups, $\cat C$ of all cyclic groups, $\Cprime$ of cyclic groups of prime order together with the trivial group, and $\cat A(p)$ of finite abelian $p$-groups.
    \item We write $\tCFG$ for the category of finitely generated groups and surjective homomorphisms, and $\CFG$ for the quotient category in which we identify conjugate homomorphisms, see \cref{def-fg}. Finally, we write $\hCG$ for the category of finitely topologically generated profinite groups with maps the conjugacy classes of surjective homomorphisms, see \cref{defn-hCG}. 
    \item We write $\cat{U}$ and $\cat{V}$ for replete full subcategories of $\cat{G}$ which are widely closed, see \cref{hyp}. Given $\cat{U}$, we write $\cat{U}\langle r\rangle$ for the full subcategory of $\cat{U}$ consisting of the $r$-generated groups, see \cref{defn-r-generated}, and $\widehat{\cat{U}}$ for the profinite extension of $\cat{U}$, see \cref{defn-hCU}.
    \item We write $G \gg H$ if there exists an epimorphism $G \twoheadrightarrow H$. For $G \in \cat{U}$, $\cat{U}_{\ll G}$ denotes the downwards closure of $G$ under this relation, and $\cat{U}_{\not \ll G}$ is the complement of this subfamily, see \cref{def:downclosure}.
    \item $\mathsf{A}(\cat{U};k)$ is the abelian category of functors $\cat{U}^{\op} \to \Mod{k}$ for a fixed field $k$ of characteristic 0, see \cref{def-AU-and-GrAU}. We suppress the field from the notation unless it is relevant.
    \item $\D{\cat{U}}$ is the derived category of $\A{\cat{U}}$, see \cref{def-derived-cat}, and $\D{\cat{U}}^c$ denotes the full subcategory of compact objects.
    \item Given a triangulated category $\cat T$ with arbitrary coproducts, we write $\cat T^c$ for the full subcategory of compact objects. 
    \item Given a collection of objects $\cat E \subseteq \cat T$, we write $\loc{\cat{E}}$ and $\loct{\cat E}$ for the localizing subcategory and localizing ideal generated by $\cat{E}$. Likewise, we write $\thick{\cat{E}}$ and $\thickt{\cat{E}}$ for the thick subcategory and thick ideal generated by $\cat{E}$.
\end{itemize}

\subsection*{Acknowledgements}\label{ssec:acknowledgements}

MB is supported by the EPSRC grant EP/X038424/1 “Classifying spaces, proper actions and stable homotopy theory”. TB is supported by the European Research Council (ERC) under Horizon Europe (grant No.~101042990). LP is supported by the SFB 1085 Higher Invariants in Regensburg. JW is supported by the project PRIMUS/23/SCI/006 from Charles University and the Charles University Research Centre
program No.~UNCE/24/SCI/022.

MB, TB, and LP are grateful to the Max Planck Institute for Mathematics in Bonn for its hospitality and financial support. MB, TB, LP, and JW would also like to thank the Fondation des Treilles for its support and hospitality, where work on this paper was undertaken. The authors would also like to thank the Hausdorff Research Institute for Mathematics for its hospitality and support during the trimester program ‘Spectral Methods in Algebra, Geometry, and Topology’, funded by the Deutsche Forschungsgemeinschaft under Germany’s Excellence Strategy – EXC-2047/1 – 390685813. JW would like to thank the Isaac Newton Institute for Mathematical
Sciences for the support and hospitality during the programme `Topology, representation theory and higher 
structures' when work on this paper was undertaken. This work was supported by: EPSRC Grant Number EP/R014604/1. The authors would like to thank the Isaac Newton Institute for Mathematical Sciences, Cambridge, for support and hospitality during the programme `Equivariant homotopy theory in context', where work on this paper was undertaken. This work was supported by EPSRC grant EP/Z000580/1.

\newpage
\part{Categories of global representations}
In this part we introduce the abelian category of global representations and its derived category, and list various useful features and results that they enjoy. Finally we connect this to rational global homotopy theory by showing that the derived category of rational global representations is symmetric monoidally equivalent to the $\infty$-category of rational global spectra as introduced by Schwede \cite{Schwedebook}.

 \section{The abelian category of global representations} 
 In this section we introduce the abelian category of global representations and recall some key properties from \cite{PolStrickland2022}. Let us start by setting some notation and terminology. 
 
\begin{Not}\label{not-families}
We denote by $\cat{G}$ the category of finite groups and conjugacy classes of surjective group homomorphisms. We will denote by $\cat U$ a full subcategory of $\cat G$ which we will always assume to be closed under isomorphism (i.e., replete), even if not explicitly stated.  We furthermore write $\pi_0\cat U$ for the set of isomorphism classes of objects of $\cat U$. Given $H,K \in\cat U$, we will often write $H \gg K$ to mean that $\Hom_{\cat U}(H,K)\not =\emptyset$ so there exists an epimorphism $H \twoheadrightarrow K$. We will denote the \emph{upwards closure} of a subset $S \subseteq \pi_0\cat U$ by
        \[
            \ua(S) \coloneqq \{G \in \pi_0\cat{U} \mid \exists H \in S\colon G \gg H\}.
        \]
    Finally, we say that a subgroup $H$ of $G \times K$ is \emph{wide} if both projections $G \leftarrow H\to K$ are surjective homomorphisms.
\end{Not} 

We will often assume that $\cat U$ satisfies some of the following additional conditions. 

\begin{Def}\label{def-type-families}
Let $\cat U$ be a subcategory of $\cat G$ as above. We say that:
    \begin{enumerate}
        \item $\cat U$ is \emph{closed downwards}: if $G \in \cat U$ and there exists a surjective group homomorphism $G \twoheadrightarrow H$, then $H \in \cat U$ too; 
        \item $\cat U$ is \emph{closed upwards}: if $G \in \cat U$ and there exists a surjective group homomorphism $H\twoheadrightarrow G$, then $H \in \cat U$ too;
        \item $\cat U$ is a \emph{global family}: if it is closed downwards and it is closed under subgroups;
        \item $\cat U$ is \emph{widely closed}: if whenever $G \twoheadleftarrow H \twoheadrightarrow K$ are surjective homomorphisms with $G, H, K \in \cat U$, the image of the combined morphism $H \to G \times K$ (which is a wide subgroup) is also in $\cat U$;
        \item $\cat U$ is \emph{essentially finite}: if it contains only finitely many isomorphism classes of objects; 
        \item $\cat U$ is \emph{unital}: if it contains the trivial group $1$;
        \item $\cat U$ is \emph{multiplicative}: if $G,H \in\cat U$ then also $G\times H \in \cat U$.
    \end{enumerate}
\end{Def}

\begin{Exa}\label{eg-implies-widely-closed}
    As discussed in \cite[Remark 3.2]{PolStrickland2022}, conditions (a) and (b)  both imply (d). In particular any global family is widely closed. The same reference also shows that any groupoid is widely closed. 
\end{Exa}

\begin{Rem}
    In \cref{def-type-families} we introduced the conditions of being closed upwards and of being closed downwards for a collection of finite groups $\cat U$. Later, we will also need a relative version of these conditions which depends on an inclusion $i\colon \cat U \to \cat V$. For instance we will say that $\cat U$ is closed upwards in $\cat V$, if $G \in \cat U$ and there exists a surjective group homomorphism $H \twoheadrightarrow G$ with $H \in \cat V$, then $H \in \cat U$. We recover the absolute conditions by letting $\cat V$ be the family of all finite groups.
\end{Rem}

Throughout the whole paper we will always work under the following assumption unless otherwise stated.
\begin{Hyp}\label{hyp}
    We assume that any subcategory of $\cat G$ is widely closed, in addition to being full and replete.
\end{Hyp}
Recall from \cref{eg-implies-widely-closed} that virtually all examples of subcategories satisfy the above hypothesis. 
Next, we introduce the abelian category of global representations. 

\begin{Def}\label{def-AU-and-GrAU}
    Let $\cat U\subseteq \cat G$ be a subcategory and let $k$ be a field of characteristic zero. The abelian category of $\cat U$-global representations is the functor category 
    \[
    \sfA(\cat U; k)\coloneqq\Fun(\cat U^{\op}, \Mod{k}).
    \]
    We will often drop the field from the notation and simply write $\A{\cat U}$.
\end{Def}

\begin{Exa}
    Let $\{G\}\subseteq \cat G$ denote the replete full subcategory spanned by a single finite group $G$. In this case $\A{\{G\}}\simeq\Mod{k[\Out(G)]}$. 
\end{Exa}

We now introduce a collection of projective generators in $\A{\cat U}$ which we will show below give compact generators for the derived category as well.

\begin{Def}\label{defn-eGV}
    Let $\cat U \subseteq \cat G$ be a subcategory. For any $G \in \cat U$, there is an evaluation functor 
    \[
    \ev_G \colon \A{\cat U} \to \A{\{G\}}\quad X \mapsto X(G),
    \]
    which by \cite[Lemma 2.9]{PolStrickland2022} admits a left adjoint 
    \[
    e_{G, \bullet} \colon \A{\{G\}} \to \A{\cat{U}} \quad V \mapsto e_{G,V}.
    \]
    For brevity, we set $e_G \coloneqq e_{G,k[\Out(G)]}$ so that by construction 
    \[\Hom_{\A{\cat{U}}}(e_G, X) \cong X(G).\]
    Since $\ev_G$ is exact and also admits a right adjoint, together with the fact that these evaluation functors are jointly conservative, we deduce that $(e_G)_{G \in \cat U}$ provides a set of finitely presented projective generators for $\A{\cat{U}}$.   
\end{Def}

\begin{Rem}\label{rem-formula-eGV}
    More concretely, we have the formula 
    \[
    e_{G,V} \cong V \otimes_{k[\Out(G)]} k[\Hom_{\cat{U}}(-,G)]\in\A{\cat{U}}.
    \]
\end{Rem}

\begin{Exa}\label{ex-unit}
    If $\cat{U}$ is unital, we observe that $e_{1}$ is the constant diagram with value $k$ and all maps the identity.
\end{Exa}

Let us record some general features of the abelian category $\A{\cat U}$:
\begin{enumerate}
    \item It has all small limits and colimits, and these are calculated pointwise.
    \item It is Grothendieck with generators given by $e_G$ for all $G \in \cat U$. This means that filtered colimits are exact and that any $X \in \A{\cat U}$ admits an epimorphism $P \to X$ where $P$ is a direct sum of generators. If the direct sum in the definition of $P$ is finite, then $X$ is said to be \emph{finitely generated}. 
    \item By general Grothendieck abelian category theory, we know that $\A{\cat U}$ has enough injective objects.
    \item It is a symmetric monoidal category with pointwise tensor product, so $(X\otimes Y )(G)=X(G) \otimes Y(G)$ for all $X,Y\in \A{\cat U}$ and $G\in\cat{U}$. The tensor unit $\unit$ is the constant diagram with value $k$ and all maps the identity. In particular, if $\cat{U}$ is unital, then $\unit \cong e_1$ by \cref{ex-unit}.
    \item The symmetric monoidal structure is closed with internal hom object given by 
    \[
    \iHom(X,Y)(G)\cong\Hom_{\A{\cat U}}(e_G \otimes X, Y)
    \]
    for all $X,Y\in \A{\cat U}$ and $G\in\cat{U}$. 
    \item Any object in $\A{\cat U}$ is flat.
\end{enumerate}

In the next result we collect some properties of projective objects in $\A{\cat U}$.

\begin{Prop}\label{prop-proj-objects}
    Let $\cat U$ be a subcategory of $\cat G$. 
    \begin{enumerate}
        \item Any projective object of $\A{\cat U}$ decomposes as a direct sum of objects of the form $e_{G,V}$ for $G\in\cat U$ and $V$ an $\Out(G)$-representation.
        \item Products of projective objects in $\A{\cat U}$ are again projective.
        \item The full subcategory of projective objects in $\A{\cat U}$ is closed under tensor products. Moreover, the tensor product of finitely generated projective objects is again finitely generated projective.
        \item If $\cat U$ is a multiplicative global family, then the full subcategory of projective objects in $\A{\cat U}$ is closed under the internal hom functor. Moreover, the internal hom between finitely generated projective objects is again finitely generated projective.
        \item The tensor unit $\unit\in\A{\cat{U}}$ is a projective object if and only if for each $G\in\cat{U}$, there is a unique (up to isomorphism) normal subgroup $N\lhd G$ such that $G/N$ is a minimal object of $\cat{U}$ under the relation $\gg$. It is furthermore finitely generated if and only if there are only finitely many isomorphism classes of minimal objects in $\cat{U}$. 
    \end{enumerate}
\end{Prop}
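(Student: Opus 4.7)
The plan is to leverage the explicit description of the generators $e_{G,V}$ from \cref{defn-eGV,rem-formula-eGV}, the semisimplicity of the group algebras $k[\Out(G)]$ available in characteristic zero, and the wide-closure assumption on $\cat{U}$. For (a), given an arbitrary projective $P$, I would start from an epimorphism $\bigoplus_i e_{G_i,V_i}\twoheadrightarrow P$ out of a coproduct of generators, which splits. To upgrade ``retract of a coproduct of $e_{G,V}$'s'' to ``direct sum of $e_{G,V}$'s'', I would invoke a Kaplansky-type transfinite argument; the requisite local property is that each $\End(e_{G,V})\cong\End_{k[\Out(G)]}(V)^{\opname}$ is semiperfect, which follows from semisimplicity of $k[\Out(G)]$ in characteristic zero.

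For (b), products in $\A{\cat{U}}$ are computed pointwise and are therefore exact, so from the natural isomorphism $\Hom_{\A{\cat{U}}}(-,\prod_i P_i)\cong\prod_i\Hom_{\A{\cat{U}}}(-,P_i)$ and exactness of each $\Hom(-,P_i)$, one concludes that $\Hom(-,\prod_i P_i)$ is exact. For (c), by (a) together with the fact that every $\Out(G)$-representation is a summand of a finite free one, it suffices to decompose $e_G\otimes e_H$. Pointwise
\[
(e_G\otimes e_H)(K)=k\bigl[\Hom_{\cat{U}}(K,G)\times\Hom_{\cat{U}}(K,H)\bigr],
\]
and I would partition the right-hand side according to the conjugacy class of the wide subgroup $W\leq G\times H$ arising as the image of the associated map $K\to G\times H$. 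Wide closedness ensures $W\in\cat{U}$, and carrying out the bookkeeping yields a finite decomposition $e_G\otimes e_H\cong\bigoplus_W e_{W,V_W}$ for suitable $\Out(W)$-representations $V_W$, hence a finitely generated projective. Part (d) follows the same strategy: from the adjunction identity $\iHom(e_G,e_H)(K)=\Hom(e_K\otimes e_G,e_H)$ and the decomposition from (c), one identifies $\iHom(e_G,e_H)$ with a finite direct sum of representables. Here multiplicativity of $\cat{U}$ guarantees that $K\times G\in\cat{U}$ and, combined with closure under subgroups, ensures that all relevant wide subgroups $W'\leq K\times G$ lie in $\cat{U}$.

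For (e), I would first observe that $\Hom(e_H,\unit)\cong\unit(H)^{\Out(H)}=k$, so there is a canonical nonzero map $e_H\to\unit$ for each $H\in\cat{U}$. Letting $\{H_i\}$ be representatives of the isomorphism classes of minimal objects of $\cat{U}$ under $\gg$, this assembles into a surjection $\bigoplus_i e_{H_i}\twoheadrightarrow\unit$, since every finite group in $\cat{U}$ admits at least one minimal quotient by descending chain considerations. Projectivity of $\unit$ then amounts to the existence of a coherent splitting which, unpacked at each $G$, selects a distinguished normal subgroup $N\lhd G$ with $G/N$ minimal, compatibly along all morphisms of $\cat{U}$. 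Such a coherent assignment exists if and only if the minimal quotient of each $G$ is unique up to isomorphism, yielding the stated characterization; the finite generation clause follows by observing that only finitely many generators are required in the surjection precisely when the minimal objects form finitely many isomorphism classes.

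The main obstacle will be part (a), which is a set-theoretic decomposition statement of Kaplansky type rather than a formal manipulation; by contrast, parts (b)--(e) reduce to explicit pointwise computations using the representability formulas, wide closure, multiplicativity where applicable, and semisimplicity in characteristic zero.
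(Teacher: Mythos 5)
The paper does not give a self-contained proof of this proposition: parts (a)--(d) are cited from Corollary 8.5 and Propositions 8.6--8.7 of the reference [PolStrickland2022], and part (e) from the companion paper [BBPSWstructural]. So you are attempting to reconstruct an argument that the authors deliberately outsourced. Your outlines for (a), (c), (d) and (e) are broadly in the right spirit: for (a) the Kaplansky-type reduction using semiperfect (indeed, by semisimplicity, local for indecomposable $V$) endomorphism rings of the generators is the standard route; for (c) and (d) the decomposition of $e_G\otimes e_H$ via wide subgroups, using wide closure (resp.\ multiplicativity and closure under subgroups), is exactly what the cited Proposition 4.11 and Proposition 8.7 of [PolStrickland2022] carry out; and for (e) the surjection $\bigoplus_i e_{H_i}\twoheadrightarrow\unit$ from minimal representatives, and splitting it coherently, is the right idea, though your ``if and only if'' direction needs more care to turn ``unique minimal quotient type'' into a genuine natural splitting compatible along all morphisms.

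There is, however, a genuine error in part (b). You write that from the isomorphism $\Hom_{\A{\cat{U}}}(-,\prod_i P_i)\cong\prod_i\Hom_{\A{\cat{U}}}(-,P_i)$ and exactness of each $\Hom(-,P_i)$ it follows that $\Hom(-,\prod_i P_i)$ is exact. But exactness of the contravariant functor $\Hom(-,M)$ characterizes $M$ being \emph{injective}, not projective; projectivity of $M$ requires exactness of the covariant functor $\Hom(M,-)$, and there is no corresponding formula $\Hom(\prod_i P_i,-)\cong\,?$ that would let the argument go through formally. Furthermore, ``$P_i$ projective'' does not give you that $\Hom(-,P_i)$ is exact, so the premise is also false. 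Products of projectives being projective is a nontrivial statement (related, for module categories, to left perfectness of the ring), and for $\A{\cat{U}}$ it relies on the specific structure of the category and the characteristic-zero assumption; it does not reduce to the formal manipulation you propose. You would need to either invoke the decomposition in (a) together with an analysis of how products of objects $e_{G,V}$ interact, or go through the cited argument in [PolStrickland2022, Proposition~8.6].
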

\begin{proof}
    For (a)--(d) see \cite[Corollary 8.5, Propositions 8.6 and 8.7]{PolStrickland2022}. We note that the proof of \cite[Proposition 8.7]{PolStrickland2022} in fact proves that the subcategory of finitely generated projective objects is closed under tensor products and internal hom functors under the  assumptions of part (c) and (d), respectively. We emphasize that (c) holds since we are under \cref{hyp}. Part (e) is proved in \cite[Proposition 3.21]{BBPSWstructural}.
\end{proof}

In the next construction we record how these abelian categories are related to one another via change of families functors. 

\begin{Cons}\label{con-functors}
    Given a functor $f \colon \cat U \to \cat V$, we have adjunctions
    \[
    \begin{tikzcd}[column sep=large, row sep=large]
        \A{\cat{U}} \arrow[r,"f_!", yshift=2mm] \arrow[r, yshift=-2mm, "f_*"'] & \A{\cat{V}} \arrow[l, "f^*" description] 
    \end{tikzcd}
    \]
    with left adjoints displayed on the top. By definition $f^*(X)(H)=X(f(H))$, $f_!$ is given by the left Kan extension along $f$, and $f_*$ is given by the right Kan extension along $f$. We note that $f^*$ is always symmetric monoidal and exact. If $f$ is fully faithful, then the theory of Kan extensions tells us that $f_!$ and $f_*$ are also fully faithful, and that if $f$ admits a left adjoint $q$, then $f_! \simeq q^*$, see \cite[Lemma 5.3]{PolStrickland2022} for more details. 
\end{Cons}

\begin{Rem}\label{rem-restriction-gen}
    A priori the objects $e_{G,V}$ from \cref{defn-eGV} depend on the ambient category $\cat U$, so we could indicate this by writing $e_{G,V}^{\cat U}$.
    However by \cite[Lemma 5.3(j)]{PolStrickland2022}, given an inclusion $i\colon \cat U \to \cat V$, we have canonical isomorphisms
    \[
    i_! e_{G,V}^{\cat U}\cong e_{G,V}^{\cat V} \quad \mathrm{and} \quad i^*e_{G,V}^{\cat V}\cong e_{G,V}^{\cat U}
    \]
    for all $G \in \cat{U}$, so often there is no harm in dropping the superscript. 
\end{Rem}

\section{The derived category of global representations}
In this section we recall some background about the derived category of global representations from~\cite{BBPSWstructural}, introduce some special objects in the derived category, and finally discuss the formalism of changing family in the derived category.  For instance we show that under mild assumptions on the family $\cat U$, the derived category $\D{\cat U}$ naturally fits into two recollements, see \cref{thm:recollement}. Let us begin by introducing the derived category.  
\begin{Def}\label{def-derived-cat}
    Let $\cat{U}\subseteq \cat G$ be a subcategory. The derived category of global representation is 
    \[
    \sfD(\cat U)\coloneqq \sfD(\A{\cat U}).
    \]
    If we want to emphasize the field we are working over, we will write $\sfD(\cat U;k)$.
\end{Def}
The derived category is canonically a triangulated category.
\begin{Prop}\label{prop-rigidly-comp-gen}
    The derived category $\D{\cat{U}}$ is compactly generated by 
    \[
    \{e_G \mid G \in \cat U\}.
    \]
    Moreover $\D{\cat{U}}$ is rigidly-compactly generated if and only if $\cat U$ is a finite groupoid. 
\end{Prop}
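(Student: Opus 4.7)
The first claim is the standard bookkeeping around Grothendieck abelian categories. Each $e_G$ is a finitely generated projective object of $\A{\cat U}$ representing evaluation at $G$ (\cref{defn-eGV}), so
\[
\Hom_{\D{\cat U}}(e_G[n], Y) \cong H_n(Y(G)),
\]
and this commutes with coproducts because $\ev_G$ is exact and pointwise. So each $e_G$ is compact, and joint conservativity of the family $(\ev_G)_{G \in \cat U}$ shows that $\{e_G\}_{G \in \cat U}$ generates $\D{\cat U}$ as a localizing subcategory.

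For the easy direction of the rigidity claim, if $\cat U$ is a finite groupoid, then $\A{\cat U} \simeq \prod_{[G] \in \pi_0\cat U} \Mod{k[\Out G]}$ is a finite product of module categories over finite-dimensional semisimple $k$-algebras. Correspondingly $\D{\cat U}$ splits as a finite product of categories of the form $\D(\Mod{k[\Out G]})$, each of which is rigidly-compactly generated with compacts equal to perfect complexes, and a finite product of such tt-categories remains rigidly-compactly generated.

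For the converse I would argue contrapositively, distinguishing two cases. Suppose first that $\cat U$ is not a groupoid, so there is a non-invertible surjection $\phi\colon G \twoheadrightarrow H$ with $G \not\cong H$. Then $|H| < |G|$ forces $\Hom_{\cat U}(H, G) = \emptyset$, giving $e_G(H) = 0$ while $e_G(G) = k[\Out G] \neq 0$, so the transition map $e_G(\phi)$ is the zero map. If $e_G$ were dualizable in $\D{\cat U}$ with coevaluation $\eta\colon \unit \to e_G \otimes e_G^\vee$, then naturality with respect to $\phi$ would give
\[
\eta(G) = \bigl(e_G(\phi) \otimes e_G^\vee(\phi)\bigr) \circ \eta(H) = 0
\]
in $\D(\Mod k)$. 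However, since the symmetric monoidal functor $\ev_G$ sends dual pairs to dual pairs, $\eta(G)$ is the coevaluation of the non-zero perfect complex $e_G(G)$ against $e_G^\vee(G)$ and so must be non-zero — a contradiction. Hence $e_G$ is compact but not dualizable. Otherwise $\cat U$ is an infinite groupoid, in which case $\A{\cat U} \simeq \prod_{[G]} \Mod{k[\Out G]}$; in characteristic zero the trivial representation splits off each regular representation, exhibiting $\unit = (k)_G$ as a direct summand of the infinite coproduct $\bigoplus_{G \in \pi_0\cat U} e_G = (k[\Out G])_G$. If $\unit$ were compact, this split inclusion would factor through a finite sub-coproduct, which vanishes on components outside the finite index set while $\unit$ is non-zero there. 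So $\unit$ is not compact, and $\D{\cat U}$ is again not rigidly-compactly generated.

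The main obstacle I foresee is making the naturality argument for the coevaluation fully rigorous at the derived level: strictly speaking $\eta$ is only a morphism in $\D{\cat U}$, and one needs to verify that its image under each $\ev_G$ defines a compatible family of coevaluations in $\D(\Mod k)$. This can be handled either by working with explicit projective resolutions (so that $\ev_G$ becomes strict symmetric monoidal on chain complexes) or by appealing to the general fact that dualizable objects in a pointwise-tensor functor category are local systems of dualizables — a fact which is also implicit in the identification of $\D{\cat U}^{\dual}$ carried out in \cite{BBPSWstructural}.
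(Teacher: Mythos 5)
Your proof of the first claim is the standard argument and is correct. For the second claim, the paper itself only cites the companion paper \cite[Theorem~8.7]{BBPSWstructural}, whose proof proceeds by characterizing the dualizable objects in $\D{\cat U}$ outright; your argument is more targeted and self-contained, and it is correct.

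A few remarks on the converse direction. In your Case 1, the two-case distinction (not a groupoid vs.\ infinite groupoid) is exactly right, and the retract argument in Case 2 is clean. The ``main obstacle'' you flag in Case 1 is in fact not an obstacle at all, so you can drop the hedge: for the conjugacy class of a surjection $\phi\colon G\twoheadrightarrow H$ in $\cat U$, the structure maps $\phi^*_X\colon X(H)\to X(G)$ assemble into a genuine natural transformation $\phi^*\colon \ev_H\Rightarrow\ev_G$ of exact functors $\A{\cat U}\to \Mod k[\Out G]$, and any natural transformation of exact functors passes directly to the derived categories. Hence the square
\[
\begin{array}{ccc}
\unit(H) & \xrightarrow{\;\eta(H)\;} & (e_G\otimes e_G^\vee)(H) \\[2pt]
\big\downarrow{\scriptstyle\phi^*_\unit=\mathrm{id}} & & \big\downarrow{\scriptstyle\phi^*} \\[2pt]
\unit(G) & \xrightarrow{\;\eta(G)\;} & (e_G\otimes e_G^\vee)(G)
\end{array}
\]
commutes in $\D(k)$ with no resolution bookkeeping, and since $(e_G\otimes e_G^\vee)(H)=0$ it forces $\eta(G)=0$ on the nose; the contradiction with $\ev_G$ being strong monoidal (using that every object of $\A{\cat U}$ is flat, so the underived tensor already computes the derived one) is exactly as you state. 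In other words, there is no need for either of your two suggested repairs. One small phrasing nit: the ``general fact that dualizable objects in a pointwise-tensor functor category are local systems of dualizables'' is false at that level of generality — the internal hom in $\A{\cat U}$ is an end formula, not the pointwise hom — so you should not lean on it; the direct naturality argument is the right tool here.
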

\begin{proof}
The first claim was proved in \cite{BBPSWstructural} but it also follows from \cref{lem-prop-e_g} below. The second claim is proved in \cite[Theorem 8.7]{BBPSWstructural}.
\end{proof}

It follows from the previous result that the full subcategory of compact objects $\D{\cat{U}}^c$ is an essentially small triangulated category. The next result gives another characterisation of the compact objects.

\begin{Prop}\label{prop-compact-perfect}
    The following are equivalent for an object $X \in \D{\cat{U}}$:
    \begin{enumerate}
        \item $X$ is compact;
        \item $X \in \thick{e_G \mid G \in \cat{U}}$;
        \item $X$ is isomorphic (in $\D{\cat{U}}$) to a perfect complex, that is, to a bounded complex of finitely generated projective objects.
    \end{enumerate}
\end{Prop}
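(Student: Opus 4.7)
The plan is to establish the chain of implications $(a) \Rightarrow (b) \Rightarrow (c) \Rightarrow (a)$, leveraging \cref{prop-rigidly-comp-gen} together with the structural results on projective objects from \cref{prop-proj-objects}.

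For $(a) \Rightarrow (b)$ I would invoke the standard fact, due to Neeman, that in a compactly generated triangulated category $\cat T$ with a set of compact generators $\cat E$, the subcategory of compact objects coincides with $\thick{\cat E}$. Since \cref{prop-rigidly-comp-gen} exhibits $\{e_G \mid G \in \cat U\}$ as a set of compact generators of $\D{\cat U}$, this gives $\D{\cat U}^c = \thick{e_G \mid G \in \cat U}$.

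The implication $(b) \Rightarrow (c)$ is straightforward: each $e_G$, viewed as a complex concentrated in degree zero, is a (bounded) complex of finitely generated projectives, i.e., perfect. The class of perfect complexes is closed under shifts, cones, and retracts (hence forms a thick subcategory of $\D{\cat U}$), so every object of $\thick{e_G \mid G \in \cat U}$ is perfect.

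The implication $(c) \Rightarrow (b)$ is the step where I expect the mild work to lie. I would reduce, by induction on the length of a perfect complex (using the standard stupid truncation triangles relating a bounded complex to its shifted components), to showing that every finitely generated projective $P \in \A{\cat U}$ lies in $\thick{e_G \mid G \in \cat U}$. By \cref{prop-proj-objects}(a), $P$ is a summand of a finite direct sum $\bigoplus_i e_{G_i, V_i}$ with each $V_i$ a finite-dimensional $\Out(G_i)$-representation. Because $k$ has characteristic zero and $\Out(G_i)$ is finite, Maschke's theorem makes $V_i$ a direct summand of $k[\Out(G_i)]^{\oplus n_i}$ for some $n_i$, and therefore $e_{G_i, V_i}$ is a direct summand of $e_{G_i}^{\oplus n_i}$. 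Taking the appropriate summand shows $P \in \thick{e_G \mid G \in \cat U}$, completing the chain.

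Finally, $(c) \Rightarrow (a)$ is immediate once we have $(c) \Rightarrow (b)$ and \cref{prop-rigidly-comp-gen}, since compactness is preserved under the operations defining $\thick{-}$.
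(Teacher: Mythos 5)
Your chain of implications is the standard one, and two of the three legs are solid. The step $(a)\Rightarrow(b)$ is exactly Neeman's characterization of compacts in a compactly generated triangulated category, and $(c)\Rightarrow(b)$ is fine: a finitely generated projective $P$ admits an epimorphism from a finite direct sum $\bigoplus_{i=1}^n e_{G_i}$, which splits by projectivity, so $P$ is already a retract of such a sum and lies in $\thick{e_G\mid G\in\cat U}$ without even needing \cref{prop-proj-objects}(a) or Maschke; your route works too, provided you note that a finitely generated object can only be a direct sum of finitely many nonzero pieces. The induction on stupid truncations to pass from a single projective to a bounded complex is fine as well.

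The genuine gap is in $(b)\Rightarrow(c)$: you assert that the class of objects of $\D{\cat U}$ isomorphic to a bounded complex of finitely generated projectives is closed under retracts, and invoke this to conclude it is a thick subcategory. Closure under shifts and cones is obvious, but retract-closure is precisely the nontrivial content of the proposition. It amounts to the statement that the essential image of $K^b(\proj\A{\cat U})$ inside $\D{\cat U}$ is idempotent complete, which is a real theorem. The standard proofs either (i) lift a homotopy idempotent on a bounded complex of projectives to a strict idempotent on a quasi-isomorphic bounded complex (the argument underlying Balmer--Schlichting--type idempotent-completion results for $K^b$ of an idempotent-complete additive category), or (ii) argue that a retract of a perfect complex is pseudo-coherent of finite Tor-amplitude and hence perfect (the route taken, e.g., in the Stacks Project for $\D(R)$). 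Either way, this is not a one-line observation, and asserting it without justification leaves the crux of the proposition unproved. You should either cite such a result or supply the argument; once you do, the rest of your plan goes through. (For comparison, the paper itself defers this proposition to \cite[Theorem 7.3]{BBPSWstructural}, so the burden of this step is carried there.)
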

\begin{proof}
    This is proved (along with some other characterisations) as~\cite[Theorem 7.3]{BBPSWstructural}.
\end{proof}

We now discuss the symmetric monoidal structure on the derived category and on its subcategory of compact objects. 
The derived tensor product equips $\D{\cat{U}}$ with the structure of a tensor-triangulated category. In fact since any object in the abelian category is flat, the derived tensor product extends the tensor product of the abelian category in the sense that there is a commutative diagram 
\[
\begin{tikzcd}
\A{\cat U} \times \A{\cat U} \arrow[r,"-\otimes -"] \arrow[d, hook] & \A{\cat U} \arrow[d, hook] \\
\sfD(\cat U) \times \sfD(\cat U) \arrow[r, "-\otimes-"]& \sfD(\cat U),
\end{tikzcd}
\]
where the vertical arrows denote the canonical inclusions. 
We next show that our standing assumptions (\cref{hyp}) ensure that the subcategory of compact objects $\D{\cat U}^c$ is closed under the tensor product of $\D{\cat{U}}$.

\begin{Prop}\label{prop-compacts-tt-widely-closed}
     The full subcategory of compact objects $\D{\cat{U}}^c$ is closed under tensor products. If in addition $\unit \in \A{\cat U}$ is finitely generated projective, then $\D{\cat U}^c$ is symmetric monoidal subcategory of $\D{\cat{U}}$. 
\end{Prop}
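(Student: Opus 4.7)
The plan is to reduce both statements to the characterization of compact objects as perfect complexes provided by \cref{prop-compact-perfect}, combined with the closure properties of finitely generated projectives recorded in \cref{prop-proj-objects}.

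For the first claim, I would take two compact objects $X, Y \in \D{\cat U}^c$. By \cref{prop-compact-perfect}, I may replace them up to isomorphism by perfect complexes, i.e., bounded chain complexes $P^{\bullet}$ and $Q^{\bullet}$ of finitely generated projective objects of $\A{\cat U}$. Since every object of $\A{\cat U}$ is flat, the derived tensor product $X \otimes^{\bbL} Y$ is computed by the usual tensor product of chain complexes $P^{\bullet} \otimes Q^{\bullet}$, whose term in degree $n$ is a finite direct sum $\bigoplus_{i+j = n} P^i \otimes Q^j$. The standing \cref{hyp} that $\cat U$ is widely closed guarantees that \cref{prop-proj-objects}(c) applies, so each $P^i \otimes Q^j$ is again finitely generated projective in $\A{\cat U}$. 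The total complex is therefore bounded and degreewise finitely generated projective, hence perfect, and thus compact by another application of \cref{prop-compact-perfect}.

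For the second claim, I need only check additionally that the tensor unit $\unit \in \D{\cat U}$ is compact. Under the hypothesis that $\unit \in \A{\cat U}$ is finitely generated projective, $\unit$ viewed as a chain complex concentrated in degree zero is a perfect complex, so compactness is immediate from \cref{prop-compact-perfect}. Combined with the first claim, this exhibits $\D{\cat U}^c$ as a symmetric monoidal subcategory of $\D{\cat U}$.

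I do not expect any serious obstacle here; the proof is essentially bookkeeping, with the only subtlety being to invoke flatness of all objects in $\A{\cat U}$ so that derived and underived tensor products agree on the chain level. The heavy lifting is already done in \cref{prop-proj-objects}(c) and \cref{prop-compact-perfect}, both of which are imported from \cite{PolStrickland2022,BBPSWstructural}.
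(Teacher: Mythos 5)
Your proof is correct and follows essentially the same route as the paper's: reduce via \cref{prop-compact-perfect} to perfect complexes, and use \cref{prop-proj-objects}(c) for closure of finitely generated projectives under tensor, together with flatness to identify derived and underived tensor products. The paper's proof is simply a more terse version of the same argument.
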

\begin{proof}
    Since compact objects and perfect objects agree by \cref{prop-compact-perfect}, to check closure under tensor products it suffices to show that the tensor product of two finitely generated projective objects is again a finitely generated projective object. This follows from \cref{prop-proj-objects}(c). If $\unit \in \A{\cat{U}}$ is finitely generated projective, then it is a perfect complex, and hence compact.
\end{proof}

\begin{Rem}\label{rem-unit-fg-projective}
We observe that the condition that $\unit \in\A{\cat U}$ is a finitely generated projective is satisfied in many cases of interest. For example it is satisfied if $\cat U$ is unital by \cref{ex-unit}, or if $\cat U$ is a finite groupoid by \cite[Examples 3.22(b)]{BBPSWstructural}. Moreover by \cref{prop-proj-objects}(e), the condition that $\unit \in\A{\cat U}$ is a finitely generated projective descends to any downwards closed subcategory of $\cat U$.
\end{Rem}

In the next construction we extend \cref{defn-eGV} to the derived level.
\begin{Cons}
    Recall from \cref{defn-eGV} that for any $G \in \cat U$, the evaluation functor 
    \[
    \ev_G \colon \A{\cat U} \to \A{\{G\}}, \quad X \mapsto X(G)
    \]
    admits a left adjoint $e_{G, \bullet}$. We observe that the additive functor $e_{G,\bullet}$ is exact as by Maschke's theorem every short exact sequence in $\A{\{G\}}$ is split. It then follows that the above adjunction descends to an adjunction
    \[
    \begin{tikzcd}
        \sfD(\cat U) \arrow[r, shift right, "\ev_G"'] & \arrow[l, shift right, "e_{G, \bullet}"']\sfD(\{G\})
    \end{tikzcd}
    \]
    at the level of derived categories without the need to derive the functors. 
\end{Cons}

Next, we record some properties of the functor that we have just defined.

\begin{Lem}\label{lem-prop-e_g}
    Let $G \in \cat U$, $X \in \sfD(\cat U)$ and $V\in \D{\{G\}}$. 
    \begin{enumerate}
    \item $e_{G,V}\simeq e_{G, H_*(V)}$ as objects of $\D{\cat U}$;
    \item The functor $H_*(\ev_G)$ induces an equivalence 
    \[
    \Hom_{\D{\cat U}}(e_{G,V}, X)\cong \Hom_{\Gr(\A{\{G\}})}(H_*(V), H_*(X(G)))
    \]
    where $\Gr$ denotes the category of $\Z$-graded objects.
    \end{enumerate}
\end{Lem}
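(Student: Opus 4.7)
The plan is to leverage the fact that $\A{\{G\}} \simeq \Mod{k[\Out(G)]}$ is semisimple, which holds because $k$ has characteristic zero and $\Out(G)$ is finite (this is just Maschke's theorem, and is already used in the construction above the lemma). The immediate consequences are that $\Ext^i_{\A{\{G\}}}$ vanishes for $i \neq 0$, and that every object $W \in \sfD(\{G\})$ is formal: there is a (non-canonical) isomorphism $W \simeq \bigoplus_n H_n(W)[n]$ in $\sfD(\{G\})$, where the right-hand side denotes the complex with zero differential.

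For part (a), I would fix such a formality equivalence for $V$ and then apply the functor $e_{G,\bullet}$. Since this functor is exact and additive at the abelian level, it descends without derivation and preserves direct sums and shifts, so it sends the formality equivalence to an isomorphism
\[
e_{G,V} \simeq \bigoplus_n e_{G,H_n(V)}[n] \simeq e_{G,H_*(V)},
\]
where the final expression is understood as the image of the graded object $H_*(V)$ under the obvious extension of $e_{G,\bullet}$ to $\Gr(\A{\{G\}})$.

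For part (b), the first step is to invoke the derived adjunction $(e_{G,\bullet}, \ev_G)$ recorded in the construction before the lemma, which gives
\[
\Hom_{\sfD(\cat U)}(e_{G,V}, X) \;\cong\; \Hom_{\sfD(\{G\})}(V, X(G)).
\]
The second step is to compute the right-hand side using semisimplicity: write $V$ and $X(G)$ as direct sums of shifts of their homologies, and note that $\Hom_{\sfD(\{G\})}(H_nV[n], H_mX(G)[m]) = \Ext^{m-n}_{\A{\{G\}}}(H_nV, H_mX(G))$ vanishes unless $m = n$. What survives is precisely $\prod_n \Hom_{\A{\{G\}}}(H_nV, H_nX(G)) = \Hom_{\Gr(\A{\{G\}})}(H_*V, H_*X(G))$.

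The only genuine content to verify is that the composite isomorphism is induced by $H_*(\ev_G)$ as claimed; this is a routine unwinding of the unit of the adjunction against the formality splitting, and I don't expect a real obstacle. The main conceptual point of the argument is just that \emph{everything in sight is formal over a semisimple base}, so both the target and the Hom-groups collapse to their graded-homology avatars.
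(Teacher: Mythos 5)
Your proof is correct and takes essentially the same route as the paper. The paper's own argument also reduces to formality over the semisimple category $\A{\{G\}}$: it cites the companion paper \cite{BBPSWstructural} for the two key facts you prove directly, namely that $V \simeq H_*(V)$ in $\D{\{G\}}$ (their Lemma 2.16) and that $\Hom_{\D{\{G\}}}(V, X(G)) \cong \Hom_{\Gr(\A{\{G\}})}(H_*V, H_*X(G))$ (their Corollary 2.18). Part (a) is then, in both versions, just additivity/exactness of $e_{G,\bullet}$ applied to the formality splitting, and part (b) is adjunction followed by the $\D{\{G\}}$ computation. One small point worth being a bit more careful about in your part (b): after writing $X(G) \simeq \bigoplus_m H_m(X(G))[m]$ you invoke vanishing of $\Hom_{\sfD(\{G\})}(H_nV[n], H_mX(G)[m])$ for $m \neq n$, but since $X(G)$ need not be compact one cannot automatically commute $\Hom$ past the infinite direct sum in the target. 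The clean fix is to observe that with both differentials zero, a chain map $H_nV[n] \to \bigoplus_m H_m(X(G))[m]$ is literally a map into the degree-$n$ component and there are no nontrivial chain homotopies, so the identification $\Hom_{\sfD(\{G\})}(V, X(G)) \cong \Hom_{\Gr}(H_*V, H_*X(G))$ holds on the nose and is manifestly induced by $H_*$; equivalently, $H_*\colon \D{\{G\}} \to \Gr(\A{\{G\}})$ is an equivalence of categories, which disposes of both the direct-sum issue and your final ``routine unwinding'' in one stroke.
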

\begin{proof}
Recall from \cite[Lemma 2.16]{BBPSWstructural} that since $\A{\{G\}}$ is semisimple, there is a chain homotopy equivalence $V \simeq H_*(V)$ in $\D{\{G\}}$. Then part (a) follows from the fact that $e_{G,\bullet}$ is an additive functor. By adjunction part (b) is equivalent to proving that 
\[
\Hom_{\D{\{G\}}}(V, X(G)) \cong \Hom_{\Gr(\A{\{G\}})}(H_*(V), H_*(X(G)))
\]
which holds by \cite[Corollary 2.18]{BBPSWstructural}.
\end{proof}

\begin{Lem}\label{lem-eGU-eGV}
 Let $\cat U$ be a subcategory and $G \in \cat U$. For $k[\Out(G)]$-modules $U$ and $V$, there are natural maps 
 \[ 
  e_{G,U\otimes V} \xrightarrow{i} e_{G,U}\otimes e_{G,V} \xrightarrow{p}   e_{G,U\otimes V}
 \]
 whose composite is the identity.
\end{Lem}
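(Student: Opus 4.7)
The plan is to define the maps $i$ and $p$ pointwise at each $H \in \cat{U}$ using the explicit formula $e_{G,W}(H) = W \otimes_{k[\Out(G)]} k[\Hom_{\cat U}(H, G)]$ from \cref{rem-formula-eGV}, and then verify naturality in $H$. First, I would record the elementary but key observation that $\Out(G)$ acts \emph{freely} on $\Hom_{\cat U}(H, G)$: if $\tilde\alpha \in \Aut(G)$ represents a class $[\alpha] \in \Out(G)$ and satisfies $\tilde\alpha \circ \phi = \mathrm{conj}_g \circ \phi$ for some surjection $\phi \colon H \twoheadrightarrow G$, then surjectivity of $\phi$ forces $\tilde\alpha = \mathrm{conj}_g$, so $[\alpha]$ is trivial. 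Hence $k[\Hom_{\cat U}(H, G)]$ is a free $k[\Out(G)]$-module, and any choice of orbit representatives $\{\phi_i\}_i$ yields a decomposition $e_{G, W}(H) \cong \bigoplus_i W$.

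Next, I would construct $i$ from the $\Out(G)$-equivariant diagonal on $\Hom_{\cat U}(H, G)$ (with diagonal $\Out(G)$-action on the target): after linearizing and tensoring with $U \otimes V$, this gives a well-defined morphism characterized on generators by
\[
(u \otimes v) \otimes [\phi] \longmapsto (u \otimes [\phi]) \otimes (v \otimes [\phi]),
\]
which in the orbit decomposition is the diagonal inclusion $\bigoplus_i (U \otimes V) \hookrightarrow \bigoplus_{i,j} (U \otimes V)$ onto the $(i,i)$-summands. The map $p$ is then defined as the corresponding projection onto those diagonal summands, so that $p \circ i = \id$ is tautological. I would then verify that $p$ is independent of the choice of representatives (changing $\phi_i$ to $\alpha\phi_i$ rescales the $i$-th summand of each of $e_{G, U}(H)$, $e_{G, V}(H)$, and $e_{G, U\otimes V}(H)$ by the identical action of $\alpha^{-1}$ — diagonally on $U \otimes V$ — and these rescalings commute with the diagonal projection), and natural in $H$ (precomposition along an epimorphism $f \colon H' \twoheadrightarrow H$ is $\Out(G)$-equivariant and, by a short argument using surjectivity of $f$, sends distinct orbits in $\Hom_{\cat U}(H, G)$ to distinct orbits in $\Hom_{\cat U}(H', G)$, so diagonals are sent to diagonals).

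The main obstacle is the construction of $p$. While $i$ is simply the oplax monoidal structure of the left adjoint $e_{G, \bullet}$ to the strong monoidal functor $\ev_G$, the map $p$ has no such direct universal origin from the adjunction $(e_{G, \bullet}, \ev_G)$ alone. It must instead be extracted from the freeness of the $\Out(G)$-action — equivalently, from the ambidexterity of $\ev_G$ in this semisimple setting, which promotes $e_{G, \bullet}$ to a right adjoint as well and endows it with a dual lax monoidal structure.
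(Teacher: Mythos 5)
Your construction is correct and is essentially the paper's proof in different clothing. The paper defines $p$ pointwise by an explicit formula
\[
p(K)\bigl([\alpha]\otimes u\otimes [\beta]\otimes v\bigr) =
\begin{cases}
[\alpha]\otimes(u\otimes \theta \cdot v) & \text{if } \beta=\theta\alpha \text{ for some } \theta\in\Out(G),\\
0 & \text{otherwise},
\end{cases}
\]
and verifies well-definedness by checking $\Out(G)$-invariance directly. Your observation that the $\Out(G)$-action on $\Hom_{\cat U}(H,G)$ is \emph{free} (by surjectivity of the homomorphisms) makes it transparent both that $\theta$ above is unique and that $e_{G,W}(H)\cong\bigoplus_i W$ over orbit representatives; your $p$ is then literally the diagonal projection, which is the same map the paper writes down. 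The freeness remark is a genuine conceptual clarification that the paper uses only implicitly.

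One inaccuracy in your closing paragraph: you assert that $p$ arises from ``the ambidexterity of $\ev_G$,'' claiming that $e_{G,\bullet}$ is also a right adjoint of $\ev_G$. This is false. The functor $\ev_G$ does have both a left adjoint $e_{G,\bullet}$ and a right adjoint, but they are different: the left adjoint is $V\mapsto V\otimes_{k[\Out(G)]}k[\Hom_{\cat U}(-,G)]$, while the right adjoint involves $\Hom_{\cat U}(G,-)$ instead, and these have complementary supports (e.g.\ $e_{G,V}(H)=0$ whenever $H$ does not surject onto $G$, while the right adjoint evaluated on such $H$ is generally nonzero). So there is no ambidexterity to appeal to here. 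Fortunately this remark is tangential; your actual construction of $p$ relies only on freeness of the $\Out(G)$-action, which is sound.
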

\begin{proof}
   The map $i$ in the lemma is the map adjoint to the identity map of $U \otimes V$. Explicitly, using the formula in \cref{rem-formula-eGV}, we have 
    \[ 
 i(K)([\alpha]\otimes(u\otimes v)) = ([\alpha]\otimes u)\otimes([\alpha]\otimes v). 
 \]
    for all $K \in \cat U$, $\alpha \in \Hom_{\cat U}(K,G)$, $u\in U$ and $v\in V$. In the opposite direction, we define a map $p(K)\colon  e_{G,U}(K)\otimes e_{G,V}(K) \to e_{G, U \otimes V}(K)$ by the formula
    \[ 
    p(K)([\alpha]\otimes u\otimes [\beta]\otimes v) = 
     \begin{cases}
      [\alpha]\otimes(u\otimes \theta \cdot v) & \text{ if } \beta=\theta\alpha
       \text{ for some } \theta\in\Out(G) \\
      0 & \text{ otherwise}
     \end{cases}
 \]
 for all $\alpha,\beta\in \Hom_{\cat U}(K,G)$, $u\in U$ and $v \in V$. Unravelling the definitions, one verifies that for all $\phi, \psi \in \Out(G)$ we have
 \[ p(K)([\phi^{-1}\alpha]\otimes\phi \cdot u \otimes[\psi^{-1}\beta]\otimes\psi \cdot v) = 
     p(K)([\alpha]\otimes u\otimes [\beta]\otimes v)
 \]
 so the map $p(K)$ is well-defined. As it is clearly natural in $K$, it defines a map $p$ as in the lemma. It is also easy to see that $p\circ i=\mathrm{id}$ concluding the proof.
\end{proof}

\begin{Lem}\label{cor-eG-eGV}
 Let $\cat U\subseteq\cat G$ be a subcategory and $G \in \cat U$. If $C\in \D{\{G\}}^c$ is nonzero, then 
 \[
 \thickt{e_{G,C}}=\thickt{e_G} \subseteq \D{\cat U}^c
 \]
\end{Lem}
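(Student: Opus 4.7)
The plan is to prove both containments separately, using the two preceding lemmas together with the semisimplicity of $k[\Out(G)]$ and the fact that every finite-dimensional $k[\Out(G)]$-module is a summand of a free module.

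For the easier inclusion $\thickt{e_{G,C}}\subseteq\thickt{e_G}$, I would use \cref{lem-prop-e_g}(a) to replace $C$ by its homology, which, since $C$ is compact in $\D{\{G\}}$, is a finite direct sum of shifted finite-dimensional $k[\Out(G)]$-modules $V_n$. By semisimplicity, each $V_n$ is a direct summand of some $k[\Out(G)]^{\oplus m_n}$, so $e_{G,V_n}$ is a direct summand of $e_G^{\oplus m_n}$. Assembling shifts and finite direct sums, $e_{G,C}\in\thick{e_G}$, which gives the inclusion.

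The substantive direction is $\thickt{e_G}\subseteq\thickt{e_{G,C}}$, i.e., to show $e_G\in\thickt{e_{G,C}}$. Since $C\neq 0$, pick some nonzero homology module $V\coloneqq V_{n_0}$; then (after a shift) $e_{G,V}$ is a direct summand of $e_{G,C}$, so it suffices to show $e_G\in\thickt{e_{G,V}}$. Here is the key step: apply \cref{lem-eGU-eGV} to $U=V$ and the regular representation $k[\Out(G)]$ to exhibit $e_{G,V\otimes k[\Out(G)]}$ as a retract of $e_{G,V}\otimes e_G$. Then invoke the standard isomorphism of $k[\Out(G)]$-modules
\[
V\otimes k[\Out(G)]\;\cong\;k[\Out(G)]^{\oplus\dim V}
\]
(where the left-hand side carries the diagonal action and the right-hand side the regular action; the isomorphism is induced by $v\otimes h\mapsto h^{-1}v\otimes h$). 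This identifies the retract above with $e_G^{\oplus\dim V}$, and since $\dim V\geq 1$, we conclude that $e_G$ is a retract of $e_{G,V}\otimes e_G\in\thickt{e_{G,V}}$, as needed.

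The main obstacle, modest as it is, is spotting that one should tensor with the regular representation in \cref{lem-eGU-eGV} to trivialize the $\Out(G)$-action on $V$; once this move is made, semisimplicity supplies both directions of the equality in a symmetric fashion. The fact that $e_{G,C}$ already lies in $\thick{e_G}$ (not merely $\thickt{e_G}$) is a useful bonus which I would record in passing, although the statement only asks for the weaker ideal equality.
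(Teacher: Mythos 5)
Your proof is correct and follows essentially the same route as the paper's: both reduce to the case $C=V$ a nonzero finite-dimensional representation via \cref{lem-prop-e_g}(a), use semisimplicity of $k[\Out(G)]$ for the containment $\thickt{e_{G,C}}\subseteq\thickt{e_G}$, and for the substantive direction combine \cref{lem-eGU-eGV} with the isomorphism $V\otimes k[\Out(G)]\cong k[\Out(G)]^{\oplus\dim V}$ to realize $e_G$ as a retract of $e_{G,V}\otimes e_G$. The only cosmetic difference is that the paper phrases the easy direction via a decomposition of $V$ into indecomposables, whereas you go directly through summands of free modules; both are instances of the same semisimplicity argument.
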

\begin{proof}
 By \cref{lem-prop-e_g}(a) we can reduce to the case where $C=V$ is a nonzero, finite dimensional $\Out(G)$-representation.

 We first claim that $e_G=e_{G,k[\Out(G)]}$ is a retract of
 $e_G\otimes e_{G,V}$. To see this, we note that for any finite group $\Gamma$ and $k[\Gamma]$-module $V$ of $k$-dimension $d$, we have $k[\Gamma]\otimes V\cong k[\Gamma]^d$ as $k[\Gamma]$-modules.  In particular, if $V\neq 0$ then $k[\Gamma]$ is a retract of $k[\Gamma]\otimes V$.  The claim then follows from this by taking $\Gamma=\Out(G)$ and using \cref{lem-eGU-eGV}.

 With this claim in hand we can prove the lemma. Write $V$ as $S_1\oplus\dotsb\oplus S_m$, where each $S_i$
 is indecomposable.  This means that $e_{G,V}=\bigoplus_ie_{G,S_i}$
 and $e_{G,S_i}$ is a retract of $e_G$ so $e_{G,V}\in\thickt{e_G}$.
 On the other hand, as $e_G$ is a retract of $e_G\otimes e_{G,V}$ by the previous paragraph, we see
 that $e_G\in\thickt{e_{G,V}}$.  It follows that
 $\thickt{e_{G,V}}=\thickt{e_G}$ as claimed.
\end{proof}

We next describe some objects which play the role of characteristic functions.
\begin{Def}\label{def-chi-objects}
    Let $\cat U\subseteq \cat G$ be a subcategory and $G \in \cat{U}$.
    \begin{enumerate}
    \item If $V$ is an $\Out(G)$-representation, we let $\chi_{G,V}$ denote the object of $\A{\cat{U}}$ given by the formula
    \[
    \chi_{G,V}(H)\coloneqq\begin{cases}
        e_{G,V}(H)\cong V \otimes_{k[\Out(G)]} k[\Hom_{\cat{U}}(H,G)] & \mathrm{if}\; H \cong G\\
        0 & \mathrm{if}\; H \not \cong G
    \end{cases}
    \]
    for all $H \in \cat U$, together with the following structure maps: for any surjective group homomorphism $\alpha \colon H \to K$, the map $\alpha^* \colon \chi_{G,V}(K) \to \chi_{G,V}(H)$ is the same as for $e_{G,V}$ if $H\cong K\cong G$ (in which case $\alpha$ is necessarily an isomorphism), and zero in all other cases.
    \item If $C \in \D{\{G\}}$, we let $\chi_{G, C}$ denote the object 
    \[
    \chi_{G,C}\coloneqq e_{G,C} \otimes \chi_{G, k} \in \D{\cat U}.
    \]
    \end{enumerate}
\end{Def}

Let us record some of the key properties of these objects.

\begin{Prop}\label{prop-chi-VandQ}
    Let $\cat U\subseteq \cat G$ be a subcategory and $G \in \cat{U}$. 
    \begin{enumerate}
    \item If $C\in \D{\{G\}}$, then $\chi_{G,C}\simeq \chi_{G,H_*(C)}\in \D{\cat U}$.
    \item For all nonzero $C\in \D{\{G\}}$, we have
    \[
    \thickt{\chi_{G,k}}=\thickt{\chi_{G,C}} \subseteq \D{\cat U}.
    \]
    \end{enumerate}
\end{Prop}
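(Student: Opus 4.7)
The plan for part (a) is to invoke \cref{lem-prop-e_g}(a), which gives $e_{G,C} \simeq e_{G,H_*(C)}$ in $\D{\cat U}$; tensoring both sides with $\chi_{G,k}$ and using the definition $\chi_{G,C} = e_{G,C} \otimes \chi_{G,k}$ immediately yields $\chi_{G,C} \simeq \chi_{G,H_*(C)}$.

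For part (b), the forward inclusion $\thickt{\chi_{G,C}} \subseteq \thickt{\chi_{G,k}}$ is immediate from the defining equation $\chi_{G,C} = e_{G,C} \otimes \chi_{G,k}$, since any thick $\otimes$-ideal in $\D{\cat U}$ absorbs arbitrary objects under tensor product. The substance of the argument lies in the reverse inclusion, which I would obtain in two stages: first reduce to the case of a nonzero finite-dimensional simple $\Out(G)$-representation, and then transfer the conclusion of \cref{cor-eG-eGV} along the functor $-\otimes\chi_{G,k}$.

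For the reduction stage, part (a) lets me replace $C$ by $H_*(C)$. Since $k$ has characteristic zero and $\Out(G)$ is finite, $\Mod{k[\Out(G)]}$ is semisimple, so $H_*(C)$ decomposes as a direct sum $\bigoplus_{n,i} S_{n,i}[n]$ of shifted finite-dimensional simple $\Out(G)$-representations. The functor $e_{G,-}$ preserves direct sums as a left adjoint, and the derived tensor product preserves direct sums in each variable, yielding a corresponding decomposition $\chi_{G,C} \simeq \bigoplus_{n,i}\chi_{G,S_{n,i}}[n]$ in $\D{\cat U}$. As $H_*(C) \neq 0$, some summand $S \coloneqq S_{n_0,i_0}$ is nonzero; projecting onto this summand realizes a shift of $\chi_{G,S}$ as a retract of $\chi_{G,C}$, so $\chi_{G,S} \in \thickt{\chi_{G,C}}$.

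To complete the argument I would then show $\chi_{G,k} \in \thickt{\chi_{G,S}}$. Since $S \in \D{\{G\}}^c$ is nonzero, \cref{cor-eG-eGV} gives $e_{G,k} \in \thickt{e_{G,S}}$. Invoking the standard fact that $X \in \thickt{Y}$ implies $X \otimes Z \in \thickt{Y \otimes Z}$ for all $Z \in \D{\cat U}$ (the class of such $X$ is readily verified to be a thick $\otimes$-ideal containing $Y$), tensoring with $\chi_{G,k}$ produces
\[
\chi_{G,k} \;=\; e_{G,k} \otimes \chi_{G,k} \;\in\; \thickt{e_{G,S} \otimes \chi_{G,k}} \;=\; \thickt{\chi_{G,S}} \;\subseteq\; \thickt{\chi_{G,C}}.
\]
The main obstacle in this plan is justifying the reduction to a simple finite-dimensional summand when $C$ is not assumed compact; this is handled precisely by the semisimplicity of $\Mod{k[\Out(G)]}$ in characteristic zero, which lets me extract a compact nonzero summand from the (possibly unbounded and infinite-dimensional) total homology of $C$.
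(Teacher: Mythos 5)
Your proposal is correct and follows essentially the same route as the paper: part~(a) by \cref{lem-prop-e_g}(a) together with exactness of the tensor product, and part~(b) by reducing to a nonzero finite-dimensional $\Out(G)$-representation, invoking \cref{cor-eG-eGV}, and tensoring the resulting equality $\thickt{e_{G,k}}=\thickt{e_{G,V}}$ with $\chi_{G,k}$. The only difference is that you spell out the reduction step (using semisimplicity of $\Mod{k[\Out(G)]}$ to extract a compact simple summand from a possibly infinite-dimensional $H_*(C)$), which the paper compresses into the phrase ``this is enough by part~(a).''
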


\begin{proof}
     Part (a) follows from the fact the tensor product is exact and \cref{lem-prop-e_g}. Consider a nonzero finite dimensional $\Out(G)$-representation $V$, and note that by \cref{cor-eG-eGV} we have an equality of thick ideals
     \[
    \thickt{e_{G,k}}=\thickt{e_{G,V}} \subseteq \D{\cat U}.
    \]
    Tensoring the above equality with $\chi_{G,k}$ gives the claim in (b) for $C$ a representation, and this is enough by part (a).
\end{proof}

We finish this section by discussing how the derived categories for different families relate to one another.

\begin{Cons}\label{cons-restriction-derived}
    Let $f\colon \cat{U} \to \cat{V}$ be a functor. By the universal property of the derived category, the adjunctions
    \[
    \begin{tikzcd}[column sep=large, row sep=large]
        \A{\cat{U}} \arrow[r,"f_!", yshift=2mm] \arrow[r, yshift=-2mm, "f_*"'] & \A{\cat{V}} \arrow[l, "f^*" description] 
    \end{tikzcd}
    \]
from \cref{con-functors}, descend to adjunctions 
    \[
    \begin{tikzcd}[column sep=large, row sep=large]
        \D{\cat{U}} \arrow[r,"f_!", yshift=2mm] \arrow[r, yshift=-2mm, "f_*"'] & \sD(\cat{V}), \arrow[l, "f^*" description] 
    \end{tikzcd}
    \]
    where we have not distinguished between a functor and its derived functor. We observe that the functor $f^*$ is exact so it does not require deriving, but at this level of generality the functor $f_!$ and $f_*$ are only right exact and left exact, respectively, so they require deriving. The theory of Kan extensions tells us that if $f$ is fully faithful, then the functors $f_!$ and $f_*$ are also fully faithful.
\end{Cons}

\begin{Prop}\label{prop-gen-fun-preserve-compacts}
Let $f\colon \cat{U} \to \cat{V}$ be a functor. 
\begin{enumerate}
\item The functor $f^*$ is symmetric monoidal and hence $f_!$ (resp., $f_*$) admits a canonical oplax (resp., lax) symmetric monoidal structure.
\item The functor $f_!\colon \D{\cat{U}} \to \sD(\cat{V})$ preserves compact objects.  
\end{enumerate}
\end{Prop}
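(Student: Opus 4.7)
For part (a), I would start at the abelian level. Since the tensor product on $\A{\cat{U}}$ and $\A{\cat{V}}$ is defined pointwise and since $f^{*}$ is precomposition with $f$, there is a manifest natural isomorphism
\[
f^{*}(X\otimes Y)(H) = (X\otimes Y)(f(H)) = X(f(H))\otimes Y(f(H)) = (f^{*}X\otimes f^{*}Y)(H),
\]
and $f^{*}\unit\cong\unit$. Thus $f^{*}$ is strong symmetric monoidal on the abelian categories. Since $f^{*}$ is exact (it is both a left and a right adjoint) and the tensor product on $\D{\cat{U}}$ extends the pointwise tensor product (as noted after \cref{prop-compact-perfect}, using flatness of every object in $\A{\cat{U}}$), this structure descends to a strong symmetric monoidal structure on the triangulated functor $f^{*}\colon\D{\cat{V}}\to\D{\cat{U}}$. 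The rest of (a) is then formal adjoint calculus: the oplax (resp.\ lax) symmetric monoidal structure on the left adjoint $f_{!}$ (resp.\ right adjoint $f_{*}$) is constructed from the monoidal structure on $f^{*}$ together with the unit and counit of the adjunction.

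For part (b), the strategy is to reduce to the generators. By \cref{prop-compact-perfect}, we have $\D{\cat{U}}^{c}=\thick{e_{G}\mid G\in\cat{U}}$, and $f_{!}\colon\D{\cat{U}}\to\D{\cat{V}}$ is a triangulated functor preserving arbitrary coproducts (being a left adjoint), hence it preserves thick subcategories. It therefore suffices to verify that $f_{!}(e_{G})$ is compact in $\D{\cat{V}}$ for every $G\in\cat{U}$. The key point is the identification $f_{!}(e_{G})\simeq e_{f(G)}$ in $\D{\cat{V}}$: using the adjunction $f_{!}\dashv f^{*}$ together with the defining property $\Hom_{\A{\cat{V}}}(e_{f(G)},Y)\cong Y(f(G))$ of the generators, one computes
\[
\Hom_{\A{\cat{V}}}(f_{!}(e_{G}),Y)\cong\Hom_{\A{\cat{U}}}(e_{G},f^{*}Y)\cong (f^{*}Y)(G)=Y(f(G)),
\]
so that $f_{!}(e_{G})\cong e_{f(G)}$ already at the abelian level by Yoneda. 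Since $e_{G}$ is projective in $\A{\cat{U}}$ and $f_{!}$ preserves projectives (because its right adjoint $f^{*}$ is exact), this identification persists through the left-derived functor. Finally, $e_{f(G)}$ is compact in $\D{\cat{V}}$ by \cref{prop-rigidly-comp-gen}, concluding the argument.

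The only subtlety I anticipate is the bookkeeping around the derived functor: one must check that no nontrivial resolution is needed to compute $Lf_{!}$ on the generators. This is where using that $e_{G}$ is a finitely generated projective and that $f_{!}$ is left adjoint to an exact functor is essential, but once this is noted the remainder is purely formal.
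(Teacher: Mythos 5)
Your argument for part (a) is essentially the same as the paper's: check that $f^*$ is strong symmetric monoidal on the abelian categories (this is immediate from the pointwise tensor product and $f^*$ being precomposition), observe that exactness of $f^*$ means it descends without deriving, and deduce the (op)lax structures on $f_!$ and $f_*$ by the standard adjunction doctrine.

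For part (b), however, you take a genuinely different route. The paper's proof is a one-line abstract argument: a left adjoint preserves compact objects whenever its right adjoint preserves (filtered) colimits, and $f^*$ preserves all colimits because colimits in functor categories are computed pointwise. Your proof instead reduces to the compact generators via $\D{\cat{U}}^c = \thick{e_G \mid G \in \cat{U}}$, identifies $f_!(e_G) \cong e_{f(G)}$ by Yoneda at the abelian level, and checks that no derived correction arises because $e_G$ is projective and $f_!$ preserves projectives (its right adjoint $f^*$ being exact). Both arguments are correct. The paper's is shorter and more general — it does not need a description of the compact objects at all — while yours is more informative, producing the explicit identification $Lf_!(e_G) \cong e_{f(G)}$ as a by-product, which is a natural generalization of the paper's \cref{rem-restriction-gen} from inclusions to arbitrary functors. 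One small phrasing note: a triangulated functor does not literally ``preserve thick subcategories''; what you use is that $f_!$ maps $\thick{S}$ into $\thick{f_!(S)}$, which is the correct statement and what your argument relies on. Also, only projectivity of $e_G$ (not finite generation) is needed to conclude $Lf_!(e_G) = f_!(e_G)$; finite generation is instead what underlies compactness of $e_{f(G)}$.
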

\begin{proof}
    Part (a) follows from the fact that $f^* \colon \A{\cat U} \to \A{\cat V}$ is symmetric monoidal and exact. For (b), the functor $f_!$ preserves compacts, since its right adjoint $f^*$ preserves colimits.
\end{proof}

\begin{Prop}\label{prop-incl-preserve-compacts}
Let $i\colon \cat{U} \to \cat{V}$ be an inclusion. 
\begin{enumerate}
\item If $\cat{U}$ is closed upwards in $\cat V$, then $i_! \colon \D{\cat U}\to \D{\cat V}$ is extension by zero, and hence $i_!$ preserves tensors (but not necessarily the unit object).
\item If $\cat{U}$ is closed downwards in $\cat V$, then $i_* \colon \D{\cat U}\to \D{\cat V}$ is extension by zero, and hence preserves tensors (but not necessarily the unit object).
\item If $\cat U$ is a multiplicative global family, then $i_!$ is strong symmetric monoidal.
\item If $\cat{U}$ is closed downwards in $\cat V$, then $i^*\colon \sD(\cat{V}) \to \D{\cat{U}}$ preserves compact objects.
\end{enumerate}
\end{Prop}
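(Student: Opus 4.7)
The plan is to attack all four parts through the pointwise Kan extension formulas from \cref{con-functors}, using the respective closure conditions to collapse the relevant comma categories.

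For parts (a) and (b), I would compute $(i_!X)(H)$ as a colimit of $X(G)$ over the category of pairs $(G \in \cat U, H \twoheadrightarrow G)$, and dually $(i_*X)(H)$ as a limit of $X(G)$ over $(G \in \cat U, G \twoheadrightarrow H)$. When $\cat U$ is closed upwards in $\cat V$, the existence of any epimorphism $H \twoheadrightarrow G$ with $G \in \cat U$ and $H \in \cat V$ already forces $H \in \cat U$; hence for $H \in \cat V \setminus \cat U$ the indexing category is empty and $(i_!X)(H) = 0$, while for $H \in \cat U$ the pair $(H, \id_H)$ is initial in the indexing category, so $(i_!X)(H) = X(H)$. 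This identifies $i_!$ with extension by zero, and since the tensor product on $\A{\cat V}$ is computed pointwise, preservation of binary tensors is immediate. The unit is not preserved because $\unit_{\cat V}$ is the constant diagram with value $k$, which is nonzero on $\cat V \setminus \cat U$. The argument for (b) is formally dual.

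For part (c), my strategy is to identify $i_!$ with the pullback along a reflection functor. Given $H \in \cat V$, I would set $N_H \trianglelefteq H$ to be the intersection of all normal subgroups $N$ with $H/N \in \cat U$, so that the embedding $H/N_H \hookrightarrow \prod_N H/N$ together with the closure of $\cat U$ under finite products and subgroups places $r(H) \coloneqq H/N_H$ in $\cat U$. The defining property of $N_H$ shows that any epimorphism $H \twoheadrightarrow G$ with $G \in \cat U$ factors uniquely through the canonical surjection $H \twoheadrightarrow r(H)$. After checking that this descends to a well-defined functor on the conjugacy-class quotient (using that any inner automorphism of the target transfers along a surjection), $r$ provides a left adjoint to the inclusion $i$. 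The last sentence of \cref{con-functors} then identifies $i_! \simeq r^*$, which is strong symmetric monoidal since any restriction functor between pointwise-monoidal diagram categories is.

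For part (d), note that $i^*$ is exact and, as the left adjoint to $i_*$, preserves all colimits, so it suffices to verify that the compact generators $\{e_G^{\cat V}\}_{G \in \cat V}$ of $\D{\cat V}$ are sent to compacts in $\D{\cat U}$. When $G \in \cat U$, the restriction formula in \cref{rem-restriction-gen} yields $i^* e_G^{\cat V} \cong e_G^{\cat U}$, which is compact by \cref{prop-rigidly-comp-gen}. When $G \in \cat V \setminus \cat U$, closed-downwardness of $\cat U$ forces $\Hom_{\cat V}(H, G) = \emptyset$ for every $H \in \cat U$, as otherwise $G$ would lie in $\cat U$; invoking the explicit formula in \cref{rem-formula-eGV} then gives $i^* e_G^{\cat V} = 0$. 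The main obstacle I anticipate is the careful verification in part (c) that $H \mapsto H/N_H$ descends to a functor on $\cat V$ and furnishes a bona fide left adjoint in the conjugacy-class setting; the remaining parts reduce to routine unpacking of pointwise (co)limits.
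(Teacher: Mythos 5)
Your proposal is correct, and for each part it reaches the same conclusion as the paper by a route that is parallel in spirit but more explicit in places.

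For (a) and (b), the paper simply cites \cite[Lemma~5.3(f),(g)]{PolStrickland2022} for the fact that $i_!$ and $i_*$ are extension by zero; you instead derive this directly from the pointwise Kan extension formula, using upward (resp.~downward) closure to see that the relevant comma category over $H \in \cat V \setminus \cat U$ is empty. That is a fine, self-contained alternative. One small slip: for $H \in \cat U$ the object $(H,\id_H)$ is \emph{terminal}, not initial, in the comma category indexing the left Kan extension colimit (a colimit over a category with a terminal object collapses to the value there); in any case the conclusion $(i_!X)(H) \cong X(H)$ follows already from fully-faithfulness of $i$, as noted in \cref{cons-restriction-derived}.

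For (c), the paper argues via the oplax structure map $i_!(X\otimes Y) \to i_!X \otimes i_!Y$, reducing to the generators $e_G, e_H$ by a localizing subcategory argument and invoking \cite[Lemma~5.3(h)]{PolStrickland2022}. You instead construct the reflection $r\colon \cat V \to \cat U$ with $r(H) = H/N_H$ and identify $i_! \simeq r^*$ via \cref{con-functors}, from which strong monoidality (including unit preservation, since $r^*\unit_{\cat U} = \unit_{\cat V}$) is immediate because $r^*$ is precomposition against a pointwise tensor structure. This is a clean and conceptually satisfying alternative, and is in fact the same reflection construction the paper later develops in \cref{prop-submult-refl}; note however that the identification $i_! \simeq q^*$ in \cref{con-functors} is itself a citation to \cite{PolStrickland2022}, so the two proofs trade one citation for another rather than eliminating external input. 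The details you flag as needing verification (that $r$ descends to conjugacy classes and lands in $\cat U \subseteq \cat V$) do go through exactly as you sketch.

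For (d), the paper cites \cite[Lemma~11.2]{PolStrickland2022} for preservation of finitely generated projectives and then invokes \cref{prop-compact-perfect}; you instead check directly that $i^*$ sends each generator $e_G^{\cat V}$ either to $e_G^{\cat U}$ (when $G\in\cat U$) or to $0$ (when $G \notin \cat U$, using downward closure), then use exactness of $i^*$ and the identification $\D{\cat V}^c = \thick{e_G}$. This works. Minor cosmetic point: the observation that $i^*$ preserves all colimits is not actually used; what the argument needs is only that $i^*$ is exact/triangulated, so that it preserves thick subcategories. Overall this is a sound and more explicit proof than the paper's.
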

\begin{proof}
 Parts (a) and (b) follow from \cite[Lemma 5.3 (f) and (g)]{PolStrickland2022} which show that under the above assumptions the abelian functors $i_!$ and $i_*$ are given by extension by zero and so they are exact and they preserve tensors. Now for (c), suppose that $\cat U$ is a multiplicative global family and consider the oplax structure map $i_!(X \otimes Y) \to i_!(X) \otimes i_!(Y)$; we want to show that this map is an isomorphism. By a localizing subcategory argument we can assume that $X=e_G$ and $Y=e_H$ for some $G,H \in \cat U$. The oplax structure map is an isomorphism in this case by \cite[Lemma 5.3(h)]{PolStrickland2022}, and hence is always an isomorphism. The same reference also shows that $i_!$ preserves the tensor unit, so that $i_!$ is strong monoidal. For (d), suppose $\cat{U}$ is closed downwards. In this case, $i^*\colon \A{\cat{V}} \to \A{\cat{U}}$ preserves finitely generated projectives by \cite[Lemma 11.2]{PolStrickland2022}. Therefore by \cref{prop-compact-perfect} we see that $i^*\colon \sD(\cat{V}) \to \D{\cat{U}}$ preserves compacts.
\end{proof}

\begin{Rem}
    If $\cat{U}$ is not closed downwards, then $i^*\colon \sD(\cat{V}) \to \D{\cat{U}}$ need not preserve compacts. A counterexample may be found following the same line of argument as in~\cite[Remark 11.3]{PolStrickland2022}.
\end{Rem}

Using the change of families functors we can explain how to construct the characteristic function objects from the compact generators. To this end recall our notation for the upwards closure of a subset from \cref{not-families}.

\begin{Lem}\label{lem-chi-eGs}
     For any $G \in \cat U$ and $V\in \D{\{G\}}$, there is a triangle in $\D{\cat U}$
        \[
            F \to e_{G,V} \xrightarrow{f} \chi_{G,V}
        \]
        where $f$ is adjoint to the identity map and $F \in \loc{e_{H} \mid H \in \ua(\{G\}) \setminus \{G\}}$.
\end{Lem}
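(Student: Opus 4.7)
The plan is to first reduce to the case where $V$ is an $\Out(G)$-representation concentrated in degree zero, and then to construct the triangle via a short exact sequence in the abelian category. By Maschke's theorem, $\D{\{G\}}$ is semisimple, so $V \simeq H_*(V) = \bigoplus_n V_n[n]$ in $\D{\{G\}}$. Both $e_{G,-}$ (a left adjoint) and $\chi_{G,-} = e_{G,-} \otimes \chi_{G,k}$ preserve direct sums and shifts, so combining \cref{lem-prop-e_g}(a) and \cref{prop-chi-VandQ}(a) reduces the problem to $V$ being a single $\Out(G)$-representation concentrated in degree $0$.

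In this situation I would define $f\colon e_{G,V} \to \chi_{G,V}$ in $\A{\cat U}$ componentwise as the identity $e_{G,V}(H) = \chi_{G,V}(H)$ when $H \cong G$ and the zero map when $H \not\cong G$. To check that $f$ is natural, one analyses compatibility with the structure maps along epimorphisms $\alpha\colon H \twoheadrightarrow K$ in $\cat U$. The only potentially troublesome case is $H \cong G$ with $K \not\cong G$, where one invokes the Hopfian property of finite groups to conclude that $K$ cannot surject onto $G$; hence $\Hom_{\cat U}(K,G) = \emptyset$ and $e_{G,V}(K) = 0$, so the relevant square trivially commutes. The kernel $F$ then satisfies $F(G) = 0$ and $F(H) = e_{G,V}(H)$ for $H \not\cong G$, and the short exact sequence in $\A{\cat U}$ gives the desired triangle in $\D{\cat U}$. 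Using the explicit formula for the $(e_{G,-}, \ev_G)$-adjunction one checks directly that $f$ corresponds to $\id_V$ under the identification $\chi_{G,V}(G) = V$.

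To locate $F$ in the claimed localizing subcategory, observe that $e_{G,V}(H) = V \otimes_{k[\Out(G)]} k[\Hom_{\cat U}(H, G)]$ vanishes unless $H \gg G$, so $F$ is supported on $\cat W \coloneqq \ua(\{G\}) \setminus \{G\}$. The key point is that $\cat W$ is closed upwards in $\cat U$: if $L \in \cat U$ surjects onto some $K \in \cat W$, then certainly $L \gg G$, and $L \not\cong G$ because otherwise the composite $L \twoheadrightarrow K \twoheadrightarrow G$ would be an automorphism of $G$ by the Hopfian property, forcing $K \cong G$.

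Finally, let $i\colon \cat W \hookrightarrow \cat U$ denote the inclusion. By \cref{prop-incl-preserve-compacts}(a), $i_!$ is extension by zero, hence exact and preserves all colimits; in particular $F \simeq i_!(i^* F)$. Since $\D{\cat W}$ is compactly generated by $\{e_H^{\cat W} \mid H \in \cat W\}$ by \cref{prop-rigidly-comp-gen}, the object $i^* F$ lies in the localizing subcategory they generate, and applying $i_!$ together with the identification $i_! e_H^{\cat W} \cong e_H^{\cat U}$ from \cref{rem-restriction-gen} places $F$ in $\loc{e_H \mid H \in \cat W}$, as required. The two appeals to the Hopfian property -- both to check naturality of $f$ and to show $\cat W$ is closed upwards -- are the main conceptual inputs; everything else is formal manipulation of the change-of-family adjunctions set up earlier.
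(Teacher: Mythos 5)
Your argument is correct and follows the same strategy as the paper's proof: show that $F$ is supported on $\cat W \coloneqq \ua(\{G\}) \setminus \{G\}$, observe that $\cat W$ is closed upwards so that $i_!$ is extension by zero and the counit $i_!i^*F \to F$ is an equivalence, and conclude $F \in \loc{e_H \mid H \in \cat W}$ via compact generation of $\D{\cat W}$ and the identification $i_!e_H^{\cat W} \cong e_H^{\cat U}$. The extra details you supply — the Maschke reduction to degree-zero $V$, the explicit construction of $f$ as a short exact sequence in $\A{\cat U}$ with its naturality check, and the Hopfian-type arguments showing both that $F(G)=0$ and that $\cat W$ is upward closed — are all implicit in the paper's terser write-up and are correctly carried out.
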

\begin{proof}
  Write $i \colon  \ua(\{G\}) \setminus \{G\} \to \cat U$ for the inclusion. We claim that the counit map $i_!i^*F \to F$ is an equivalence. This follows from the fact that $i_!$ is extension by zero by \cref{prop-incl-preserve-compacts}(a) and that 
  $F(H)\simeq 0$ for all $H \not \in \ua(\{G\}) \setminus \{G\}$. By \cref{prop-rigidly-comp-gen}, we know that $i^*F \in \loc{e_H \mid H\in \ua(\{G\}) \setminus \{G\} }$ and so 
  \[
    F\simeq i_!i^* F \in \loc{e_H \mid H \in \ua(\{G\}) \setminus \{G\} }
  \]
  using \cref{rem-restriction-gen}.
\end{proof}
As another application of the change of family functors, we show that the derived category fits into a (symmetric) recollement. 

\begin{Thm}\label{thm:recollement}
    Let $i\colon \cat{U} \to \cat{V}$ be an inclusion with complement $j\colon \cat V \,\backslash\, \cat{U} \to \cat{V}$. Assume that $\cat U$ is closed downwards in $\cat V$, so that $ \cat V \,\backslash\, \cat{U}$ is closed upwards in $\cat V$. Then there are recollements of the form:
    \[
    \begin{tikzcd}[column sep=1.5cm]
        \D{\cat U} \arrow[r, "i_*" description] & \D{\cat V} \arrow[r, "j^*" description] \arrow[l, yshift=-2.5mm, "i^{\dagger}"] \arrow[l, yshift=2.5mm, "i^*"'] & \D {\cat V \,\backslash\, \cat{U}}\arrow[l, yshift=2.5mm, "j_!"'] \arrow[l, yshift=-2.5mm, "j_*"]
    \end{tikzcd}
    \]
    and
    \[
     \begin{tikzcd}[column sep=1.5cm]
        \D{\cat V \, \backslash \, \cat U} \arrow[r, "j_!" description] & \D{\cat V} \arrow[r, "i^*" description] \arrow[l, yshift=-2.5mm, "j^*"] \arrow[l, yshift=2.5mm, "j^!"'] & \D{\cat{U}}\arrow[l, yshift=2.5mm, "i_!"'] \arrow[l, yshift=-2.5mm, "i_*"].
    \end{tikzcd}
    \]
\end{Thm}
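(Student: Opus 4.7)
The plan is to verify the axioms of a recollement for each of the two displayed diagrams, using Proposition 2.18 as the crucial input. I will focus on the first recollement and then indicate how the second follows by a symmetric argument.

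First I would establish the existence of all six functors. The adjunctions $i_!\dashv i^*\dashv i_*$ and $j_!\dashv j^*\dashv j_*$ exist from Construction 2.22, noting that $i^*$ and $j^*$ are always exact while $i_*$, $j_*$, $i_!$, $j_!$ are \emph{a priori} derived. However, under the present hypotheses, Proposition 2.18 tells us that $i_*$ (respectively $j_!$) is extension by zero at the abelian level, hence exact, so no derivation is required. Because $i_*$ and $j_!$ are exact and preserve all coproducts, Brown representability in the compactly generated category $\D{\cat V}$ yields the additional right adjoint $i^\dagger$ of $i_*$ and left adjoint $j^!$ of $j_!$.

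Next I would verify the recollement axioms. Fully faithfulness of $i_*$, $j_*$, $j_!$ (for the first recollement) and of $i_!$, $i_*$, $j_!$ (for the second) follows from the fact that Kan extensions along fully faithful functors are fully faithful at the abelian level and, since $i_*, j_!$ are moreover exact, this passes to the derived category. The vanishing $j^*\circ i_* = 0$ is immediate from Proposition 2.18(b): $i_*X$ has value zero on every object of $\cat V\setminus\cat U$. Dually, $i^*\circ j_! = 0$ using Proposition 2.18(a).

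For the recollement triangles in the first diagram, take the unit $\eta_X\colon X\to i_*i^*X$ of $i^*\dashv i_*$ and let $F$ be its fiber. Applying $i^*$ shows $i^*F\simeq 0$. I would then prove that any $Y\in\D{\cat V}$ with $i^*Y=0$ satisfies $Y\simeq j_!j^*Y$: since $\cat V\setminus\cat U$ is closed upwards in $\cat V$, the extension-by-zero description of $j_!$ (Proposition 2.18(a)) and the upward-closure of $\cat V\setminus\cat U$ guarantee that $Y$ and $j_!j^*Y$ agree both on objects and on structure maps (for every morphism $H\twoheadrightarrow K$ with $K\in\cat V\setminus\cat U$, also $H\in\cat V\setminus\cat U$). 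Hence $F\simeq j_!j^*F\simeq j_!j^*X$, yielding the first triangle $j_!j^*X\to X\to i_*i^*X\to$. The second triangle $i_*i^\dagger X\to X\to j_*j^*X$ is obtained dually by taking the fiber of the unit $X\to j_*j^*X$ and using the analogous observation that $j^*Y=0$ implies $Y\simeq i_*i^*Y$, which follows from the extension-by-zero description of $i_*$ and the downward-closure of $\cat U$ in $\cat V$.

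The second recollement is proved by interchanging the roles of $\cat U$ and $\cat V\setminus\cat U$: now $j_!$ plays the role of the fully faithful ``closed inclusion'' (extension by zero since $\cat V\setminus\cat U$ is closed upwards), $i^*$ is the ``restriction to the open piece'', and the same four steps go through verbatim with obvious relabelings. The main point requiring care is the construction of the two recollement triangles and the identification of objects supported on $\cat U$ (resp.\ on $\cat V\setminus\cat U$) with the essential images of $i_*$ (resp.\ $j_!$); this is where the closed-downwards and closed-upwards hypotheses are used in an essential way, and is the only non-formal step in the argument.
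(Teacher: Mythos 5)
Your argument is correct in outline but takes a genuinely different route from the paper's. The paper works entirely inside Lurie's $\infty$-categorical framework: for the first recollement it verifies the axioms (a)--(e) of \cite[Definition A.8.1]{HA} --- with the substantive step being joint conservativity of $i^*$ and $j^*$, checked on the compact generators $e_G$ via the identities $i_!i^*e_G\cong e_G$ and $j_!j^*e_G\cong e_G$ --- and then lets \cite[Remark A.8.19]{HA} produce the missing adjoints formally; for the second recollement it invokes \cite[Proposition A.8.20]{HA}, reducing to a computation of $\ker(j^*)$. Your approach is more elementary and closer to the classical BBD axioms: you build the adjoints, check fully faithfulness and the two vanishings $j^*i_*\simeq 0$, $i^*j_!\simeq 0$, and then hand-construct the gluing triangles by taking fibers of unit maps and identifying them pointwise using the extension-by-zero descriptions from \cref{prop-incl-preserve-compacts}. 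The payoff of the paper's route is brevity (the triangles and joint conservativity come for free once the axioms are checked); the payoff of yours is that it makes the role of the downward/upward closure hypotheses maximally transparent and would port directly to a classical triangulated setting.

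Two small points deserve tightening. First, Brown representability supplies \emph{right} adjoints of colimit-preserving functors; it gives you $i^\dagger$, but to obtain the \emph{left} adjoint $j^!$ of $j_!$ you should instead observe (as the paper does) that $j_!$, being extension by zero, preserves all limits, and then apply the adjoint functor theorem for presentable $\infty$-categories. Second, in constructing the colocalization triangle you show that the fiber $F'$ of $X\to j__*j^*X$ satisfies $j^*F'\simeq 0$ and hence $F'\simeq i_*i^*F'$, but you still need to identify $i^*F'$ with $i^\dagger X$; this follows by mapping $i_*W$ into the fiber sequence and using $j^*i_*\simeq 0$ together with fully faithfulness of $i_*$, but it is worth saying explicitly rather than leaving it implicit in the dualization.
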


\begin{proof}
    Let us first prove the existence of the first recollement. According to \cite[Definition A.8.1]{HA} we need to verify:
    \begin{enumerate}
        \item the subcategories $\D{\cat U}$ and $\D{\cat V \, \backslash\, \cat U}$ are closed under equivalences;
        \item the functors $i_*$ and $j_*$ admit left adjoints $i^*$ and $j^*$;
        \item the functors $i^*$ and $j^*$ are exact;
        \item $j^* i_*\simeq 0$;
        \item the functors $i^*$ and $j^*$ are jointly conservative. 
    \end{enumerate}
    The existence of the other adjoints will follow formally from these points, see \cite[Remark A.8.19]{HA}. Parts (a), (b) and (c) are clear, and (d) follows from the fact that $i_*$ is the extension by zero functor, see \cref{prop-incl-preserve-compacts}(b). Therefore it remains to verify part (e). Let $X \in \D{\cat V}$ be such that $j^*X \simeq i^*X\simeq 0$; we claim that $X \simeq 0$. We note that $j_*j^* X \simeq i_*i^*X \simeq 0$. If $G \in \cat V \setminus \cat U$, then $j_!j^* e_G\cong e_G$ (by \cref{rem-restriction-gen}) so 
    \begin{align*}
    \Map_{\D{\cat V}}(e_G, X) & \simeq\Map_{\D{\cat V}}(j_!j^* e_G, X) \\
                            & \simeq \Map_{\D{\cat V}}(e_G, j_*j^*X)\simeq 0.
    \end{align*}
    Similarly, if $G \in \cat U$, then $i_!i^* e_G\cong e_G$ (again by \cref{rem-restriction-gen}) so 
    \begin{align*}
    \Map_{\D{\cat V}}(e_G, X) & \simeq \Map_{\D{\cat V}}(i_!i^* e_G, X) \\
                            & \simeq \Map_{\D{\cat V}}(e_G, i_*i^*X)\simeq 0.
    \end{align*}
    Since $\{e_G\}$ form a set of generators, by \cref{prop-rigidly-comp-gen}, we conclude that $X\simeq 0$. This concludes the proof of the first recollement. From the axioms (a)--(e) one deduces that 
    \begin{equation}\label{kernel}
    \ker(j^*)\simeq \D{\cat U}
    \end{equation}
    see \cite[Remark A.8.5]{HA}. 
    
    For the second recollement we instead apply \cite[Proposition A.8.20]{HA} so we are reduced to checking that:
    \begin{enumerate}
        \item[(f)] The functor $j_!$ admits left and right adjoints;
        \item[(g)] $\D{\cat U}\simeq\{Y \in \D{\cat V} \mid \Map_{\D{\cat V}}(j_! X, Y)\simeq 0, \; \forall X\in \D{\cat V \, \backslash\, \cat U}\}$. 
    \end{enumerate}
    The only nontrivial part of (f) follows from the adjoint functor theorem using that $j_!$ is extension by zero and so it preserves all limits. Using the standard compact generators for $\D{\cat V \, \backslash\, \cat U}$ and \cref{lem-prop-e_g} we can rewrite part (g) as saying that $\D{\cat U}$ is equivalent to 
    \[
     \{Y \in \D{\cat V} \mid  Y(G)\simeq 0, \; \forall G \in \cat V \, \backslash\, \cat U\}=\ker{j^*}
    \]
    which holds by \eqref{kernel}.
\end{proof}

\begin{Rem}\label{rem-fibre-seq-recollement}
    In the situation of \cref{thm:recollement}, the (co)unit maps of the adjunctions define fibre sequences of functors 
    \begin{align*}
    j_!j^*  \to \mathrm{id} \to i_*i^* \\
    i_*i^\dagger \to \mathrm{id} \to j_*j^*\\
    i_!i^* \to \mathrm{id} \to j_!j^! \\
    j_!j^* \to \mathrm{id} \to i_* i^*
    \end{align*}
    which one can verify directly from the axioms (a)--(e). We leave the details to the interested reader. 
\end{Rem}

\begin{Cor}\label{cor:verdierquotient}
    Let $i\colon \cat U \to \cat V$ be the inclusion of a downwards closed subcategory with complement $j\colon \cat V \, \backslash\, \cat U \to \cat V$. There is a diagram of Verdier sequences
    \[
    \begin{tikzcd}
        &  \D{\cat V \, \backslash\, \cat U}\arrow[d, tail, "j_!"]& \\
       \D{\cat U}\arrow[r, tail, "i_*"] & \D{\cat V}\arrow[d,two heads, "i^*"] \arrow[r,two heads, "j^*"]&\D{\cat V \, \backslash\, \cat U} \\
        &  \D{\cat U}. & 
    \end{tikzcd}
    \]
 Passing to compact objects we obtain a non-unital symmetric monoidal equivalence 
    \[
     \left(\frac{\D{\cat V}^c}{\D{\cat V \, \backslash\, \cat U}^c}\right)^{\natural}\xrightarrow{\sim} \D{\cat U}^c,
    \]
    where $(-)^{\natural}$ denotes idempotent completion (or  \emph{Karoubi envelope}). If $\unit \in \D{\cat V}^c$, the equivalence is also unital. In particular, on spectra $i^*$ induces an embedding of topological spaces, 
        \[
            \Spc(\D{\cat U}^c) \hookrightarrow \Spc(\D{\cat V}^c).
        \]
\end{Cor}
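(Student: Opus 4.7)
The strategy is to extract the two Verdier localization sequences from the twin recollements of \cref{thm:recollement}, then pass to compact objects using the Neeman--Thomason localization theorem, and finally read off the consequences for the Balmer spectrum from Balmer's general theory.

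First, I would observe that in both recollements of \cref{thm:recollement}, the two outer rows yield Verdier sequences: the identifications $\ker(j^*)\simeq \D{\cat U}$ and dually $\ker(i^*)\simeq \D{\cat V\setminus\cat U}$ (obtained via the fibre sequences of \cref{rem-fibre-seq-recollement}) produce precisely the horizontal Verdier sequence $\D{\cat U}\xrightarrow{i_*}\D{\cat V}\xrightarrow{j^*}\D{\cat V\setminus\cat U}$ and the vertical one $\D{\cat V\setminus\cat U}\xrightarrow{j_!}\D{\cat V}\xrightarrow{i^*}\D{\cat U}$. Assembling these gives the displayed diagram.

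Next, to descend to compact objects, I would use the vertical Verdier sequence. Both $j_!$ and $i^*$ preserve compacts: for $j_!$, note that its right adjoint $j^*$ is itself a left adjoint in the first recollement and hence preserves all colimits; for $i^*$ this is \cref{prop-incl-preserve-compacts}(d) (using that $\cat U$ is downwards closed in $\cat V$). All three categories in the Verdier sequence are compactly generated by \cref{prop-rigidly-comp-gen}. By the compactly-generated localization theorem of Neeman (the appropriate form of Thomason's localization theorem for triangulated categories), this produces a fully faithful functor from the idempotent completion of the Verdier quotient $\D{\cat V}^c/\D{\cat V\setminus\cat U}^c$ into $\D{\cat U}^c$, and essential surjectivity follows from the fact that $i^*$ sends the set of compact generators of $\D{\cat V}$ surjectively onto a generating set for $\D{\cat U}$ (by \cref{rem-restriction-gen}, $i^*e_G^{\cat V}\cong e_G^{\cat U}$ for $G\in\cat U$), so its image is a thick subcategory of $\D{\cat U}^c$ containing a set of compact generators, hence is all of $\D{\cat U}^c$ after idempotent completion.

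For the (non-unital) symmetric monoidal enhancement I would argue that $i^*$ is symmetric monoidal by \cref{prop-gen-fun-preserve-compacts}(a), and $\D{\cat V\setminus\cat U}^c$ is a $\otimes$-ideal in $\D{\cat V}^c$ (either by direct check using extension by zero, or because it is $\ker(i^*)\cap \D{\cat V}^c$ for a tt-functor $i^*$). Hence the Verdier quotient inherits a non-unital symmetric monoidal structure and the induced functor to $\D{\cat U}^c$ is symmetric monoidal; if $\unit\in\D{\cat V}^c$ the quotient is unital and $i^*$ preserves units, so the equivalence is unital. Finally, for the spectrum statement, Balmer's results guarantee that Karoubi completion induces a homeomorphism of Balmer spectra and that a Verdier quotient by a $\otimes$-ideal induces an embedding of spectra onto the subspace of primes containing that ideal; composing these two facts with the equivalence above shows that $i^*$ induces a topological embedding $\Spc(\D{\cat U}^c)\hookrightarrow \Spc(\D{\cat V}^c)$.

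The main obstacle I anticipate is the bookkeeping around the non-unital case: one must be careful that Neeman's theorem and Balmer's spectrum constructions continue to work in the absence of a compact tensor unit, which is exactly the non-rigid setting the paper is designed to address. All other steps are essentially formal consequences of the recollements and of compact generation.
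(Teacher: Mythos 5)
Your proposal is correct and follows essentially the same route as the paper: extract the Verdier sequences from the recollements of \cref{thm:recollement}, check that $j_!$ and $i^*$ preserve compacts, invoke Neeman--Thomason for the equivalence on compacts, inherit the (non-unital) monoidal structure, and conclude the spectral embedding via Balmer's Proposition 3.11. Your spelled-out essential-surjectivity argument via the generators $e_G$ is slightly more detailed than the paper's one-line appeal to Neeman--Thomason but amounts to the same thing; and your worry about the non-unital case for the spectrum embedding is moot because the ``in particular'' clause is stated precisely under the hypothesis $\unit\in\D{\cat V}^c$ (which, by \cref{rem-unit-fg-projective}, is inherited by $\D{\cat U}^c$), so $\D{\cat V}^c$ and $\D{\cat U}^c$ are genuine tt-categories at that point.
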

\begin{proof}
    The existence of the Verdier sequences follows formally from \cref{thm:recollement} but we can also give a direct proof. Let us argue for the vertical sequence, the argument for the other being similar. We verify that the functor $i^*$ has the universal property of the Verdier quotient of $\D{\cat V}$ by $\im(j_!)$. Let $F \colon \D{ \cat V} \to \cat T$ be a functor such that $Fj_!\simeq 0$. 
    Using the fibre sequence from \cref{rem-fibre-seq-recollement} we obtain that $F \simeq F i_*i^*$ showing that $F$ factors (up to equivalence) through $i^*$, hence proving the claim. Since $j_!$ and $i^*$ preserve compact objects (see \cref{prop-gen-fun-preserve-compacts} and \cref{prop-incl-preserve-compacts}(d)), we obtain functors on the subcategories of compact objects
    \[
    \begin{tikzcd}
    {\D{\cat V \setminus \cat U}^c }\arrow[r,"(j_!)^c"] & {\D{\cat V}^c} \arrow[r,"(i^*)^c"] & {\D{\cat U}}^c.
    \end{tikzcd}
    \]
    The claim on compact objects follows from the Neeman--Thomason localization theorem \cite[Theorem 2.1]{Neeman92}. We note that with no additional assumptions on $\cat V$, the categories $\D{\cat V}^c$ and $\D{\cat U}^c$ are non-unital symmetric monoidal by \cref{prop-compacts-tt-widely-closed}. Since the functor $(i^*)^c$  preserves tensors, the resulting Verdier quotient equivalence is only non-unital symmetric monoidal. However the additional assumption on $\cat V$ ensures that $\D{\cat V}^c$ and $\D{\cat U}^c$ are unital, and so the functor $(i^*)^c$ is symmetric monoidal. The final statement on Balmer spectra is then a consequence of \cite[Proposition 3.11]{Balmer2005}.
\end{proof}

\begin{Rem}\label{rem-other-Verdier-quotient}
    In the situation of \cref{cor:verdierquotient}, we saw that the functor $j^*$ induces an equivalence of tt-categories
    \[
    \frac{\D{\cat V}}{\D{\cat U}}\xrightarrow{\sim} \D{\cat V \, \backslash\, \cat U}.
    \] 
    However, in this case, the functor $j^*$ does not preserve compact objects in general, so we cannot conclude that there is a similar description for the subcategory of compact objects. Nonetheless we can use the symmetry of the recollement in \cref{thm:recollement} to see that there is a Verdier sequence
    \[
    \begin{tikzcd}
    \D{\cat U}\arrow[r,"i_!"] & \D{\cat V} \arrow[r,"j^!"] & \D{\cat V \, \backslash\, \cat U}
    \end{tikzcd}
    \]
    which now does restrict to compact objects, giving a non-unital symmetric monoidal equivalence
    \[
    \left(\frac{\D{\cat V}^c}{\D{\cat U}^c}\right)^{\natural}\xrightarrow{\sim} \D{\cat V \, \backslash\, \cat U}^c.
    \]
\end{Rem}

\section{Connection to rational global spectra}

In this section we show that the $\infty$-category of rational global spectra, as recalled in \cref{appendix}, is symmetric monoidally equivalent to the derived category of global representations, extending a result of Wimmer \cite{Wimmerthesis}. This section is not essential for the remainder of the paper, though it offers additional motivation for studying global representation theory, as discussed in the introduction. 

Let us start by recalling the main results of the appendix. 
\begin{enumerate}
\item Given a global family of compact Lie groups $\cat F$, we have introduced an $\infty$-category of $\cat F$-global spectra $\Spgl{\cat F}$ by making use of the notion of partially lax limit, see \cref{def-global-spectra}.
\item For any $G \in \cat F$, there is a symmetric monoidal, colimit preserving restriction functor $\res_G \colon \Spgl{\cat F}\to \Sp_G$ into the $\infty$-category of genuine $G$-spectra. The restriction functors admit left adjoints $L_G\colon \Sp_G \to \Spgl{\cat F}$. There is also a geometric fixed points functor $\Phi^G \colon \Spgl{\cat F}\to \Sp$ for any $G \in\cat F$, and the functors $\{\Phi^G\}_{G\in\cat F}$ are jointly conservative.
\item The $\infty$-category of rational global spectra $\Spgl{\cat F}^{\bbQ}$ is the Bousfield localization of $\Spgl{\cat F}$ at the class of rational global equivalences, i.e., at those maps $f$ such that $\res_G(f)$ is rational equivalence for all $G \in \cat F$, see \cref{prop-Qglsp-bousfield}.
\item The $\infty$-category of rational global spectra is compactly generated by the set
\[
\{L_G^\bbQ \unit \coloneqq L_\bbQ L_G \unit \mid G \in\cat F\}
\]
of rationalizations of $L_G\unit$, see \cref{cor-smashing+compactly-gen}.
\end{enumerate}

Next we discuss an $\infty$-categorical model for the derived category $\sfD(\cat U; \bbQ)$.

\begin{Rem}\label{rem-model-derived-cat}
    One model for the $\infty$-category $\sfD(\cat U;\bbQ)$ is the $\infty$-categorical localization of the category of chain complexes $\Ch(\A{\cat U;\bbQ})$ at the quasi-isomorphisms. On the other hand, we observe that 
    \[
    \Ch(\A{\cat U;\bbQ})=\Fun(\cat U^{\op}, \Ch(\bbQ))
    \]
    so we can also describe $\sfD(\cat U;\bbQ)$ as the $\infty$-categorical localization of $\Fun(\cat U^{\op}, \Ch(\bbQ))$ at the pointwise quasi-isomorphisms. In other words, we have an equivalence of $\infty$-categories 
    \begin{equation}\label{sym-mon-model}
    \sfD(\cat U;\bbQ)\simeq \Fun(\cat U^{\op}, \sD(\bbQ)).
    \end{equation}
    We observe that the category $\Ch(\A{\cat U};\bbQ)$ admits a symmetric monoidal structure by pointwise tensor product. It follows from \cite[Lemma 3.18]{BBPSWstructural} that the $\infty$-categorical localization is compatible with the symmetric monoidal structure so we can lift \eqref{sym-mon-model} to a symmetric monoidal equivalence, compare \cite[Proposition 2.2.1.9]{HA}. 
\end{Rem}

We now state the main result of this section.

\begin{Thm}\label{thm-algebraic-model}
    Let $\cat{F}$ be a global family of finite groups. There is a symmetric monoidal equivalence of $\infty$-categories 
\[
\Phi \colon \Spgl{\cat{F}}^\mathbb{Q} \xrightarrow{\sim} \mathsf{D}(\cat{F};\bbQ)
\]
such that $H_*(\Phi X)(G)\cong \pi_* (\Phi^G X)$ for all $X \in \Spgl{\cat F}^\bbQ$ and $G\in \cat F$.
\end{Thm}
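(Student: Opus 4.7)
The plan is to construct $\Phi$ as a symmetric monoidal, colimit-preserving functor and then verify it is an equivalence by matching compact generators. For the construction, one assembles the rational geometric fixed point functors $\Phi^G\colon \Spgl{\cat F}^{\bbQ} \to \sD(\bbQ)$, taking into account the natural $\Out(G)$-action and the compatibility with inflation along surjections $H \twoheadrightarrow K$ in $\cat F$. This yields a functor
\[
\Phi\colon \Spgl{\cat F}^{\bbQ} \to \Fun(\cat F^{\op}, \sD(\bbQ)) \simeq \sfD(\cat F; \bbQ)
\]
with $(\Phi X)(G) \simeq \Phi^G X$ by construction, which immediately gives the stated compatibility on passage to homology. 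Since each $\Phi^G$ is symmetric monoidal, exact, and colimit-preserving, and the corresponding structures on $\sfD(\cat F; \bbQ)$ are computed pointwise (\cref{rem-model-derived-cat}), the same properties are inherited by $\Phi$.

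To upgrade $\Phi$ to an equivalence, I would exploit that both sides are compactly generated: the source by $\{L_G^{\bbQ} \unit\}_{G \in \cat F}$ and the target by $\{e_G\}_{G \in \cat F}$ (\cref{prop-rigidly-comp-gen}). The crucial step is to check that $\Phi(L_G^{\bbQ}\unit) \simeq e_G$ and that $\Phi$ is fully faithful on these generators. For the first identification, one computes the geometric fixed points of the rational global classifying space $\Sigma^{\infty}_+ B_{\mathrm{gl}}G \otimes \bbQ$ at $H$: the $H$-fixed points of $B_{\mathrm{gl}}G$ form a disjoint union of classifying spaces indexed by $\Hom_{\cat F}(H,G)$, and rationally this produces the permutation module $\bbQ[\Hom_{\cat F}(H,G)]$ concentrated in degree zero, which is exactly $e_G(H)$ as an $\Out(G)$-module with the expected naturality in $H \in \cat F^{\op}$. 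For fully faithfulness, a parallel calculation using the adjunction $L_G^{\bbQ} \dashv \res_G^{\bbQ}$ on the one side and the formula $\Map(e_G, e_H) \simeq e_H(G)$ on the other shows that both mapping complexes are identified with $\bbQ[\Hom_{\cat F}(G,H)]$ concentrated in degree zero. Since $\Phi$ is colimit-preserving and induces equivalences on a set of compact generators, it is an equivalence of presentable stable $\infty$-categories; combined with the symmetric monoidal enhancement from the construction, this gives the desired equivalence.

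The main obstacle is organizing the construction of $\Phi$ coherently at the $\infty$-categorical level: one must verify that the geometric fixed point functors, together with their $\Out(G)$-equivariance and inflation compatibilities, assemble into a single symmetric monoidal functor indexed by $\cat F^{\op}$, rather than merely a collection of homotopy-commuting data. Prior work of Schwede~\cite{Schwedebook} and Wimmer~\cite{Wimmerthesis} produces closely related equivalences at the level of homotopy categories or for the full global family, which provides a template; the task here is to refine those arguments into a coherent, symmetric monoidal $\infty$-categorical equivalence that is natural for an arbitrary global family $\cat F$ of finite groups.
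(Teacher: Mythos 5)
Your overall strategy---assemble geometric fixed points into a coherent symmetric monoidal colimit-preserving functor and then check it on compact generators---matches the paper's. However, there is a genuine error in the identification of the generators that invalidates the proof as written.

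You assert $\Phi(L_G^{\bbQ}\unit) \simeq e_G$, computing the $H$-geometric fixed points of the rational global classifying space as $\bbQ[\Hom_{\cat F}(H,G)]$, where $\Hom_{\cat F}(H,G)$ denotes conjugacy classes of \emph{surjective} homomorphisms. This is incorrect. The $H$-fixed points of $B_{\mathrm{gl}}G$ decompose according to conjugacy classes of \emph{all} continuous homomorphisms $H\to G$ (this is the content of \cref{prop-Wimmer-work}(b), quoting Wimmer/Schwede), so one gets $\bbQ[\mathrm{Rep}(H,G)]$, not $\bbQ[\Hom_{\cat F}(H,G)]$. The resulting functor $H\mapsto\bbQ[\mathrm{Rep}(H,G)]$ is the object the paper calls $b_G$, and the paper includes a remark explicitly warning that $b_G$ is \emph{not} $e_G$. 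Thus the identification of generators you rely on fails: a geometric fixed-point computation sees subconjugates, not merely quotients.

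This error propagates to your fully-faithfulness argument. You claim both sides of
\[
\Map_{\Spgl{\cat F}^\bbQ}(L_G^{\bbQ}\unit,\,L_H^{\bbQ}\unit)\ \longrightarrow\ \Map_{\sfD(\cat F;\bbQ)}(\Phi L_G^{\bbQ}\unit,\,\Phi L_H^{\bbQ}\unit)
\]
reduce to $\bbQ[\Hom_{\cat F}(G,H)]$ in degree zero. In fact the left-hand side is $\pi_*^G(L_H^\bbQ\unit)$, and by Schwede's rational splitting theorem this is a product $\prod_{(K)\leq G}\pi_*(\Phi^K L_H^{\bbQ}\unit)^{W_G(K)}$ running over conjugacy classes of subgroups of $G$, not a single term. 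The paper handles this by proving that $b_G$ corepresents exactly the functor $C\mapsto\prod_{(K)\leq G}H_*(C)(K)^{W_G(K)}$ on $\sfD(\cat F;\bbQ)$ (\cref{prop-Wimmer-work}(c)), so that the splitting theorem is precisely what makes the comparison map an equivalence. Your mapping-space computation $\Map(e_G,e_H)\simeq e_H(G)$ is correct for the $e_G$'s but irrelevant here, since the $e_G$'s are not in the image of the generators under $\Phi$. Finally, you correctly flag the coherence problem (assembling $\Phi$ as a functor out of $\Spgl{\cat F}$ rather than a homotopy-coherent collection) as the main structural obstacle, but you do not supply a mechanism for resolving it; the paper does this via the partially lax limit description of global spectra and the natural transformation of \cref{cons-Phi-nat-trans}, and additionally needs \cref{prop-hoEpi} (using that $BK$ is rationally contractible for finite $K$) to descend from the $\infty$-category $\Epi{\cat F}$ to its homotopy category $\cat F$.
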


\begin{Rem}
    This extends work of Wimmer~\cite[Theorem 3.2.20]{Wimmerthesis} in two directions:
\begin{enumerate}
\item We lift the tensor-triangulated equivalence of homotopy categories to a symmetric monoidal equivalence of $\infty$-categories.
\item We prove the existence of a symmetric monoidal algebraic model for any global family of finite groups, and not just for multiplicative global families. 
\end{enumerate}
\end{Rem}

Constructing the functor witnessing the equivalence will require substantial work. In the next two constructions we will use the notion of partially lax limits without further comment. The reader not familiar with this construction is advised to read \cref{appendix} before continuing.

\begin{Cons}\label{cons-Phi-nat-trans}
 Let $\cat F$ be a global family of compact Lie groups and recall the global orbit category $\Glo{\cat F}$ from \cref{def-global-orbits}. We let $\Epi{\cat F}$ denote the wide subcategory of $\Glo{\cat F}$ spanned by the surjective group homomorphisms. We will construct a natural transformation
 \[
 \Phi \colon \Sp_\bullet \Rightarrow \Delta(\Sp) \in \Fun( \Epi{\cat F}^{\op}, \CAlg(\PrL))
 \]
 between the functor $\Sp_\bullet$ of \cref{cons-functor-Spbullet} (whose partially lax limit gives the $\infty$-category of global spectra) and the constant functor with value $\Sp$. The value of this natural transformation at $G\in \Epi{\cat F}$ will be given by the $G$-geometric fixed points functor $\Phi^G \colon \Sp_G \to \Sp$, which was described in \cref{cons-geometric-fixed-points} using the $G$-space $\widetilde{E}\mathcal{P}_G$. We will proceed in steps. 
 \begin{enumerate}
 \item Consider the full subcategory of $\mathcal{P}_G$-torsion objects
 \[
    \Sp_G^{\mathcal{P}_G\text{-}\mathrm{tors}}\coloneqq\loct{(E\mathcal{P}_G)_+}=\loct{G/H_+ \mid H<G}. 
 \] 
 For a \emph{surjective} continuous group homomorphism $\alpha\colon H \to G$, we have
 \[
 \alpha^*( E\mathcal{P}_G)_+\simeq (E \alpha^*\mathcal{P}_G)_+\in \Sp_G^{\mathcal{P}_H\text{-}\mathrm{tors}}
 \]
 so we obtain a natural transformation $\Sp_\bullet^{\mathcal{P}_\bullet\text{-}\mathrm{tors}} \Rightarrow \Sp_\bullet$ in $\Fun(\Epi{\cat F}^{\op},\PrL)$. 
 \item We define the category of $\mathcal{P}_G$-local objects $\Sp_G^{\mathcal{P}_G\text{-}\mathrm{loc}}$ as the Verdier quotient of $\Sp_G$ by the subcategory of $\mathcal{P}_G$-torsion objects. The associated localization functor $\Sp_G \to \Sp_G^{\mathcal{P}_G\text{-}\mathrm{loc}}$ is given by the functor $\widetilde{E}\mathcal{P}_G \otimes -$. As Verdier quotients of presentable categories are calculated as cofibres, we can apply the functor (see \cite[Remark 1.1.1.7]{HA})
 \begin{align*}
  \hspace{1cm}\cof \colon &\Fun(\Delta^1, \Fun(\Epi{\cat F}^{\op},\PrL))& \to&  &\Fun(\Delta^1, \Fun(\Epi{\cat F}^{\op},\PrL))\\
 &(\Sp_\bullet^{\mathcal{P}_\bullet\text{-}\mathrm{tors}} \Rightarrow \Sp_\bullet)& \mapsto &  &(\Sp_\bullet \Rightarrow \Sp_\bullet^{\mathcal{P}_\bullet\text{-}\mathrm{loc}})
 \end{align*}
 to the natural transformation constructed in (a) and obtain a natural transformation $\Sp_\bullet \Rightarrow \Sp_\bullet^{\mathcal{P}_\bullet\text{-}\mathrm{loc}}$ in $\Fun(\Epi{\cat F}^{\op},\PrL)$. 
 \item In fact, since $\Sp_\bullet \in\Fun(\Epi{\cat F}^{\op}, \CAlg(\PrL))$ and the $\mathcal{P}_\bullet$-torsion objects form an ideal, the  natural transformation $\Sp_\bullet \Rightarrow\Sp_\bullet^{\mathcal{P}_\bullet\text{-}\mathrm{loc}}$ refines to a natural transformation in $\Fun(\Epi{\cat F}^{\op},\CAlg(\PrL))$, compare \cite[Proposition 2.2.1.9]{HA}. 
 \item Now recall that for any $G\in\cat F$, there is a symmetric monoidal equivalence $(-)^G\colon \Sp_G^{\mathcal{P}_G\text{-}\mathrm{loc}} \xrightarrow{\sim} \Sp$, see \cite[VI.5.3]{MM}. Thus for any surjective continuous group homomorphism $\alpha \colon H \to G$, we get a commutative square
 \[
 \begin{tikzcd}
     \Sp_G^{\mathcal{P}_G\text{-}\mathrm{loc}} \arrow[d,"\alpha^*(-)\otimes \widetilde{E}\mathcal{P}_H"'] \arrow[r, "\sim"] & \Sp \arrow[d,"F_\alpha"]\\
     \Sp_H^{\mathcal{P}_H\text{-}\mathrm{loc}} \arrow[r,"\sim"] & \Sp.
 \end{tikzcd}
 \]
 The commutativity of the above diagram forces $F_\alpha$ to be colimit preserving and symmetric monoidal, and so it must be homotopic to the identity functor by the universal property of $\Sp$, see \cite[Corollary 4.8.2.19]{HA}. 
 \item Putting all this together we obtain a natural transformation $\Sp_\bullet \Rightarrow \Delta(\Sp)$ in  $\Fun( \Epi{\cat F}^{\op}, \CAlg(\PrL))$ whose value at $G\in \cat F$ is given by 
    \[  
        (- \otimes \widetilde{E}\mathcal{P}_G)^G
    \]
 which is a model for $\Phi^G$.
 \end{enumerate}
\end{Cons}

\begin{Cons}\label{glo-geo-fix-points}
We claim that the geometric fixed points functor refines to a symmetric monoidal colimit preserving functor
 \[
 \Phi \colon \Spgl{\cat F} \to \Fun(\Epi{\cat F}^{\op}, \Sp), \quad (X \mapsto (G \mapsto \Phi^G X)),
 \]
 where we equip the target with the pointwise symmetric monoidal structure.
 We construct this functor in steps:
 \begin{itemize}
     \item[(1)] The inclusion functor $\iota \colon \Epi{\cat F} \to \Glo{\cat F}$ induces a symmetric monoidal colimit preserving forgetful functor
     \[
     \Spgl{\cat F}\subseteq \laxlim_{\Glo{\cat F}^{\op}}\Sp_\bullet \to \laxlim_{\Epi{\cat F}^{\op}}\Sp_\bullet
     \]
     see \cref{rec-laxlim-sym-mon}.
     \item[(2)] Applying the lax limit functor to the natural transformation from \cref{cons-Phi-nat-trans}, we obtain a colimit preserving symmetric monoidal functor 
     \[
     \laxlim_{\Epi{\cat F}^{\op}} \Sp_\bullet \to \laxlim_{\Epi{\cat F}^{\op}} \Delta(\Sp)
     \]
     whose value at $G$ is given by the $G$-geometric fixed points.
     \item[(3)] It remains to prove that there is a symmetric monoidal equivalence
     \[
     \laxlim_{\Epi{\cat F}^{\op}} \Delta(\Sp)\simeq \Fun(\Epi{\cat F}^{\op}, \Sp),
     \]
     where we equip the functor category with the pointwise symmetric monoidal structure. The fact that the lax limit of the constant functor can be identified with the functor category follows easily from the definitions, see for instance \cref{ex-constant-laxlim}. Thus, the nontrivial part of the claim is to construct a symmetric monoidal functor realizing this equivalence. By the properties of the lax limit (see \cref{rec-laxlim-sym-mon}) we have to construct an object in  
     \[
     \laxlim_{\Epi{\cat F}^{\op}} \Fun^{\mathrm{L},\otimes}(\Fun(\Epi{\cat F}^{\op}, \Sp)^\otimes, \Delta(\Sp^\otimes))
     \]
     which, by \cref{ex-constant-laxlim} again, can be identified with 
     \[
     \Fun(\Epi{\cat F}^{\op},\Fun^{\mathrm{L},\otimes}(\Fun(\Epi{\cat F}^{\op}, \Sp)^{\otimes}, \Sp^{\otimes}) ).
     \]
     Unravelling the definition of the pointwise symmetric monoidal structure, we can construct an object in 
      \[
     \Fun(\Epi{\cat F}^{\op},\Fun(\Fun(\Epi{\cat F}^{\op}, \Sp^{\otimes})\times_{\Fun(\Epi{\cat F}^{\op}, \Fin_{\ast})} \Fin_\ast, \Sp^{\otimes}) )
     \]
     and then verify that it is symmetric monoidal and colimit preserving. This latter category receives a natural functor from 
      \[
     \Fun(\Epi{\cat F}^{\op},\Fun(\Fun(\Epi{\cat F}^{\op}, \Sp^{\otimes}), \Sp^{\otimes}) )
     \]
     induced by the canonical projection. We can rewrite this last category as 
     \[
     \Fun(\Epi{\cat F}^{\op}\times \Fun(\Epi{\cat F}^{\op}, \Sp^{\otimes}), \Sp^{\otimes}) \ni \ev
     \]
     and observe that the evaluation functor (which is adjoint to the identity functor) lives in here. Tracing our steps back, we find that the corresponding functor in fact lands in the subcategory of symmetric monoidal and colimit preserving functors and hence gives the claimed functor.
 \end{itemize}
 The required functor is obtained by combining the previous steps.
 \end{Cons}
We now check that our refined geometric fixed points functor $\Phi \colon \Spgl{\cat F} \to \Fun(\Epi{\cat F}^{\op}, \Sp)$ behaves well with respect to rational $\cat F$-global equivalences. Recall that a rational equivalence in $\Sp$ is a map of spectra which induces an equivalence on rational homotopy groups. 

\begin{Lem}\label{lem-rat-glo-geo-fix}
     The functor $\Phi$ from \cref{glo-geo-fix-points} induces a symmetric monoidal functor on rationalizations:
     \[
     \Phi \colon  \Spgl{\cat F}^\bbQ \to \Fun(\Epi{\cat F}^{\op}, \Sp^\bbQ).
     \]
\end{Lem}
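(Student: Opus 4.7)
The plan is to show that $\Phi$ sends rational global equivalences to pointwise rational equivalences, and then invoke the universal property of the symmetric monoidal Bousfield localization. More precisely, because $\Spgl{\cat F}^\bbQ$ is by definition the Bousfield localization of $\Spgl{\cat F}$ at the class $W$ of rational global equivalences (\cref{prop-Qglsp-bousfield}), and because the pointwise rationalization
\[
 \Fun(\Epi{\cat F}^{\op}, \Sp) \longrightarrow \Fun(\Epi{\cat F}^{\op}, \Sp^\bbQ)
\]
is similarly a symmetric monoidal Bousfield localization, it suffices to check that $L_\bbQ \circ \Phi$ inverts the maps in $W$. The induced factorisation through $\Spgl{\cat F}^\bbQ$ is then automatic, and the induced functor inherits a symmetric monoidal refinement from that of $\Phi$ and $L_\bbQ$.

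The first key step is that for each $G \in \cat F$, the geometric fixed points functor $\Phi^G\colon \Sp_G \to \Sp$ preserves rational equivalences. This follows because $\Phi^G$ is a colimit-preserving symmetric monoidal functor, rationalization is a smashing Bousfield localization in both $\Sp_G$ and $\Sp$ (localization at the rational sphere $S_G\bbQ$, resp. $S\bbQ$), and $\Phi^G$ is strong monoidal and unital, so $\Phi^G(S_G\bbQ)\simeq S\bbQ$. Equivalently, after identifying $\mathcal{P}_G$-local $G$-spectra with $\Sp$ via $(-)^G$ as in \cref{cons-Phi-nat-trans}(d), geometric fixed points factors through tensoring with the rational unit, and hence preserves rational equivalences.

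The second step is to combine this with the definition of a rational global equivalence. By \cref{glo-geo-fix-points}, for any $f\colon X\to Y$ in $\Spgl{\cat F}$ we have $\Phi(f)(G) = \Phi^G(f)$, and this factors through $\res_G(f)$ since $\Phi^G$ is computed on the underlying genuine $G$-spectrum $\res_G X$. If $f$ lies in $W$, then by definition $\res_G(f)$ is a rational equivalence in $\Sp_G$ for every $G \in \cat F$, and so by the first step $\Phi(f)(G)$ is a rational equivalence in $\Sp$ for every $G$. Therefore the composite $L_\bbQ \circ \Phi$ inverts $W$, producing the desired functor $\Phi\colon \Spgl{\cat F}^\bbQ \to \Fun(\Epi{\cat F}^{\op}, \Sp^\bbQ)$ at the level of underlying $\infty$-categories.

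The final step is upgrading this to a symmetric monoidal functor. The class $W$ is a tensor-ideal in $\Spgl{\cat F}$ because rationalization is smashing pointwise and $\res_G$ is symmetric monoidal, so the localization $\Spgl{\cat F}\to\Spgl{\cat F}^\bbQ$ is a localization in $\CAlg(\PrL)$; likewise for the target. Since $\Phi$ is already in $\CAlg(\PrL)$ by \cref{glo-geo-fix-points} and $L_\bbQ\circ\Phi$ sends the generating local equivalences to equivalences, the universal property of symmetric monoidal Bousfield localization (cf.~\cite[Proposition 2.2.1.9]{HA}) yields the desired symmetric monoidal factorization. The main technical obstacle is the bookkeeping required to make the symmetric monoidal/$\infty$-categorical naturality precise through \cref{glo-geo-fix-points}, but once the compatibility of $\Phi^G$ with rationalization is established, everything else is formal.
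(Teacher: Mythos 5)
Your proof is correct and follows the same overall strategy as the paper's: show that $L_\bbQ\circ\Phi$ inverts the rational $\cat F$-global equivalences, then invoke the universal property of the symmetric monoidal Bousfield localization to get a symmetric monoidal factorization. The paper's proof simply cites \cref{lem-geom-fix-rational-eq}, which is proven by arguing globally: it uses joint conservativity of $\{\Phi^G\}_G$ together with $\Phi^G\unit_\bbQ\simeq S\bbQ$ to characterize rational global equivalences as maps with pointwise rational geometric fixed points. You instead re-derive only the needed implication group-by-group, noting that each $\Phi^G\colon\Sp_G\to\Sp$ preserves rational equivalences because rationalization is smashing on both sides and $\Phi^G$ is cocontinuous and symmetric monoidal (so sends $S_G\bbQ$ to $S\bbQ$), and then observing that if $f$ is a rational global equivalence then each $\res_G f$ is a rational equivalence in $\Sp_G$ (this last point is not quite ``by definition'' as you say, since Schwede's definition only asks $\pi^G_*\otimes\bbQ$ to be an isomorphism, but it follows because $\cat F$ is a global family closed under subgroups and the structure maps of a global spectrum identify $\res^G_H\res_G X$ with $\res_H X$). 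The two arguments rely on the same facts; yours is somewhat more elementary in that it sidesteps the joint-conservativity lemma, at the cost of needing the remark about restriction to subgroups.
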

\begin{proof}
    Recall from \cref{lem-geom-fix-rational-eq} that a map of global spectra $f\colon X \to Y$ is a rational $\cat F$-global equivalence if and only if $\Phi^G f$ is a rational equivalence for all $G \in\cat F$.
     It then follows that the functor of \cref{glo-geo-fix-points} descends to the rationalization, and it remains symmetric monoidal as the localization functor $L_\bbQ$ is compatible with the symmetric monoidal structure, compare \cite[Proposition 2.2.1.9]{HA}. 
\end{proof}

We note that $\Ho(\Epi{\cat F})$ is the category whose objects are the groups in $\cat F$, and whose morphisms are conjugacy classes of surjective group homomorphisms, compare \cref{rem-homotopy-category}. This is the category that we previously denoted by $\cat F$, see \cref{not-families}. Write $\pi_{\cat F}\colon \Epi{\cat F}\to \Ho(\Epi{\cat F})=\cat F$ for the canonical projection functor. The next result shows that if we are only interested in global families of finite groups we can replace the indexing category $\Epi{\cat F}$ by its homotopy category without losing information.

\begin{Prop}\label{prop-hoEpi}
     Let $\cat F$ be a global family of finite groups. Then the functor $\pi^*_{\cat F}$ induces a symmetric monoidal equivalence 
     \[
     \Fun(\cat F^{\op}, \Sp^{\bbQ}) \xrightarrow{\sim} \Fun(\Epi{\cat F}^{\op},\Sp^\bbQ).
     \]
\end{Prop}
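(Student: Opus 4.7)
The strategy is to exhibit $\pi_{\cat F}\colon \Epi{\cat F} \to \cat F$ as a rational Dwyer--Kan equivalence and invoke the fact that $\Sp^{\bbQ}$ is $\bbQ$-enriched to conclude. First I would unpack the mapping spaces in $\Epi{\cat F}$: since $\cat F$ consists of finite groups, the standard description of the global orbit $\infty$-category (recalled in \cref{appendix}) identifies $\Glo{\cat F}$ with the $(2,1)$-category of finite groups, homomorphisms, and conjugations. Restricting to surjective $1$-cells gives
\[
\Map_{\Epi{\cat F}}(H, G) \simeq \coprod_{[\alpha]} BC_G(\alpha(H)),
\]
where $[\alpha]$ ranges over conjugacy classes of surjective homomorphisms $H \twoheadrightarrow G$ and $C_G(\alpha(H))$ is the centralizer of the image in $G$.

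The key observation is that each centralizer $C_G(\alpha(H))$ is a \emph{finite} group, hence $BC_G(\alpha(H))$ is rationally contractible. Consequently $\pi_{\cat F}$ is essentially surjective and induces a rational equivalence on every mapping space. Since $\Sp^{\bbQ}$ is $\bbQ$-local (its mapping spectra are all rational), every functor $\Epi{\cat F}^{\op} \to \Sp^{\bbQ}$ should factor essentially uniquely through $\Ho(\Epi{\cat F}) = \cat F$.

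To formalize this I would verify that $\pi_{\cat F}^{\ast}$ is fully faithful with a left adjoint given by left Kan extension, and then deduce the equivalence using the essential surjectivity of $\pi_{\cat F}$. Full faithfulness reduces, via the pointwise Kan-extension formula and the Yoneda lemma, to showing that the canonical map $(\pi_{\cat F})_{!}\pi_{\cat F}^{\ast}(\Hom_{\cat F}(-, G)) \to \Hom_{\cat F}(-, G)$ is an equivalence in $\Fun(\cat F^{\op}, \Sp^{\bbQ})$ for each $G \in \cat F$, which upon unwinding amounts precisely to the rational contractibility of the spaces $BC_G(\alpha(H))$ established above. The symmetric monoidal enhancement is automatic: both functor $\infty$-categories carry the pointwise tensor product coming from $\Sp^{\bbQ}$, and restriction along $\pi_{\cat F}^{\op}$ strictly preserves pointwise tensors and the constant unit. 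The main obstacle will be the rigorous $\infty$-categorical bookkeeping in the Kan-extension step; given the explicit mapping-space description, this is essentially a standard Bousfield-localization/Dwyer--Kan argument, but it requires some care with the enriched structures involved.
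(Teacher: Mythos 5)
Your proposal is correct and takes essentially the same route as the paper: both arguments hinge on the mapping-space decomposition $\Map_{\Epi{\cat F}}(H,G) \simeq \coprod_{[\alpha]} BC(\alpha)$, the rational contractibility of $BK$ for finite $K$, and a reduction to the compact generators $\unit_\bbQ[\Map_{\cat F}(-,G)]$ and $\unit_\bbQ[\Map_{\Epi{\cat F}}(-,G)]$. The paper directly checks that the induced map on mapping spectra between generators is an equivalence, whereas you reformulate this as the counit $(\pi_{\cat F})_!\pi_{\cat F}^*\to\mathrm{id}$ being an equivalence on generators; these are equivalent by Yoneda and co-Yoneda. One phrase is too quick: "deduce the equivalence using the essential surjectivity of $\pi_{\cat F}$" does not directly give essential surjectivity of $\pi_{\cat F}^*$ — the correct step (which the paper takes, and which follows from the same rational-contractibility computation) is to observe that $\pi_{\cat F}^*$ preserves colimits, is fully faithful, and its essential image contains the compact generators $\unit_\bbQ[\Map_{\Epi{\cat F}}(-,G)] \simeq \pi_{\cat F}^*\unit_\bbQ[\Map_{\cat F}(-,G)]$, hence is everything.
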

\begin{proof}
     As both the source and the target have the pointwise symmetric monoidal structure, it is clear that the functor $\pi^*_{\cat{F}}$ is symmetric monoidal. Thus, we only need to check that it is an equivalence.  We observe that  $\unit_{\bbQ}[\Map_{\cat F}(-,G)]$ corepresents the functor which evaluates at $G$, so we deduce that $\Fun(\cat F^{\op}, \Sp^{\bbQ})$ is compactly generated by 
      \[
     \{\unit_{\bbQ}[\Map_{\cat F}(-,G)] \mid G \in \cat F\}.
     \]
     Similarly, $\Fun(\Epi{\cat F}^{\op},\Sp^\bbQ)$ is compactly generated by 
     \[
     \{\unit_{\bbQ}[\Map_{\Epi{\cat F}}(-,G)] \mid  G \in \cat F\}.
     \]
     Next, we claim that for all $G \in \cat F$, the natural map
     \begin{equation}\label{eq-gen-pif}
     \unit_{\bbQ}[\Map_{\Epi{\cat F}}(-,G)]  \xrightarrow{\sim} \pi_{\cat F}^* \unit_{\bbQ}[\Map_{\cat F}(-,G)] 
     \end{equation}
     is an equivalence. This follows from the formula in \cref{rem-homotopy-category} (discarding the components indexed by non-surjective group homomorphisms) and the fact that $BK$ is rationally contractible for any finite group $K$. 
     
     Using these results we show that $\pi_{\cat F}^*$ is an equivalence. We begin by showing that the functor is fully faithful, i.e., that the canonical map on mapping spectra
     \[
     \mathrm{map}_{\Fun(\cat F^{\op}, \Sp^\bbQ)}(X,Y) \to \mathrm{map}_{\Fun(\Epi{\cat F}^{\op}, \Sp^\bbQ)}(\pi^*_{\cat F}X, \pi^*_{\cat F}Y)
     \]
     is an equivalence for all $X$ and $Y$. By a localizing subcategory argument, we can reduce to the case where $X$ is a compact generator, say corresponding to $G \in \cat F$. Using a localizing subcategory argument in the other variable (which uses compactness of $X$), we can further reduce to $Y$ being also a compact generator, say corresponding to $H \in \cat F$. In this case the source of the above map takes the form 
     \[
     \mathrm{map}(\unit_{\bbQ}[\Map_{\cat F}(-,G)],\unit_{\bbQ}[\Map_{\cat F}(-,H)]) \simeq \unit_{\bbQ}[\Map_{\cat F}(G,H)]
     \]
     while the target has the form
     \[
      \mathrm{map}(\unit_{\bbQ}[\Map_{\Epi{\cat F}}(-,G)], \unit_{\bbQ}[\Map_{\Epi{\cat F}}(-,H)])\simeq \unit_{\bbQ}[\Map_{\Epi{\cat F}}(G,H)]
     \]
     using \eqref{eq-gen-pif}.  Thus the above map is an equivalence by the formula in \cref{rem-homotopy-category} once again.      
     Since $\pi_{\cat F}^*$ is fully faithful and clearly preserves colimits, the essential image is a localizing subcategory, which contains the generators by \eqref{eq-gen-pif}. It then follows that the functor is essentially surjective and so an equivalence.      
\end{proof}

 Given finite groups $H,G\in\cat F$, we write $\mathrm{Rep}(H,G)$ for the set of conjugacy classes of group homomorphisms from $H$ to $G$, and set
\[
b_G(-)\coloneqq\bbQ[\mathrm{Rep}(-,G)]\in \A{\cat F;\bbQ},
\]
which we can view as an object of $\D{\cat{F};\bbQ}$. In the next proposition we record three main ingredients from \cite{Wimmerthesis}. Recall the canonical compact generators $L_G^\bbQ\unit$ of $\Spgl{\cat F}^\bbQ$ from \cref{cor-smashing+compactly-gen}.

 \begin{Prop}\label{prop-Wimmer-work}
     Let $\cat F$ be a global family of finite groups. 
     \begin{enumerate}
        \item The $\infty$-category $\sfD(\cat F;\bbQ)$ is compactly generated by the set $\{b_G\mid G \in \cat F\}$.
        \item For any $G,H \in\cat F$ and $k\in\bbZ$, we have
         \[
         \pi_k(\Phi^H L_G^\bbQ \unit)\cong 
         \begin{cases}
             \bbQ[\mathrm{Rep}(H,G)] & \mathrm{if}\; k=0 \\
             0                       & \mathrm{if}\; k \not =0.
         \end{cases}
         \]
         Furthermore, the functor $H \mapsto \pi_0(\Phi^H L_G^\bbQ \unit)$ (with functoriality induced by \cref{lem-rat-glo-geo-fix}) defines an object of $\A{\cat F;\bbQ}$ naturally isomorphic to $b_G$. 
        \item The object $b_G$ corepresents the functor 
         \[
         C\mapsto  \prod_{(H)\leq G} H_*(C)(H)^{W_G(H)}
         \]
         in $\D{\cat{F};\bbQ}$.
     \end{enumerate}
 \end{Prop}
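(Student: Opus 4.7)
The key technical input underlying all three parts is the following rational decomposition: partition $\mathrm{Hom}(H,G)$ by image subgroup to obtain a natural bijection
\[
    \mathrm{Rep}(H,G) \;\cong\; \bigsqcup_{(K)\leq G}\, \mathrm{Hom}_{\cat F}(H,K)\big/W_G(K),
\]
where $(K)$ runs over $G$-conjugacy classes of subgroups of $G$ and $W_G(K)\coloneqq N_G(K)/K$ acts through its image in $\Out(K)$. Since $\cat F$ is a global family it contains every such $K$. Passing to $\bbQ$-vector spaces and using that $|W_G(K)|$ is invertible in $\bbQ$, coinvariants equal invariants, so this upgrades to a natural isomorphism
\[
    b_G \;\cong\; \bigoplus_{(K)\leq G} (e_K)^{W_G(K)} \quad\text{in } \A{\cat F;\bbQ},
\]
and each summand is a direct summand of $e_K$ via the averaging idempotent.

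For part (a), this decomposition exhibits $b_G$ as a finite direct sum of retracts of the objects $e_K\in \D{\cat F;\bbQ}^c$, hence $b_G$ is compact. Conversely, specialising to $K=G$ gives $W_G(G)=1$, so $e_G$ is a direct summand of $b_G$; by \cref{prop-rigidly-comp-gen} this implies that $\{b_G\mid G\in \cat F\}$ generates the same localizing subcategory as $\{e_G\mid G\in\cat F\}$, namely all of $\D{\cat F;\bbQ}$.

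For part (c), combine the decomposition with \cref{lem-prop-e_g}(b): since $(e_K)^{W_G(K)}$ is a retract of $e_K\in\D{\cat F;\bbQ}^c$ cut out by a finite-group idempotent, taking graded maps into $C$ commutes with the direct sum and with the $W_G(K)$-invariants, yielding
\[
    [b_G, C]^*_{\D{\cat F;\bbQ}} \;\cong\; \prod_{(K)\leq G} \bigl([e_K,C]^*\bigr)^{W_G(K)} \;\cong\; \prod_{(K)\leq G} H_*(C)(K)^{W_G(K)},
\]
which is exactly the claimed corepresentability.

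Part (b) is the main obstacle, and is where we import work of Schwede and Wimmer. The adjunction $L_G\dashv\res_G$ together with the construction of $\Phi^H$ in \cref{cons-Phi-nat-trans} (as a symmetric monoidal localization of $\Sp_H$) lets us identify $L_G\unit$ with the suspension spectrum of the global classifying object $B_{\mathrm{glo}}G$; its $H$-geometric fixed points then split, by the tom Dieck-type decomposition proved in \cite{Schwedebook}, as a coproduct indexed by $\mathrm{Rep}(H,G)$ of classifying spectra of the form $\Sigma^\infty_+ BC_G(\phi)$. Rationally all these classifying spaces are contractible, so $\pi_*(\Phi^H L_G^\bbQ\unit)$ is concentrated in degree $0$ where it equals the free $\bbQ$-vector space on $\mathrm{Rep}(H,G)$. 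The functoriality of $\Phi$ in $H$ established in \cref{glo-geo-fix-points}, combined with the identification $\Ho(\Epi{\cat F})=\cat F$ of \cref{prop-hoEpi}, matches these isomorphisms with the natural action of morphisms in $\cat F$ on $\mathrm{Rep}(-,G)$, giving the asserted isomorphism $\pi_0(\Phi^{(-)}L_G^\bbQ\unit)\cong b_G$ in $\A{\cat F;\bbQ}$.
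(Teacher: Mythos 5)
Your proposal is correct but takes a genuinely different and more self-contained route: the paper simply cites \cite[Corollary 3.2.4, Lemma 3.2.3, Corollary 2.3.21, Lemma 3.2.12]{Wimmerthesis} for all three parts, whereas you derive (a) and (c) from scratch via the explicit decomposition
\[
b_G \;\cong\; \bigoplus_{(K)\leq G}(e_K)^{W_G(K)}
\]
obtained by partitioning $\mathrm{Rep}(H,G)$ by conjugacy class of image and using that $\bbQ$-coinvariants agree with invariants for finite group actions. This decomposition is the real structural insight: it immediately makes $b_G$ a finite sum of retracts of the canonical compact generators $e_K$ (giving compactness), exhibits $e_G$ as the $(K)=(G)$ summand (giving generation via \cref{prop-rigidly-comp-gen}), and yields corepresentability by applying $\Hom(-,C)$ and using \cref{lem-prop-e_g}(b) on each summand. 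One should only note that the $W_G(K)$-action factors through the natural surjection $N_G(K)/K \to \mathrm{im}(N_G(K)\to\Out(K))$ (the kernel $C_G(K)/Z(K)$ acts trivially), so the invariants are really invariants for the image in $\Out(K)$; this is consistent with both sides of your bijection, but is worth stating. For part (b) you, like the paper, must import substantial computations from the global homotopy theory literature — you appeal to the global tom~Dieck-type splitting $\Phi^H\Sigma^\infty_+ B_{\mathrm{gl}}G\simeq\bigvee_{[\alpha]}\Sigma^\infty_+BC_G(\alpha)$ from Schwede's book, while the paper cites Wimmer's lemmas, so the two proofs of (b) are close in spirit; your statement of the functoriality matching (that the resulting $\A{\cat F;\bbQ}$-object is $b_G$ and not merely pointwise isomorphic to it) is asserted rather than verified, which is precisely the content of Wimmer's Lemma 3.2.12 and deserves either a citation or a short check. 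Overall your approach trades the paper's brevity for transparency, and has the bonus of making (a) and (c) independent of any global homotopy theory.
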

 \begin{proof}
    Part (a) is \cite[Corollary 3.2.4]{Wimmerthesis} and part (c) is \cite[Lemma 3.2.3]{Wimmerthesis}.
    Part (b) combines \cite[Corollary 2.3.21 and Lemma 3.2.12]{Wimmerthesis}.
 \end{proof}

 \begin{Rem}
    The derived category $\sfD(\cat F;\bbQ)$ admits a set of ``canonical'' generators $\{e_G \mid G\in \cat F\}$, see \cref{prop-rigidly-comp-gen}. We warn the reader that these are different from the generators $\{b_G \mid G \in\cat F\}$ defined above. For the proof of \cref{thm-algebraic-model} it will convenient to work with the latter rather than the former as we will see. 
\end{Rem}

We are finally ready to prove the main theorem of this section.

\begin{proof}[Proof of \cref{thm-algebraic-model}]
    We first construct the functor. By \cref{lem-rat-glo-geo-fix} there is a symmetric monoidal functor 
    \[
     \Phi \colon \Spgl{\cat F}^{\bbQ} \to\Fun(\Epi{\cat F}^{\op}, \Sp^\bbQ).
    \]
    Using \cref{prop-hoEpi}, the symmetric monoidal equivalence $\Sp^\bbQ \simeq \D{\bbQ}$, and \cref{rem-model-derived-cat}, we also have symmetric monoidal equivalences
    \[
    \Fun(\Epi{\cat F}^{\op}, \Sp^{\bbQ}) \simeq \Fun({\cat F}^{\op}, \Sp^{\bbQ})\simeq \Fun({\cat F}^{\op}, \D{\bbQ}) \simeq \sfD(\cat F; \bbQ).
    \]
     Putting all this together, we see that geometric fixed points induces a symmetric monoidal functor 
    \[
    \Phi \colon \Spgl{\cat F}^{\bbQ} \to  \sfD(\cat F;\bbQ)
    \]
    such that $H_*(\Phi X)(G)\cong \pi_*(\Phi^G X)$ for all $G \in\cat F$ and $X\in \Spgl{\cat F}^\bbQ$. We observe that $\Phi$ preserves colimits as the geometric fixed points functor preserves colimits and colimits in $\sfD(\cat F;\bbQ)$ are computed pointwise. Moreover, by construction of the functor $\Phi$ and  \cref{prop-Wimmer-work}(b), we have
    \[
    H_*(\Phi(L_G^\bbQ\unit)) \cong H_0(\Phi(L_G^\bbQ\unit))\cong b_G.
    \]
    By taking cycle representatives we can lift this isomorphism to a quasi-isomorphism 
    \begin{equation}\label{bg}
    \Phi(L_G^\bbQ\unit) \simeq b_G.
    \end{equation}
    
    We are now ready to show that the functor $\Phi$ is fully faithful. By a localizing subcategory argument, it suffices to prove that for all $G\in\cat F$ and $X \in \Spgl{\cat F}^\bbQ$, the map
    \[
    \pi_*\Map_{\Spgl{\cat F}^\bbQ}(L_G^\bbQ \unit, X) \xrightarrow{\Phi} \pi_*\Map_{\sfD(\cat F)}(b_G, \Phi X)
    \]
    is an equivalence. By a simple adjunction argument for the left hand side and \cref{prop-Wimmer-work}(c) for the right hand side, we may rewrite the above map as 
    \[
    \pi_*^G(X) \to \prod_{(H)\leq G} H_*(\Phi X)(H)^{W_G(H)}\cong
    \prod_{(H)\leq G} \pi_*(\Phi^H X)^{W_G(H)}. 
    \]
    This map is an equivalence by \cite[Corollary 3.4.28]{Schwedebook} which applies since we are working rationally.

    As $\Phi$ is fully faithful and preserves colimits, the essential image is a localizing subcategory. Since the objects $\{b_G \mid G \in\cat F\}$ generate the derived category by \cref{prop-Wimmer-work}(a) and are in the essential image of $\Phi$ by \eqref{bg}, we conclude that $\Phi$ is essentially surjective. 
\end{proof}

\newpage
\part{The tt-geometry of global representations}
In this part we initiate the study of the tensor triangular geometry of global representations. In particular, we introduce the homological support for objects in the derived category, and define two types of Balmer primes: group primes and family primes. We then calculate the Balmer spectrum for any essentially finite subcategory $\cat U$ and for the category of elementary abelian $p$-groups. Finally, we give a simple criterion to verify that a group prime is isolated in the Balmer spectrum. Throughout, we assume some familiarity with the basic notions of tt-geometry, as developed by Balmer in \cite{Balmer2005}. 

We remind the reader that \cref{hyp} is always in place without further comments. However, in order to ensure that $\D{\cat U}^c$ is symmetric monoidal, and not only non-unital symmetric monoidal, we will occasionally need to refer to the following hypothesis which extends \cref{hyp}.
\setcounter{section}{4}
\setcounter{equation}{-1}
\begin{Hyp}\label{hyp2}
   Let $\cat{U}\subseteq \cat G$ be a subcategory which is full, replete, and widely closed, and for which the tensor unit of $\D{\cat{U}}$ is compact. 
\end{Hyp}
We note that the additional assumption that the unit of $\D{\cat{U}}$ is compact ensures that the full subcategory $\D{\cat{U}}^c$ is symmetric monoidal, so that we may use standard constructions and results from tt-geometry. This hypothesis is satisfied if the tensor unit of $\A{\cat{U}}$ is finitely generated projective by \cref{prop-compact-perfect}, for instance if $\cat{U}$ is unital. Note that if $\cat{U}$ satisfies \cref{hyp2} then so does every downwards closed subcategory of $\cat U$ by \cref{prop-incl-preserve-compacts}(d). 

\setcounter{section}{3}
\section{Homological support and primes}\label{sec:hsupp}
The conventional approach to understanding the spectrum in the rigid context, which works in virtually all known examples, is by probing the given rigid tt-category through a jointly nil-conservative family of tt-functors. The underlying reason that this works so well is that a family of geometric tt-functors $f_i^*\colon \cat{T} \to \cat{S}_i$ between rigidly-compactly generated tt-categories which is jointly nil-conservative, i.e., which detects (weak) rings in $\cat{T}$, induces a surjective map on spectra 
    \[
        \xymatrix{\bigsqcup_{i \in I}\Spc(\cat{S}_i^c) \ar@{->>}[r] & \Spc(\cat{T}^c),}
    \]
see \cite{BCHS2024}. For a subcategory $\cat{U}$ satisfying \cref{hyp2}, we make the same ansatz for the non-rigid tt-category $\D{\cat{U}}$, taking the family of evaluation functors 
    \begin{equation}\label{eq:evaluationfamily}
        (\D{\cat{U}} \to \D{\{G\}})_{G \in \pi_0\cat{U}} 
    \end{equation}
indexed on the isomorphism classes of groups in $\cat{U}$. This family is jointly conservative, so in particular detects weak rings. However, as we will see below, the induced map on spectra captures only a proper subset of the spectrum of $\D{\cat{U}}$ in general, in sharp contrast to the rigid case. This is a source of the rich structure of the spectrum of $\D{\cat{U}}^c$, and necessitates the development of new techniques to compute the spectrum.

To begin with, recall that Balmer \cite{Balmer2005} assigns to any essentially small tt-category $\cat{K}$ its spectrum $\Spc(\cat{K})$ along with a support function $\supp$. The pair $(\Spc(\cat{K}),\supp)$ is universal among support theories that parametrize radical thick ideals in $\cat{K}$ in terms of Thomason subsets of $\Spc(\cat{K})$. When all thick ideals are radical, $\supp$ thus induces a bijection
    \begin{equation}\label{eq:ttclassification}
        \supp\colon \big\{\text{thick ideals in } \cat{K}\big\} \xrightarrow{\cong} \big\{\text{Thomason subsets in } \Spc(\cat{K})\big\}.
    \end{equation}
In non-rigid situations, there can exist non-radical thick ideals. We therefore introduce the following terminology:

\begin{Def}\label{def:standardtt}
    An essentially small tt-category $\cat{K}$ is \emph{standard} if all of its thick ideals are radical. When that is the case, then the corresponding compactly generated tt-category $\cat{T} = \Ind\cat{K}$ is also called \emph{standard}.
\end{Def}

\begin{Prop}\label{prop:standardtt}
    Let $\cat{K}$ be an essentially small tt-category. Then $\cat{K}$ is standard in each of the following cases:
    \begin{enumerate}
        \item $\cat{K}$ is rigid; 
        \item $\cat{K} = \prod_{i \in I}\cat{K}_i$ is a finite product of standard tt-categories $\cat{K}_i$; 
        \item $\cat{K} = \colim_{i \in I}\cat{K}_i$ is a filtered colimit of standard tt-categories $\cat{K}_i$ along tt-functors $f_i^*$. 
    \end{enumerate}    
\end{Prop}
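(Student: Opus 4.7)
The plan is to treat each of the three cases separately, in each case verifying the defining property of a radical ideal: given a thick ideal $\cat{I}\subseteq\cat{K}$ and an object $x\in\cat{K}$ with $x^{\otimes n}\in\cat{I}$ for some $n\geq 1$, one must show $x\in\cat{I}$.

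For part (a), this is a classical theorem of Balmer and I would reproduce the standard retract argument. For rigid $x$ with dual $x^\vee$, the triangle identity for evaluation and coevaluation displays $x$ as a retract of $x\otimes x^\vee\otimes x$. Tensoring this splitting with $x^{\otimes(n-2)}$ and using the symmetry of $\otimes$, one finds that $x^{\otimes(n-1)}$ is a retract of $x^{\otimes n}\otimes x^\vee$. Since $\cat{I}$ is an ideal containing $x^{\otimes n}$, it also contains $x^{\otimes n}\otimes x^\vee$ and hence its retract, so $x^{\otimes(n-1)}\in\cat{I}$. Induction on $n$ concludes.

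For part (b), I would reduce to the binary case $\cat{K}=\cat{K}_1\times\cat{K}_2$ by induction on $|I|$. The unit decomposes as a sum of orthogonal idempotents $\unit_{\cat{K}}\cong e_1\oplus e_2$ with $e_i$ being the tensor unit in the $i$-th position and zero elsewhere; tensoring with $e_i$ realises the projection onto the $i$-th factor. Using this, I claim that every thick ideal $\cat{I}\subseteq\cat{K}$ decomposes as $\cat{I}_1\times\cat{I}_2$, where $\cat{I}_i\subseteq\cat{K}_i$ is the thick ideal of those $y_i\in\cat{K}_i$ with $(y_i,0)\in\cat{I}$ (respectively $(0,y_i)\in\cat{I}$). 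The radical condition is then verified componentwise: $(y_1,y_2)^{\otimes n}=(y_1^{\otimes n},y_2^{\otimes n})\in\cat{I}$ amounts to $y_i^{\otimes n}\in\cat{I}_i$ for each $i$, which by standardness of $\cat{K}_i$ forces $y_i\in\cat{I}_i$ and thus $(y_1,y_2)\in\cat{I}$.

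For part (c), write $\cat{K}=\colim_{i\in I}\cat{K}_i$ with structure tt-functors $f_i\colon\cat{K}_i\to\cat{K}$. The filteredness of the colimit guarantees that every object $x\in\cat{K}$ is represented by some $x_i\in\cat{K}_i$ with $f_i(x_i)\cong x$. Given a thick ideal $\cat{I}\subseteq\cat{K}$, its preimage $\cat{I}_i:=f_i^{-1}(\cat{I})$ is a thick ideal in $\cat{K}_i$, because $f_i$ is exact and tensor-preserving. If $x^{\otimes n}\in\cat{I}$, then $f_i(x_i^{\otimes n})=x^{\otimes n}\in\cat{I}$, so $x_i^{\otimes n}\in\cat{I}_i$, and standardness of $\cat{K}_i$ yields $x_i\in\cat{I}_i$, whence $x=f_i(x_i)\in\cat{I}$.

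I expect the main obstacle to be part (b), where one must carefully verify that the definition of a thick ideal of $\cat{K}$ — closure under triangles, retracts, and tensoring with arbitrary objects of $\cat{K}$ — is strong enough to force the clean decomposition through the orthogonal idempotents $e_1$ and $e_2$. Part (a) can be cleanly outsourced to the existing tt-geometry literature, and part (c) reduces to the general heuristic that every categorical phenomenon in a filtered colimit already occurs at some finite stage.
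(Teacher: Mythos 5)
Your proof is correct, but it works directly with the defining condition of radicality ($x^{\otimes n}\in\cat{I}\Rightarrow x\in\cat{I}$), whereas the paper routes everything through Balmer's criterion from \cite[Proposition 4.4]{Balmer2005}: $\cat{K}$ is standard if and only if $a\in\thickt{a\otimes a}$ for every $a\in\cat{K}$. The two characterizations are of course equivalent, but Balmer's reformulation makes the paper's argument shorter. In part (a) the paper simply notes that a dualizable $a$ is a retract of $a\otimes\mathbb{D}a\otimes a$, giving $a\in\thickt{a\otimes a}$ immediately, without the descending induction on $n$ you perform. In part (b) the paper checks $a\in\thickt{a\otimes a}$ coordinatewise and avoids the explicit decomposition $\cat{I}=\cat{I}_1\times\cat{I}_2$ of thick ideals that you establish via the orthogonal idempotents $e_1,e_2$; your lemma is genuine extra content and it is correct, but not logically required once one works with Balmer's criterion. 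In part (c) the paper pushes the relation $a_i\in\thickt{a_i\otimes a_i}$ forward along $f_i^*$, while you pull a thick ideal of $\cat{K}$ back to $\cat{K}_i$; these are mirror-image arguments of the same length and both are fine. So the substance is the same: what you buy with your approach is a more hands-on engagement with the radical condition; what the paper buys with Balmer's criterion is brevity, since the condition $a\in\thickt{a\otimes a}$ is stable under tt-functors and behaves cleanly under products, making all three verifications one-liners.
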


\begin{proof}
    The key observation, due to Balmer \cite[Proposition 4.4]{Balmer2005}, is that $\cat{K}$ is standard if and only if $a \in \thickt{a \otimes a}$ for all $a \in \cat{K}$. 

    The case $(a)$ then follows from the existence of duals, since any dualizable $a$ is a retract of $a \otimes a \otimes \mathbb{D}a$, see \cite[Remark 4.3]{Balmer2005}. For $(b)$, each of the projections $a_i$ of $a$ in $\cat{K}_i$ satisfies $a_i \in \thickt{a_i \otimes a_i}$, hence so does $a$ itself by working coordinatewise. Finally, $(c)$ holds because for any object $a$ in $\cat{K}$ there exists some $i \in I$ and $a_i \in \cat{K}_i$ such that $a \cong f_i^*(a_i)$, so 
        \[
            a \cong f_i^*(a_i) \in f_i^*\thickt{a_i} \subseteq \thickt{f_i^*(a_i \otimes a_i)} = \thickt{a \otimes a},
        \]
    as claimed.
\end{proof}

With this preparation at hand, we now return to $\D{\cat{U}}$. Based on the jointly conservative family of evaluation functors \eqref{eq:evaluationfamily}, we come to the central definition of this section. 

\begin{Def}\label{def:hsupp} 
Let $X \in \D{\cat{U}}$. The \emph{homological support} of $X$ is 
        \[
            \hsupp(X)\coloneqq \{G \in \pi_0\cat{U} \mid X(G) \not \simeq 0\},
        \]
    viewed as a subset of the discrete space $\pi_0\cat{U}$. Note that this is well-defined, because $X(G) \not \simeq 0$ if and only if $X(H) \not \simeq 0$ whenever $G \cong H$. 
\end{Def}

\begin{Rem}\label{rem:hsupp}
    The condition $X(G) \not \simeq 0$ in \cref{def:hsupp} is equivalent to $H_*(X)(G)\not =0$, which inspired the terminology used here. In \cite{Balmer20_bigsupport}, Balmer constructs a notion of homological support $\Supp^h$ for objects in \emph{rigidly}-compactly generated tt-categories, based on the homological spectrum. It is possible to extend $\Supp^h$ to the non-rigid context, and it is work in progress to compare this extension to the homological support introduced in \cref{def:hsupp}.
\end{Rem}

Recall our notation for the upwards closure of a set from \cref{not-families}.

\begin{Exa}\label{ex:hsuppeg}
    For any $G \in \pi_0\cat{U}$, we have $\hsupp(e_G) = \ua(\hsupp(e_G)) = \ua(\{G\})$.
\end{Exa}
Let us record the key properties that the homological support satisfies. 

\begin{Lem}\label{lem:hsupp}
   The homological support satisfies the following properties:
        \begin{enumerate}
            \item $\hsupp(0) = \varnothing$ and $\hsupp(\unit) = \pi_0\cat{U}$;
            \item $\hsupp(\bigoplus_{i \in I} X_i) = \bigcup_{i \in I} \hsupp(X_i)$ for any collection $\{X_i\}_{I} \subseteq \D{\cat{U}}$;
            \item $\hsupp(\Sigma X) = \hsupp(X)$ for all $X \in \D{\cat{U}}$;
            \item $\hsupp(Y) \subseteq \hsupp(X) \cup \hsupp(Z)$ for any triangle $X \to Y \to Z$ in $\D{\cat{U}}$;
            \item $\hsupp(X \otimes Y) = \hsupp(X) \cap \hsupp(Y)$ for all $X,Y \in \D{\cat{U}}$.
        \end{enumerate}
    In particular, if $X \in \thickt{Y}$ in $\D{\cat{U}}^c$ then $\hsupp(X)\subseteq \hsupp(Y)$. Moreover, if $f\colon \cat{U} \to  \cat{V}$ is any functor, then we have
        \begin{equation}\label{eq:hsupprestriction}
            \hsupp(f^*X) = (\pi_0f)^{-1}\hsupp(X).
        \end{equation}
    Finally, if $\cat{U}$ is closed downwards (resp.~upwards) in $\cat V$ with inclusion $j$, then
        \begin{equation}\label{eq:hsupppushforward}
            \hsupp(j_*X) = j\hsupp(X) \;\;(\text{resp.~} \hsupp(j_!X) = j\hsupp(X)).
        \end{equation}
\end{Lem}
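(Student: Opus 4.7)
The strategy is uniform throughout: since the tensor product, direct sums, suspensions and triangles in $\D{\cat U}$ are all computed pointwise, each condition reduces to the analogous statement in $\D{\{G\}}$ for every $G \in \pi_0 \cat U$. Concretely, for every $G \in \cat U$ the evaluation functor $\ev_G\colon \D{\cat U} \to \D{\{G\}}$ is exact, symmetric monoidal, and preserves arbitrary coproducts. The plan is to exploit this in sequence.

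For parts (a)--(d) one argues as follows. The unit $\unit \in \D{\cat U}$ is the constant diagram at $k$, so $\unit(G) \simeq k \not\simeq 0$ for every $G$, giving (a). Part (b) is immediate since coproducts in $\A{\cat U}$ are pointwise, so $(\bigoplus_i X_i)(G) \simeq \bigoplus_i X_i(G)$, and a direct sum in $\D{\{G\}}$ is zero if and only if each summand is zero. Part (c) is clear because $(\Sigma X)(G) \simeq \Sigma(X(G))$. For (d), evaluating the triangle $X \to Y \to Z$ at $G$ yields a triangle $X(G) \to Y(G) \to Z(G)$, and in a triangulated category the outer vertices being zero force the middle one to be zero.

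For (e), the tensor product is pointwise, so $(X \otimes Y)(G) \simeq X(G) \otimes_k Y(G)$, where the tensor is formed over the base field $k$ (equipped with diagonal $\Out(G)$-action). Since $k$ is a field, the Künneth formula gives $H_*(X(G) \otimes_k Y(G)) \cong H_*(X(G)) \otimes_k H_*(Y(G))$, and this tensor product of graded $k$-vector spaces is zero if and only if one of the factors is. This yields the equality in (e). The containment $\hsupp(X) \subseteq \hsupp(Y)$ for $X \in \thickt{Y}$ then follows by a standard induction combining (b)--(e): the class of objects with support contained in $\hsupp(Y)$ is a thick ideal containing $Y$, since (e) ensures closure under tensoring with arbitrary objects.

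Finally, for the functoriality statements, the restriction identity is a direct unwinding of the definition: by construction $(f^*X)(H) = X(f(H))$, so $H \in \hsupp(f^*X)$ if and only if $f(H) \in \hsupp(X)$. For the pushforward identities, recall from \cref{prop-incl-preserve-compacts}(a)--(b) that when $\cat U$ is closed downwards (respectively, upwards) in $\cat V$, the functor $j_*$ (respectively, $j_!$) is given by extension by zero along the inclusion, so $(j_*X)(G)$ equals $X(G)$ if $G \in \cat U$ and is zero otherwise, and similarly for $j_!$. The claimed equalities follow at once. No serious obstacles are expected; the only point that requires attention is the Künneth argument in (e), which uses crucially that we are working over a field.
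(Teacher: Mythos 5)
Your proof is correct and follows exactly the approach the paper intends: the paper's own proof dismisses parts (a)--(e) as "easy verifications" left to the reader, and then cites the pointwise formula for $f^*$ and the extension-by-zero description of $j_*$ and $j_!$, which is precisely what you do. Your K\"unneth argument for (e) is the one fact that genuinely uses the field hypothesis, and your derivation of the thick-ideal consequence from (a)--(e) is the standard one.
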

\begin{proof}
    Parts (a)--(e) are easy verifications, which we leave to the reader, and \eqref{eq:hsupprestriction} uses that 
        \[
            (f^*X)(G) \cong X(\pi_0f(G))
        \]
    for any $G  \in \cat{U}$. The final claim uses that $j_*$ (resp.~$j_!$) is extension by zero if $\cat{U}$ is closed downwards (resp.~upwards), see \cref{prop-incl-preserve-compacts}.
\end{proof}

Now suppose that $\cat U$ satisfies \cref{hyp2} to ensure that $\D{\cat U}^c$ is a tt-category. Restricting to the compacts $\D{\cat{U}}^c$, the pair $(\pi_0\cat{U},\hsupp)$ thus forms a support datum in the sense of \cite[Definition 3.1]{Balmer2005}. Balmer's \cite[Theorem 3.2]{Balmer2005} therefore supplies a continuous map 
    \begin{equation}\label{eq:groupprimemap}
        \mathfrakp_{-} = \mathfrakp_{-}^{\cat{U}} \colon \pi_0\cat{U} \to \Spc(\D{\cat{U}}^c)
    \end{equation}
such that $\hsupp(X) = \mathfrakp^{-1}(\supp(X))$ for all $X \in \D{\cat{U}}^c$. Unwinding the construction, we arrive at: 

\begin{Def}\label{def:groupprimes}
    The map $\mathfrakp_{-}$ sends a group $G \in \cat U$ to the tt-prime 
    \[
        \mathfrakp_G =\{X \in \D{\cat{U}}^c \mid G \notin \hsupp(X)\} = \{X \in \D{\cat{U}}^c \mid X(G) \simeq 0\}.
    \]
    We refer to these ideals as \emph{group primes}. 
\end{Def}

The formation of the map $\mathfrakp_{-}$ is compatible with functors of collections of finite groups, as expressed in the next lemma.

\begin{Lem}\label{lem:groupprimes_naturality}
    Let $\cat V$ be a subcategory satisfing \cref{hyp2}, let $i\colon \cat{U} \to \cat{V}$ be the inclusion of a down-closed subcategory, and write $\varphi$ for the map induced on spectra by the pullback functor $i^*\colon \D{\cat{V}}^c \to \D{\cat{U}}^c$ which is well-defined by \cref{prop-incl-preserve-compacts}(d). Then there is a commutative diagram of topological spaces
        \[
            \xymatrix{\pi_0\cat{U} \ar[r]^-{\mathfrakp_{-}^{\cat{U}}} \ar[d]_{\pi_0i} & \Spc(\D{\cat{U}}^{c}) \ar[d]^{\varphi} \\
            \pi_0\cat{V} \ar[r]_-{\mathfrakp_{-}^{\cat{V}}} & \Spc(\D{\cat{V}}^{c}).}
        \]
\end{Lem}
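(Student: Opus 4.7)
The plan is to verify the commutativity by a direct unwinding of definitions; no deep input is required beyond the functoriality of the Balmer spectrum and the elementary identity $(i^*Y)(G) = Y(i(G))$.

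First, recall that $\varphi = \Spc(i^*)$ is by construction the map sending a prime $\mathfrak{q} \in \Spc(\D{\cat{U}}^c)$ to its preimage $(i^*)^{-1}(\mathfrak{q}) = \{Y \in \D{\cat{V}}^c \mid i^*Y \in \mathfrak{q}\}$, which is prime in $\D{\cat{V}}^c$ by \cite[Proposition 3.6]{Balmer2005}. I will fix $G \in \pi_0\cat{U}$ and chase it around the square; the target of $\pi_0i$ is the isomorphism class of $i(G)$ in $\cat{V}$.

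Next, I substitute the explicit description of group primes from \cref{def:groupprimes}. Going along the top then right, one obtains
\[
    \varphi(\mathfrakp_G^{\cat{U}}) = \{Y \in \D{\cat{V}}^c \mid i^*Y \in \mathfrakp_G^{\cat{U}}\} = \{Y \in \D{\cat{V}}^c \mid (i^*Y)(G) \simeq 0\}.
\]
Going along the left then bottom yields $\mathfrakp_{i(G)}^{\cat{V}} = \{Y \in \D{\cat{V}}^c \mid Y(i(G)) \simeq 0\}$. Since by definition $(i^*Y)(G) = Y(i(G))$ (as $i$ is the inclusion of a subcategory), the two sets coincide, giving $\varphi \circ \mathfrakp_{-}^{\cat{U}} = \mathfrakp_{-}^{\cat{V}} \circ \pi_0 i$ as required.

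The main thing to check carefully is just that $\varphi$ is well-defined, i.e.\ that $i^*$ actually lands in compacts so that the induced map on Balmer spectra makes sense; this is exactly the content of \cref{prop-incl-preserve-compacts}(d), which requires $\cat{U}$ to be downwards closed in $\cat{V}$, and both $\cat{U}$ and $\cat{V}$ to satisfy \cref{hyp2} so that the subcategories of compacts are genuine tt-categories. Once this hypothesis is in place, the verification is purely formal and there is no substantial obstacle.
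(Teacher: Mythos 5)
Your proposal is correct and follows essentially the same direct-unwinding argument as the paper: both compute $\varphi(\mathfrakp_G^{\cat{U}})$ via the formula $\Spc(i^*)(\mathfrak q)=(i^*)^{-1}(\mathfrak q)$ and then use the identity $(i^*Y)(G)=Y(i(G))$ to identify the result with $\mathfrakp_{(\pi_0i)(G)}^{\cat{V}}$.
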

\begin{proof}
    Given $G\in \cat{U}$, we compute directly from the definitions:
        \begin{align*}
            \varphi\mathfrakp_{G}^{\cat{U}} & = \{X \in \D{\cat{V}}^{c} \mid i^*(X) \in \mathfrakp_{G}^{\cat{U}}\} \\
            & = \{X \in \D{\cat{V}}^{c} \mid 0 \simeq i^*(X)(G) = X((\pi_0i)(G))\}\\
            & = \mathfrakp_{(\pi_0i)(G)}^{\cat{V}}. \qedhere
        \end{align*}
\end{proof}

\begin{Prop}\label{prop:groupprimes}
    Let $\cat U$ be a subcategory satisfying \cref{hyp2}. Given two groups $G, H \in \cat{U}$, there is an inclusion $\mathfrakp_H \subseteq \mathfrakp_G$ if and only if $G$ is isomorphic to $H$. In particular, the map $\mathfrakp_{-}\colon \pi_0\cat{U} \to \Spc(\D{\cat{U}}^c)$ is injective. 
\end{Prop}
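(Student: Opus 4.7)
The ``if'' direction is immediate: $G \cong H$ gives $\mathfrakp_G = \mathfrakp_H$ by \cref{def:groupprimes}, and the asserted injectivity of $\mathfrakp_-$ is then an automatic corollary of the main claim. For the ``only if'' I plan to argue the contrapositive: assuming $G \not\cong H$, I will construct a compact object $X \in \D{\cat{U}}^c$ with $X(G) \not\simeq 0$ and $X(H) \simeq 0$, which then lies in $\mathfrakp_H \setminus \mathfrakp_G$ and so violates the hypothetical inclusion.

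The construction I have in mind uses the adjunction $e_{H,\bullet} \dashv \ev_H$ from \cref{defn-eGV}. Let $V \coloneqq k[\Hom_{\cat{U}}(H,G)]$, viewed as a finite-dimensional $\Out(H)$-representation via pre-composition; by \cref{rem-formula-eGV} this is canonically isomorphic to $e_G(H)$ as an $\Out(H)$-module. Set $Y \coloneqq e_{H,V}$, which is finitely generated projective, hence compact by \cref{prop-compact-perfect}. The identity $\mathrm{id}_V \in \Hom_{k[\Out(H)]}(V, e_G(H))$ corresponds under the adjunction to a canonical morphism $Y \to e_G$, whose cofibre $X \coloneqq \cof(Y \to e_G)$ is then compact.

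Two stalk computations verify the desired properties. At $K = H$, one has $Y(H) = V \otimes_{k[\Out(H)]} k[\Out(H)] = V$, and by construction the induced map $Y(H) \to e_G(H) = V$ is the identity, forcing $X(H) \simeq 0$. At $K = G$, I compute
\[
Y(G) = V \otimes_{k[\Out(H)]} k[\Hom_{\cat{U}}(G,H)],
\]
and the crucial observation will be that this vanishes under the assumption $G \not\cong H$: if both $\Hom_{\cat{U}}(H,G)$ and $\Hom_{\cat{U}}(G,H)$ were non-empty, the existence of epimorphisms in both directions between the finite groups $G$ and $H$ would force $|G| = |H|$ and thus $G \cong H$, contradicting our hypothesis. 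Consequently $X(G) \simeq e_G(G) = k[\Out(G)] \neq 0$, completing the argument. The only genuinely non-formal ingredient is this Cantor--Schr\"oder--Bernstein-type observation for epimorphisms of finite groups, which uniformly handles both the trivial case $V = 0$ (where $X = e_G$ suffices directly) and the non-trivial case where $H$ properly surjects onto $G$; I expect this to be the main conceptual step, though it is a very clean one.
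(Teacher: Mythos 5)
Your proof is correct, and it takes a genuinely (if mildly) different route from the paper's. The paper argues directly from the hypothesis $\mathfrakp_H \subseteq \mathfrakp_G$: it first tests against $e_G$ (which is never in $\mathfrakp_G$) to get an epimorphism $H \twoheadrightarrow G$, then tests against $\cof(e_{H,k} \to \unit)$ — which lies in $\mathfrakp_H$ by construction — to get an epimorphism $G \twoheadrightarrow H$, and then invokes the same finiteness observation you use to conclude $G\cong H$. You instead argue by contraposition and package both directions into a single witness $X = \cof(e_{H,V}\to e_G)$ with $V=e_G(H)$, separating $\mathfrakp_H$ from $\mathfrakp_G$ in one stroke. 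Both proofs hinge on the same finite-group fact (epimorphisms in both directions force an isomorphism), and both require verifying that the evaluation at $H$ of the adjoint map is an isomorphism — in the paper's case that $\cof(e_{H,k}\to\unit)(H)\simeq 0$, in yours that $\cof(e_{H,V}\to e_G)(H)\simeq 0$; both follow from the fact that the unit of the $(e_{H,\bullet},\ev_H)$-adjunction is invertible because $\{H\}\hookrightarrow\cat{U}$ is fully faithful, a point you gloss over but which is unproblematic. One small dividend of your version is that it does not invoke the tensor unit $\unit$ at all, so it would survive outside of \cref{hyp2}; this is moot here since that hypothesis is assumed, but it is a cleaner dependence profile.
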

\begin{proof}
    If $G \cong H$, then $\mathfrakp_G = \mathfrakp_H$. For the other direction, suppose that we have $\mathfrakp_H \subseteq \mathfrakp_G$. Then since $e_G \notin \mathfrakp_G$ we have that $e_G \notin \mathfrakp_H$ so $e_G(H) \neq 0$, which means that there is an epimorphism $H \to G$. Let $X$ be the cofibre of the canonical map $e_{H, k} \to \unit$ so that $X(H)\simeq 0$. Then $X \in \mathfrakp_H \subseteq \mathfrakp_G$, which means that $X(G)\simeq 0$ and therefore $e_{H, k}(G) \neq 0$, which means that there is an epimorphism $G \to H$ and $G \cong H$.
\end{proof}

Generalizing the construction of group primes in \cref{def:groupprimes}, consider $\cat U \subseteq \cat V$ and define a thick ideal 
    \[
        \mathfrakp_{\cat U} \coloneqq \lbrace X \in \D{\cat{V}}^c \mid X(G)\simeq 0 \;\; \forall G \in \cat U \rbrace.
    \] 
Note that these may, in general, fail to be prime. 
\begin{Def}\label{def:familyprime}
    If $\mathfrakp_{\cat U}$ is a prime tt-ideal of $\D{\cat{V}}^c$, then we refer to $\mathfrakp_{\cat U}$ as a \emph{family prime}.
\end{Def}

Our next goal is to exhibit conditions under which $\mathfrakp_{\cat U}$ is prime. This is based crucially on the next result, taken from the companion paper \cite[Theorem 7.7 and Corollary 7.11]{BBPSWstructural}, whose proof relies on the fine structure of $\D{\cat{U}}$ studied there.

\begin{Thm}\label{thm:nontorsion}
    Let $\cat U$ be a multiplicative global family and let $X \in \D{\cat{U}}^c$ be nonzero. Then $H_*(X)$ has a torsion-free element: there exists $G \in\cat U$ and $x \in H_*(X)(G)$ such that for any epimorphism $\alpha\colon G' \to G$ in $\cat{U}$, we have $\alpha^* x \not = 0$. Moreover, if $x \in H_*(X)(G)$ is any such torsion-free element, then $e_G \in \thickt{X}$.
\end{Thm}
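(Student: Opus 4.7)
The plan is to combine the pullback structure of the multiplicative global family $\cat{U}$ with the characteristic function objects $\chi_{G,\bullet}$ introduced in \cref{def-chi-objects}, together with structural input from the companion paper \cite{BBPSWstructural}.

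For the existence part, the key is that $X$ being compact makes every $H_*(X)(G)$ finite-dimensional (via \cref{prop-compact-perfect}), and that for each $G$ the diagram of epimorphisms $\alpha\colon G'\twoheadrightarrow G$ in $\cat{U}$ is filtered: given two such epimorphisms, the pullback $G_1\times_G G_2 \subseteq G_1\times G_2$ lies in $\cat{U}$ by multiplicativity and closure under subgroups. Torsion-free classes at $G$ are precisely those with nonzero image in $T(G) \coloneqq \varinjlim_{\alpha}H_*(X)(G')$. The plan is to show $T(G)\neq 0$ for some $G\in\hsupp(X)$ by assuming the contrary: finite-dimensionality together with the filteredness would then yield, for each $G$, a single epimorphism $\alpha_G\colon G'\to G$ with $\alpha_G^* = 0$ on $H_*(X)$, and iterating would produce a tower along which all homology of $X$ ultimately vanishes, contradicting the growth behaviour of compact derived global representations established in \cite{BBPSWstructural}.

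For the moreover statement, a torsion-free class $x\in H_n(X)(G)$ generates a nonzero $\Out(G)$-subrepresentation $V \subseteq H_n(X(G))$, so by \cref{lem-prop-e_g}(b) there is a map $f\colon e_{G,V} \to \Sigma^n X$ whose value at $H\gg G$ sends $v\otimes[\alpha]\mapsto\alpha^* v$, nonzero by torsion-freeness when $v = x$. Since $V\neq 0$, \cref{cor-eG-eGV} reduces the claim to $e_{G,V}\in\thickt{X}$. The plan is then to compare with the characteristic function objects: since $X\otimes\chi_{G,k}$ is concentrated at $G$ with value $X(G)\neq 0$, \cref{prop-chi-VandQ}(b) together with $X\otimes\chi_{G,k}\in\thickt{X}$ yields $\chi_{G,k}\in\thickt{X}$; then the triangle $F\to e_G\to\chi_{G,k}$ of \cref{lem-chi-eGs} reduces the problem to showing the fiber $F$ also lies in $\thickt{X}$.

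The main obstacle is this last reduction: $F$ is only \emph{a priori} in the localizing subcategory generated by $\{e_H : H\in\ua(\{G\})\setminus\{G\}\}$ and need not even be compact, so $F\in\thickt{X}$ does not follow formally. The plan to overcome this is to exploit torsion-freeness recursively: for every epimorphism $\alpha\colon H\to G$ with $H\ncong G$, the pulled-back class $\alpha^*x\in H_n(X)(H)$ is itself torsion-free at $H$, so each building block $e_H$ contributing to $F$ comes equipped with its own nonzero witness into $X$. Assembling these simultaneously to place all such $e_H$, and hence $F$, into $\thickt{X}$ is the essential technical content; it is to be accomplished using the structural and growth-theoretic results of \cite{BBPSWstructural}, which also supply the contradiction needed in the first part.
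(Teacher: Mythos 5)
Note first that the paper itself does not prove this theorem: the text immediately preceding it states that it is ``taken from the companion paper \cite[Theorem 7.7 and Corollary 7.11]{BBPSWstructural}, whose proof relies on the fine structure of $\D{\cat U}$ studied there.'' There is thus no in-paper argument to compare against, and any blind reconstruction will necessarily have to re-derive companion-paper material.

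With that said, your sketch identifies reasonable structural ingredients, but both halves contain genuine gaps beyond what can fairly be deferred. For the existence part, the filtered diagram $T(G)$ and the pullback argument via multiplicativity and closure under subgroups are sound (indeed the same wide-subgroup mechanism appears in \cref{prop:zeroideal}); but the proposed contradiction is not one. A tower $G_0\twoheadleftarrow G_1\twoheadleftarrow\cdots$ in $\cat U$ along which each restriction $\alpha_i^*\colon H_*(X)(G_i)\to H_*(X)(G_{i+1})$ vanishes says nothing about the dimensions $\dim_k H_*(X)(G_i)$ themselves, and is perfectly compatible with those dimensions being constant or growing. There is no growth statement in this paper against which such a tower collides, so the contradiction must be extracted from a specific quantitative result of \cite{BBPSWstructural} that your sketch does not name. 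For the ``moreover'' part, there is first a technical slip: for infinite $\cat U$ the object $\chi_{G,k}$ is not compact (compactness of the $\chi$'s is a special feature of essentially finite families, \cref{lem:chi-compact}), so it cannot belong to $\thickt{X}\subseteq\D{\cat U}^c$. What your argument actually shows, via $\chi_{G,X(G)}\simeq\chi_{G,k}\otimes X$ and \cref{prop-chi-VandQ}(b), is that $\chi_{G,k}\in\loct{X}\subseteq\D{\cat U}$, and one must then invoke Neeman--Thomason to pass from $e_G\in\loct{X}$ back to $e_G\in\thickt{X}$ on compacts. More seriously, the recursion you propose to handle the fibre $F\in\loc{e_H\mid H\in\ua(\{G\})\setminus\{G\}}$ has no base case: using torsion-freeness of $\alpha^*x$ at $H$ to place $e_H$ into $\loct{X}$ only trades $e_G$ for some $e_H$ with $|H|>|G|$, regressing forever. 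Nor is it formal that $e_G\in\loc{\chi_{H,k}\mid H\gg G}$: the natural filtration of $e_G$ by cardinality exhibits $e_G$ as an \emph{inverse} limit of iterated extensions of $\chi$'s, and localizing subcategories are closed under colimits, not inverse limits. You are honest that these gaps must be closed by \cite{BBPSWstructural}, but as stated the plan does not close them, and the recursive step in particular cannot be salvaged without a genuinely different idea.
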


\begin{Prop}\label{prop:zeroideal}
    Let $\cat U$ be a multiplicative global family. Then the zero ideal is prime in $\D{\cat{U}}^c$. 
\end{Prop}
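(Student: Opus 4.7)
The plan is to directly verify that if $X, Y \in \D{\cat U}^c$ are both nonzero, then $X \otimes Y$ is nonzero; this is precisely the primality of the zero ideal. The main tool at our disposal is \cref{thm:nontorsion}, which, for a multiplicative global family, guarantees the existence of torsion-free homology classes in any nonzero compact object. The fact that $\cat U$ is multiplicative will be crucial: it lets us combine witnesses coming from two different groups into a single group that lies in $\cat U$.

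First, I would invoke \cref{thm:nontorsion} for $X$ and $Y$ separately, obtaining groups $G, H \in \cat U$ and torsion-free classes $x \in H_*(X)(G)$ and $y \in H_*(Y)(H)$. By multiplicativity of $\cat U$, the product $G \times H$ lies in $\cat U$, and both projections $\pi_G \colon G\times H \twoheadrightarrow G$ and $\pi_H \colon G\times H \twoheadrightarrow H$ are epimorphisms in $\cat U$. The torsion-freeness hypothesis then guarantees that $\pi_G^*x \in H_*(X)(G\times H)$ and $\pi_H^*y \in H_*(Y)(G\times H)$ are both nonzero.

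Next, I would unwind the tensor product at $G \times H$. Since the tensor product in $\A{\cat U}$ is computed pointwise and every object is flat (so the derived tensor product agrees with the underived one on the level of evaluations), we have
\[
(X \otimes Y)(G\times H) \simeq X(G\times H) \otimes_k Y(G\times H)
\]
in $\D{k}$. Because $k$ is a field, the Künneth formula yields
\[
H_*((X\otimes Y)(G\times H)) \cong H_*(X)(G\times H) \otimes_k H_*(Y)(G\times H).
\]
The element $\pi_G^*x \otimes \pi_H^*y$ is a tensor product of two nonzero elements of $k$-vector spaces, hence is itself nonzero. Therefore $X \otimes Y$ has a nonzero homology class at $G \times H$ and is not isomorphic to zero, proving that $(0)$ is prime.

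The only nontrivial input is \cref{thm:nontorsion} itself, which supplies the torsion-free witnesses. Once those are on hand, the remainder is a clean pull-back-and-tensor argument, with multiplicativity of $\cat U$ providing the ambient group $G \times H$ and characteristic zero ensuring that Künneth behaves as expected. No additional obstacle is anticipated.
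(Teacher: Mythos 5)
Your proof is correct and follows essentially the same route as the paper: invoke \cref{thm:nontorsion} to extract torsion-free classes, pull them back to the product group $G\times H$ (which lies in $\cat U$ by multiplicativity), and then observe that their tensor is a nonzero class in $H_*(X\otimes Y)(G\times H)$ via the pointwise tensor and the K\"unneth isomorphism over $k$. The paper phrases the K\"unneth step as an injection rather than an isomorphism, but this is the same argument.
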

\begin{proof}
We need to show that if $X$ and $Y$ are nonzero then so is $X \otimes Y$. By \cref{thm:nontorsion}, we know that there exists $G,K \in \cat U$ and torsion-free elements $x_G \in H_i(X)(G)$ and $y_K \in H_j(Y)(K)$ for some integers $i,j$. Let $p_G \colon G \times K \to G$ be the projection onto $G$, and similarly for $p_K$. Put $x=H_i( X(p_G))(x_G)\not = 0$ and $y = H_j(Y(p_K))(y_K)\not = 0$. Thus the image of the element 
    \[
        x \otimes y \in H_i(X)(G\times K)\otimes H_j(Y)(G\times K) \hookrightarrow  H_{i+j}(X \otimes Y)(G \times K)
    \]
is nonzero, so $X \otimes Y \not \simeq 0$. 
\end{proof}

\begin{Cor}\label{cor:familyprime}
    Let $\cat U \subseteq \cat V \subseteq \cat G$ be subcategories with $\cat U $ a multiplicative global family and $\cat V$ satisfying \cref{hyp2}. Then $\mathfrakp_{\cat U}$ is prime in $\D{\cat{V}}^c$.
\end{Cor}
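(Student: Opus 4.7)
The plan is to reduce this to \cref{prop:zeroideal} by restricting to $\cat U$. First I would verify that $\cat U$ itself satisfies \cref{hyp2}: as a multiplicative global family, $\cat U$ is closed downwards and closed under subgroups (so nonempty $\cat U$ contains the trivial group, i.e., $\cat U$ is unital); by \cref{eg-implies-widely-closed} it is widely closed, and by \cref{rem-unit-fg-projective} the unit of $\A{\cat{U}}$ is finitely generated projective, hence compact. Thus $\D{\cat{U}}^c$ is a genuine (unital) tt-category and \cref{prop:zeroideal} applies to give that $(0) \subseteq \D{\cat{U}}^c$ is a prime tt-ideal.

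Next I would package the inclusion $i\colon \cat{U}\hookrightarrow \cat{V}$ as a tt-functor on compacts. Since $\cat U$ is closed downwards in $\cat V$, \cref{prop-incl-preserve-compacts}(d) guarantees that $i^*\colon \sD(\cat V)\to \D{\cat U}$ preserves compact objects, and $i^*$ is exact and strong symmetric monoidal (it sends the constant diagram $\unit_{\cat V}$ to $\unit_{\cat U}$, and both units are compact by the previous paragraph together with \cref{hyp2}). So we obtain a tt-functor
\[
i^*\colon \D{\cat V}^c \longrightarrow \D{\cat U}^c .
\]

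Now I would identify $\mathfrakp_{\cat U}$ as the preimage $(i^*)^{-1}(0)$. Indeed, for $X \in \D{\cat V}^c$ we have $i^*X \simeq 0$ in $\D{\cat{U}}$ if and only if $(i^*X)(G) = X(G) \simeq 0$ for every $G \in \cat{U}$, because the evaluation functors $\ev_G$ on $\D{\cat{U}}$ are jointly conservative (they form a conservative family on the abelian category $\A{\cat U}$, and a map in the derived category is an equivalence iff it is an equivalence after evaluation at each $G$). This latter condition is exactly $X \in \mathfrakp_{\cat U}$.

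Finally, the preimage of a prime tt-ideal under a tt-functor is a prime tt-ideal (this is immediate from the definition: tt-ideal preimages are tt-ideals, and primality transfers because $i^*$ is strong monoidal, so $i^*(X \otimes Y) \simeq i^*X \otimes i^*Y \in (0)$ forces one factor to lie in $(0)$). Applying this to the prime $(0) \subseteq \D{\cat U}^c$ from \cref{prop:zeroideal} shows that $\mathfrakp_{\cat U} = (i^*)^{-1}(0)$ is prime in $\D{\cat V}^c$. There is no real obstacle here beyond the careful bookkeeping that $\cat U$ inherits \cref{hyp2} and that $i^*$ lands in compacts; the genuine content has already been absorbed into \cref{thm:nontorsion} and \cref{prop:zeroideal}.
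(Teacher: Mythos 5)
Your proposal is correct and follows essentially the same route as the paper: both proofs construct the tt-functor $i^*\colon \D{\cat V}^c \to \D{\cat U}^c$ using \cref{prop-gen-fun-preserve-compacts} and \cref{prop-incl-preserve-compacts}(d), then identify $\mathfrakp_{\cat U}$ as the preimage under $i^*$ of the zero ideal, which is prime by \cref{prop:zeroideal}. Your write-up just spells out a few details the paper leaves implicit (that $\cat U$ itself inherits \cref{hyp2}, and the standard fact that preimages of primes under tt-functors are prime), which is reasonable bookkeeping but not a different argument.
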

\begin{proof}
By \cref{prop-gen-fun-preserve-compacts} and \cref{prop-incl-preserve-compacts} there is an exact strong monoidal functor $i^* \colon \D{\cat{V}}^c \to \sD(\cat U)^c$ which therefore induces a map $\mathrm{Spc}(i^*)\colon \mathrm{Spc}(\sD(\cat U)^c) \to \mathrm{Spc}(\D{\cat{V}}^c)$ on spectra. Since the zero ideal $0$ is prime in $\sD(\cat U)^c$ by \cref{prop:zeroideal},
    \[
        \mathrm{Spc}(i^*)(0)= \{X \in \D{\cat{V}}^c \mid i^*(X) \simeq 0 \}=\mathfrakp_{\cat U}
    \]
is a prime ideal in $\D{\cat{V}}^c$, as required.
\end{proof}

As a further consequence of the previous result, we deduce the following.

\begin{Cor}\label{cor:infinitedimensional}
    Let $\cat V \subseteq \cat G$ be any family that contains an infinite nested sequence of multiplicative global families, with strict inclusions. Then the spectrum $\Spc(\D{\cat V}^c)$ has infinite Krull dimension. In particular, if $\cat V$ contains the family $\fabp$ of all finite abelian $p$-groups for a prime number $p$, then its associated Balmer spectrum is infinite dimensional.
\end{Cor}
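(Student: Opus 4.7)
The plan is to directly exhibit an infinite strictly descending chain of prime ideals in $\D{\cat V}^c$. Given the nested sequence $\cat U_1 \subsetneq \cat U_2 \subsetneq \cdots \subseteq \cat V$ of multiplicative global families, \cref{cor:familyprime} (which requires $\cat V$ to satisfy \cref{hyp2}, a condition we will implicitly assume or verify in each application) produces a sequence of prime ideals
\[
    \mathfrakp_{\cat U_1} \supseteq \mathfrakp_{\cat U_2} \supseteq \mathfrakp_{\cat U_3} \supseteq \cdots
\]
in $\D{\cat V}^c$, where the inclusions are obvious since the condition of vanishing on a larger family is more restrictive.

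The key step is to verify that each inclusion is strict. For each $n$, pick $G \in \cat U_{n+1} \setminus \cat U_n$ and consider the compact object $e_G \in \D{\cat V}^c$; this lies in $\D{\cat V}^c$ because $G \in \cat V$. On the one hand, for any $H \in \cat U_n$ we have $e_G(H) = k[\Hom_{\cat V}(H,G)]$ by \cref{rem-formula-eGV}, and any epimorphism $H \twoheadrightarrow G$ would force $G \in \cat U_n$ since $\cat U_n$ is closed downwards; as no such epimorphism exists, $e_G(H) = 0$, so $e_G \in \mathfrakp_{\cat U_n}$. On the other hand, $e_G(G) = k[\Out(G)] \neq 0$, so $e_G \notin \mathfrakp_G$; but $G \in \cat U_{n+1}$ gives the containment $\mathfrakp_{\cat U_{n+1}} \subseteq \mathfrakp_G$, and hence $e_G \notin \mathfrakp_{\cat U_{n+1}}$. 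This shows $\mathfrakp_{\cat U_{n+1}} \subsetneq \mathfrakp_{\cat U_n}$, and arbitrarily long strict chains of prime ideals demonstrate that $\Spc(\D{\cat V}^c)$ has infinite Krull dimension.

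For the final claim, I would exhibit an explicit infinite nested sequence inside $\fabp$: take $\cat U_l = \fabp^{\leq l} = \{G \in \fabp \mid p^l G = 0\}$, the family of finite abelian $p$-groups of exponent dividing $p^l$. Each $\cat U_l$ is closed under subgroups, quotients, and finite products, hence is a multiplicative global family, and the inclusions $\cat U_l \subsetneq \cat U_{l+1}$ are strict since $\bbZ/p^{l+1} \in \cat U_{l+1} \setminus \cat U_l$. This filtration lies inside $\fabp \subseteq \cat V$, so the previous paragraph applies.

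The whole argument is essentially an application of \cref{cor:familyprime} together with the observation that $e_G$ serves as an explicit witness separating consecutive family primes; the main conceptual input is the downward-closedness of multiplicative global families, which forces $e_G$ to vanish on every group of a strictly smaller family. No serious obstacle is anticipated.
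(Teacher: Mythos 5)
Your proof is correct and follows essentially the same route as the paper's: apply \cref{cor:familyprime} to the nested sequence of multiplicative global families, use $e_G$ for $G \in \cat U_{n+1}\setminus\cat U_n$ as a witness separating consecutive family primes (vanishing on $\cat U_n$ by downward closure, nonvanishing at $G$), and instantiate with the exponent filtration $\fabp^{\leq l}$ for the last claim. The only cosmetic difference is that you deduce $e_G \notin \mathfrakp_{\cat U_{n+1}}$ by routing through the group prime $\mathfrakp_G$, whereas the paper observes $e_G(G)\neq 0$ directly.
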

\begin{proof}
     Assume that we have an infinite sequence of strict inclusions
    \[\cat{V}_0 \subset \cat{V}_1 \subset \dots \subset \cat{V}_n \subset \dots \subset \cat V\]
    of multiplicative global families. Each of these families $\cat{V}_n$ gives a family prime $\mathfrakp_{\cat{V}_n}$ by \cref{cor:familyprime}. By construction, these family primes form a nested sequence, since $\cat{V}_{n-1} \subset \cat{V}_n$ means that $\mathfrakp_{\cat{V}_n} \subseteq \mathfrakp_{\cat{V}_{n-1}}$. Take some group $G \in \cat{V}_n$ that does not belong to $\cat{V}_{n-1}$. Then $e_G$ does not belong to $\mathfrakp_{\cat{V}_n}$, but belongs to $\mathfrakp_{\cat{V}_{n-1}}$, since having $e_G(H)\neq 0$ for some $H \in \cat V_{n-1}$ would mean that there exists an epimorphism $H\to G$, which would contradict the fact that $\cat{V}_{n-1}$ is closed under quotients. Therefore these family primes form a strict descending chain of prime ideals and hence $\mathrm{Spc}(\D{\cat{V}}^c)$ is infinite dimensional.

    For the final claim, let $\cat V$ be a family that contains $\fabp$, and consider the subcategory $\fabpex{n} = \{G \in \fabp \mid p^nG = 0\}$ of groups of $p$-exponent less than or equal to $n$. The nested sequence of multiplicative global families
    \[
    \fabpex{1} \subset \fabpex{2} \subset \cdots  \subset \fabpex{n} \subset\ldots \subset  \fabp\]
    shows that the Balmer spectrum of $\D{\fabp}^c$ (and hence of $\D{\cat{V}}^c$) is infinite dimensional. 
\end{proof}

We conclude this section with three examples that showcase the type of phenomena that we encounter in $\Spc(\D{\cat{U}}^c)$; their proofs and generalizations are given later in this paper.

\begin{Exa}\label{exa:cyclicprimes}
    Let $\Cprime$ be the family of cyclic groups of prime order together with the trivial group. The map \eqref{eq:groupprimemap} gives a bijection
        \[
            \pi_0\Cprime \xrightarrow{\cong} \Spc(\D{\Cprime}^c),
        \]
    so every tt-prime is a group prime in this case. The topology on the spectrum is the cofinal topology, with unique accumulation point given by the group prime $\mathfrakp_{1}$ of the trivial group. See \cref{ex-cyclic-groups-prime-order} for more details. The space can thus be drawn as follows:
    \begin{figure}[ht!]
    \[
    \vcenter{\xy
    (0,0)*{\halfcirc[0.5ex]_{\mathfrakp_{C_2}}};
    (15,0)*{\halfcirc[0.5ex]_{\mathfrakp_{C_3}}};
    (30,0)*{\halfcirc[0.5ex]_{\mathfrakp_{C_{5}}}};
    (45,0)*{\ldots};
    (60,0)*{\bullet_{\mathfrakp_{1}}};
    {\ar@{-} (-5,5)*{};(65,5)*{}};
    {\ar@{-} (-5,5)*{};(-5,-5)*{}};
    {\ar@{-} (-5,-5)*{};(65,-5)*{}};
    {\ar@{-} (65,5)*{};(65,-5)*{}};
    \endxy}
    \]
    \caption{An illustration of $\Spc(\D{\Cprime}^c)$. The half-filled circles are clopen, while the bullet represents the accummulation point.}
    \end{figure}
\end{Exa}

\begin{Exa}\label{exa:cyclicp}
    Consider the family $\cat{C}_p$ of cyclic $p$-groups. In this case, \eqref{eq:groupprimemap} induces a map from the one-point compactification of $\pi_0\cat{C}_p$ to the spectrum,
        \[
            (\pi_0\cat{C}_p)^+ \xrightarrow{\cong} \Spc(\D{\cat{C}_p}^c),
        \]
    which turns out to be a homeomorphism. The compactifying point in $(\pi_0\cat{C}_p)^+$ corresponds to the tt-prime 
        \[
            \mathfrakp_{\mathrm{tors}} = \{X \in \D{\cat{C}_p}^c \mid X(C_{p^n}) = 0\;  \forall n\gg 0 \}.
        \]
    See \cref{ex:cyclic-p-groups} for more details. The next picture illustrates this spectrum:
    \begin{figure}[ht!]
    \[
    \vcenter{\xy
    (0,0)*{\halfcirc[0.5ex]_{\mathfrakp_{1}}};
    (15,0)*{\halfcirc[0.5ex]_{\mathfrakp_{C_p}}};
    (30,0)*{\halfcirc[0.5ex]_{\mathfrakp_{C_{p^2}}}};
    (45,0)*{\ldots};
    (60,0)*{\bullet_{\mathfrakp_{\mathrm{tors}}}};
    {\ar@{-} (-5,5)*{};(65,5)*{}};
    {\ar@{-} (-5,5)*{};(-5,-5)*{}};
    {\ar@{-} (-5,-5)*{};(65,-5)*{}};
    {\ar@{-} (65,5)*{};(65,-5)*{}};
    \endxy}
    \]
    \caption{An illustration of $\Spc(\D{\cat{C}_p}^c)$. The half-filled circles are clopen, while the bullet represents the accummulation point.}
    \end{figure}
\end{Exa}

\begin{Exa}\label{exa:elemabp}
    For the family $\cat{E}_p$ of elementary abelian $p$-groups, the spectrum $\Spc(\D{\cat{E}_p}^c)$ turns out to be homeomorphic to the Hochster dual of $\Spec(\Z)$. The group primes form a discrete subspace via \eqref{eq:groupprimemap}, while the extra point corresponds to the zero prime ideal. It is contained in every other prime, and this completely describes the spectrum; see \cref{thm-elementary} for details. 
    \begin{figure}[H]
        \[
        \begin{tikzpicture}[mybox/.style={draw, inner sep=5pt}]
            \node[mybox] (box){
            \begin{tikzcd}
                &&  \bullet_{(0)} &&& \\
                \emptycirc[0.5ex]_{\mathfrakp_{0}} \arrow[urr, rightsquigarrow] & \emptycirc[0.5ex]_{\mathfrakp_{(\Z/p)^{\times 1}}} \arrow[ur, rightsquigarrow]  & \emptycirc[0.5ex]_{\mathfrakp_{(\Z/p)^{\times 2}}} 
                \arrow[u, rightsquigarrow] & \ldots & \emptycirc[0.5ex]_{\mathfrakp_{(\Z/p)^{\times n}}} \arrow[ull, rightsquigarrow]& \ldots 
            \end{tikzcd}
            };
        \end{tikzpicture}
        \]
        \caption{An illustration of $\Spc(\D{\cat{E}_p}^c)$. The open circles represent isolated points, while the bullet corresponds to the unique closed point.}
    \end{figure}
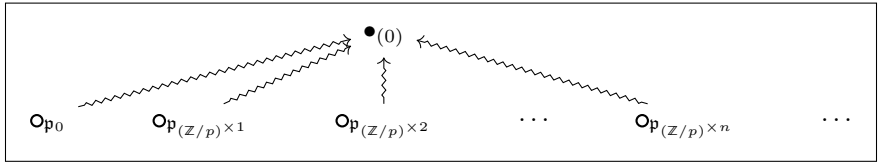
\end{Exa}

\section{The spectrum for essentially finite families}\label{sec:spcfinite}

The first goal of this section is to determine the Balmer spectrum of $\D{\cat{U}}^c$ for essentially finite collections $\cat{U}$ (see \cref{not-families}) satisfying \cref{hyp2}. We show that it is a discrete space consisting only of group primes. Based on this computation, we then establish a number of consequences for more general collections. A special feature of essentially finite families is that the ``characteristic function'' objects of \cref{def-chi-objects} are compact in the derived category:

\begin{Lem}\label{lem:chi-compact}
    Let $\cat U$ be essentially finite. Consider $G \in \cat U$ and $C \in \D{\{G\}}^c$. Then the object $\chi_{G,C}$ is compact in $\D{\cat U}$.
\end{Lem}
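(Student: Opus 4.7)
My plan is to induct on the number $n = |\pi_0 \cat U|$ of isomorphism classes of objects in $\cat U$, combining the recollement of \cref{thm:recollement} with the fact that extension by zero from an upwards-closed subcategory preserves compactness. First I would use \cref{prop-chi-VandQ}(a) to replace $C$ by $H_*(C)$, reducing via a short triangle argument on homological length to the case where $C = V$ is a finite-dimensional $\Out(G)$-representation concentrated in a single degree. The base case $n = 1$ is then immediate, since $\chi_{G,V} \cong e_{G,V}$ is finitely generated projective.

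For the inductive step I would fix a maximal element $H_0 \in \cat U$ (which exists by essential finiteness), set $\cat U' := \cat U \setminus \{H_0\}$, and write $i\colon \cat U' \hookrightarrow \cat U$ and $j\colon \{H_0\} \hookrightarrow \cat U$ for the associated inclusions. Maximality of $H_0$ makes $\cat U'$ downwards closed in $\cat U$ and $\{H_0\}$ upwards closed, placing us in the setting of \cref{thm:recollement}. If $G \cong H_0$, then $\{G\}$ itself is upwards closed in $\cat U$, so \cref{prop-incl-preserve-compacts}(a) identifies the extension-by-zero functor with $j_!$; hence $\chi_{G,V} \cong j_! V$ is compact by \cref{prop-gen-fun-preserve-compacts}(b). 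If $G \not\cong H_0$, the recollement triangle $j_! j^* X \to X \to i_* i^* X$ applied to $X = \chi_{G,V}^{\cat U}$ collapses, since $j^* \chi_{G,V}^{\cat U} = \chi_{G,V}^{\cat U}(H_0) = 0$, giving $\chi_{G,V}^{\cat U} \cong i_* \chi_{G,V}^{\cat U'}$. The inductive hypothesis yields compactness of $\chi_{G,V}^{\cat U'}$ in $\D{\cat U'}$, reducing the problem to showing that $i_*$ preserves compactness.

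The main obstacle is precisely this last point, since $i_*$ is only a right adjoint and its compact-preservation property is not automatic. My strategy would be to verify it on generators: as $i_*$ is triangulated and $\D{\cat U}^c$ is thick, it suffices to see that $i_* e_H^{\cat U'}$ is compact for each $H \in \cat U'$. Applying the recollement triangle to $X = e_H^{\cat U}$, the identification $i^* e_H^{\cat U} \cong e_H^{\cat U'}$ of \cref{rem-restriction-gen} rewrites the cofiber term as $i_* e_H^{\cat U'}$, while the fiber term equals $j_!(e_H^{\cat U}(H_0)) = \chi_{H_0, k[\Hom(H_0, H)]}$. The latter is compact by the maximal case treated above (with $G = H_0$ and $V = k[\Hom(H_0, H)]$), so $i_* e_H^{\cat U'}$ sits between two compact objects in a triangle and is therefore compact, closing the induction.
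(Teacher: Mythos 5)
Your proof is correct, but it takes a genuinely different route from the paper's. The paper's argument is a two-line reduction: it uses \cref{prop-chi-VandQ}(a) to pass to $\chi_{G,H_*(C)}$ with $H_*(C)$ finite-dimensional, and then cites Proposition 7.4 of the companion paper \cite{BBPSWstructural}, where compactness of such objects is established as part of a broader characterization of perfect complexes over essentially finite families. Your argument instead stays entirely within the present paper's toolkit, proving compactness by induction on $|\pi_0\cat U|$ via the recollement of \cref{thm:recollement}. The non-obvious part — that $i_*$ preserves compactness for the inclusion of a downwards-closed subcategory whose complement is a single maximal object — you handle neatly by testing on the generators $e_H^{\cat U'}$ and exhibiting $i_*e_H^{\cat U'}$ as the cofibre of a map between the compact objects $\chi_{H_0,\,k[\Hom(H_0,H)]}$ and $e_H^{\cat U}$; this is a nice observation, since $i_*$ being a right adjoint means compact-preservation is not automatic. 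The trade-off is clear: the paper's proof is short but outsources the real work, whereas yours is self-contained and makes visible exactly how essential finiteness gets used (finitely many induction steps, with a compact ``extension by zero'' term at each stage). Both are valid; the reduction on homological length at the start is also fine, since $H_*(C)$ is a finite direct sum of shifted simple modules by Maschke and $\chi_{G,-}$ is additive.
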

\begin{proof}
     Recall from \cref{prop-chi-VandQ}(a) that $\chi_{G,C}\simeq\chi_{G, H_*(C)}$. We note that $H_*(C)$ is finite dimensional as $C$ is compact. Then compactness of $\chi_{G,H_*(C)}$ follows from \cite[Proposition 7.4]{BBPSWstructural}.
\end{proof}

\begin{Prop}\label{prop:spcfinite}
    Let $\cat U$ be essentially finite and consider $X \in \D{\cat{U}}^c$. Then 
        \[
            \thickt{X}=\thickt{\chi_{G,k} \mid G \in \hsupp(X)} \subseteq \D{\cat U}^c.
        \]
        Thus for $Y \in \D{\cat U}^c$, we have $\hsupp(X) \subseteq \hsupp(Y)$ if and only if $X \in \thickt{Y}.$
\end{Prop}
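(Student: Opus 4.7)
The proof splits into two inclusions, of rather different flavour, followed by a formal deduction of the classification statement.

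For the easy inclusion $\thickt{\chi_{G,k} \mid G \in \hsupp(X)} \subseteq \thickt{X}$, I would rely on the pointwise identity
\[
    X \otimes \chi_{G,k} \simeq \chi_{G, X(G)}
\]
in $\D{\cat{U}}$, verified directly from \cref{def-chi-objects}: both sides vanish outside the isomorphism class of $G$, and coincide with $X(G)$ at $G$. For $G \in \hsupp(X)$, so that $X(G) \not\simeq 0$, \cref{prop-chi-VandQ}(b) then yields $\thickt{\chi_{G,k}} = \thickt{\chi_{G, X(G)}}$, and consequently
\[
    \chi_{G,k} \in \thickt{\chi_{G, X(G)}} = \thickt{X \otimes \chi_{G,k}} \subseteq \thickt{X}.
\]

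The reverse inclusion $\thickt{X} \subseteq \thickt{\chi_{G,k} \mid G \in \hsupp(X)}$ is the main content, and here I would build $X$ by an explicit iterated extension from the characteristic objects. Using essential finiteness, enumerate $\pi_0\cat{U} = \{G_1, \ldots, G_n\}$ as a topological sort for the relation $\gg$, so that $\cat{U}_j \coloneqq \{G_1, \ldots, G_j\}$ is downwards closed in $\cat{U}$ and $\{G_j\}$ is upwards closed in $\cat{U}_j$ at each stage. Applying \cref{thm:recollement} to the inclusion $\alpha_j\colon \cat{U}_{j-1} \hookrightarrow \cat{U}_j$ and invoking the first fibre sequence from \cref{rem-fibre-seq-recollement} produces a triangle
\[
    \chi_{G_j, X(G_j)} \to i_j^* X \to \alpha_{j*}(i_{j-1}^* X)
\]
in $\D{\cat{U}_j}$, where $i_j\colon \cat{U}_j \hookrightarrow \cat{U}$. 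Pushing forward along $i_{j*}$, which is extension by zero by \cref{prop-incl-preserve-compacts}(b), and setting $Y_j \coloneqq i_{j*} i_j^* X$, I obtain compatible triangles
\[
    \chi_{G_j, X(G_j)} \to Y_j \to Y_{j-1}
\]
in $\D{\cat{U}}$, with $Y_0 = 0$ and $Y_n = X$. Iterating these triangles places $X$ in the thick subcategory generated by $\{\chi_{G_j, X(G_j)} \mid G_j \in \hsupp(X)\}$ (the remaining terms are zero), and a final appeal to \cref{prop-chi-VandQ}(b) upgrades this to $X \in \thickt{\chi_{G,k} \mid G \in \hsupp(X)}$.

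The main technical obstacle I anticipate is ensuring that each intermediate $Y_j$ remains compact, since the pushforward $i_{j*}$ does not preserve compactness in general. This is resolved by a two-out-of-three argument run inductively on $j$: the fibre $\chi_{G_j, X(G_j)}$ is compact by \cref{lem:chi-compact} (using that $X(G_j) \in \D{\{G_j\}}^c$ since evaluation preserves compacts), $Y_0 = 0$ is trivially compact, and hence every $Y_j$ lies in $\D{\cat{U}}^c$. Granted the equality of thick ideals, the bi-implication in the last sentence is formal: if $\hsupp(X) \subseteq \hsupp(Y)$, then $\thickt{X} = \thickt{\chi_{G,k} \mid G \in \hsupp(X)} \subseteq \thickt{\chi_{G,k} \mid G \in \hsupp(Y)} = \thickt{Y}$, while the converse is the standard monotonicity of homological support established in \cref{lem:hsupp}.
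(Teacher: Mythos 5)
Your proof is correct but takes a genuinely different route for the main inclusion. For the containment $\supseteq$, your argument and the paper's are essentially the same: the paper produces the identification $\chi_{G,X(G)} \xrightarrow{\sim} \chi_{G,k}\otimes X$ by tensoring the counit $e_{G,X(G)} \to X$ with $\chi_{G,k}$ and checking that the resulting map is a pointwise quasi-isomorphism, while you simply assert the isomorphism; in either case one should confirm that the $\Out(G)$-module structures on both sides agree, not merely the underlying objects. For the reverse containment $\subseteq$, the approaches diverge. The paper inducts on $|\hsupp(X)|$, choosing a maximal $G_n \in \hsupp(X)$ and constructing a map $\chi_{G_n,X(G_n)} \to X$ by factoring the counit $e_{G_n,X(G_n)} \to X$ through the triangle of \cref{lem-chi-eGs}, using a vanishing argument (the fibre $F$ is built from $e_H$ for $H$ strictly above $G_n$, and $X$ vanishes there by maximality); passing to the cone strictly shrinks the support and closes the induction. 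You instead assemble $X$ from the bottom up, choosing a linear extension of $(\pi_0\cat U, \gg)$ and forming the filtration $0 = Y_0 \to \cdots \to Y_n = X$ with $Y_j = i_{j*}i_j^*X$, whose graded pieces $\chi_{G_j,X(G_j)}$ are handed to you directly by the recollement fibre sequences of \cref{rem-fibre-seq-recollement}. Your version is cleaner mechanically: the recollement produces the required triangles without any factorization or vanishing argument. The trade-off is that you filter over all of $\pi_0\cat U$ rather than only $\hsupp(X)$ (though the extra graded pieces vanish), and — since you pass through $i_{j*}$, which need not preserve compacts in general — you must verify compactness of the intermediate stages, which you do correctly by two-out-of-three starting from $Y_0 = 0$ and the compactness of $\chi_{G_j,X(G_j)}$ via \cref{lem:chi-compact}. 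The paper's version sidesteps this subtlety by never leaving $\D{\cat{U}}^c$. Both approaches are valid.
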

\begin{proof}
    The ``$\supseteq$'' containment holds without the compactness assumption on $X$: if $G \in \hsupp(X)$, then we can use \cref{lem-prop-e_g}(b) to construct a map $e_{G, X(G)} \to X$ which is a quasi-isomorphism at $G$. Tensoring with $\chi_{G,k}$, we obtain a quasi-isomorphism 
    \[
    \chi_{G, X(G)}\coloneqq\chi_{G,k} \otimes e_{G, X(G)} \xrightarrow{\sim} \chi_{G,k} \otimes X,
    \]
    using \cref{def-chi-objects}. Therefore, by \Cref{prop-chi-VandQ}, we have
        \[
            \chi_{G, k} \in \thickt{\chi_{G, X(G)}} \subseteq \thickt{X}.
        \]
    For the other containment, using that $\pi_0\cat U$ is finite, we prove by induction on $n=|\hsupp(X)|$ that $X \in \thickt{\chi_{G,k}\mid G \in \hsupp(X)}$. If $n=0$, then $X\simeq 0$ and the claim is clear. Now let $n\geq 1$ and suppose that the claim holds for all objects $Z \in \D{\cat{U}}^c$ with $|\hsupp(Z)|<n$. Let $\{G_1, \ldots, G_n\}$ be a complete collection of representatives for the isomorphism classes of groups in $\hsupp(X)$ and choose $G_n$ maximal among the $G_i$s with respect to $\gg$. Set $V_n=X(G_n)$. Then we claim that there is a map $f_n\colon \chi_{G_n,V_n}\to X$ which is a quasi-isomorphism at $G_n$. To see this, we start with a map $g_n \colon e_{G_n,V_n}\to X$ which is a quasi-isomorphism at $G_n$, see \cref{lem-prop-e_g}(b). By \cref{lem-chi-eGs} there is a triangle
        \[
            F \to e_{G_n,V_n} \to \chi_{G_n,V_n}
        \]
    with $F \in \loc{e_{H}\mid H \in \ua(\{G_n\}) \setminus \{G_n\}}$. By our choice of $G_n$, we have $\hsupp(X)\cap (\ua(\{G_n\}) \setminus \{G_n\})=\emptyset$. It follows that any map from $F$ to $X$ must be zero, so $g_n$ factors through the desired map $f_n\colon \chi_{G_n,V_n} \to X$. Now let $C=\mathrm{cone}(f_n)$, so that 
    $X \in \thickt{\chi_{G_n,V_n}, C}$. Note that $\hsupp(C)=\hsupp(X)\backslash\{G_n\}$ so by the induction hypothesis 
        \[
            X \in \thickt{\chi_{G_n,V_n}, C}\subseteq\thickt{\chi_{G_n,V_n}, \chi_{G_i,k} \mid 1\leq i < n}.
        \]
    To conclude the proof we apply \cref{prop-chi-VandQ}. The final statement is an immediate consequence of the first claim.
\end{proof}

\begin{Thm}\label{thm:spcfinite}
    Let $\cat U$ be essentially finite and satisfying \cref{hyp2}. Then $\D{\cat{U}}^c$ is standard (in the sense of \cref{def:standardtt}) and the map \eqref{eq:groupprimemap} induces a homeomorphism $\pi_0\cat{U} \cong \Spc(\D{\cat{U}}^c)$. Consequently, $\Spc(\D{\cat{U}}^c)$ is a finite discrete space consisting only of the group primes. 
\end{Thm}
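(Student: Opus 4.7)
The plan is to reduce everything to the characterization of thick ideals supplied by \cref{prop:spcfinite}: for essentially finite $\cat U$ and compact $X$, $\thickt{X} = \thickt{\chi_{G,k} \mid G\in\hsupp(X)}$, or equivalently $X\in\thickt{Y}$ if and only if $\hsupp(X)\subseteq\hsupp(Y)$. The key computational input is that the characteristic functions behave like Kronecker deltas under the pointwise tensor product: since $\chi_{G,k}$ takes the value $k$ at $G$ and $0$ elsewhere, $\chi_{G,k}\otimes\chi_{H,k}\simeq 0$ whenever $G\not\cong H$, and $\chi_{G,k}\otimes\chi_{G,k}\simeq\chi_{G,k}$.

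To show that $\D{\cat U}^c$ is standard I would verify Balmer's criterion $X\in\thickt{X\otimes X}$ recorded in \cref{prop:standardtt}. Combining \cref{prop:spcfinite} with the equality $\hsupp(X\otimes X)=\hsupp(X)$ from \cref{lem:hsupp}(e), one obtains
\[
\thickt{X\otimes X} = \thickt{\chi_{G,k} \mid G\in\hsupp(X)} = \thickt{X},
\]
so the criterion holds automatically.

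For the bijection $\mathfrakp_{-}\colon\pi_0\cat U\to\Spc(\D{\cat U}^c)$, injectivity is \cref{prop:groupprimes}. For surjectivity, let $\mathfrakq$ be a tt-prime and set $S\coloneqq\{G\in\pi_0\cat U \mid \chi_{G,k}\in\mathfrakq\}$. Applying \cref{prop:spcfinite} in both directions, a compact $X$ lies in $\mathfrakq$ if and only if $\hsupp(X)\subseteq S$. Since $\chi_{G,k}\otimes\chi_{H,k}\simeq 0 \in \mathfrakq$ for $G\not\cong H$ and $\mathfrakq$ is prime, the complement $S^c$ contains at most one isomorphism class; and since $\hsupp(\unit)=\pi_0\cat U$ while $\unit\notin\mathfrakq$, the subset $S^c$ is nonempty. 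So $S^c=\{G_0\}$ for a unique $G_0\in\pi_0\cat U$, and $\mathfrakq = \{X \mid X(G_0)\simeq 0\} = \mathfrakp_{G_0}$.

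Finally, each $\chi_{G,k}$ is compact by \cref{lem:chi-compact} with $\supp(\chi_{G,k})=\{\mathfrakp_G\}$ (a closed set), and the finite direct sum $Y_G\coloneqq\bigoplus_{H\not\cong G}\chi_{H,k}$ is again compact with $\supp(Y_G)$ equal to the complement of $\{\mathfrakp_G\}$ (using the bijectivity just proved). Hence every singleton $\{\mathfrakp_G\}$ is clopen, $\Spc(\D{\cat U}^c)$ is finite discrete, and the continuous bijection $\mathfrakp_{-}$ is a homeomorphism. I do not anticipate a serious obstacle: \cref{prop:spcfinite} has done the real work, and the rest is bookkeeping around the Kronecker-delta behaviour of the $\chi_{G,k}$.
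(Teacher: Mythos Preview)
Your argument is correct. Both your proof and the paper's rest on \cref{prop:spcfinite}, and your standardness step is essentially identical to the paper's. The difference lies in how the homeomorphism is established. The paper observes that \cref{prop:spcfinite} makes $(\pi_0\cat U,\hsupp)$ a classifying support datum and then invokes Balmer's general criterion \cite[Theorem 16]{BalmerICM} as a black box. You instead argue by hand: the orthogonality $\chi_{G,k}\otimes\chi_{H,k}\simeq 0$ for $G\not\cong H$ pins down each prime as some $\mathfrakp_{G_0}$, and the compactness of the $\chi_{G,k}$ then shows every singleton is clopen. Your route is more elementary and self-contained (no external citation needed), and it makes the role of the $\chi_{G,k}$ as ``support idempotents'' transparent; the paper's route is shorter and emphasizes that this is an instance of a general pattern in tt-geometry. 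One minor remark: the equality $\chi_{G,k}\otimes\chi_{G,k}\simeq\chi_{G,k}$ that you mention in the preamble is not actually used anywhere in your argument, so you could drop it.
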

\begin{proof}
    Any thick ideal in $\D{\cat{U}}^c$ is necessarily finitely generated by \Cref{prop:spcfinite} and the fact that the family is essentially finite. The same proposition shows that for any thick ideals $\cat{J}_1,\cat{J}_2$, we have
        \[
            \cat{J}_1 \subseteq \cat{J}_2 \iff \hsupp(\cat{J}_1) \subseteq \hsupp(\cat{J}_2).
        \]
    In particular, $\cat{J}_1$ coincides with its radical as they have the same homological support, so $\D{\cat{U}}^c$ is standard. Moreover, it follows that the pair $(\pi_0\cat{U},\hsupp)$ satisfies the assumptions of \cite[Theorem 16]{BalmerICM}, so that the map $\mathfrakp_{-}$ is a homeomorphism. 
\end{proof}

\begin{Rem}\label{rem:spcfinite}
    More generally, in \cite{Xu2014}, Xu computes the spectrum of the derived category of representations of an essentially finite EI category with coefficients in an arbitrary field $k$; see also \cite{Wang2019} for an alternative proof and \cite{LiuSierra2013} for an analogous result for derived representations of a quiver. Note that their bounded derived categories agree with our categories by \cite[Proposition 7.4]{BBPSWstructural}. As already observed there, these computations in particular demonstrate the failure of the (categorical) structure presheaf on the spectrum to satisfy the sheaf condition. 
\end{Rem}

As a first application of \cref{thm:spcfinite}, we extract a structural property detected by the homological support for arbitrary collections of finite groups (\cref{prop:hsupptt+}). We will begin with two auxiliary lemmas.  

\begin{Lem}\label{lem:upwardhsupp}
    Let $i \colon \cat U \to \cat V$ be the inclusion of a down-closed subcategory and $j \colon \cat V \, \backslash \, \cat{U} \to \cat V$ the inclusion of its complement, which is closed upwards. For $X \in \D{\cat{V}}$, we have:
        \begin{enumerate}
            \item if $\hsupp(X) \subseteq \cat V \, \backslash \, \cat{U}$, then the counit map $j_!j^*X \xrightarrow{\simeq} X$ is an equivalence;
            \item $X \in \loct{e_{G}\mid G \in \ua(\hsupp(X))}$. If $X$ is compact, we may  take the thick ideal instead of the localizing tensor-ideal. 
        \end{enumerate}
\end{Lem}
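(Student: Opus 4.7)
The two parts of the lemma can be handled separately, and part (b) will reduce to part (a) together with either \cref{prop-rigidly-comp-gen} or \cref{cor:verdierquotient} depending on whether $X$ is compact.

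For (a), my plan is to check that the counit $j_!j^*X \to X$ is an equivalence pointwise, which suffices since equivalences in $\D{\cat{V}}$ are detected on objects $H\in\cat V$. Since $\cat V\,\backslash\,\cat U$ is closed upwards in $\cat V$, \cref{prop-incl-preserve-compacts}(a) tells me that $j_!$ is extension by zero. Therefore the counit evaluated at $H$ is the identity on $X(H)$ when $H\in\cat V\,\backslash\,\cat U$, and maps $0\to X(H)$ when $H\in\cat U$. In the latter case the target $X(H)$ is also zero by the hypothesis $\hsupp(X)\subseteq \cat V\,\backslash\,\cat U$, so the map is an equivalence on the nose.

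For (b), set $\cat W\coloneqq\ua(\hsupp(X))$, which is upwards closed in $\cat V$ so that $\cat V\,\backslash\,\cat W$ is downwards closed. Write $j\colon \cat W\hookrightarrow \cat V$ and $i\colon \cat V\,\backslash\,\cat W\hookrightarrow \cat V$ for the inclusions. Applying (a) to the pair $(\cat V\,\backslash\,\cat W,\cat W)$ yields $X\simeq j_!j^*X$. For the general (non-compact) assertion, I would use \cref{prop-rigidly-comp-gen} in $\D{\cat W}$ to write $j^*X \in \loc{e_G^{\cat W}\mid G\in\cat W}$, then apply the colimit-preserving functor $j_!$ and the identification $j_!(e_G^{\cat W})\cong e_G^{\cat V}$ from \cref{rem-restriction-gen} to conclude $X\in \loc{e_G^{\cat V}\mid G\in\cat W}\subseteq \loct{e_G\mid G\in\cat W}$.

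The compact case is the only subtle point, because the pullback $j^*$ is not known to preserve compactness when $\cat W$ is only upwards closed (\cref{prop-incl-preserve-compacts}(d) requires the opposite direction), so I cannot simply restrict $X$ to $\D{\cat W}^c$ and run the same argument. The plan is instead to invoke \cref{cor:verdierquotient} applied to the downwards-closed inclusion $i$, which produces a Neeman--Thomason-style Verdier sequence $\D{\cat W}^c\xrightarrow{j_!}\D{\cat V}^c\xrightarrow{i^*}\D{\cat V\,\backslash\,\cat W}^c$ on compacts. Since $\hsupp(X)\subseteq \cat W$, we have $i^*X\simeq 0$, so $X$ lies in the thick subcategory generated by the image of $j_!$ on compacts. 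Combining this with \cref{prop-compact-perfect} (to identify $\D{\cat W}^c$ as the thick subcategory generated by the $e_G^{\cat W}$ for $G\in\cat W$) and \cref{rem-restriction-gen} once more gives $X\in\thick{e_G^{\cat V}\mid G\in\cat W}\subseteq \thickt{e_G\mid G\in\cat W}$. The main obstacle, as noted, is precisely this asymmetry: the up-closed direction lacks the clean compactness preservation one would want, and Corollary~\ref{cor:verdierquotient} is the tool that bypasses it.
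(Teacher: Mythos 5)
The proposal is correct and follows essentially the same route as the paper. The only cosmetic difference is in part (a), where you verify the counit is an objectwise equivalence directly (using that $j_!$ is extension by zero), whereas the paper reads it off the cofibre sequence $j_!j^*X \to X \to i_*i^*X$ from the recollement; and in the compact case of (b), where you route through \cref{cor:verdierquotient} rather than citing the Neeman--Thomason theorem directly — but that corollary is itself proved via Neeman--Thomason, so the underlying mechanism is the same.
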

\begin{proof}
  From \cref{rem-fibre-seq-recollement}, we obtain a cofibre sequence
        \[
            j_!j^*X \to X \to i_*i^*X.
        \]
    By assumption on the homological support and \eqref{eq:hsupprestriction}, we deduce that $i^*X \simeq 0$, hence the counit map $j_!j^*X \to X$ is an equivalence. 

    Now the first claim in (b) follows from (a) applied to $\cat V \, \backslash \, \cat{U} = \ua(\hsupp(X))$, using the fact that $j_!$ preserves tensors (\cref{prop-incl-preserve-compacts}(a)):
        \begin{align*}
            X \simeq j_!j^*X & \in j_!\loct{e_{G}\mid G \in \ua(\hsupp(X))} \\
                & \subseteq \loct{j_!e_{G}\mid G \in \ua(\hsupp(X))} \\
                & = \loct{e_{G}\mid G \in \ua(\hsupp(X))}.
        \end{align*}
    Finally, the claim about compact $X$ is a formal consequence of this and the Neeman--Thomason localization theorem, since the $e_G$s are compact, see for example \cite[Theorem 7.1]{Greenlees2019}.
\end{proof}

\begin{Lem}\label{lem:downwardhsupp}
    Let $\cat{V}\subseteq \cat G$ be a subcategory. Write $i\colon \cat{U} \to \cat{V}$ for the inclusion of some down-closed subcategory. For $Y,Z \in \D{\cat{U}}$, the cofibre of the oplax monoidal structure on $i_!$ satisfies 
        \[
            \mathrm{cof}(i_!(Y \otimes Z) \to i_!(Y) \otimes i_!(Z)) \in \loct{e_G\mid G \in \cat V \, \backslash \, \cat{U}}.
        \]
    In particular, for $X \in \D{\cat{U}}^c$, we have 
        \[
            i_!\thickt{X} \subseteq \thickt{\{i_!X\}\cup\{e_G\mid G\in \cat V \, \backslash \, \cat{U}\}}.
        \]
\end{Lem}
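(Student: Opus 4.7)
My plan for the first claim is to study the cofibre $C \coloneqq \mathrm{cof}(\mu)$ by restricting it back to $\cat U$ via the functor $i^*$. Since $i^*$ is strong symmetric monoidal, applying it to the oplax structure map $\mu\colon i_!(Y\otimes Z) \to i_!Y\otimes i_!Z$ produces a map $i^*i_!(Y\otimes Z) \to i^*i_!Y \otimes i^*i_!Z$; by the adjoint definition of $\mu$, this map is identified, after precomposition with the unit $\eta_{Y\otimes Z}$, with $\eta_Y \otimes \eta_Z$. Since $i$ is fully faithful, $\eta$ is an equivalence, so $i^*\mu$ is also an equivalence and hence $i^*C \simeq 0$. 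By \eqref{eq:hsupprestriction}, this gives $\hsupp(C) \subseteq \cat V\,\backslash\,\cat U$. Because $\cat U$ is downwards closed in $\cat V$, the complement $\cat V\,\backslash\,\cat U$ is upwards closed, so $\ua(\hsupp(C)) \subseteq \cat V\,\backslash\,\cat U$, and \cref{lem:upwardhsupp}(b) yields the desired containment $C \in \loct{e_G \mid G \in \cat V\,\backslash\,\cat U}$.

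For the ``in particular'' claim, I would argue by generation. Write $\cat J \coloneqq \thickt{\{i_!X\}\cup\{e_G\mid G\in\cat V\,\backslash\,\cat U\}}$ in $\D{\cat V}^c$, and consider
\[
    \cat S \coloneqq \{W \in \D{\cat U}^c \mid i_!W \in \cat J\}.
\]
Since $i_!$ is exact and preserves compacts (\cref{prop-gen-fun-preserve-compacts}(b)), $\cat S$ is a thick subcategory of $\D{\cat U}^c$ which contains $X$, so it remains to verify that it is closed under tensoring with $\D{\cat U}^c$. Given $W\in\cat S$ and $Y \in \D{\cat U}^c$, the first claim delivers a triangle
\[
    i_!(W\otimes Y) \to i_!W \otimes i_!Y \to C
\]
with $C \in \loct{e_G \mid G \in \cat V\,\backslash\,\cat U}$. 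Both outer terms are compact by \cref{prop-gen-fun-preserve-compacts}(b) and \cref{prop-compacts-tt-widely-closed}, and hence so is $C$. The Neeman--Thomason localization theorem then upgrades this containment to $C \in \thickt{e_G\mid G\in\cat V\,\backslash\,\cat U} \subseteq \cat J$. Since $\cat J$ is a $\otimes$-ideal containing $i_!W$, we find $i_!W \otimes i_!Y \in \cat J$, and the triangle forces $i_!(W\otimes Y) \in \cat J$, i.e., $W\otimes Y \in \cat S$, concluding the proof.

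The main difficulty will be the first claim, specifically the identification that $i^*\mu$ is an equivalence. This is an adjunction bookkeeping exercise using fully faithfulness of $i$ at the level of the unit, and once it is in hand everything else is formal. The refinement from a localizing tensor ideal to a thick tensor ideal in the second claim is standard, contingent on the compactness of the cofibre term that is supplied by the good behaviour of $i_!$ (\cref{prop-gen-fun-preserve-compacts}(b)) and the closure of $\D{\cat V}^c$ under tensor (\cref{prop-compacts-tt-widely-closed}).
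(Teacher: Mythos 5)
Your proposal is correct and takes essentially the same route as the paper. Both identify $i^*C \simeq 0$ (via full faithfulness of $i$ and the strong monoidality of $i^*$) as the crux of the first claim, and both then pass through \cref{lem:upwardhsupp}(b) to land $C$ in the localizing ideal generated by the $e_G$'s over $\cat V\,\backslash\,\cat U$; you route this through the homological support characterization and the upward-closedness of the complement, while the paper substitutes $i^*C\simeq 0$ directly into the recollement fibre sequence $j_!j^*C \to C \to i_*i^*C$ from \cref{rem-fibre-seq-recollement}, but these are two phrasings of the same argument since that lemma is itself proved via the recollement. For the ``in particular'' clause, the paper condenses the thick-subcategory argument to the phrase ``the claim about thick ideals follows since $i_!$ is exact''; you have unpacked this correctly, and you have also been explicit about the compactness-of-$C$ and Neeman--Thomason step that upgrades the localizing-ideal containment to a thick-ideal containment, a step that the paper uses implicitly but does not spell out.
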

\begin{proof}
    Let $j\colon  \cat V \, \backslash \, \cat{U} \to \cat{V}$ denote the inclusion of the complement. For any $Y,Z \in \D{\cat{U}}$, the oplax monoidal structure on $i_!$ gives a cofibre sequence
        \[
            i_!(Y \otimes Z) \to i_!(Y) \otimes i_!(Z) \to C
        \]
    in $\D{\cat{V}}$. Since $i^*$ is strong monoidal and the unit transformation $\mathrm{id} \to i^*i_!$ is an equivalence, we deduce that $i^*(C) \simeq 0$. Substituting this into the cofibre sequence 
        \[
            j_!j^*C \to C \to i_*i^*C
        \]
    from \cref{rem-fibre-seq-recollement} yields $C \simeq j_!j^*(C) \in \loct{e_G\mid G \in \cat V \, \backslash \, \cat{U}}$, as in \cref{lem:upwardhsupp}(b). In particular, 
    \[
    i_!(X\otimes Z) \in \thick{i_!(X) \otimes i_!(Z), C} \subseteq \thickt{\{i_!X\}\cup\{e_G\mid G\in \cat V \, \backslash \, \cat{U}\}}.
    \]
    The claim about thick ideals follows since $i_!$ is exact.
\end{proof}

This lemma motivates the following notation. Recall that for a subcategory $\cat V\subseteq \cat G$, we write $\cat V_{\leq n}$ (respectively $\cat V_{>n}$) for the subcategory of $\cat V$ of groups of cardinality less than or equal to $n$ (respectively greater than $n$).

\begin{Def}\label{def:ttideal+}
   For $X \in \D{\cat{V}}^c$ and any $n \geq 0$, we write 
   \[
   \ua(\hsupp(X))_{>n} = \{G \in \pi_0\cat{V}_{>n} \mid \exists H \in \hsupp(X)\colon G \gg H\}.
   \]
   We then define an auxiliary thick ideal
        \[
            \mathrm{thick}_{\otimes,n}^{+}\langle X\rangle \coloneqq  
            \thickt{\{X\}\cup\{e_G\mid G\in\ua(\hsupp(X))_{>n}\}}.
        \]
    For varying $n$ these ideals form a nested sequence; set 
    \[
    \mathrm{thick}_{\otimes}^{+}\langle X\rangle \coloneqq \bigcap_{n>0}\mathrm{thick}_{\otimes,n}^{+}\langle X\rangle.
    \]
\end{Def}

\begin{Prop}\label{prop:hsupptt+}
    Let $\cat V\subseteq \cat G$ be a subcategory and consider $X,Y\in\D{\cat{V}}^c$ with $\hsupp(X)\subseteq \hsupp(Y)$.  Then $X\in\mathrm{thick}_{\otimes}^{+}\langle Y \rangle$. 
\end{Prop}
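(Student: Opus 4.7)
My plan is to show that $X \in \mathrm{thick}_{\otimes,n}^{+}\langle Y\rangle$ for every fixed $n>0$; intersecting over all $n$ then gives the desired conclusion. So I fix $n$ and set $\cat U = \cat V_{\leq n}$, which is essentially finite and downwards closed in $\cat V$, with complementary inclusions $i\colon \cat U \to \cat V$ and $j\colon \cat V_{>n} \to \cat V$. The key tool will be the cofibre sequence
\[
i_!i^*X \to X \to j_!j^!X
\]
arising from the second recollement in \cref{thm:recollement}, all of whose terms are compact in $\D{\cat V}^c$: indeed $i^*$ preserves compacts by \cref{prop-incl-preserve-compacts}(d), $i_!$ does so by \cref{prop-gen-fun-preserve-compacts}(b), and $j^!$ does so by the Verdier sequence of \cref{rem-other-Verdier-quotient}.

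For the cofibre $j_!j^!X$, I first observe that the left Kan extension formula $(i_!Z)(G) = \colim_{(H\in\cat U,\,G\gg H)} Z(H)$ immediately gives $\hsupp(i_!i^*X) \subseteq \ua(\hsupp(X))$. Since $j_!$ is extension by zero by \cref{prop-incl-preserve-compacts}(a), the cofibre sequence combined with \cref{lem:hsupp}(d) then forces $\hsupp(j_!j^!X) \subseteq \ua(\hsupp(X)) \cap \cat V_{>n} \subseteq \ua(\hsupp(Y))_{>n}$, and the compact version of \cref{lem:upwardhsupp}(b) places $j_!j^!X$ in $\thick{e_G \mid G \in \ua(\hsupp(Y))_{>n}} \subseteq \mathrm{thick}_{\otimes,n}^{+}\langle Y\rangle$.

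For the fibre $i_!i^*X$, since $\cat U$ is essentially finite and $\hsupp(i^*X) \subseteq \hsupp(i^*Y)$, \cref{prop:spcfinite} supplies $i^*X \in \thickt{i^*Y}$ in $\D{\cat U}^c$. To transfer this through $i_!$ I plan to sharpen \cref{lem:downwardhsupp}: for $A \in \thickt{i^*Y}$ and $B \in \D{\cat U}^c$, the oplax error term $C = \mathrm{cof}(i_!(A\otimes B) \to i_!A \otimes i_!B)$ satisfies $\hsupp(C) \subseteq \cat V_{>n}$ as in \cref{lem:downwardhsupp}, but the Kan extension bound $\hsupp(i_!(-)) \subseteq \ua(\hsupp(-))$ combined with $\hsupp(A) \subseteq \hsupp(i^*Y) \subseteq \hsupp(Y)$ further restricts $\hsupp(C)$ to lie in $\ua(\hsupp(Y))_{>n}$, whence $C \in \thick{e_G \mid G \in \ua(\hsupp(Y))_{>n}}$ by \cref{lem:upwardhsupp}(b). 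An induction on the generation of $\thickt{i^*Y}$ then yields
\[
i_!i^*X \in \thickt{\{i_!i^*Y\} \cup \{e_G \mid G \in \ua(\hsupp(Y))_{>n}\}}.
\]
Applying the cofibre sequence argument to $Y$ in place of $X$ shows $i_!i^*Y \in \mathrm{thick}_{\otimes,n}^{+}\langle Y\rangle$, so $i_!i^*X \in \mathrm{thick}_{\otimes,n}^{+}\langle Y\rangle$ as well, completing the argument.

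The hard part will be this sharpening of \cref{lem:downwardhsupp}: the cited lemma places $C$ only in $\loct{e_G \mid G \in \cat V_{>n}}$, which is too coarse and would force $i_!i^*X$ into an ideal strictly larger than $\mathrm{thick}_{\otimes,n}^{+}\langle Y\rangle$. The refinement is driven by the observation that every object of $\thickt{i^*Y}$ has homological support inside $\hsupp(i^*Y)$, which propagates through the Kan extension formula to give the desired tighter control on $\hsupp(C)$.
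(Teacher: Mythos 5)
Your proof is correct, but it takes a genuinely different route from the paper's. The paper first reduces to the case $\ua(\hsupp(Y)) = \cat V$ by restricting along the inclusion of the upward closure $\ua(\hsupp(Y))\hookrightarrow\cat V$; in that case it picks $n$ large enough (depending on $X$ and $Y$) that the counit $i_!i^*X\to X$ is already an equivalence, so the recollement cofibre sequence never appears and the existing \cref{lem:downwardhsupp} suffices because $\cat V_{>n}=\ua(\hsupp(Y))_{>n}$ automatically; one then lets $n\to\infty$ and intersects. You instead fix $n$, decompose $X$ via the second recollement into $i_!i^*X$ and $j_!j^!X$, bound the tail using the estimate $\hsupp(i_!Z)\subseteq\ua(\hsupp(Z))$, and handle $i_!i^*X$ by sharpening \cref{lem:downwardhsupp}. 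Both approaches rest on the same triumvirate (\cref{prop:spcfinite}, \cref{lem:upwardhsupp}, \cref{lem:downwardhsupp}); the paper's two-step reduction trades your recollement decomposition and the sharpening of the lemma for a cleverly-chosen $n$. Yours is a bit longer but has the virtue of proving the membership in $\mathrm{thick}_{\otimes,n}^{+}\langle Y\rangle$ uniformly in $n$.

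One step deserves a cleaner justification: the claim that the Kan extension formula $(i_!Z)(G)=\colim_{(H\in\cat U,\,G\gg H)}Z(H)$ ``immediately gives'' $\hsupp(i_!Z)\subseteq\ua(\hsupp(Z))$. The derived $i_!$ is computed by applying the abelian Kan extension termwise to a bounded complex of projectives, so $(i_!Z)(G)$ is indeed a chain-level colimit of the complexes $Z(H)$; but for acyclicity of the colimit to follow from acyclicity of each $Z(H)$ one would need the indexing comma category to be filtered, and this is not automatic: the natural cocone for two quotients $\alpha_i\colon G\twoheadrightarrow H_i$ is the common quotient $G/(\ker\alpha_1\cdot\ker\alpha_2)$, which need not lie in $\cat V$ when $\cat V$ is merely widely closed. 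The estimate is nonetheless true; a clean argument is to factor through upward closures: set $\cat W=\ua_{\cat V}(\hsupp(Z))$ and $\cat W_0=\cat W\cap\cat U=\ua_{\cat U}(\hsupp(Z))$ with inclusions $\ell_0\colon\cat W_0\to\cat U$, $\ell'\colon\cat W_0\to\cat W$, $\ell\colon\cat W\to\cat V$. Then \cref{lem:upwardhsupp}(a) gives $Z\simeq\ell_{0!}\ell_0^*Z$, so $i_!Z\simeq(i\ell_0)_!\ell_0^*Z=\ell_!\bigl(\ell'_!\ell_0^*Z\bigr)$, and since $\ell_!$ is extension by zero from the upward-closed $\cat W$ (\cref{prop-incl-preserve-compacts}(a)), its essential image has support inside $\cat W=\ua_{\cat V}(\hsupp(Z))$. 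With this patch the remainder of your argument, including the sharpening of \cref{lem:downwardhsupp} and the thick-ideal induction over $\thickt{i^*Y}$, goes through as written.
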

\begin{proof}
    We first prove the claim in the special case $\ua{(\hsupp(Y))} = \cat{V}$. To this end, pick a finite set $\cat{S} \subseteq \cat{V}$ such that $X$ and $Y$ are contained in $\thick{e_{G}\mid G \in \cat{S}}$. Let $n \geq \max\{|G| \mid G \in \cat{S}\}$ and write $i \colon \cat{V}_{\leq n} \to \cat{V}$.  This guarantees that the corresponding counit maps are equivalences: $i_!i^*(X) \simeq X$ and  $i_!i^*(Y) \simeq Y$, as can be checked on the generators $e_G$ for $G \in \cat{S}$. By \eqref{eq:hsupprestriction}, we have $\hsupp(i^*X) \subseteq \hsupp(i^*Y)$, so \cref{prop:spcfinite} implies $i^*X \in \thickt{i^*Y}$. Applying $i_!$ and appealing to \cref{lem:downwardhsupp}, we get
        \begin{align*}
            X \simeq i_!i^*(X) \in i_!\thickt{i^*Y} & \subseteq \thickt{\{i_!i^*Y\}\cup\{e_G\mid G\in \pi_0\cat{V}_{>n}\}} \\
            & = \thickt{\{Y\}\cup\{e_G\mid G\in\ua(\hsupp(Y))_{>n}\}}.
        \end{align*}
    Since $n$ may be taken arbitrarily large, we conclude that $X \in \mathrm{thick}_{\otimes}^{+}\langle Y \rangle$ when $\ua{(\hsupp(Y))} = \cat{V}$.

    The general case can then be deduced as follows: Write $j\colon \ua{(\hsupp(Y))} \to \cat{V}$ for the inclusion. The counit map $j_!j^* \to \mathrm{id}$ is an equivalence (\cref{lem:upwardhsupp}(a)) on $X$ and $Y$. In addition, $j_!$ preserves tensors (\cref{prop-incl-preserve-compacts}(a)), $e_G$s (\cref{rem-restriction-gen}), and homological support \eqref{eq:hsupppushforward}, so
        \[
            j_!\mathrm{thick}_{\otimes,n}^{+}\langle j^*X\rangle \subseteq \mathrm{thick}_{\otimes,n}^{+}\langle j_!j^*X\rangle = \mathrm{thick}_{\otimes,n}^{+}\langle X\rangle
        \]
        for any $n \geq 0$. By \eqref{eq:hsupprestriction}, $\hsupp(j^*X) \subseteq \hsupp(j^*Y)$. Using the claim for $\cat{V} = \hsupp(j^*Y)$ as just established, we thus get
        \[
            X \simeq j_!j^*(X) \in j_!\mathrm{thick}_{\otimes,n}^{+}\langle j^*Y\rangle \subseteq \mathrm{thick}_{\otimes,n}^{+}\langle Y\rangle.
        \]
    Taking the intersection for varying $n$ then gives the desired claim. 
\end{proof}

\section{Embedding group primes}\label{sec:ttembedding}
In this section we give a criterion for verifying if a group prime is isolated in the Balmer spectrum. We then deduce that the group prime map \eqref{eq:groupprimemap} is an open embedding for any downwards closed subcategory of finite $p$-groups. 

\begin{Def}\label{def:downclosure}
    For $G \in \cat{U}$, we define the \emph{down-closure} of $G$ in $\cat U$ by
    \[
    \cat U_{\ll G} \coloneqq \{H \in \cat{U}\mid G \gg H\}
    \]
    and denote its complement by $\cat U_{\not \ll G} \coloneqq \cat U \, \backslash\, \cat U_{\ll G}.$
\end{Def}

The next lemma highlights the relevance of the previous subcategories.

\begin{Lem}\label{lem:lambdaG}
    For any $K \in \cat{U}$, we have
        \[  
            e_K \in \mathfrakp_G \iff K \notin \; \cat U_{\ll G} \iff K \in \; \cat U_{\not \ll G}.
        \]
\end{Lem}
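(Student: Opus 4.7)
The proof will be a direct unraveling of definitions, so the plan is essentially to trace through the explicit description of $e_K$ evaluated at $G$. First I would invoke the concrete formula from \cref{rem-formula-eGV} (taking $V = k[\Out(K)]$) to obtain
\[
    e_K(G) \;\cong\; k[\Out(K)] \otimes_{k[\Out(K)]} k[\Hom_{\cat{U}}(G,K)] \;\cong\; k[\Hom_{\cat{U}}(G,K)],
\]
regarded as an object of $\D{\{G\}}$ concentrated in degree zero.

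Next, by the description of the group prime $\mathfrakp_G$ in \cref{def:groupprimes}, we have $e_K \in \mathfrakp_G$ if and only if $e_K(G) \simeq 0$, and since the complex above is concentrated in a single degree, this is equivalent to the vanishing of the underlying vector space $k[\Hom_{\cat{U}}(G,K)]$. The latter vanishes precisely when $\Hom_{\cat{U}}(G,K) = \emptyset$, which by the definition of $\cat G$ in \cref{not-families} means that there is no epimorphism $G \twoheadrightarrow K$, i.e., $G \not\gg K$. Unpacking \cref{def:downclosure}, this is exactly the statement that $K \notin \cat{U}_{\ll G}$, giving the first equivalence.

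The second equivalence is immediate from the definition $\cat{U}_{\not\ll G} = \cat{U} \setminus \cat{U}_{\ll G}$. There is no serious obstacle here: the whole argument is a bookkeeping check, with the only subtlety being the reminder that $e_K(G)$ lives in degree zero so that homological vanishing coincides with vanishing of the vector space itself.
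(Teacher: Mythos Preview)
Your proof is correct and takes essentially the same approach as the paper's: unwind the definition of $\mathfrakp_G$ to reduce to vanishing of $e_K(G)$, and then observe this vector space is nonzero precisely when there exists an epimorphism $G\twoheadrightarrow K$. The paper's version is terser (it does not spell out the isomorphism $e_K(G)\cong k[\Hom_{\cat U}(G,K)]$), but the logic is identical.
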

\begin{proof}
    By definition, $e_K\in \mathfrakp_G$ if and only if $e_K(G) = 0$, which holds if and only if $G \not \gg K$. This is equivalent to $K \notin \cat U_{\ll G}$ and to $K \in \cat U_{\not \ll G}$.
\end{proof}
As a consequence we describe the derived category $\D{\{G\}}$ as a Verdier quotient. 

\begin{Prop}\label{prop:groupprimeVerdier}
    For any $G \in \cat{U}$, the evaluation functor
    $\ev_G \colon \D{\cat U} \to \D{\{G\}}$
    induces a symmetric monoidal equivalence
    \[
            \D{\cat{U}}/\loc{\mathfrakp_G} \simeq \D{\{G\}}.
    \]
    After passing to compact objects, there is a non-unital symmetric monoidal equivalence
        \[
            (\D{\cat{U}}^{c}/\mathfrakp_G)^{\natural} \simeq \D{\{G\}}^{c}
        \]
        which is symmetric monoidal if $\cat U$ satisfies \cref{hyp2}.
\end{Prop}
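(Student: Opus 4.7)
Let $i \colon \{G\} \hookrightarrow \cat{U}$ denote the inclusion of the full subcategory on $G$. Under this identification $\ev_G = i^*$, with left adjoint $e_{G,\bullet} = i_!$ from \cref{defn-eGV} and right adjoint $i_*$ given by right Kan extension, both produced by the formalism of \cref{cons-restriction-derived}. My plan is to show that $\ev_G$ presents $\D{\{G\}}$ as a symmetric monoidal Verdier quotient of $\D{\cat{U}}$, and then to identify its kernel with $\loc{\mathfrakp_G}$ by means of the Neeman--Thomason localization theorem.

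The crux is that $i_!$ is fully faithful. Using the explicit formula from \cref{rem-formula-eGV} together with the observation that $\Hom_{\cat{U}}(G,G) = \Out(G)$, one computes
\[
    (\ev_G \circ i_!)(V) \cong V \otimes_{k[\Out(G)]} k[\Out(G)] \cong V
\]
for any $V \in \D{\{G\}}$. Now $\ev_G$ is exact, cocontinuous (it admits a right adjoint $i_*$), and symmetric monoidal as a restriction functor. The counit triangle $i_! \ev_G X \to X \to \bar{X}$ therefore has $\bar{X} \in \widetilde{\mathfrakp}_G \coloneqq \ker(\ev_G)$, showing that every object of $\D{\cat{U}}$ is equivalent modulo $\widetilde{\mathfrakp}_G$ to one in the essential image of $i_!$. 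By the universal property of the Verdier quotient, this exhibits $\ev_G$ as a Verdier quotient and yields a symmetric monoidal equivalence $\D{\cat{U}}/\widetilde{\mathfrakp}_G \xrightarrow{\sim} \D{\{G\}}$.

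To identify $\widetilde{\mathfrakp}_G$ with $\loc{\mathfrakp_G}$, I would next verify that $\ev_G$ preserves compact objects: on the compact generators $\ev_G(e_H) \cong k[\Hom_{\cat{U}}(G,H)]$ is a finite-dimensional $\Out(G)$-representation, hence compact by \cref{prop-compact-perfect}. Applying the Neeman--Thomason localization theorem \cite[Theorem 2.1]{Neeman92} to the compact-preserving Verdier quotient $\ev_G$ then gives $\widetilde{\mathfrakp}_G = \loc{\widetilde{\mathfrakp}_G \cap \D{\cat{U}}^c} = \loc{\mathfrakp_G}$, along with a non-unital symmetric monoidal equivalence $(\D{\cat{U}}^c/\mathfrakp_G)^{\natural} \xrightarrow{\sim} \D{\{G\}}^c$ after idempotent completion. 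Under \cref{hyp2} the unit of $\D{\cat{U}}$ is compact, so both subcategories of compacts are unital symmetric monoidal and the equivalence upgrades to a symmetric monoidal one in the usual sense.

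The main obstacle to be mindful of is that $\D{\cat{U}}$ is not rigidly-compactly generated in general (\cref{prop-rigidly-comp-gen}), so one cannot directly invoke the principle that every localising ideal is compactly generated. Fortunately, the combination of the fully faithful left adjoint $i_!$ and the compact-preservation of $\ev_G$ places us squarely within the classical Neeman--Thomason framework and avoids this difficulty.
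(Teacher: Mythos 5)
Your route is genuinely different from the paper's: the paper reduces to the essentially finite case via the recollement for the downwards-closed inclusion $\cat{U}_{\ll G} \hookrightarrow \cat{U}$ and then invokes \cref{prop:spcfinite} to identify $\mathfrakp_G$ with a thick ideal generated by the (now compact) objects $\chi_{H,k}$, whereas you work directly with the non-downwards-closed inclusion $\{G\} \hookrightarrow \cat{U}$ and exploit the fully faithful left adjoint $i_! = e_{G,\bullet}$ of $\ev_G$. Your argument up to the symmetric monoidal equivalence $\D{\cat{U}}/\ker(\ev_G) \simeq \D{\{G\}}$ is sound and arguably cleaner.

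However, the step identifying $\ker(\ev_G)$ with $\loc{\mathfrakp_G}$ is a genuine gap as written. \cite[Theorem 2.1]{Neeman92} goes in the opposite direction: starting from a thick subcategory $\cat{R}$ of compacts, it describes the compact objects of the quotient by $\loc{\cat{R}}$. It does not assert that the kernel of a compact-preserving Verdier quotient is automatically generated by its own compacts; in general that assertion is in the orbit of the telescope conjecture and fails, regardless of rigidity. What rescues your approach is precisely the extra structure you already set up, namely the fully faithful \emph{left} adjoint. For each compact generator $e_H$ set $\bar e_H \coloneqq \cof(i_!\ev_G e_H \to e_H)$; this is compact (since $\ev_G$ preserves colimits, $i_!$ preserves compacts) and lies in $\mathfrakp_G$. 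If $X \in \ker(\ev_G)$ satisfies $\Hom(\Sigma^n \bar e_H, X) = 0$ for all $n,H$, then applying $\Hom(-,X)$ to the triangle $i_!\ev_G e_H \to e_H \to \bar e_H$ together with the adjunction isomorphism $\Hom(\Sigma^n i_!\ev_G e_H, X) \cong \Hom(\Sigma^n \ev_G e_H, \ev_G X) = 0$ gives $\Hom(\Sigma^n e_H, X) = 0$, hence $X = 0$. Thus $\{\bar e_H\}_H$ is a set of compact generators of $\ker(\ev_G)$, so $\ker(\ev_G) = \loc{\bar e_H \mid H} \subseteq \loc{\mathfrakp_G}$, and the reverse containment is clear. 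Only once this equality is established does the Neeman--Thomason theorem apply to yield the statement on compact objects. With this supplementary argument, your proof is complete and constitutes a nice alternative to the paper's.
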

\begin{proof}
      Let us prove the first claim. To this end, let $i \colon \cat U_{\ll G} \to \cat{U}$ and $j \colon \cat U_{\not \ll G} \to \cat U$ denote the inclusions. \Cref{lem:lambdaG} shows that there is a containment  
     \[
     \mathrm{im}(j_!)=\loc{e_K\mid K \in \cat U_{\not \ll G}} \subseteq \loc{\mathfrakp_G}.
     \]
    It follows that the Verdier quotient $\D{\cat U}/\loc{\mathfrakp_G}$ can be calculated in two steps: by first nullifying $\mathrm{im}(j_!)$, and then by nullifying the image of $\loc{\mathfrakp_G}$ in the quotient $\D{\cat U}/\mathrm{im}(j_!)$. Now \cref{cor:verdierquotient} tells us that the nullification of $\mathrm{im}(j_!)$ is equivalent to $\D{\cat U_{\ll G}}$ via the functor $i^*$. Therefore we have a factorization as in the following commutative square 
        \[
            \xymatrix{
            \loc{\mathfrakp_G} \ar[r]\ar[d]^{i^*} &\D{\cat{U}}\ar[r] \ar[d]^{i^*} & \D{\cat{U}}/\loc{\mathfrakp_G} \ar@{=}[d] \\
            \loc{\mathfrakp_G} \ar[r] &\D{\cat{U}_{\ll G}} \ar[r] & \D{\cat{U}_{\ll G}}/\loc{\mathfrakp_G} \ar[r] & \D{\{G\}}.
            }
        \]
   This reduces the claim to $\cat{U}_{\ll G}$ which is essentially finite. In other words, we need to prove that the right most horizontal arrow in the above diagram is an equivalence. Write $\cat{U}_{< G}\coloneqq\cat{U}_{\ll G}\, \backslash \,\{G\}$, and let $k\colon \cat{U}_{< G} \to \cat{U}_{\ll G}$ and $h \colon \{G\}\to \cat U_{\ll G}$ be the inclusions. In light of \cref{cor:verdierquotient}, the functor $h^*$ induces an equivalence 
   \[
   \frac{\D{\cat U_{\ll G}}}{\im(k_*)}\xrightarrow{\sim} \D{\{G\}}
   \]
   so it suffices to verify the equality
   \[
   \loc{\mathfrakp_G}=\mathrm{im}(k_*).
   \]
    Since $k_*$ is given by extension by zero and $\cat{U}_{\ll G}$ is essentially finite so the objects $\chi_{H,k}$s are compact (\cref{lem:chi-compact}), we have 
        \[
            \mathrm{im}(k_*) = \loc{\chi_{H,k}\mid H \in \cat U_{<G}} = \loc{\mathrm{im}(k_*)^c}.
        \]
    The claim then follows because both $\mathfrakp_G$ and $\im(k_*)^c$ agree with 
    \[
    \thickt{\chi_{H,k}\mid H \in \cat U_{<G}}\subseteq \D{\cat U_{\ll G}}^c
    \]
    by \cref{prop:spcfinite}.
    The second claim follows from the first one together with the Neeman--Thomason localization theorem \cite[Theorem 2.1]{Neeman92}. Recall that in general $\D{\cat U}^c$ is only non-unital symmetric monoidal, however under \cref{hyp2} it also admits a unit object.
\end{proof}   

This result leads to a strengthening of \cref{prop:groupprimes}:

\begin{Cor}\label{cor:groupprimes_generic}
    Let $\cat U$ be a subcategory satisfying \cref{hyp2}. For any $G \in \cat{U}$, the group prime $\mathfrakp_G$ is a maximal prime in $\Spc(\D{\cat{U}}^{c})$, that is, if $\mathfrakp \in \Spc(\D{\cat{U}}^{c})$ is such that $\mathfrakp_G \subseteq \mathfrakp$, then $\mathfrakp_G = \mathfrakp$.
\end{Cor}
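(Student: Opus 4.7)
The plan is to reduce the statement to a direct application of \cref{prop:groupprimeVerdier} together with the computation of the spectrum in the essentially finite case from \cref{thm:spcfinite}.

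First I would invoke the standard fact from Balmer's foundational work that, for any essentially small tt-category $\cat K$ and any thick ideal $\cat J \subseteq \cat K$, the Verdier quotient functor $\cat K \to \cat K/\cat J$ induces a homeomorphism
\[
\Spc(\cat K/\cat J) \xrightarrow{\;\cong\;} \{\mathfrak{p} \in \Spc(\cat K) \mid \cat J \subseteq \mathfrak{p}\}.
\]
Moreover, passing to the idempotent completion does not change the Balmer spectrum, so $\Spc((\cat K/\cat J)^{\natural}) \cong \Spc(\cat K/\cat J)$. Under \cref{hyp2}, \cref{prop:groupprimeVerdier} supplies a \emph{symmetric monoidal} equivalence
\[
(\D{\cat U}^c/\mathfrak{p}_G)^{\natural} \simeq \D{\{G\}}^c,
\]
so combining these two observations yields a homeomorphism
\[
\{\mathfrak{p} \in \Spc(\D{\cat U}^c) \mid \mathfrak{p}_G \subseteq \mathfrak{p}\} \cong \Spc(\D{\{G\}}^c).
\]

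Next I would apply \cref{thm:spcfinite} to the essentially finite family $\{G\}$, which contains a single isomorphism class of objects. This gives
\[
\Spc(\D{\{G\}}^c) \cong \pi_0\{G\} = \{*\},
\]
a single point. Chasing this point through the homeomorphism above, it must correspond to $\mathfrak{p}_G$ itself, so $\mathfrak{p}_G$ is the unique prime containing $\mathfrak{p}_G$, which is precisely the claim of maximality.

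There is no substantive obstacle here; the result is essentially a formal corollary of the Verdier quotient description together with the essentially finite case. The only point requiring a modicum of care is ensuring that the equivalence in \cref{prop:groupprimeVerdier} is symmetric monoidal (and not merely non-unital symmetric monoidal), which is why \cref{hyp2} is imposed in the statement; this is exactly what allows us to compare Balmer spectra on the two sides.
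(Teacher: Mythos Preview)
Your proposal is correct and matches the paper's primary proof essentially verbatim: invoke \cref{prop:groupprimeVerdier} to identify $(\D{\cat U}^c/\mathfrak{p}_G)^{\natural}$ with $\D{\{G\}}^c$, use Balmer's result that the quotient spectrum bijects with primes containing $\mathfrak{p}_G$, and conclude via \cref{thm:spcfinite} that this is a single point. The paper also records a short alternative argument that avoids the Verdier quotient entirely, working directly with the cofibre of $e_{G,X(G)} \to X$ and \cref{cor-eG-eGV}, but your route is the one given as the main proof.
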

\begin{proof}
    The Verdier localization $\D{\cat{U}}^{c}\to (\D{\cat{U}}^{c}/\mathfrakp_G)^{\natural}$ induces a bijection between the spectrum of the quotient and those primes $\mathfrakp$ in $\D{\cat{U}}^{c}$ containing $\mathfrak{p}_G$, see \cite[Proposition 3.11]{Balmer2005}. Using \cref{prop:groupprimeVerdier}, we thus get a bijection
        \[
            \Spc(\D{\{G\}}^{c}) \cong \Spc((\D{\cat{U}}^{c}/\mathfrakp_G)^{\natural}) \cong \{\mathfrakp \in \Spc(\D{\cat{U}}^c) \mid \mathfrakp_G \subseteq \mathfrakp\}.
        \]
    The claim follows, because $\Spc(\D{\{G\}}^{c})$ is a point by \cref{thm:spcfinite}.  
\end{proof}
\begin{proof}[Alternative proof]
    More concretely, we could argue as follows.  Consider a compact object $X\notin\mathfrak{p}_G$, and an arbitrary $Y \in \D{\cat{U}}^c$; it suffices to show that $Y$ lies in the thick ideal $\cat{J}$ generated by $\mathfrak{p}_G\cup\{X\}$. By \cref{lem-prop-e_g}(b) there is a canonical map $e_{G,X(G)}\to X$, and we let $X'$ be the cofibre.  By construction we have $X'(G)\simeq 0$ so $X'\in\mathfrak{p}_G$ and $e_{G,X(G)}\in\cat{J}$. 
 We are assuming that $X\notin\mathfrak{p}_G$ so $X(G)\not\simeq 0$ so $e_{G,X(G)}$ generates the same thick ideal as $e_G$ by \cref{cor-eG-eGV}.  We therefore have $e_G\in\cat{J}$, and so $e_{G,V}\in\cat{J}$ for any $V \in \D{\{G\}}^c$.  We have another cofibre sequence $e_{G,Y(G)}\to Y\to Y'$ with $e_{G,Y(G)}\in\cat{J}$ and $Y'\in\mathfrak{p}_G\subseteq\cat{J}$, so $Y\in\cat{J}$ as required.
\end{proof}

\begin{Rem}
    Although all group primes are maximal primes, the converse does not hold in general. Indeed, the spectrum $\Spc(\D{\cat{C}_p}^c)$ of \cref{exa:cyclicp} is Hausdorff, hence all of its points are maximal primes, but $\mathfrakp_{\infty}$ is not a group prime.  
\end{Rem}

Recall that a point $x\in X$ in a topological space is said to be \emph{isolated} if the set $\{x\}$ is open in $X$; when $X$ is non-Hausdorff, this does not imply that $X$ is the coproduct of $\{x\}$ and $X\setminus\{x\}$. We now state our criterion for verifying whether a group prime is isolated in the Balmer spectrum of $\D{\cat{U}}^{c}$. 

\begin{Prop}\label{lem:isolation_criterion}
    Let $\cat U$ be a subcategory satisfying \cref{hyp2} and let $G \in \cat{U}$. If $\pi_0 \cat U_{\not \ll G}$ has finitely many minimal elements under $\gg$, then $\mathfrakp_G \in \Spc(\D{\cat{U}}^{c})$ is isolated. 
\end{Prop}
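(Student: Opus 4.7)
The plan is to combine three ingredients: (i) the hypothesis that $\pi_0\cat U_{\not\ll G}$ has a finite set $H_1,\ldots,H_n$ of $\gg$-minimal elements, (ii) a retract argument showing that $\thickt{e_K\mid K\in\cat U_{\not\ll G}}$ is finitely generated, and (iii) the Verdier-quotient embedding of \cref{cor:verdierquotient}. I would begin by observing that a Noetherian descent argument (each $K \in \cat U_{\not\ll G}$ admits a chain of proper quotients inside $\cat U_{\not\ll G}$, which must terminate since $|K|$ is finite) forces every $K \in \cat U_{\not\ll G}$ to satisfy $K\gg H_i$ for some $i$; equivalently, $\cat U_{\not\ll G}=\ua\{H_1,\ldots,H_n\}$.

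The technical heart of the proof is the following retract claim: for any epimorphism $\alpha\colon K \twoheadrightarrow H$ in $\cat U$, the object $e_K \in \D{\cat U}^c$ is a retract of $e_K \otimes e_H$, so $e_K \in \thickt{e_H}$. I would construct this explicitly via the natural transformations
\[
    s\colon e_K \to e_K\otimes e_H,\quad [f]\mapsto [f]\otimes[\alpha\circ f],\qquad
    r\colon e_K\otimes e_H \to e_K,\quad [f]\otimes[g]\mapsto \begin{cases} [f] & \text{if } [g]=[\alpha\circ f], \\ 0 & \text{otherwise,}\end{cases}
\]
where $[f]$ ranges over basis elements of $e_K(L)=k[\Hom_{\cat U}(L,K)]$ for $L\in\cat U$. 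One then verifies (a) well-definedness on conjugacy classes, using that conjugating $f$ by $k\in K$ conjugates $\alpha\circ f$ by $\alpha(k)\in H$; (b) naturality in $L$, where the key backward step $[g\beta]=[\alpha f\beta]\Rightarrow [g]=[\alpha f]$ uses epi-cancellation for $\beta$; and (c) $r\circ s=\id$ by direct computation. Applied to $K\gg H_i$ for every $K\in\cat U_{\not\ll G}$, this yields
\[
    \thickt{e_K\mid K\in\cat U_{\not\ll G}} \;=\; \thickt{e_{H_1},\ldots,e_{H_n}} \;=\; \thickt{a_1},
\]
where $a_1\coloneqq\bigoplus_{i=1}^n e_{H_i}\in\D{\cat U}^c$.

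Finally, note that $\cat U_{\ll G}$ is essentially finite since the finite group $G$ has only finitely many isomorphism classes of quotients (and it satisfies \cref{hyp2} by the remark following that hypothesis). Invoking \cref{cor:verdierquotient} for the inclusion $i\colon \cat U_{\ll G}\hookrightarrow \cat U$ produces a topological embedding $\Spc(\D{\cat U_{\ll G}}^c)\hookrightarrow \Spc(\D{\cat U}^c)$ whose image is the set of primes containing the thick ideal $\thickt{e_K\mid K\in\cat U_{\not\ll G}}$; by the previous paragraph this equals $\{\mathfrakp\mid a_1\in\mathfrakp\}=U(a_1)$, a basic open subset of $\Spc(\D{\cat U}^c)$. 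By \cref{thm:spcfinite}, $\Spc(\D{\cat U_{\ll G}}^c)$ is finite and discrete, so $U(a_1)$ is a finite discrete open subspace of the ambient spectrum. The singleton $\{\mathfrakp_G\}$ is therefore open inside $U(a_1)$, and since open subsets of open subspaces remain open in the ambient space, $\{\mathfrakp_G\}$ is open in $\Spc(\D{\cat U}^c)$, i.e.\ $\mathfrakp_G$ is isolated. The main obstacle in this plan is the retract claim; the formulas look simple but verifying naturality of $r$ and well-definedness on conjugacy classes requires careful bookkeeping.
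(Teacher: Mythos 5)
Your proof is correct and follows essentially the same route as the paper's: reduce to the Verdier-quotient open embedding $\Spc(\D{\cat U_{\ll G}}^c)\hookrightarrow \Spc(\D{\cat U}^c)$ supplied by \cref{cor:verdierquotient}, and then argue that the image is open because the kernel of $i^*$ is generated as a thick ideal by a single compact object, so that the resulting embedding is open onto a finite discrete subspace. The one genuine difference is in the retract step. The paper cites \cite[Proposition 4.11]{PolStrickland2022} to produce a representation $W$ with $e_{K,W}$ a retract of $e_K\otimes e_{G_i}$, and then invokes \cref{cor-eG-eGV} to replace $e_{K,W}$ by $e_K$. You instead construct the retraction directly via the maps $s\colon [f]\mapsto[f]\otimes[\alpha f]$ and $r\colon[f]\otimes[g]\mapsto[f]\cdot\mathbb{1}_{[g]=[\alpha f]}$, showing $e_K$ is itself a retract of $e_K\otimes e_H$. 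This is in fact the content of \cite[Remark 4.12]{PolStrickland2022}, which the paper does cite elsewhere (in the proof of \cref{thm-elementary}) but not here. Your well-definedness and naturality checks for $r$ and $s$ are sound — the backward epi-cancellation step for naturality of $r$ is the only subtle point and you handle it correctly — so the approach is self-contained, at the modest cost of carrying out the verification by hand rather than by citation. One small wording caution: the Noetherian-descent sentence could be misread as asserting that every proper quotient of $K$ lies in $\cat U_{\not\ll G}$, which is false; what you need (and clearly intend) is that any strictly decreasing chain of quotients that stays inside $\cat U_{\not\ll G}$ must terminate, hence every $K\in\cat U_{\not\ll G}$ dominates some minimal element.
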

\begin{proof}
    Let $\{G_1,\ldots, G_n\}$ be a complete list of the minimal elements in $\pi_0 \cat U_{\not \ll G}$ and consider the set $\cat S=\{e_{G_1},\ldots, e_{G_n}\}$. 
    We first claim that 
    \[
    \D{\cat U_{\not \ll G}}^c=\thickt{\cat S}.
    \]
   In fact it will be enough to show that $e_K \in \thickt{\cat S}$ for all $K \in \cat U_{\not \ll G}$. By definition of minimal elements, given $K\in \cat U_{\not \ll G}$ there exists $G_i$ and an epimorphism $K \gg G_i$. Then we know by \cite[Proposition 4.11]{PolStrickland2022} that there exists some representation $W$ so that $e_{K,W}$ is a retract of $e_{K}\otimes e_{G_i}$ (this is because the graph subgroup of the epimorphism $K \to G_i$ is a wide subgroup of $G_i\times K$). Now by \cref{cor-eG-eGV} we know that $e_{K}$ and $e_{K,W}$ generate the same thick ideal so we conclude that $e_K \in \thickt{e_{G_i}} \subseteq \thickt{\cat S}$, verifying our claim. 

    Next, let $i\colon \cat U_{\ll G} \to \cat{U}$ denote the inclusion and write $j\colon\cat U_{\not \ll G} \to \cat{U}$ for the inclusion of the complement. By the previous paragraph we can assume that $\D{\cat U_{\not \ll G}}$ is generated as a thick ideal by a finite set $\cat S$. By \cref{cor:verdierquotient}, there is a Verdier localization
        \[
            i^*\colon \D{\cat{U}}^{c} \to \D{\cat U_{\ll G}}^{c} 
        \]
    with kernel $\thickt{j_!\cat{S}}$. Note that the finiteness of $\cat{S}$ guarantees that the Thomason subset $\supp(j_!\cat{S}) \subseteq \Spc(\D{\cat{U}}^{c})$ is in fact closed. Therefore, $i^*$ induces an \emph{open} embedding $\Spc(\D{\cat U_{\ll G}}^{c}) \hookrightarrow \Spc(\D{\cat{U}}^c)$ on spectra. By \cref{thm:spcfinite}, the space $\Spc(\D{\cat U_{\ll G}}^{c})$ is discrete; in particular, $\mathfrakp_G$ viewed as a point in this space is isolated. It thus follows that $\mathfrakp_G$ is also isolated in $\Spc(\D{\cat{U}}^{c})$, as desired.
\end{proof}

\begin{Rem}\label{rem:isolation_criterion}
    We can make the previous proposition more explicit. To this end, consider an essentially small tt-category $\cat{K}$ , and let $\mathfrakp$ be a point of $\Spc(\cat{K})$. Then we claim that $\mathfrakp$ is isolated if and only if the following conditions hold:
    \begin{enumerate} 
        \item it is a maximal prime ideal in $\Spc(\cat{K})$; and
        \item there is an object $X\in\cat{K}$ with $\mathfrak{p}=\sqrt{\thickt{X}}$.
    \end{enumerate}
Since prime ideals are radical, the second condition is implied by the existence of some object $X$ with $\mathfrak{p}=\thickt{X}$. To prove the claim, we first note that \cite[Proposition 2.14]{Balmer2005} shows that $\mathfrakp$ is isolated if and only if there exists some $X \in \cat{K}$ such that $\{\mathfrakp\}=\Spc(\cat{K})\setminus\supp(\sqrt{\thickt{X}})$. Note that the latter set is always closed upwards under inclusions of prime ideals. Since the complement of $\supp(\mathfrakp)$ is  precisely $\SET{\mathfrakq}{\mathfrakq \supseteq \mathfrakp}$ (see \cite[Lemma 4.6]{Balmer2005}), we thus see that $\mathfrakp$ is isolated if and only if
    \[
        \{\mathfrakp\} = \Spc(\cat{K})\setminus\supp(\sqrt{\thickt{X}}) = \Spc(\cat{K})\setminus\supp(\mathfrakp) = \SET{\mathfrakq}{\mathfrakq \supseteq \mathfrakp}.
    \]
The middle and composite displayed equality translate into Conditions (b) and (a) above, respectively, thereby finishing the proof of the claim. 

Under the assumptions of \cref{lem:isolation_criterion}, we can then specify an explicit generator for the group prime $\mathfrakp_G$: Suppose we have a finite collection of elements $H_1,\dotsc,H_r\in\cat{U}_{\not\leq G}$ such that every $H\in\cat{U}_{\not\leq G}$ admits an epimorphism to some $H_i$.  Let $P_G$ be the cofibre of the evident augmentation $e_{G,k}\to\unit$, then
    \[
        \mathfrak{p}_G=\thickt{P_G \oplus \bigoplus_ie_{H_i}}.
    \]
This gives an alternative approach to \cref{lem:isolation_criterion}.
\end{Rem}

\begin{Rem}
In \cref{exa:cyclicprimes} we saw that the group prime $\mathfrakp_{C_p} \in \Spc(\Cprime)$ is isolated. However $\Cprime_{\not \ll C_p}=\{C_q \mid q \not =p \; \mathrm{prime}\}$ has infinitely many minimal elements. This shows that our criterion only gives a sufficient condition for isolation.   
\end{Rem}

\begin{Cor}\label{prop:pgroup_embedding}
    Let $p$ be a prime number and let $\cat U$ be a downwards closed subcategory of finite $p$-groups. Then the map $\mathfrakp_{-}\colon \pi_0\cat{U} \to \Spc(\D{\cat{U}}^c)$ is an open embedding.
\end{Cor}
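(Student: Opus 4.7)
The plan is to combine the injectivity of $\mathfrakp_{-}$ from \cref{prop:groupprimes} with the isolation criterion of \cref{lem:isolation_criterion}. Since $\pi_0\cat U$ carries the discrete topology, it will be enough to show that every group prime $\mathfrakp_G$ is an isolated point of $\Spc(\D{\cat U}^c)$: the image of $\mathfrakp_{-}$ will then automatically be a union of isolated points, hence open in $\Spc(\D{\cat U}^c)$, and the resulting continuous bijection from the discrete space $\pi_0\cat U$ onto this image is a homeomorphism.

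By \cref{lem:isolation_criterion}, the isolation of $\mathfrakp_G$ will follow as soon as I can verify that $\pi_0\cat U_{\not\ll G}$ has only finitely many minimal elements under $\gg$. My approach is to exploit the fact that every non-trivial finite $p$-group has non-trivial center. Concretely, suppose $H$ is a minimal element of $\cat U_{\not\ll G}$; then $H$ is non-trivial, so I may choose a central (hence normal) subgroup $N\lhd H$ of order $p$. The quotient $H/N$ is a proper quotient of $H$, lies in $\cat U$ because $\cat U$ is closed under quotients, and by minimality of $H$ must satisfy $G\gg H/N$. In particular $|H/N|$ divides $|G|$, giving the key bound $|H|\leq p|G|$.

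Because there are only finitely many isomorphism classes of finite $p$-groups of order at most $p|G|$, the set of minimal elements of $\pi_0\cat U_{\not\ll G}$ is finite, and the conclusion follows from \cref{lem:isolation_criterion}. The one place where the $p$-group hypothesis is genuinely used is in producing a normal subgroup of order $p$ inside $H$; without it the same bound fails, in line with the non-isolated prime $\mathfrakp_1$ observed for the family $\Cprime$ of \cref{exa:cyclicprimes}.
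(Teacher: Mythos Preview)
Your proof is correct and follows the same strategy as the paper: verify the hypothesis of \cref{lem:isolation_criterion} by showing that any minimal $H\in\cat U_{\not\ll G}$ admits a central subgroup of order $p$ whose quotient, by minimality and downward closure, lies in $\cat U_{\ll G}$. The only difference is the final finiteness step: the paper bounds the minimal elements by counting, for each quotient $Q\in\cat U_{\ll G}$, the extensions $C_p\hookrightarrow K\twoheadrightarrow Q$ via the finite group $H^2(Q;C_p)$, whereas you observe more directly that $|H|\le p|G|$ and invoke the finiteness of isomorphism classes of $p$-groups of bounded order—an equally valid and slightly more elementary route.
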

\begin{proof}
    We will verify the criterion of \cref{lem:isolation_criterion}. To this end, fix some $G\in \cat U$ and let $K$ be a minimal element in $\pi_0 \cat U_{\not \ll G}$, and note that $K$ must be nontrivial. Standard $p$-group theory tells us that the centre $Z(K)$ is nontrivial and that there is a subgroup $C \leq Z(K)$ of order $p$. As $C \cong C_p$ is central, it is normal and so we have an extension $ C_p \hookrightarrow K \twoheadrightarrow Q$. By minimality of $K$, we must have $Q \in \cat U_{\ll G}$ since $\cat U$ is closed downwards. Therefore to prove that there are only finitely many minimal elements $K$, it suffices to show that:
   \begin{enumerate}
       \item there are finitely many groups $Q\in \pi_0\cat U_{\ll G}$;
       \item for a fixed $Q$ there are only finitely many groups $K \in \pi_0\cat{G}$ which fit into an extension of the type $C_p \hookrightarrow K \twoheadrightarrow Q$.
   \end{enumerate}
   Part (a) is clear since any group in $\cat U_{\ll G}$ must have cardinality less than or equal to $|G|$. For part (b) note that if we consider extensions where the kernel is $C_p$, then they are
 classified by the finite group $H^2(Q;C_p)$.
\end{proof}

\begin{Rem}
    It is conceivable that the hypothesis that $\cat{U}$ is downwards closed in the previous corollary could be relaxed. However, it cannot be removed completely, as the conclusion of the corollary does not hold for some subcategories $\cat U$ of finite $p$-groups. Some examples are given by extraspecial $p$-groups. We demonstrate this in the following explicit example. 
    
    Let $p=2$, write $Q_8$ for the quaternion group, and let $C$ denote the centre of $Q_8$, which has order $2$. Then set $E_1=Q_8$, $E_2=Q_8 \times_C Q_8$ amalgamated with respect to the diagonal copy of $C$, and $E_n=E_{n-1} \times_C Q_8$ for $n \geq 3$.
    For each $n$, the $2$-group $E_n$ is extraspecial, meaning that the centre $C$ of $E_n$ has order two and the quotient $E_n/C$ is an elementary abelian $2$-group. Let $c \in C$ denote the non-identity element of the centre. If  $a \notin C$, then either $a^2=c$, or we can conjugate $a$ with an element in the first copy of $Q_8$ and multiply with $a$ to obtain $c$. This shows that all non-trivial normal subgroups $\{1\} \neq N \lhd E_n$ contain $C$. Therefore there exist no epimorphisms from $E_n$ to $E_m$ for $n \neq m$, because any non-trivial quotient of $E_n$ is a quotient of the elementary abelian group $E_n/C$, and $E_m$ is non-abelian.

    If we let $\cat{U}$ denote the family formed by the $E_n$'s and the trivial group $1$, then $\cat{U} \setminus \{1\}$ is a groupoid, and the subcategory $\cat{U} \subset \cat{G}$ has the same structure as the category of cyclic groups of prime order and the trivial group of \cref{exa:cyclicprimes}. The resulting spectrum $\Spc(\D{\cat{U}}^{c})$ is homeomorphic to the one shown in \cref{exa:cyclicprimes}, and in this case, the group prime map $\mathfrakp_{-}\colon \pi_0\cat{U} \to \Spc(\D{\cat{U}}^c)$ is neither an embedding nor open.
\end{Rem}

As one application of \cref{prop:pgroup_embedding}, we obtain a sharper version of \cref{thm:spcfinite} when restricting attention to downwards closed families of finite $p$-groups. Without any such constraint, however, \Cref{exa:cyclicprimes} demonstrates that the conclusion of the next corollary fails. 

\begin{Cor}\label{cor:spcfinitep_characterization}
    For a downwards closed family of finite $p$-groups $\cat{U}$, the following conditions are equivalent:
        \begin{enumerate}
            \item $\mathfrakp_{-}\colon \pi_0\cat{U} \to \Spc(\D{\cat{U}}^{c})$ is a surjection;
            \item $\mathfrakp_{-}\colon \pi_0\cat{U} \to \Spc(\D{\cat{U}}^{c})$ is a homeomorphism;
            \item $\cat{U}$ is essentially finite.
        \end{enumerate}
\end{Cor}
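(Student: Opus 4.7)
The plan is to establish the cycle $(c) \Rightarrow (b) \Rightarrow (a) \Rightarrow (c)$, with all the real content concentrated in the last implication.

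First I would dispatch the easy directions. Any nonempty downwards closed family of finite $p$-groups is automatically unital (the trivial group is a quotient of every finite $p$-group), and therefore satisfies \cref{hyp2} by \cref{rem-unit-fg-projective}; the empty case is vacuous. With \cref{hyp2} in place, $(c) \Rightarrow (b)$ is immediate from \cref{thm:spcfinite}, and $(b) \Rightarrow (a)$ is tautological.

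For the substantive implication $(a) \Rightarrow (c)$, my strategy is to combine the open-embedding property already in hand with the quasi-compactness of the Balmer spectrum. Concretely, \cref{prop:pgroup_embedding} guarantees that
\[
\mathfrakp_- \colon \pi_0\cat U \hookrightarrow \Spc(\D{\cat U}^c)
\]
is an open embedding. Under the surjectivity assumption of (a), this map upgrades to a homeomorphism, forcing $\Spc(\D{\cat U}^c)$ to inherit the discrete topology of $\pi_0 \cat U$. Since Balmer spectra of essentially small tt-categories are always quasi-compact \cite{Balmer2005}, and any discrete quasi-compact space is finite, we conclude that $\pi_0 \cat U$ is finite, i.e., $\cat U$ is essentially finite, yielding (c).

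I do not foresee any serious obstacle here: the heavy lifting has already been done in establishing \cref{prop:pgroup_embedding}. The only conceptual point is the clean topological observation — discrete plus quasi-compact equals finite — that turns the open-embedding property for families of $p$-groups into the desired essential-finiteness conclusion.
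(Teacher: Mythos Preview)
Your argument is correct and essentially identical to the paper's: both rely on \cref{prop:pgroup_embedding} to get an embedding, then use that a surjective embedding is a homeomorphism onto a discrete space, and finally invoke quasi-compactness of the spectrum to force finiteness, with \cref{thm:spcfinite} handling the converse. The only cosmetic difference is that the paper runs the cycle as $(a)\Rightarrow(b)\Rightarrow(c)\Rightarrow(a)$ whereas you run it as $(c)\Rightarrow(b)\Rightarrow(a)\Rightarrow(c)$.
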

\begin{proof}
    By \cref{prop:pgroup_embedding}, $\mathfrakp_{-}$ is an embedding. A surjective embedding is a homeomorphism, so $(a) \Rightarrow (b)$. Condition $(b)$ implies that $\Spc(\D{\cat{U}}^{c})$ is discrete. Since it is always spectral and thus in particular quasi-compact, it must be finite, which gives $(c)$. Finally, the implication $(c) \Rightarrow (a)$ is proven in \cref{thm:spcfinite}. 
\end{proof}

\begin{Rem}
    This result amply demonstrates the failure of the surjectivity criterion of \cite{BCHS2024} in the context of non-rigidly compactly generated tt-categories. Let $\cat{U}$ be a downwards closed family of finite $p$-groups. While the evaluation functors
        \[
            (\mathrm{ev}_{G}\colon \D{\cat{U}} \to \D{\{G\}})_{G \in \pi_0\cat{U}}
        \]
    are jointly conservative, the induced maps on spectra are not jointly surjective, unless $\cat{U}$ is essentially finite. 
\end{Rem}

We conclude this section with a computation of the Balmer spectrum for global representations indexed on the multiplicative global family of elementary abelian $p$-groups $\cat E_p$, for some prime number $p$, and explain the interpretation in terms of derived VI-modules.

\begin{Thm}\label{thm-elementary}\leavevmode
The category $\sD(\cat E_p)^c$ is standard (in the sense of \cref{def:standardtt}) and its spectrum is given as follows:
\begin{enumerate}
    \item The only prime ideals in $\sD(\cat E_p)^c$ are the group primes $\mathfrakp_{(\bbZ/p)^{\times n}}$ for $n \in \bbN$ and the zero ideal $(0)$ which we denote by $\mathfrakp_\infty$. Therefore we may identify the underlying set of the spectrum with $\pi_0\cat E_p \cup \{\mathfrakp_\infty\}$.
    \item A subset $U$ is open in the Balmer topology on $\Spc(\D{\cat{E}}^c)$ if and only if it is contained in $\pi_0\cat E_p$, or is equal to the whole set. Moreover, $U$ is a quasi-compact open if and only if it is a finite subset of $\pi_0\cat E_p$ or equal to the whole set.
 \end{enumerate}
 In other words, the Balmer spectrum of $\D{\cat{E}}^c$ can be depicted in the following way:
	\[
	\begin{tikzcd}
         &            &  (0)           &        &               &       \\
	\mathfrakp_{(\bbZ/p)^{\times 0}} \arrow[urr, rightsquigarrow] & \mathfrakp_{(\bbZ/p)^{\times 1}} \arrow[ur, rightsquigarrow]  & \mathfrakp_{(\bbZ/p)^{\times 2}} 
	\arrow[u, rightsquigarrow] & \ldots & \mathfrakp_{(\bbZ/p)^{\times n}} \arrow[ull, rightsquigarrow]& \ldots 
	\end{tikzcd}
	\]
    Thus, $\Spc(\D{\cat{E}}^c)$ is homeomorphic to the Hochster dual of $\Spec(\bbZ)$.
\end{Thm}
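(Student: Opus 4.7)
The proof proceeds in three stages: identify all prime ideals; determine the topology; and establish standardness. The key inputs are the non-torsion theorem \cref{thm:nontorsion}, which controls arbitrary non-zero compact objects in $\D{\cat E_p}^c$, and the Verdier quotient description of \cref{cor:verdierquotient}, which reduces questions about $\cat E_p$ to the essentially finite downward-closed subfamilies $\cat E_p^{\leq n-1}$ of elementary abelian $p$-groups of rank at most $n-1$, whose spectra have already been computed in \cref{thm:spcfinite}.

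For the identification of primes, observe first that $\cat E_p$ is a multiplicative global family containing the trivial group, so \cref{prop:zeroideal} shows that the zero ideal $\mathfrakp_\infty = (0)$ is prime, while every group prime $\mathfrakp_{(\bbZ/p)^{\times n}}$ is maximal by \cref{cor:groupprimes_generic}. Conversely, given any non-zero prime $\mathfrakp$, choose a non-zero $X \in \mathfrakp$; by \cref{thm:nontorsion} there is an integer $n \geq 0$ with $e_{(\bbZ/p)^{\times n}} \in \thickt{X} \subseteq \mathfrakp$. A wide-subgroup computation (as used in the proof of \cref{lem:isolation_criterion}) exhibits $(\bbZ/p)^{\times m}$ as a wide subgroup of $(\bbZ/p)^{\times n}\times(\bbZ/p)^{\times m}$ via the graph of any surjection for every $m \geq n$, so by \cite[Proposition 4.11]{PolStrickland2022} combined with \cref{cor-eG-eGV} we deduce $e_{(\bbZ/p)^{\times m}} \in \thickt{e_{(\bbZ/p)^{\times n}}\otimes e_{(\bbZ/p)^{\times m}}} \subseteq \mathfrakp$. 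Writing $i \colon \cat E_p^{\leq n-1} \hookrightarrow \cat E_p$ for the downward-closed inclusion, this says $\mathfrakp$ contains $\ker(i^*) = \thickt{e_{(\bbZ/p)^{\times m}} : m \geq n}$; the Verdier quotient correspondence of \cref{cor:verdierquotient} together with \cref{thm:spcfinite} then identifies $\mathfrakp$ with one of the group primes $\mathfrakp_{(\bbZ/p)^{\times m}}$ for some $0 \leq m < n$.

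For the topology, the open embeddings $\Spc(\D{\cat E_p^{\leq n-1}}^c) \hookrightarrow \Spc(\D{\cat E_p}^c)$ obtained from \cref{cor:verdierquotient} have finite discrete sources by \cref{thm:spcfinite}, so their union displays the set of group primes as an open discrete subspace; in particular every subset of $\pi_0 \cat E_p$ is open, recovering also the isolation statement of \cref{prop:pgroup_embedding}. On the other hand, a basic open $\{\mathfrakq : Y \in \mathfrakq\}$ contains $\mathfrakp_\infty = (0)$ if and only if $Y = 0$, in which case it is the whole spectrum; consequently the only open neighbourhood of $\mathfrakp_\infty$ is the entire space. This pins down the open sets and the quasi-compact opens, and the comparison with the Hochster dual of $\Spec(\bbZ)$ reduces to a direct matching of the respective topologies under the bijection $\mathfrakp_\infty \leftrightarrow (0)$ and $\mathfrakp_{(\bbZ/p)^{\times n}} \leftrightarrow$ an enumeration of the closed points of $\Spec(\bbZ)$.

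Finally, standardness is verified via Balmer's criterion $a \in \thickt{a \otimes a}$. We may assume $a \not\simeq 0$; then $a \otimes a \not\simeq 0$ because $(0)$ is prime, so \cref{thm:nontorsion} yields $e_{(\bbZ/p)^{\times n_0}} \in \thickt{a \otimes a}$ for some $n_0$, and the wide-subgroup argument propagates this to $e_{(\bbZ/p)^{\times m}} \in \thickt{a \otimes a}$ for all $m \geq n_0$. Since $\hsupp(a) = \hsupp(a \otimes a)$, \cref{prop:hsupptt+} gives $a \in \thickt{\{a \otimes a\} \cup \{e_G : G \in \ua(\hsupp(a))_{>N}\}}$ for every $N$; taking $N$ large enough that every auxiliary generator already lies in $\thickt{a \otimes a}$ yields $a \in \thickt{a \otimes a}$. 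The main obstacle throughout is the wide-subgroup bookkeeping that delivers $e_{(\bbZ/p)^{\times m}} \in \thickt{e_{(\bbZ/p)^{\times n}}}$ for $m \geq n$ and thus identifies $\ker(i^*)$ correctly; once this is in place, everything else is either formal tt-geometry or a direct appeal to \cref{thm:spcfinite}.
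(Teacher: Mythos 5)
Your proposal is correct and uses the same essential inputs as the paper (the non-torsion theorem \cref{thm:nontorsion}, the wide-subgroup retract fact giving $e_{(\bbZ/p)^{\times m}} \in \thickt{e_{(\bbZ/p)^{\times n}}}$ for $m \geq n$, and \cref{prop:hsupptt+} for standardness), but you classify the non-zero primes by a genuinely different route. The paper introduces the cofibres $m_s = \mathrm{cof}(e_{s,k}\to\unit)$, proves that $\hsupp$-containment implies $\thickt{}$-containment (their auxiliary statement (1)), uses cofiniteness of $\hsupp$ to express $\hsupp(X)$ as a finite intersection of $\hsupp(m_s)$'s, then uses primality and maximality. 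You instead show that any non-zero prime $\mathfrakp$ contains $\ker((i^*)^c) = \thickt{e_{(\bbZ/p)^{\times n}}}$ for the truncation $i\colon \cat E_p^{\leq n-1}\hookrightarrow\cat E_p$, and then transport the classification from the essentially finite case via the Verdier correspondence of \cref{cor:verdierquotient} and \cref{thm:spcfinite}. Your route is perhaps more conceptual in that it visibly reduces to a known computation, while the paper's is more self-contained and also yields the stronger auxiliary statement (1) along the way, which is of independent use.

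One small point to tighten: \cref{cor:verdierquotient} gives embeddings on spectra, not \emph{open} embeddings a priori. The openness requires knowing that the kernel $\thickt{e_{(\bbZ/p)^{\times n}}}$ is singly generated so that its support is closed; this is precisely what the wide-subgroup retract argument delivers (and is the mechanism behind \cref{lem:isolation_criterion}, which you reference, or one can simply invoke \cref{prop:pgroup_embedding} directly as the paper does). You clearly have the ingredient in hand, so this is a matter of phrasing rather than a gap.
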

\begin{proof}
    Throughout this proof we will adopt the following notation: Any group in $\cat E_p$ is isomorphic to $(\bbZ/p)^{\times n}$ for some $n$. In what follows we will abbreviate $(\bbZ/p)^{\times n}$ to $n$, and write $e_{n,V}$ instead of $e_{(\bbZ/p)^{\times n}, V}$. Likewise, we denote the corresponding group prime by $\mathfrakp_n$.

    We begin by establishing two auxiliary properties of $\sD(\cat E_p)^c$, which both rely on \cref{thm:nontorsion}:
        \begin{enumerate}
            \item[(1)] $\hsupp(X) \subseteq \hsupp(Y) \implies \thickt{X} \subseteq \thickt{Y}$ for all $X,Y \in \sD(\cat E_p)^c$;
            \item[(2)]
            $\hsupp(X) \subseteq \pi_0\cat E_p$ is cofinite for all $X \in \sD(\cat E_p)^c$ nontrivial.
        \end{enumerate}
    Note that, in particular, (1) shows that $\D{\cat{E}}^c$ is standard: $\hsupp(X) = \hsupp(X \otimes X)$ for any $X \in \D{\cat{E}}^c$, so $X \in \thickt{X \otimes X}$ by (1). This means we can apply Balmer's criterion \cite[Proposition 4.4]{Balmer2005}.
        
    Let us prove Statement (1). When $n\geq m$, \cite[Remark 4.12]{PolStrickland2022} shows that $e_n$ is a retract of $e_n\otimes e_m$, so $e_n\in\thickt{e_m}$. \Cref{thm:nontorsion} tells us that there exists $n_0$ with $e_{n_0}\in\thickt{Y}$ and so $e_n\in\thickt{Y}$ for all $n\geq n_0$. From this it is clear that when $n \geq n_0$, we have $\mathrm{thick}_{\otimes,n}^{+}\langle Y\rangle = \thickt{Y}$. Therefore $\mathrm{thick}_{\otimes}^{+}\langle Y \rangle = \thickt{Y}$, so we conclude by \cref{prop:hsupptt+}. To see Statement (2), \Cref{thm:nontorsion} again shows that there exists $n_0$ with $e_{n_0}\in\thickt{X}$, so $\hsupp(e_{n_0}) \subseteq \hsupp(X)$. The former subset of $\pi_0\cat E_p$ is cofinite, hence so is the latter.

    Now consider some nonzero prime ideal $\mathfrakq \in \Spc(\sD(\cat E_p)^c)$. We claim that there exists $s \in \bbN$ such that $\mathfrakq = \mathfrakp_s$. Since the zero ideal is prime by \cref{prop:zeroideal}, this determines the underlying set of the spectrum. In order to establish the claim, take some nonzero $X \in \mathfrakq$. By (2), its homological support is cofinite, say $\hsupp(X) = \pi_0\cat E_p \setminus S$ for some finite $S$. Note that $S$ must be non-empty, for otherwise $\hsupp(X) = \hsupp(\unit)$ so that $\unit \in \mathfrakq$ by (1), a contradiction. 

    Put $m_{s}\coloneqq \cof(e_{s, k} \to \unit)$. One calculates that $\hsupp(m_{s})=\pi_0 \cat E_p \setminus \{s\}$. Therefore, 
        \[
            \hsupp(X) = \bigcap_{s\in S}\hsupp(m_s) = \hsupp(\bigotimes_{s \in S}m_s).
        \]
    By (1), this gives $\bigotimes_{s \in S}m_s \in \thickt{X} \subseteq \mathfrakq$, so there exists $s \in S$ with $m_s \in \mathfrakq$ because $\mathfrakq$ is prime. Any $Y \in \mathfrakp_{s}$ satisfies $\hsupp(Y) \subseteq \pi_0 \cat E_p \setminus \{s\} = \hsupp(m_s)$, which implies $Y \in \thickt{m_s}$, again by (1). Combining these statements, we obtain containments
        \[
            \mathfrakp_{s} \subseteq \thickt{m_s} \subseteq \mathfrakq.
        \]
    But group primes are maximal primes by \cref{cor:groupprimes_generic}, so that these containments are in fact equalities, thereby establishing the claim. 

    It remains to determine the topology on $\Spc(\sD(\cat E_p)^c)$. Note that the group primes are isolated (\Cref{prop:pgroup_embedding}) but not closed (as each of them contains $\mathfrakp_\infty$), while the zero prime ideal $\mathfrakp_\infty$ is closed but not isolated (as otherwise all points would be isolated, contradicting quasi-compactness). Now consider an open subset $U$ in $\Spc(\sD(\cat E_p)^c)$. On the one hand, if $U$ does not contain $\mathfrakp_\infty$, then there are no constraints on $U$. On the other hand, if $U$ contains $\mathfrakp_\infty$, then its closed complement must be empty, because every closed subset contains $\mathfrakp_\infty$; hence $U = \Spc(\sD(\cat E_p)^c)$. This also determines the quasi-compact open subsets, thus finishing the proof. 
\end{proof}

\begin{Rem}\label{rem:fingenideals}
    In the course of the proof of \cref{thm-elementary}, we have seen that every prime ideal in $\sD(\cat E_p)^c$ is generated by a single object; in fact, this holds for all thick ideals. An explanation for this phenomenon is given in \cite{ttCohen_pre}. Indeed, for an essentially small tt-category $\cat K$, the following conditions are equivalent:
        \begin{enumerate}
            \item every prime ideal in $\cat K$ is generated by a single object;
            \item every radical ideal in $\cat K$ is generated by a single object;
            \item the Hochster dual of $\Spc(\cat K)$ is Noetherian.
        \end{enumerate}
\end{Rem}

\begin{Rec}\label{rec:VImod}
    Let $\VI$ denote the category of finite-dimensional $\bbF_p$-vector spaces and injective homomorphisms. The symmetric monoidal abelian category of VI-modules is defined as 
        \[
            \VIMod \coloneqq \Fun(\VI,\Mod{k}),
        \]
    equipped with pointwise additive and tensor structure. This category features prominently in representation stability, see for example \cite{PutmanSam2017}. Pontryagin duality furnishes an equivalence of categories $\VI \simeq \cat{E}^{\op}$ and hence a tt-equivalence
        \[
            \sfD(\VIMod) \simeq \sfD(\cat{E}). 
        \]
    Keeping in mind \cref{rem:fingenideals}, we can thus translate \cref{thm-elementary} into a classification of derived VI-modules:
\end{Rec}

\begin{Cor}\label{cor:VImod}
        Homological support induces a bijection
        \[
            \begin{tikzcd}[ampersand replacement=\&,column sep=small]
                {\begin{Bmatrix}
                    \text{principal ideals } \langle M\rangle_{\otimes}\colon \\
                    M \in \sfD(\VIMod)^c
                \end{Bmatrix}}
                    \& 
                {\begin{Bmatrix}
                    \text{cofinite subsets }\\
                    \text{of } \bbN, \text{ or } \emptyset
                \end{Bmatrix}}.
                    \arrow["\sim", from=1-1, to=1-2]
            \end{tikzcd}
        \]
\end{Cor}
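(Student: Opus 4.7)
The plan is to transport \cref{thm-elementary} along the symmetric monoidal tt-equivalence $\sfD(\VIMod) \simeq \sfD(\cat E_p)$ from \cref{rec:VImod}. Under Pontryagin duality $\bbF_p^{\oplus n}$ corresponds to $(\bbZ/p)^{\times n}$, so for $M \in \sfD(\VIMod)^c$ with associated $\tilde M \in \sfD(\cat E_p)^c$, the homological support of $M$ viewed as a subset of $\bbN$ agrees with $\hsupp(\tilde M) \subseteq \pi_0 \cat E_p$ under the evident bijection $\pi_0\cat E_p \xrightarrow{\sim} \bbN$.

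By \cref{thm-elementary}, $\sfD(\cat E_p)^c$ is standard, so Balmer's theorem supplies a bijection between its thick ideals and the Thomason subsets of $\Spc(\sfD(\cat E_p)^c)$ via $\supp$. Unwinding the topology described there, the Thomason subsets are exactly $\emptyset$ together with subsets of the form $\Spc(\sfD(\cat E_p)^c)\setminus F$ for $F\subseteq\pi_0\cat E_p$ finite (any union of such subsets is again of this form, since an intersection of finite sets is finite); via the relation $\hsupp = \mathfrakp_{-}^{-1}\circ\supp$ recorded in \cref{sec:hsupp}, these pull back to $\emptyset$ and cofinite subsets of $\bbN$, respectively. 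This yields a bijection between thick ideals in $\sfD(\cat E_p)^c$ and the target set of the corollary, implemented by $\hsupp$.

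It remains to see that every thick ideal in $\sfD(\cat E_p)^c$ is principal, so that the established bijection restricts to principal ideals. Given a finite set $F=\{n_1,\dots,n_r\}\subseteq\bbN$, consider $m_F\coloneqq m_{n_1}\otimes\cdots\otimes m_{n_r}$, where $m_n = \cof(e_{n,k}\to\unit)$ as in the proof of \cref{thm-elementary}. Combining $\hsupp(m_n)=\pi_0\cat E_p\setminus\{n\}$ with \cref{lem:hsupp}(e), one computes $\hsupp(m_F)=\pi_0\cat E_p\setminus F$; by Property (1) in the proof of \cref{thm-elementary} (which states that homological support detects containment of thick ideals for compact objects in $\sfD(\cat E_p)^c$), this identifies $\thickt{m_F}$ with the thick ideal corresponding to the cofinite subset $\pi_0\cat E_p\setminus F$. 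The zero ideal $\thickt{0}$ trivially corresponds to $\emptyset$. Transporting back along the Pontryagin equivalence completes the proof; the only real bookkeeping is identifying $\hsupp$ on both sides of the equivalence, which is automatic as it is defined tt-categorically via evaluation at the canonical generators, and there is no deeper obstacle since the substantive content is entirely contained in \cref{thm-elementary}.
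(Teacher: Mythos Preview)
Your proof is correct and follows essentially the same approach as the paper: both transport \cref{thm-elementary} along the equivalence of \cref{rec:VImod} and read off the answer from the topology of the spectrum. The only difference is one of packaging: the paper invokes directly the general tt-geometric fact that principal thick ideals biject with complements of quasi-compact opens, whereas you first establish a bijection between \emph{all} thick ideals and the target set and then argue separately (via the explicit generators $m_F$) that every thick ideal is principal. Your route is slightly longer but has the side benefit of exhibiting explicit generators, which the paper only remarks upon in \cref{rem:fingenideals}.
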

\begin{proof}
    The universal support function \eqref{eq:ttclassification} provides a bijection between principal thick ideals of $\sfD(\VIMod)^c$ and the complements of quasi-compact open subsets of $\Spc(\sfD(\VIMod)^c)$. By \cref{thm-elementary}, the latter biject with the collection of cofinite subsets of $\pi_0\cat E_p \cup \{\mathfrakp_{\infty}\}$ containing $\mathfrakp_{\infty}$, together with the empty set. Upon identifying $\pi_0\cat E_p$ with $\bbN$ and unwinding the construction of the homological support function via \eqref{eq:groupprimemap}, we obtain the result. 
\end{proof}

Unwinding the construction of the support function, we recover \cref{thmx:VImod}.

\newpage
\part{Reflective filtrations}
In this part, we study filtrations of collections of finite groups by reflective subcategories. By combining this with a result of Gallauer~\cite{Gallauer} concerning the continuity of Balmer spectra, we show that if $\cat{U}$ admits a nice filtration, then the spectrum of $\D{\cat{U}}^c$ is homeomorphic to the inverse limit over the spectra of the constituent filtration pieces. We remind the reader that \cref{hyp} is in place throughout.

\section{Reflective subcategories of groups}
\label{sec-refl}

In this section, we prove various facts about inclusions of subcategories of groups where the inclusion admits a left adjoint; recall that such subcategories are called \emph{reflective}. For example,
abelianisation is left adjoint to the inclusion of abelian groups in
all groups. These results will be used to construct well behaved filtrations of collections $\cat{U}$, so that we may make arguments by reduction to the filtration pieces. To put this into practice, it will be helpful to broaden our scope and consider finitely generated groups.

\begin{Def}\label{def-fg}
 We write $\tCFG$ for the category of finitely generated groups and
 surjective homomorphisms, and $\CFG$ for the quotient category in
 which we identify conjugate homomorphisms.  Given any full subcategory
 $\cat{S}\subseteq\CFG$, we write $\widetilde{\cat{S}}$ for the corresponding subcategory of
 $\tCFG$.  In particular, $\cat{G}$ is the category of finite groups and conjugacy classes of epimorphisms and $\widetilde{\cat{G}}$ is the category of finite groups and
 surjective homomorphisms. 
\end{Def}

\begin{Rem}
Note that in this section we work in greater generality than \cref{hyp} and generally consider full subcategories of $\CFG$ rather than of $\cat{G}$. To avoid confusion, we will therefore write $\cat{S}$ and $\cat{T}$ for such full subcategories. 
\end{Rem}

\begin{Lem}\label{lem-Hom-finite}
    If $G\in\CFG$ and $H\in\cat{G}$ then the set of group homomorphisms
    \[
    \mathrm{Hom}(G,H)
    \]
    is finite. Furthermore, the sets 
    \[
    \Hom_{\tCFG}(G,H)\;\; and \;\;\Hom_{\CFG}(G,H)
    \]
    are also finite.
\end{Lem}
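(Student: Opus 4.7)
The plan is to use the finite generation of $G$ in an essentially elementary way: a group homomorphism out of a finitely generated group is determined by its values on a generating set. Pick generators $g_1,\dots,g_n$ of $G$. Any homomorphism $\phi\colon G\to H$ is determined by the tuple $(\phi(g_1),\dots,\phi(g_n))\in H^n$, so the assignment $\phi\mapsto(\phi(g_1),\dots,\phi(g_n))$ gives an injection $\Hom(G,H)\hookrightarrow H^n$. Since $H$ is finite, so is $H^n$, which immediately yields finiteness of $\Hom(G,H)$.

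For the remaining two claims, I would observe that both $\Hom_{\tCFG}(G,H)$ and $\Hom_{\CFG}(G,H)$ are constructed from $\Hom(G,H)$ by passing to a subset and then to a quotient. Concretely, $\Hom_{\tCFG}(G,H)$ is the subset of $\Hom(G,H)$ consisting of surjective homomorphisms, so it is finite by the previous paragraph. The set $\Hom_{\CFG}(G,H)$ is by construction the quotient of $\Hom_{\tCFG}(G,H)$ by the conjugation action of $H$ (two surjections are identified when they differ by an inner automorphism of the target), hence is a quotient of a finite set and therefore finite.

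There is no real obstacle here; the only subtlety is keeping track of which category is which. I would simply write the proof as two short sentences, emphasising the crucial use of finite generation of $G$ (without which the first step fails) and the finiteness of $H$ (without which $H^n$ could be infinite). No appeal to any earlier result in the paper is needed.
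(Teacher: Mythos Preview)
Your proof is correct and essentially identical to the paper's: both choose a finite generating set, inject $\Hom(G,H)$ into the finite set of functions from the generators to $H$, and then observe that $\Hom_{\tCFG}(G,H)$ is a subset and $\Hom_{\CFG}(G,H)$ a quotient thereof.
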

\begin{proof}
    Choose a finite subset $X\subseteq G$ that generates $G$, and let $\mathrm{Map}(X,H)$ be the set of all functions from $X$ to $H$, which is clearly finite.  As $X$ generates $G$ we see that the restriction $\mathrm{Hom}(G,H)\to\mathrm{Map}(X,H)$ is injective, so $\mathrm{Hom}(G,H)$ is finite.  Moreover, $\Hom_{\tCFG}(G,H)$ is a subset of $\mathrm{Hom}(G,H)$, and $\Hom_{\CFG}(G,H)$ is a quotient of $\Hom_{\tCFG}(G,H)$, so both of these are also finite.
\end{proof}

We begin by giving some equivalent conditions for a subcategory of groups to be reflective.
\begin{Prop}\label{prop-refl}
 Suppose we have full subcategories $\cat{S}\subseteq\cat{T}\subseteq\CFG$.  Then the
 following are equivalent:
 \begin{enumerate}
  \item $\widetilde{\cat{S}}$ is reflective in $\widetilde{\cat{T}}$, i.e., the inclusion
   $\widetilde{\cat{S}}\to\widetilde{\cat{T}}$ has a left adjoint;
  \item $\cat{S}$ is reflective in $\cat{T}$;
  \item for each $G\in\cat{T}$ there is a smallest normal subgroup
   $N(G)$ in $G$ such that $G/N(G)\in\cat{S}$: if there is $N\lhd G$ such that $G/N \in \cat{S}$, then $N(G)\leq N$. 
 \end{enumerate}
 Moreover, if these equivalent conditions hold then all of the following also hold.
 \begin{enumerate}
  \item[(d)] 
  Let $q(G) \coloneqq G/N(G)$. For any $\alpha\in\Hom_{\widetilde{\cat{T}}}(G,H)$ we have $\alpha(N(G))\leq N(H)$,
   so there is an induced homomorphism $q(\alpha)\colon q(G)\to q(H)$, and $q$ gives a functor $\widetilde{\cat{T}}\to\widetilde{\cat{S}}$.
  \item[(e)] If $\alpha$ and $\beta$ are conjugate, then so are $q(\alpha)$
   and $q(\beta)$, so $q$ also gives a functor $\cat{T}\to\cat{S}$.
  \item[(f)] The functors $q\colon \widetilde{\cat{T}}\to\widetilde{\cat{S}}$ and $q\colon \cat{T}\to\cat{S}$ are left
   adjoint to the corresponding inclusions.
  \item[(g)] $\cat{S}=\{G\in\cat{T}\mid N(G)=1\}$.
 \end{enumerate}
\end{Prop}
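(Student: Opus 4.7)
The plan is to prove the chain of implications (c) $\Rightarrow$ (a), (b), (d), (e), (f), (g), and then (a) $\Rightarrow$ (c) and (b) $\Rightarrow$ (c) separately to close the loop. This keeps the constructive half of the argument (building the reflection out of the subgroups $N(G)$) in one block, while reducing the two categorical hypotheses to the same concrete statement (c).

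The constructive half: assume (c) and set $q(G)\coloneqq G/N(G)$ with canonical quotient $\eta_G\colon G\twoheadrightarrow q(G)$. For (d), given a surjection $\alpha\colon G\twoheadrightarrow H$ in $\widetilde{\cat{T}}$, the composite $\eta_H\circ\alpha\colon G\twoheadrightarrow q(H)$ is a surjection onto an object of $\cat{S}$, so its kernel contains $N(G)$ by minimality; since $\alpha$ is surjective, this yields $\alpha(N(G))\leq N(H)$ and an induced $q(\alpha)\colon q(G)\to q(H)$. Functoriality is immediate from this universal factorisation. For (e), if $\beta=c_h\circ\alpha$ for some $h\in H$ then because $N(H)\lhd H$ the inner automorphism $c_h$ descends to an inner automorphism $c_{\bar h}$ of $q(H)$, whence $q(\beta)=c_{\bar h}\circ q(\alpha)$ is conjugate to $q(\alpha)$. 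The adjunction (f) is then a direct check: given $S\in\cat{S}$ and a surjection $f\colon G\twoheadrightarrow S$ in $\widetilde{\cat{T}}$, the kernel $\ker(f)$ is a normal subgroup of $G$ with quotient in $\cat{S}$, so $N(G)\leq\ker(f)$ and $f$ factors uniquely as $G\xrightarrow{\eta_G}q(G)\to S$; this produces the natural bijection $\Hom_{\widetilde{\cat{S}}}(q(G),S)\cong\Hom_{\widetilde{\cat{T}}}(G,S)$, and passing to conjugacy classes and using (e) gives the corresponding bijection for $\cat{T}$ and $\cat{S}$. Finally, (g) follows because $G\in\cat{S}$ forces $N(G)=1$ (the identity on $G$ is an admissible surjection), and conversely $N(G)=1$ gives $q(G)\cong G$, hence $G\in\cat{S}$.

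For the reverse implications, assume (a) and let $q\colon\widetilde{\cat{T}}\to\widetilde{\cat{S}}$ be left adjoint to the inclusion, with unit $\eta_G\colon G\to q(G)$. Since all morphisms in $\widetilde{\cat{T}}$ are surjective, $\eta_G$ is surjective; define $N(G)\coloneqq\ker(\eta_G)$, so that $G/N(G)\cong q(G)\in\cat{S}$. For any normal $N\lhd G$ with $G/N\in\cat{S}$, the quotient map $G\twoheadrightarrow G/N$ corresponds under the adjunction to a map $q(G)\to G/N$, and the triangle identity forces $G\twoheadrightarrow G/N$ to factor through $\eta_G$; hence $N(G)\leq N$, proving (c). The argument from (b) is identical after passing to conjugacy classes, using \cref{lem-Hom-finite} only implicitly (no finiteness is actually needed for this step, but \cref{lem-Hom-finite} ensures the hom-sets under consideration are well behaved).

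The main subtle point is (d): one must verify that $N(\cdot)$ is preserved by \emph{every} surjection in $\widetilde{\cat{T}}$, not merely that each $G$ admits some minimal $N(G)$. The argument above handles this cleanly because $\eta_H\circ\alpha$ is automatically a morphism in $\widetilde{\cat{T}}$ with target in $\cat{S}$, so the defining property of $N(G)$ applies. A second delicate point is (e): one should note that the \emph{inner} automorphism of $H$ descends to an inner automorphism of $q(H)$ (not merely some automorphism), which is exactly what conjugacy of morphisms requires; this uses nothing beyond normality of $N(H)$.
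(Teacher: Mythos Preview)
Your proof is correct and follows essentially the same approach as the paper: both extract $N(G)$ as the kernel of the unit map to prove (a)$\Rightarrow$(c) and (b)$\Rightarrow$(c), and both build the reflection $q$ from the minimal subgroups $N(G)$ to derive (d)--(g) and hence (a), (b) from (c). The only small point worth tightening is in (b)$\Rightarrow$(c): the paper explicitly notes that although the unit $\eta_G$ in $\cat{T}$ is only a conjugacy class of surjections, conjugate homomorphisms have the same kernel, so $N(G)$ is well-defined---you gesture at this but could state it directly, and the reference to \cref{lem-Hom-finite} is unnecessary here.
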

\begin{proof}
 First suppose we have a reflection $q\colon \widetilde{\cat{T}}\to\widetilde{\cat{S}}$, so for each
 $G\in\widetilde{\cat{T}}$ we have a unit map $\eta_G\colon G\to q(G)$.  This is a morphism
 in $\widetilde{\cat{T}}$, so it is a surjective group homomorphism.  We write $N(G)$
 for the kernel, so $\eta_G$ induces an isomorphism $G/N(G)\to q(G)$,
 so $G/N(G)\in\cat{S}$.  Now suppose we have another normal subgroup
 $M\lhd G$ with $G/M\in\cat{S}$.  The adjointness property says that the
 projection $G\to G/M$ must factor uniquely through $\eta_G$, so
 $N(G)\leq M$.  Thus $N(G)$ is the smallest normal subgroup with
 quotient in $\cat{S}$.  This shows that~(a) implies~(c).

 Suppose instead that we have a reflection $q\colon \cat{T}\to\cat{S}$.  The unit map
 $\eta_G\colon G\to q(G)$ is now just a conjugacy class of surjective
 homomorphisms, but conjugate homomorphisms have the same kernel, so
 we still have a well-defined kernel $N(G)\lhd G$.  By the same line
 of argument as in the previous paragraph, we deduce that~(b)
 implies~(c).  

 Now suppose that~(c) holds.  For $\alpha\in\Hom_{\widetilde{\cat{T}}}(G,H)$ we see that $\alpha$
 induces an isomorphism $G/\alpha^{-1}(N(H))\to H/N(H)\in\widetilde{\cat{S}}$, so we
 must have $N(G)\leq\alpha^{-1}(N(H))$, or equivalently
 $\alpha(N(G))\leq N(H)$.  This means that~(d) holds, so we can define
 $q(G)=G/N(G)$ and let $\eta_G\colon G\to q(G)$ be the projection, and
 define $q(\alpha)\colon q(G)\to q(H)$ to be the unique homomorphism with
 $q(\alpha)\circ\eta_G=\eta_H\circ\alpha$.  This clearly makes $q$ into a
 functor $\widetilde{\cat{T}}\to\widetilde{\cat{S}}$.  Now consider an inner automorphism
 $\gamma_h\in\Hom_{\widetilde{\cat{T}}}(H,H)$ given by $\gamma_h(x)=hxh^{-1}$.  The map
 $\gamma_{\eta_H(h)}$ then has the defining property of $q(\gamma_h)$.
 Claim~(e) follows from this.  

 Now suppose $G\in\widetilde{\cat{T}}$, $H\in\widetilde{\cat{S}}$, and $\alpha\in\Hom_{\widetilde{\cat{T}}}(G,H)$.  This
 means that $G/\ker(\alpha)\in\widetilde{\cat{S}}$, so $N(G)\leq\ker(\alpha)$, so $\alpha$
 factors uniquely through $\eta_G$.  This shows that $q\colon \widetilde{\cat{T}}\to\widetilde{\cat{S}}$
 is left adjoint to the inclusion $\widetilde{\cat{S}}\to\widetilde{\cat{T}}$, and essentially the
 same argument also shows that $q\colon \cat{T}\to\cat{S}$ is left adjoint to the
 inclusion $\cat{S}\to\cat{T}$, so~(a),~(b), and~(f) hold.
 
 Finally, it is clear from the definition of $N(G)$ that $N(G)=1$ if and only if
 $G\in\cat{S}$, which is claim~(g).
\end{proof}

\begin{Not}\label{not-reflections}
    Consider full subcategories $\cat{S}\subseteq\cat{T}\subseteq\CFG$. If $\cat{S}$ is reflective in $\cat{T}$, we will denote the reflection by $q\colon \cat{T} \to \cat{S}$ and note that for all $G \in \cat T$ we have $q(G) = G/N(G)$ for a normal subgroup $N(G)\lhd G$ as in \cref{prop-refl}. Throughout we will often suppress notation for the inclusion and view reflections as endofunctors. 
\end{Not}
\begin{Rem}\label{rem-nested-reflection}
    Suppose that $\cat{S}_0 \subseteq \cat{S}_1 \subseteq \cat{T}$. If $\cat{S}_0$ is reflective in $\cat{T}$ with reflection $q\colon \cat{T} \to \cat{S}_0$, then $\cat{S}_0$ is also reflective in $\cat{S}_1$ with reflection given by $q\arrowvert_{\cat{S}_1}\colon \cat{S}_1 \to \cat{S}_0$. By a slight abuse of notation, we will often write $q$ for both reflections.
\end{Rem}

\begin{Exa}\label{eg-refl-ker}\leavevmode
 \begin{enumerate}
  \item The category of finite abelian groups is reflective in $\cat{G}$, with $N(G)=[G,G]$.
  \item The category of abelian groups of exponent dividing $n$ 
   is reflective in the category of all finite abelian groups with 
   $N(G)=\{g^n\mid g\in G\}$.
  \item The category of elementary abelian $p$-groups is reflective in $\cat{G}$, with $N(G)=G^p[G,G]$ the $p$-Frattini subgroup.
  \item Let $\cat{S}$ be the category of finite groups that can be
   expressed as a product of simple groups.  Then one can verify that $\cat{S}$ is
   reflective in $\cat{G}$, with $N(G)$ being the intersection of all
   maximal proper normal subgroups of $G$. As this takes some work to verify, we have decided not to include the details. 
 \end{enumerate}
\end{Exa}
The next proposition gives a first nontrivial example of a reflective subcategory.
\begin{Prop}\label{prop-refl-subprod}
 Let $\cat{S}_0\subseteq\cat{G}$ be an essentially finite subcategory, and let $\cat{S}$ be the category of groups that
 can be embedded in a finite product of groups taken from $\cat{S}_0$.
 Then $\cat{S}$ is reflective in $\CFG$.
\end{Prop}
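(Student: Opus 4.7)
The plan is to verify condition (c) of \cref{prop-refl}: for each $G\in\CFG$, we exhibit a smallest normal subgroup $N(G)\lhd G$ such that $G/N(G)\in\cat{S}$.

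First, observe that a normal subgroup $N\lhd G$ satisfies $G/N\in\cat{S}$ if and only if $N$ can be written as a finite intersection $N=\bigcap_{i=1}^k \ker(\phi_i)$, where each $\phi_i\colon G\to H_i$ is a homomorphism with $H_i\in\cat{S}_0$: the data of an embedding $G/N\hookrightarrow H_1\times\dotsb\times H_k$ is precisely the data of such a family of $\phi_i$'s whose kernels intersect in $N$. In particular, any such $N$ has finite index in $G$, so $G/N$ is itself a finite group.

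Next, use that $\cat{S}_0$ is essentially finite to choose finitely many representatives $H_1,\dotsc,H_n$ for the isomorphism classes of objects in $\cat{S}_0$. Since $G$ is finitely generated and each $H_j$ is finite, \cref{lem-Hom-finite} ensures that $\mathrm{Hom}(G,H_j)$ is a finite set. Accordingly, define
\[
N(G) \coloneqq \bigcap_{j=1}^n \bigcap_{\phi\in\mathrm{Hom}(G,H_j)} \ker(\phi),
\]
which is a \emph{finite} intersection of normal subgroups of finite index. The product of all these $\phi$'s therefore provides an embedding $G/N(G)\hookrightarrow\prod_{j,\phi}H_j$, which shows $G/N(G)\in\cat{S}$.

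It remains to verify minimality. Suppose $M\lhd G$ is another normal subgroup with $G/M\in\cat{S}$. By the first paragraph, we may write $M=\bigcap_\alpha \ker(\psi_\alpha)$ for finitely many homomorphisms $\psi_\alpha\colon G\to H'_\alpha$ with $H'_\alpha\in\cat{S}_0$. For each $\alpha$, fix an isomorphism $H'_\alpha\cong H_{j(\alpha)}$; then the composite $\widetilde{\psi}_\alpha\colon G\to H'_\alpha\cong H_{j(\alpha)}$ is one of the homomorphisms appearing in the definition of $N(G)$, so $N(G)\subseteq\ker(\widetilde{\psi}_\alpha)=\ker(\psi_\alpha)$. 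Intersecting over $\alpha$ gives $N(G)\subseteq M$, as required. This verifies condition (c) of \cref{prop-refl}, so $\cat{S}$ is reflective in $\CFG$. There is no serious obstacle here: the crux is combining the essential finiteness of $\cat{S}_0$ with the finiteness of $\mathrm{Hom}(G,H)$ from \cref{lem-Hom-finite} to see that the naive ``intersect all possible kernels'' construction is actually a finite intersection.
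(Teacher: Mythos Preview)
Your proof is correct and follows essentially the same approach as the paper. The only cosmetic difference is packaging: the paper first bundles the chosen representatives $H_1,\dotsc,H_n$ into their product $P$ and then works with the finite set $\mathrm{Hom}(G,P)$, whereas you keep the $H_j$ separate and intersect over $\bigcup_j\mathrm{Hom}(G,H_j)$; the resulting $N(G)$ and the minimality argument are the same.
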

\begin{proof}
 Choose a group from every isomorphism class in $\cat{S}_0$, and let $P$
 be the product of those groups.  Then $P$ is finite and $G\in\cat{S}$
 if and only if $G$ can be embedded in $P^r$ for some $r$.  If $G\in\CFG$ then
 $\Hom(G,P)$ is finite by \cref{lem-Hom-finite}.  If
 $\Hom(G,P)=\{\alpha_i\mid i<r\}$ then we can combine the maps $\alpha_i$
 into a single homomorphism $\alpha\colon G\to P^r$ and define
 $N(G)=\ker(\alpha)$, so $G/N(G)\cong\img(\alpha)\leq P^r$ so
 $G/N(G)\in\cat{S}$.  Conversely, if $M\lhd G$ and $G/M\in\cat{S}$ then $M$
 can be realized as the kernel of some morphism $\beta\colon G\to P^s$, and
 each component of $\beta$ must be one of the maps $\alpha_i$, so
 $N(G)\leq M$.  Thus, $N(G)$ is the smallest normal subgroup of $G$
 with quotient in $\cat{S}$, so everything follows from
 \cref{prop-refl}.  
\end{proof}
We next identify some general types of families which give rise to reflective subcategories.

\begin{Def}\label{defn-submult}
 We say that a subcategory $\cat{U}\subseteq\cat{G}$ is \emph{submultiplicative}, or
 \emph{$\infty$-submultiplicative}, if $1\in\cat{U}$ and whenever $G\leq G_0\times G_1$
 with $G_0,G_1\in\cat{U}$, we also have $G\in\cat{U}$.
\end{Def}

\begin{Prop}\label{prop-submult-refl}
    If $\cat{U}\subseteq\cat{G}$ is submultiplicative, then it is reflective in $\cat{G}$.
\end{Prop}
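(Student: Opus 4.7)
The plan is to verify condition (c) of \cref{prop-refl}: for each finite group $G$, we must exhibit a smallest normal subgroup $N(G) \lhd G$ such that $G/N(G) \in \cat{U}$. Granted this, all of the statements (a), (b), (d)--(g) of \cref{prop-refl} follow, and in particular $\cat{U}$ is reflective in $\cat{G}$.

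First, observe that the collection
\[
\mathcal{N}(G) \coloneqq \{N \lhd G \mid G/N \in \cat{U}\}
\]
is nonempty, since $1 \in \cat{U}$ by assumption, so that $G \in \mathcal{N}(G)$ (giving the trivial quotient). The main content is to show that $\mathcal{N}(G)$ is closed under pairwise intersection. Given $N_1, N_2 \in \mathcal{N}(G)$, the canonical homomorphism
\[
G/(N_1 \cap N_2) \hookrightarrow G/N_1 \times G/N_2
\]
is injective, because $N_1 \cap N_2$ is precisely the kernel of the combined map $G \to G/N_1 \times G/N_2$. Since $G/N_1, G/N_2 \in \cat{U}$ and $\cat{U}$ is submultiplicative, this immediately forces $G/(N_1 \cap N_2) \in \cat{U}$, so $N_1 \cap N_2 \in \mathcal{N}(G)$.

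Finally, since $G$ is finite, $\mathcal{N}(G)$ has only finitely many elements, so
\[
N(G) \coloneqq \bigcap_{N \in \mathcal{N}(G)} N
\]
is itself a finite intersection of members of $\mathcal{N}(G)$, hence by the closure under intersection just established, $N(G) \in \mathcal{N}(G)$. By construction $N(G)$ is contained in every element of $\mathcal{N}(G)$, so it is the smallest normal subgroup of $G$ with quotient in $\cat{U}$. This verifies condition (c) of \cref{prop-refl}, completing the proof.

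The argument is essentially a routine verification; there is no real obstacle, as submultiplicativity was designed precisely to make the diagonal embedding trick work, and finiteness of $G$ takes care of the rest.
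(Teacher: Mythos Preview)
Your proof is correct and follows essentially the same approach as the paper: define $N(G)$ as the intersection of all normal subgroups with quotient in $\cat{U}$, embed $G/N(G)$ into the product of the quotients, and invoke submultiplicativity. The only cosmetic difference is that you establish closure under pairwise intersection first and then iterate, whereas the paper takes the full intersection in one step and embeds into the full product directly.
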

\begin{proof}
  Given $G\in\cat{G}$ (so $G$ is finite) we let $N_0,\dotsc,N_{r-1}$ be the list of all normal subgroups such that $G/N_i\in\cat{U}$, and put $N(G)=\bigcap_iN_i$.  Then the natural map $G/N(G)\to\prod_iG/N_i$ is injective, and the image lies in $\cat{U}$ by the submultiplicativity assumption, so $G/N(G)\in\cat{U}$.  This means that $N(G)$ is one of the groups $N_i$, and it is clearly the smallest one.  Thus, condition~(c) in \cref{prop-refl} is satisfied, so $\cat{U}$ is reflective in $\cat{G}$.
\end{proof}

\begin{Exa}\label{ex-submult}
    Note that multiplicative global families are submultiplicative.  Thus, the following subcategories are all submultiplicative, and therefore reflective in $\cat{G}$:
    \begin{enumerate}
        \item the whole category $\cat{G}$;
        \item the subcategory of abelian groups;
        \item the subcategory of solvable groups;
        \item the subcategory of nilpotent groups of class (i.e., the length of the minimal central series of the group) at most $c$ (for a fixed $c$);
        \item the subcategory of $p$-groups (for a fixed prime number $p$);
        \item the subcategory of $G$ for which all simple composition factors of $G$ have order at most $m$ (for a fixed $m$).
    \end{enumerate}
    We also note that there are examples of submultiplicative families which are not multiplicative global families, for example the family $\cat W_3$ from \cite[Definition 3.3]{PolStrickland2022}.
\end{Exa}

\begin{Rem}\label{rem-refl-res}
 Suppose we have a reflective subcategory $\cat{S}\subseteq\cat{T}$.  Let
 $\cat{T}'\subseteq\cat{T}$ be a full subcategory that is closed downwards in $\cat{T}$, so
 whenever we have $\alpha\in\Hom_{\cat{T}}(G,H)$ with $G\in\cat{T}'$ we also have
 $H\in\cat{T}'$.  Put $\cat{S}'=\cat{S}\cap\cat{T}'$.  It is then easy to see that
 $\cat{S}'$ is reflective in $\cat{T}'$, with the reflection $q'\colon \cat{T}'\to\cat{S}'$
 just being the restriction of $q\colon \cat{T}\to\cat{S}$.  
\end{Rem}

We next give a particular example of \cref{rem-refl-res} which will be important for our main application regarding groups of bounded $p$-rank.
\begin{Def}\label{defn-r-generated}
 For any natural number $r$, we let $F_r$ denote the free group on $r$ generators. We say that a finite group $G$ is
 \emph{$r$-generated} if there exists a surjective homomorphism
 $F_r\to G$.
 We also say that every finite group is $\infty$-generated.  For any
 $r\in\bbN\cup\{\infty\}$ we write $\cat{G}\ip{r}$ for the category of
 $r$-generated finite groups, so $\cat{G}\ip{\infty}=\cat{G}$.  For an arbitrary full
 subcategory $\cat{S}\subseteq\cat{G}$ we put $\cat{S}\ip{r}=\cat{S}\cap\cat{G}\ip{r}$.
\end{Def}

\begin{Prop}\label{prop-fg-finite}
 Suppose that $\cat{S}\subseteq\cat{G}$, $\cat{S}$ is reflective in $\CFG$ and that $r<\infty$. Then
 $\cat{S}\ip{r}$ is essentially finite and is reflective in $\CFG\ip{r}$.
\end{Prop}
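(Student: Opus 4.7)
The plan is to use the reflection $q\colon \CFG\to\cat{S}$ to exhibit every $r$-generated group in $\cat{S}$ as a quotient of the single finite group $q(F_r)$, which will give both claims at once. Since $\cat{S}\subseteq\cat{G}$, the group $q(F_r)$ is finite by hypothesis. The key input from earlier in the paper is the universal property supplied by \cref{prop-refl}(f), together with the observation from \cref{prop-refl}(c) that the reflection is by quotient by a smallest normal subgroup.

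For essential finiteness, given any $G\in\cat{S}\ip{r}$, pick a witnessing epimorphism $\alpha\colon F_r\twoheadrightarrow G$ in $\tCFG$. Its conjugacy class $[\alpha]\in\Hom_{\CFG}(F_r,G)$ factors uniquely through the unit $\eta_{F_r}\colon F_r\to q(F_r)$ as $[\alpha]=[\beta]\circ \eta_{F_r}$ for some $[\beta]\in\Hom_{\cat{S}}(q(F_r),G)$. Since $\beta\circ\eta_{F_r}$ is conjugate (hence equal up to an inner automorphism) to the surjection $\alpha$, the morphism $\beta$ is itself a surjective homomorphism. Thus every $G\in\cat{S}\ip{r}$ is isomorphic to a quotient of the fixed finite group $q(F_r)$, and there are only finitely many such quotients up to isomorphism.

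For reflectivity, observe that $\CFG\ip{r}\subseteq\CFG$ is closed downwards: any quotient of an $r$-generated group is $r$-generated. Consequently, for $G\in\CFG\ip{r}$ the reflection $q(G)=G/N(G)$ lies in $\cat{S}\cap\CFG\ip{r}=\cat{S}\ip{r}$, so $q$ restricts to a functor $q'\colon\CFG\ip{r}\to\cat{S}\ip{r}$. Left adjointness then follows from the corresponding property over $\CFG$ together with fullness of all the subcategories involved: for $G\in\CFG\ip{r}$ and $H\in\cat{S}\ip{r}$,
\[
\Hom_{\cat{S}\ip{r}}(q'(G),H)=\Hom_{\cat{S}}(q(G),H)\cong \Hom_{\CFG}(G,H)=\Hom_{\CFG\ip{r}}(G,H).
\]

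The argument is essentially formal given \cref{prop-refl}; the only subtlety to watch is the distinction between morphisms in $\tCFG$ (actual homomorphisms) and in $\CFG$ (conjugacy classes thereof) when lifting the chosen surjection $\alpha\colon F_r\twoheadrightarrow G$ through the unit and then recovering surjectivity of a representative of $[\beta]$. This is the main bookkeeping step, but causes no real obstacle since surjectivity is invariant under conjugation.
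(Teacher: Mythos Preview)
Your proof is correct and follows essentially the same approach as the paper: both establish reflectivity by restricting $q$ along the downwards closed inclusion $\CFG\ip{r}\subseteq\CFG$ (the paper cites \cref{rem-refl-res} for this), and both prove essential finiteness by factoring any surjection $F_r\twoheadrightarrow G$ through the unit to exhibit $G$ as a quotient of the finite group $q(F_r)$. Your additional care about $\tCFG$ versus $\CFG$ and the surjectivity of $\beta$ is fine but not needed, since morphisms in $\cat{S}\subseteq\CFG$ are by definition (conjugacy classes of) surjections.
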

\begin{proof}
 As $\CFG\ip{r}$ is closed downwards in $\CFG$,
 \cref{rem-refl-res} shows that the subcategory
 $\cat{S}\ip{r}=\cat{S}\cap\CFG\ip{r}$ is reflective in $\CFG\ip{r}$.
 If $G\in\cat{S}\ip{r}$ then we have a morphism $F_r\to G$ in $\CFG$,
 which must factor through a morphism $q(F_r)\to G$ by adjunction.  Here
 $q(F_r)\in\cat{S}\subseteq\cat{G}$, so $q(F_r)$ is finite, so there are only
 finitely many possible quotients of $q(F_r)$.  It follows that there
 are only finitely many possibilities for the isomorphism class of
 $G$. 
\end{proof}

We finish this section by verifying that \cref{hyp} and \cref{hyp2} are inherited by reflective subcategories.

\begin{Lem}\label{lem-refl-wide}
 Let $\cat{U}, \cat{V}$ be subcategories of $\cat{G}$ with $\cat{U}$ reflective in $\cat{V}$. If $\cat{V}$ is widely closed then $\cat{U}$
 is also widely closed. Moreover, if $\unit \in \D{\cat{V}}$ is compact, then $\unit$ is also compact in $\D{\cat{U}}$. 
\end{Lem}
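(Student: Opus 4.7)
For the wide closure property I would rely on the characterization in \cref{prop-refl}(g) that a group $G\in\cat V$ lies in $\cat U$ exactly when the reflection kernel $N(G)\lhd G$ is trivial. Starting from surjections $G\twoheadleftarrow H\twoheadrightarrow K$ in $\cat U$, let $W\leq G\times K$ be the image of $H$, which lies in $\cat V$ by wide closure of $\cat V$. The projections $p_G\colon W\twoheadrightarrow G$ and $p_K\colon W\twoheadrightarrow K$ have targets in $\cat U$, so by minimality of $N(W)$ I get
\[
N(W)\leq \ker(p_G)\cap\ker(p_K)=W\cap(\{1\}\times K)\cap(G\times\{1\})=1.
\]
Hence $W\in\cat U$ by \cref{prop-refl}(g).

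For the compactness claim I would exploit that $i$ admits a left adjoint $q$. By \cref{con-functors} this gives $i_!\simeq q^*$, and since $q^*$ is symmetric monoidal one has $i_!\unit_{\cat U}\simeq q^*\unit_{\cat U}=\unit_{\cat V}$. Moreover, $i$ being fully faithful forces $i_!$ to be fully faithful (\cref{con-functors} again), so for every $Y\in\D{\cat U}$ the induced map
\[
\Hom_{\D{\cat U}}(\unit_{\cat U},Y)\xrightarrow{i_!}\Hom_{\D{\cat V}}(i_!\unit_{\cat U},i_!Y)\simeq\Hom_{\D{\cat V}}(\unit_{\cat V},i_!Y)
\]
is an isomorphism. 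Since $i_!$ is a left adjoint it preserves coproducts, so if $\unit_{\cat V}$ is compact, then
\[
\Hom_{\D{\cat U}}(\unit_{\cat U},\coprod_\beta Y_\beta)\simeq\Hom_{\D{\cat V}}(\unit_{\cat V},\coprod_\beta i_!Y_\beta)\simeq\coprod_\beta\Hom_{\D{\cat U}}(\unit_{\cat U},Y_\beta),
\]
which says that $\unit_{\cat U}$ is compact.

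Both parts are short and formal. The mild novelty is that reflectivity of $i$ is exactly what identifies $i_!$ with the symmetric monoidal pullback $q^*$, turning the compactness argument into a routine manipulation of the adjunction $i_!\dashv i^*$; no step is a serious obstacle.
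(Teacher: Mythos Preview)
Your proposal is correct and follows essentially the same argument as the paper. For wide closure, the paper also reduces to showing $N(W)=1$ by observing that $N(W)$ maps into $N(G)=1$ and $N(K)=1$ under the two projections (using \cref{prop-refl}(d) rather than the minimality formulation from \cref{prop-refl}(c), but these are equivalent); for compactness, the paper likewise uses that $i_!\simeq q^*$ is fully faithful, symmetric monoidal, and coproduct-preserving to transport compactness of $\unit_{\cat V}$ to $\unit_{\cat U}$.
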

\begin{proof}
 Recall from \cref{prop-refl} that for all $G \in \cat V$ there exists a smallest normal subgroup $N(G)\lhd G$ such that $G/N(G) \in \cat{U}$. With this in mind, suppose we have morphisms $G\xleftarrow{\alpha}H\xrightarrow{\beta}K$ in $\cat{V}$, and we
 let $L$ be the image of the combined map $\delta\colon H\to G\times K$, so
 $L\in\cat{V}$.  Suppose that $G,K\in\cat{U}$, so $N(G)=N(K)=1$ by \cref{prop-refl}(g). We have
 surjective homomorphisms $G\xleftarrow{\pi_G}L\xrightarrow{\pi_K}K$, so
 $\pi_G(N(L))\leq N(G)=1$ and $\pi_K(N(L))\leq N(K)=1$ by \cref{prop-refl}(d).  From this it
 is clear that $N(L)=1$, so $L\in\cat{U}$ as required.

Write $q\colon \cat{V} \to \cat{U}$ for the reflection and $i$ for the inclusion. Since $q^* \simeq i_!$ is fully faithful (\cref{cons-restriction-derived}) and symmetric monoidal (\cref{prop-gen-fun-preserve-compacts}), we have 
 \[\Hom_{\D{\cat{U}}}(\unit,-) \simeq \Hom_{\D{\cat{V}}}(q^*\unit, q^*(-)) \simeq \Hom_{\D{\cat{V}}}(\unit, q^*(-)).\] Since $q^*$ moreover preserves coproducts, we deduce that if $\unit \in \D{\cat{V}}$ is compact, then so is $\unit \in \D{\cat{U}}$.
\end{proof}

\section{Reflective filtrations}
\label{sec-refl-filt}
In this section we introduce the notion of a reflective filtration and give various important examples. We begin by making the key definition for this section. 
\begin{Def}\label{defn-refl-filt}
 A \emph{reflective filtration} of $\cat{U}$ is a sequence of reflective
 subcategories $\cat{U}[n]\subseteq\cat{U}$ with $\cat{U}[n]\subseteq\cat{U}[n+1]$ and
 $\cat{U}=\bigcup_n\cat{U}[n]$.  Given such a filtration, we will always write $i_n\colon \cat{U}[n] \to \cat{U}$ for the inclusion, and
 $q_n$ for the reflection $\cat{U}\to\cat{U}[n]$ which will necessarily have the form
 $q_n(G)=G/N_n(G)$ for some normal subgroup $N_n(G)\lhd G$ by \cref{prop-refl}. We say that the filtration is
 \emph{essentially finite} if each $\cat{U}[n]$ is essentially finite.
\end{Def}

\begin{Rem}\label{rem-hyps-refl-fil}
    Suppose that $\cat U$ admits a reflective filtration $\{\cat U[n]\}_{n\geq 0}$, and that $\cat U$ satisfies \cref{hyp} (resp. \cref{hyp2}). Then for all $n\geq 0$, the subcategory $\cat U[n]$ satisfies \cref{hyp} (resp. \cref{hyp2}), see \cref{lem-refl-wide}.
\end{Rem}

\begin{Rem}\label{rem-res-filt}
 If $\{\cat{V}[n]\}_{n\geq 0}$ is a reflective filtration of $\cat{V}$, and
 $\cat{U}$ is a subcategory that is closed downwards in $\cat{V}$, then we
 find that the subcategories $\cat{U}[n]=\cat{U}\cap\cat{V}[n]$ give a reflective
 filtration of $\cat{U}$, see \cref{rem-refl-res}.  Similarly, if the original filtration is
 essentially finite, then so is the restricted filtration.
\end{Rem}

We now give a variety of examples of reflective filtrations.
\begin{Exa}\label{eg-ab-filt}
 Let $\cat{U} = \cat{A}$ be the category of finite abelian groups.  For $G\in\cat{U}$
 put $N_n(G)=\{g^{n!}\mid g\in G\}$ and $\cat{U}[n]=\{G\mid N_n(G)=1\}$ and 
 $q_n(G)=G/N_n(G)$.  This gives a
 reflective filtration, which becomes essentially finite when
 restricted to $\cat{U}\ip{r}$. 
\end{Exa}

\begin{Exa}\label{eg-p-filt}
 Let $\cat{U}=\cat{G}_p$ be the category of finite $p$-groups for some prime number $p$.  For $G\in\cat{U}$ let
 $\Phi(G)=G^p[G,G]$ be the Frattini subgroup.  Put $\cat{U}[n]=\{G\mid\Phi^n(G)=1\}$ and
 $q_n(G)=G/\Phi^nG$.  Standard facts about $p$-groups show that this
 gives a reflective filtration, which becomes essentially finite
 when restricted to $\cat{U}\ip{r}$.
\end{Exa}

We next modify \cref{defn-submult} slightly to introduce a key 
class of examples for this paper, the $r$-submultiplicative families.
We will construct a reflective filtration for each such family. Recall 
the definition of $r$-generated group from \cref{defn-r-generated}.

\begin{Def}\label{defn-r-submult}
    Let $r$ be a natural number. We say that $\cat{U}$ is \emph{$r$-submultiplicative} if it satisfies the following conditions:
    \begin{enumerate}
        \item all groups in $\cat{U}$ are $r$-generated;
        \item $1\in\cat{U}$;
        \item if $G \leq G_0 \times G_1$ with $G_0, G_1 \in \cat{U}$ and $G$ is $r$-generated, then $G \in \cat{U}$.
    \end{enumerate}
\end{Def}

\begin{Lem}\label{lem-submult-closure}
 If $\cat{U}$ is submultiplicative, then $\cat{U}\ip{r}$ is 
 $r$-submultiplicative for all $r$.
 Conversely, suppose that $\cat{U}$ is $r$-submultiplicative, and let $\cat{U}^*$ be
 the category of groups that can be embedded in a finite product $\prod_{i}G_i$ for some groups $G_i\in\cat{U}$. Then $\cat{U}^*$ is
 submultiplicative and $\cat{U}=\cat{U}^*\ip{r}$.
\end{Lem}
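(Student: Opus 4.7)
The forward direction is immediate from the definitions. If $\cat{U}$ is submultiplicative and $r \in \mathbb{N}$, then every object of $\cat{U}\ip{r}$ is $r$-generated by fiat; the trivial group $1$ is $r$-generated and in $\cat{U}$; and if $G\le G_0\times G_1$ with $G_0,G_1\in\cat{U}\ip{r}$ and $G$ is $r$-generated, then submultiplicativity of $\cat{U}$ forces $G\in\cat{U}$, hence $G\in\cat{U}\ip{r}$.

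For the converse, first I will verify that $\cat{U}^*$ is submultiplicative. Clearly $1\in\cat{U}^*$ (it embeds in any group of $\cat{U}$, and we also have $1\in\cat{U}\subseteq\cat{U}^*$ since $\cat{U}$ is unital by assumption). If $H\le H_0\times H_1$ with $H_0,H_1\in\cat{U}^*$, then by definition $H_0\hookrightarrow\prod_{i\in I}G_i$ and $H_1\hookrightarrow\prod_{j\in J}G'_j$ with all factors in $\cat{U}$ and $I,J$ finite; combining these gives an embedding $H\hookrightarrow\prod_{i\in I}G_i\times\prod_{j\in J}G'_j$ into a finite product of objects in $\cat{U}$, so $H\in\cat{U}^*$.

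The heart of the argument is the equality $\cat{U}=\cat{U}^*\ip{r}$. The inclusion $\cat{U}\subseteq\cat{U}^*\ip{r}$ is clear, since groups of $\cat{U}$ are $r$-generated by hypothesis and self-embed. For the reverse inclusion, let $G\in\cat{U}^*\ip{r}$, so that $G$ is $r$-generated and $G\le\prod_{i=1}^n G_i$ with each $G_i\in\cat{U}$. I will proceed by induction on $n$. The case $n=0$ is trivial ($G=1\in\cat{U}$), and the case $n=1$ follows by writing $G\le 1\times G_1$ and applying the $r$-submultiplicativity of $\cat{U}$ (using $1,G_1\in\cat{U}$ and that $G$ is $r$-generated). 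For the inductive step, decompose the ambient product as $G_1\times\prod_{i\ge 2}G_i$ and let $G'_1$ and $G''$ denote the images of $G$ under the two corresponding projections. Both $G'_1$ and $G''$ are quotients of $G$, hence $r$-generated. Applying the $n=1$ case yields $G'_1\in\cat{U}$, and applying the inductive hypothesis to $G''\le\prod_{i\ge 2}G_i$ yields $G''\in\cat{U}$. Since $G$ embeds into $G'_1\times G''$ and is $r$-generated, one final application of $r$-submultiplicativity gives $G\in\cat{U}$.

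The only subtlety worth flagging is the inductive step above: one must reduce a finite product to a binary product, and it is crucial that the intermediate groups $G'_1$ and $G''$ inherit $r$-generation from $G$ (which they do as quotients), so that $r$-submultiplicativity is applicable at each stage.
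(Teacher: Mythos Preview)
Your proof is correct and fills in precisely the details the paper omits; the paper's own proof reads in its entirety ``This is straightforward from the definitions.'' Your induction on the number of factors, with the projection images $G'_1$ and $G''$ inheriting $r$-generation as quotients of $G$, is exactly the argument one would supply to justify that line.
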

\begin{proof}
This is straightforward from the definitions.
\end{proof}

\begin{Exa}\label{ex-r-submultiplicative}
    We can produce $r$-submultiplicative families by taking any of the categories $\cat{U}$ in \cref{ex-submult} and intersecting it with $\cat{G}\ip{r}$. For example, the category of $r$-generated solvable groups is $r$-submultiplicative.
\end{Exa}

We can produce reflections for $r$-submultiplicative families by a method very similar to that used in \cref{prop-submult-refl}.

\begin{Def}\label{defn-std-filt}
    For any $\cat{U}\subseteq\cat{G}$ we put   
        \[
            \cat{U}_{\leq n}\coloneqq\{G\in\cat{U}\mid |G|\leq n\}.
        \]
    For any $F \in \CFG$, we define
        \begin{align*}
            \cat{K}_n(F) & \coloneqq \{N\lhd F\mid F/N\in\cat{U}_{\leq n}\}; \\
            K_n(F) & \coloneqq \bigcap_{N\in\cat{K}_n(F)} N = 
            \ker\left(F \to \prod_{N\in\cat{K}_n(F)} F/N\right); \\
            q_n(F) & \coloneqq F/K_n(F); \\
            \cat{U}[n] & \coloneqq  \{G\in\cat{U} \mid K_n(G)=1\}.
        \end{align*}
\end{Def}

\begin{Rem}\label{rem-U[n]-general}
    By construction $G \in \cat U[n]$ if and only if $G$ can be embedded into a finite product of groups in $\cat U_{\leq n}$.
\end{Rem}

\begin{Exa}
   Consider $\cat U=\cat{A}$ the collection of finite abelian groups. Then $\cat{A}[n]$ consists of the products of groups of the form $C_{p^m}$ with $p^m \leqslant n$. Note that in particular this filtration is not the same as the one given in \cref{eg-ab-filt}.
\end{Exa}

\begin{Prop}\label{prop-std-filt}\leavevmode
 \begin{enumerate}
  \item If $\cat{U}$ is submultiplicative, then
   \cref{defn-std-filt} gives a reflective filtration of
   $\cat{U}$, in which $\cat{U}[n]$ is reflective in $\CFG$.  Moreover, for any
   $r\in\bbN$ this restricts to give an essentially finite reflective
   filtration of $\cat{U}\ip{r}$, with $\cat{U}\ip{r}[n]$ being reflective in
   $\CFG\ip{r}$.
  \item Similarly, if we assume that $\cat{U}$ is
   $r$-submultiplicative, then \cref{defn-std-filt} gives an
   essentially finite reflective filtration of $\cat{U}$, with $\cat{U}[n]$
   being reflective in 
   $\CFG\ip{r}$. 
 \end{enumerate}
\end{Prop}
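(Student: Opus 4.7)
The plan is to verify the conditions of \cref{defn-refl-filt} for the sequence $\{\cat{U}[n]\}$. Nesting $\cat{U}[n] \subseteq \cat{U}[n+1]$ is immediate: enlarging $\cat{U}_{\leq n}$ to $\cat{U}_{\leq n+1}$ can only enlarge the collection $\cat K_n(F)$, hence shrinks the intersection $K_n(F)$. Exhaustion $\bigcup_n\cat{U}[n] = \cat{U}$ holds because each $G\in\cat{U}$ is finite, so for $n\geq |G|$ the trivial subgroup lies in $\cat{K}_n(G)$, forcing $K_n(G)=1$. The substantive task is then to verify reflectivity in the claimed ambient category, and essential finiteness where applicable.

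For part (a), the main step is to show that $\cat{U}[n]$ is reflective in $\CFG$ with reflection $F\mapsto F/K_n(F)$; reflectivity in $\cat{U}$ then follows from \cref{rem-nested-reflection}. I verify criterion~(c) of \cref{prop-refl}. Note first that $\cat{K}_n(F)$ is finite, because there are only finitely many isomorphism classes of groups in $\cat{U}_{\leq n}$ and each admits only finitely many surjections from $F$ by \cref{lem-Hom-finite}. The key auxiliary claim is that any subgroup $H$ of a finite product $\prod_i G_i$ with $G_i\in\cat{U}_{\leq n}$ lies in $\cat{U}[n]$: iterating the submultiplicativity axiom (using $1\in\cat{U}$) gives $H\in\cat{U}$, and each projection $\pi_i\colon H\to G_i$ has image in $\cat{U}_{\leq n}$ (again by submultiplicativity applied to a subgroup of $G_i\times 1$), so $\ker\pi_i\in\cat{K}_n(H)$ and hence $K_n(H)\leq\bigcap_i\ker\pi_i = 1$. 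Applied to the tautological embedding $F/K_n(F)\hookrightarrow\prod_{N\in\cat{K}_n(F)}F/N$, this shows $F/K_n(F)\in\cat{U}[n]$. Minimality is then easy: if $F/M\in\cat{U}[n]$, then $F/M$ embeds in a finite product of $\cat{U}_{\leq n}$-groups, and pulling back the kernels of the projections to $F$ yields $N_i\in\cat{K}_n(F)$ with $M\leq N_i$ and $M=\bigcap_i N_i$, so $K_n(F)\leq M$.

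The remaining statements of (a) follow by combining \cref{rem-res-filt} and \cref{prop-fg-finite}: the restriction to $\cat{U}\ip{r}$ is an essentially finite reflective filtration in which each $\cat{U}\ip{r}[n]$ remains reflective in $\CFG\ip{r}$. For part (b) I reduce to (a) via \cref{lem-submult-closure}: let $\cat{U}^*$ be the submultiplicative closure of $\cat{U}$, so that $\cat{U}=\cat{U}^*\ip{r}$. For any $r$-generated $G$ every quotient of $G$ is $r$-generated as well, so the collections $\cat{K}_n(G)$ (formed relative to $\cat{U}_{\leq n}$) and $\cat{K}^*_n(G)$ (formed relative to $\cat{U}^*_{\leq n}$) coincide; hence $\cat{U}[n] = \cat{U}^*[n]\cap\CFG\ip{r}$. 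Applying (a) to $\cat{U}^*$ and then restricting along $\CFG\ip{r}\subseteq\CFG$ via \cref{rem-refl-res} and \cref{prop-fg-finite} yields the desired reflectivity in $\CFG\ip{r}$ and essential finiteness. I expect the principal obstacle to be the subgroup-of-product step in (a): ensuring that the candidate reflection $F/K_n(F)$ lands in $\cat{U}[n]$ rather than merely in $\cat{U}$ requires simultaneously tracking $\cat{U}$-membership and the vanishing of $K_n$, and this is precisely where the submultiplicativity hypothesis is essential.
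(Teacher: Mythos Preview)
Your argument is correct and follows essentially the same route as the paper. The paper packages your ``subgroup-of-product'' step as a citation of \cref{prop-refl-subprod} (applied to $\cat{S}_0=\cat{U}_{\leq n}$) together with \cref{rem-U[n]-general}, whereas you unpack that argument directly; your treatment of part~(b) via \cref{lem-submult-closure} and restriction to $\CFG\ip{r}$ is exactly what the paper does.
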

\begin{proof}
 For claim~(a), \cref{prop-refl-subprod} applied to $\cat{S}_0=\cat{U}_{\leq n}$ shows that $\cat U[n]$ is reflective in $\CFG$, and then it follows from \cref{rem-U[n]-general} and \cref{prop-refl} that the above $q_n$ gives a left adjoint for the inclusion $\cat{U}[n]\to\CFG$. It then follows from \cref{rem-nested-reflection} that $\cat U[n]$ is also reflective in $\cat{U}$ with reflection given by the restriction of $q_n$.
 It is clear that $\cat{U}_{\leq n}\subseteq \cat{U}[n]\subseteq\cat{U}[n+1]$, and that
 $\bigcup_n\cat{U}_{\leq n}=\cat{U}$, so $\bigcup_n\cat{U}[n]=\cat{U}$.
 \cref{prop-fg-finite} shows that the filtration becomes
 essentially finite when restricted to $\cat{U}\ip{r}$.  This proves~(a),
 and~(b) follows similarly using \cref{lem-submult-closure}.
\end{proof}

\section{Continuity and reflective filtrations}
In this section we consider the case when $\cat{U}$ admits a reflective filtration and examine its consequences for the structure of $\D{\cat{U}}^c$. In particular, we show that $\D{\cat{U}}^c$ is then a filtered colimit of the filtration pieces $\D{\cat{U}[n]}^c$. If the filtration is moreover essentially finite, we show that this colimit description can be used to completely classify the finitely generated thick ideals in $\D{\cat{U}}^c$ in terms of certain subsets of $\pi_0\cat{U}$.
We begin this section with the following observation.

\begin{Rem}
Suppose that $\cat{U}$ has a reflective filtration $\{\cat{U}[n]\}_{n \geq 0}$. For natural numbers $n \leq m$, we write $i_n\colon \cat{U}[n] \to \cat{U}$ and $i_n^m\colon \cat{U}[n] \to \cat{U}[m]$ for the inclusions. There is a natural isomorphism of functors
\begin{equation}\label{eq:filtrationcommutes}
    (i_n^m)_! \circ i_n^* \xrightarrow{\sim} i_m^*.
\end{equation}
Indeed, there is a natural map $(i_n^m)_! \circ i_n^* \to i_m^*$ which is adjoint to the natural isomorphism $i_n^* \simeq (i_n^m)^*i_m^*$. Since all the functors involved are colimit preserving, to check this is an isomorphism it suffices to check on the compact generators $e_G$, where it is true by \cref{rem-restriction-gen}.
\end{Rem}

\begin{Prop}\label{prop-counit}
    Suppose that $\cat{U}$ admits a reflective filtration $\{\cat{U}[n]\}_{n \geq 0}$. Given $X \in \D{\cat{U}}^c$, there exists a natural number $n_0$ such that for all $n \geq n_0$, the counit map $(i_n)_!i_n^*X \to X$ is an isomorphism, and $i_n^*X$ is compact.
\end{Prop}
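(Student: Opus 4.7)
The plan is to reduce the claim to the case of the compact generators $e_G$ via a thick subcategory argument. By \cref{prop-compact-perfect}, since $X \in \D{\cat U}^c$, there exists a finite list of groups $G_1,\ldots,G_k \in \cat U$ with $X \in \thick{e_{G_1},\ldots,e_{G_k}}$. As the filtration $\{\cat U[n]\}_{n\geq 0}$ is exhaustive, each $G_j$ lies in some $\cat U[n_j]$, and I set $n_0 \coloneqq \max_j n_j$, so that $G_1,\ldots,G_k \in \cat U[n]$ for every $n \geq n_0$.

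Fix $n \geq n_0$. Since $i_n$ admits the left adjoint $q_n$, the theory of Kan extensions (see \cref{con-functors} and \cref{cons-restriction-derived}) gives an equivalence $(i_n)_! \simeq q_n^*$, and since $i_n$ is fully faithful, so is $(i_n)_!$. Consequently the counit $(i_n)_!i_n^* \to \mathrm{id}$ is an equivalence precisely on the essential image of $(i_n)_!$, which is closed under (co)fibres, retracts, and all colimits, and hence forms a localizing subcategory of $\D{\cat U}$. By \cref{rem-restriction-gen}, for any $G \in \cat U[n]$ we have $i_n^* e_G^{\cat U} \cong e_G^{\cat U[n]}$ and $(i_n)_! e_G^{\cat U[n]} \cong e_G^{\cat U}$, so each $e_{G_j}^{\cat U}$ lies in the essential image of $(i_n)_!$. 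Closure under the thick subcategory operation then forces $X \in \thick{e_{G_1},\ldots,e_{G_k}}$ to lie in this essential image as well, so $(i_n)_!i_n^* X \xrightarrow{\sim} X$ is an equivalence.

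For the remaining compactness statement, note that the exact functor $i_n^*$ sends the thick subcategory $\thick{e_{G_1}^{\cat U},\ldots,e_{G_k}^{\cat U}} \subseteq \D{\cat U}$ into $\thick{e_{G_1}^{\cat U[n]},\ldots,e_{G_k}^{\cat U[n]}} \subseteq \D{\cat U[n]}^c$, again using \cref{rem-restriction-gen} together with \cref{prop-compact-perfect}. Since $X$ lies in the former, $i_n^*X$ is compact in $\D{\cat U[n]}$, completing the proof.

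I expect no serious obstacle here. The only mild subtlety is that $i_n^*$ need not preserve compact objects in general, because $\cat U[n]$ is reflective but typically not downwards closed in $\cat U$, so that \cref{prop-incl-preserve-compacts}(d) does not apply directly. This is sidestepped by restricting attention to the finitely-generated thick subcategory in which $X$ already lives, where the relevant statements about the counit and about compactness can both be verified on the explicit generators $e_{G_j}$.
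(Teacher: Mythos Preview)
Your proof is correct and takes essentially the same approach as the paper: a thick subcategory argument reducing to the generators $e_G$, using \cref{rem-restriction-gen} to handle those. Your organization is slightly more direct, since by fixing an explicit $n_0$ from a chosen finite presentation of $X$ you avoid the paper's preliminary step of checking that the property propagates from $n$ to all $m\geq n$ via \eqref{eq:filtrationcommutes}.
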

\begin{proof}
    We first argue that if the claims are true for some $n$, then they also hold for all $m \geq n$. Firstly, if $(i_n)_!i_n^*X \to X$ is an isomorphism, then
    \[
        (i_m)_!i_m^*X \simeq (i_m)_!(i_n^m)_!i_n^*X \simeq (i_n)_!i_n^*X \simeq X,
    \]
    by using \eqref{eq:filtrationcommutes}. A straightforward diagram chase identifies this with the counit map. By using \eqref{eq:filtrationcommutes} again, one also verifies that if $i_n^*X$ is compact then $i_m^*X$ is compact for all $m \geq n$ since $(i_n^m)_!$ preserves compacts by \cref{prop-gen-fun-preserve-compacts}(b).
    
    Observe that the full subcategory \[ \{X \in \D{\cat{U}}^c \mid \text{there exists $n$ such that $(i_n)_!i_n^*X \to X$ is an isomorphism}\}\] of $\D{\cat{U}}^c$ is thick, so it suffices to check the claim on the generators $X = e_G$ for $G \in \cat{U}$. In this case, we may choose $n$ large enough so that $G \in \cat{U}[n]$, and then it is immediate from \Cref{rem-restriction-gen} that the counit map is an isomorphism. So it remains to show that $i_n^*X$ is compact. The full subcategory
    \[\{X \in \D{\cat{U}}^c \mid \text{there exists $n$ such that $i_n^*X$ is compact}\}\] is thick, and hence it suffices to check for $X = e_G$ which is clear.
\end{proof}

\begin{Thm}\label{thm-colimit}
    Let $\cat U$ be a subcategory satisfying \cref{hyp2} and admitting a reflective filtration $\{\cat{U}[n]\}_{n \geq 0}$. Then the canonical functor 
        \[
            \colim_n \D{\cat{U}[n]}^c \to \D{\cat{U}}^c
        \] 
    induced by $(i_n)_! \cong q_n^*\colon \D{\cat{U}[n]}^c \to \D{\cat{U}}^c$ is a tt-equivalence.
\end{Thm}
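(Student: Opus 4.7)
The plan is to construct the functor $\Phi\colon \colim_n \D{\cat{U}[n]}^c \to \D{\cat{U}}^c$ via the universal property of the filtered colimit and then verify that it is essentially surjective, fully faithful, and symmetric monoidal. To set up the data, observe that since each $i_n\colon \cat{U}[n] \to \cat{U}$ is fully faithful and admits the left adjoint $q_n$, there is a natural isomorphism $(i_n)_! \cong q_n^*$ of functors $\D{\cat{U}[n]}^c \to \D{\cat{U}}^c$ (see \cref{cons-restriction-derived}). Consequently, $(i_n)_!$ is fully faithful, and by \cref{prop-gen-fun-preserve-compacts} it is strong symmetric monoidal, exact, and preserves compact objects. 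The compatibility $(i_n)_! \cong (i_m)_!(i_n^m)_!$ for $n \leq m$ yields a cocone on the diagram $\{\D{\cat{U}[n]}^c\}_{n\geq 0}$ over tt-functors $(i_n^m)_!$, giving the desired exact symmetric monoidal functor $\Phi$.

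Essential surjectivity is an immediate consequence of \cref{prop-counit}: for any compact $X \in \D{\cat{U}}^c$ there exists $n$ such that $i_n^*X$ is compact and the counit map $(i_n)_!i_n^*X \to X$ is an equivalence; hence $X$ lies in the essential image of $(i_n)_!$, and therefore of $\Phi$.

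For fully faithfulness, we use the standard description of mapping spaces in a filtered colimit of small stable $\infty$-categories. Given $X \in \D{\cat{U}[n]}^c$ and $Y \in \D{\cat{U}[m]}^c$, we have
\[
\Hom_{\colim_k \D{\cat{U}[k]}^c}(X, Y) \simeq \colim_{k \geq \max(n,m)} \Hom_{\D{\cat{U}[k]}^c}\bigl((i_n^k)_!X, (i_m^k)_!Y\bigr).
\]
Since $(i_k)_!$ is fully faithful and $(i_k)_! (i_n^k)_! \cong (i_n)_!$ coherently, each term in this colimit is naturally identified with $\Hom_{\D{\cat{U}}^c}((i_n)_!X, (i_m)_!Y)$, and the transition maps are the identity. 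Therefore the colimit agrees with this mapping space, proving fully faithfulness of $\Phi$.

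The main obstacle is bookkeeping the $\infty$-categorical aspects of the colimit: one must verify that the concrete mapping-space formula holds in the relevant colimit of tt-categories, and that the symmetric monoidal enhancement passes through the colimit functoriality. Once these standard facts are in place, the heart of the argument reduces to the two inputs that $(i_n)_!$ is fully faithful and that \cref{prop-counit} supplies a counit equivalence on compacts.
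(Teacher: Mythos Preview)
Your proof is correct and follows essentially the same route as the paper: both use that $(i_n)_! \cong q_n^*$ is a fully faithful tt-functor, deduce fully faithfulness of $\Phi$ from the description of mapping spaces in a filtered colimit of categories, and obtain essential surjectivity from \cref{prop-counit}. The paper's version is simply terser, invoking in one line that filtered colimits of tt-categories are created by the forgetful functor to categories.
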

\begin{proof}
    Filtered colimits of tt-categories are created by the forgetful functor to categories. Since $(i_n)_! \cong (q_n)^*$ is symmetric monoidal, it follows that the induced map \[i_!\colon\colim_n \D{\cat{U}[n]}^c \to \D{\cat{U}}^c\] is a tt-functor. Since each $(i_n)_!$ is fully faithful, it follows that $i_!$ is fully faithful (because mapping spaces in a filtered colimit of categories are the filtered colimits of the mapping spaces). Moreover, we see that $i_!$ is essentially surjective by \cref{prop-counit} and hence is an equivalence as claimed.
\end{proof}

\begin{Cor}\label{cor-lim-spectra}
 Let $\cat U$ be a subcategory satisfying \cref{hyp2} and admitting a reflective filtration $\{\cat{U}[n]\}_{n \geq 0}$. The equivalence of \cref{thm-colimit} induces a homeomorphism
    \[
        \Spc(\D{\cat{U}}^c)\xrightarrow{\sim}\lim_n \Spc(\sD(\cat U[n])^c).
    \]
\end{Cor}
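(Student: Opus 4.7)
My plan is to deduce the statement directly from \cref{thm-colimit} by invoking the continuity of the Balmer spectrum under filtered colimits of tt-categories, a result due to Gallauer \cite{Gallauer}. Indeed, \cref{thm-colimit} provides a tt-equivalence
\[
\colim_n \D{\cat{U}[n]}^c \xrightarrow{\sim} \D{\cat{U}}^c
\]
in the $2$-category $\ttCat$ of essentially small tt-categories. Applying the contravariant functor $\Spc$ and using Gallauer's theorem, which asserts that $\Spc$ sends filtered colimits in $\ttCat$ to cofiltered limits of spectral spaces (and in particular of topological spaces), we obtain a homeomorphism
\[
\Spc(\D{\cat{U}}^c) \xrightarrow{\sim} \Spc\bigl(\colim_n \D{\cat{U}[n]}^c\bigr) \xrightarrow{\sim} \lim_n \Spc(\D{\cat{U}[n]}^c).
\]

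To verify that this homeomorphism is induced by the tt-functors $(i_n)_!$, it suffices to trace through the construction. The transition maps in the limit are $\Spc((i_n^m)_!)\colon \Spc(\D{\cat{U}[m]}^c) \to \Spc(\D{\cat{U}[n]}^c)$ for $n \leq m$, which are well-defined since each $(i_n^m)_!$ is a tt-functor (it is the restriction of the symmetric monoidal functor $(q_n)^*$, cf.~\cref{prop-incl-preserve-compacts}(c) together with the identification $(i_n)_! \cong q_n^*$). The cone maps $\Spc(\D{\cat{U}}^c) \to \Spc(\D{\cat{U}[n]}^c)$ are then precisely $\Spc((i_n)_!)$, and compatibility with the transition maps is automatic from the isomorphism $(i_n^m)_! \circ i_n^* \cong i_m^*$ recorded in \eqref{eq:filtrationcommutes}.

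The only potential obstacle is ensuring that we are in the setting where Gallauer's continuity result applies. Since each $\cat{U}[n]$ satisfies \cref{hyp2} by \cref{rem-hyps-refl-fil}, each $\D{\cat{U}[n]}^c$ is a genuine (unital) tt-category, and the transition tt-functors preserve the tensor unit. Thus the colimit in \cref{thm-colimit} is computed in $\ttCat$ as required, and Gallauer's theorem applies verbatim.
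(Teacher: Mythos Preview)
Your proof is correct and follows essentially the same approach as the paper: both combine \cref{thm-colimit} with Gallauer's continuity result \cite[Proposition 8.2]{Gallauer} that $\Spc$ takes filtered colimits of tt-categories to inverse limits of spaces. Your additional verification that the filtered diagram lives in $\ttCat$ (via \cref{rem-hyps-refl-fil} and the identification $(i_n)_!\cong q_n^*$) is a welcome elaboration, though the reference to \cref{prop-incl-preserve-compacts}(c) is slightly misplaced—the symmetric monoidality of the transition functors comes directly from \cref{prop-gen-fun-preserve-compacts}(a) applied to $q_n^*$.
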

\begin{proof}
    This follows by combining \cref{thm-colimit} with the fact that the Balmer spectrum sends filtered colimits of tt-categories to filtered limits of topological spaces~\cite[Proposition 8.2]{Gallauer}.
\end{proof}

\begin{Cor}\label{cor-ess-finite-invlim}
    Let $\cat U$ be a subcategory satisfying \cref{hyp2} and admitting an essentially finite reflective filtration $\{\cat{U}[n]\}_{n \geq 0}$. Then there is a homeomorphism 
    \[
        \Spc(\D{\cat{U}}^c) \xrightarrow{\sim} \lim_n\pi_0\cat{U}[n],
    \]
    where the limit is taken over the reflections. The spectrum $\Spc(\D{\cat{U}}^c)$ is profinite and, in particular,  for $\mathfrakp, \mathfrak{q} \in \Spc(\D{\cat{U}}^c)$, we have $\mathfrakp \subseteq \mathfrak{q}$ if and only if $\mathfrakp = \mathfrak{q}$. 
\end{Cor}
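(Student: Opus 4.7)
The statement is a corollary which assembles previously-established pieces, so the plan is short. First, I would verify that each filtration piece $\cat{U}[n]$ inherits \cref{hyp2} from $\cat{U}$ via \cref{rem-hyps-refl-fil}, and by assumption is essentially finite. Applying \cref{thm:spcfinite} to each $\cat{U}[n]$ yields a homeomorphism $\Spc(\D{\cat{U}[n]}^c)\cong\pi_0\cat{U}[n]$ where the target is a finite discrete space. Combining this with the continuity statement \cref{cor-lim-spectra} immediately gives
\[
    \Spc(\D{\cat{U}}^c)\;\cong\;\lim_n\Spc(\D{\cat{U}[n]}^c)\;\cong\;\lim_n\pi_0\cat{U}[n],
\]
producing the first claim modulo identification of the transition maps.

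The mildly nontrivial step is to identify the transition maps in the limit as the reflection maps $\pi_0 q_n^{n+1}\colon\pi_0\cat{U}[n+1]\to\pi_0\cat{U}[n]$. The colimit in \cref{thm-colimit} is formed along the functors $(i_n^{n+1})_!\cong (q_n^{n+1})^*$, and \cref{lem:groupprimes_naturality} was stated for pullback along an inclusion, so I would redo the short bookkeeping for pullback along the reflection: for $H\in\cat{U}[n+1]$, we have $(q_n^{n+1})^*X(H)=X(q_n^{n+1}(H))$, so $((q_n^{n+1})^*)^{-1}(\mathfrakp_H)=\mathfrakp_{q_n^{n+1}(H)}$, which is exactly what is needed.

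For the second part I would argue topologically. A cofiltered limit of finite discrete spaces is profinite, hence in particular Hausdorff, totally disconnected, and compact; so singletons in $\Spc(\D{\cat{U}}^c)$ are closed. Recalling that by \cite[Proposition 2.9]{Balmer2005} the closure of $\{\mathfrak{q}\}$ in a Balmer spectrum is $\{\mathfrakp\mid\mathfrakp\subseteq\mathfrak{q}\}$, the inclusion $\mathfrakp\subseteq\mathfrak{q}$ is equivalent to $\mathfrakp\in\overline{\{\mathfrak{q}\}}=\{\mathfrak{q}\}$, hence $\mathfrakp=\mathfrak{q}$. There is no genuine obstacle here; the only step that requires a moment of care is the identification of the transition maps described above, and everything else is formal.
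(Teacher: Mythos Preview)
Your proposal is correct and follows essentially the same route as the paper: combine \cref{thm:spcfinite} with \cref{cor-lim-spectra} for the homeomorphism, then use that a profinite space is Hausdorff together with \cite[Proposition 2.9]{Balmer2005} to rule out nontrivial inclusions of primes. Your explicit identification of the transition maps with the reflections is a helpful elaboration that the paper leaves implicit.
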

\begin{proof}
    The first claim follows from \cref{thm:spcfinite} and \cref{cor-lim-spectra}. This means that $\Spc(\D{\cat{U}}^c)$ is a profinite space, thus Hausdorff, and we conclude from \cite[Proposition 2.9]{Balmer2005} that there are no containments between distinct primes.
\end{proof}

\begin{Rem}
    In contrast to the setting of the previous result, the Balmer spectrum of $\D{\cat{U}}^c$ is not always zero-dimensional. For example, in \cref{cor:infinitedimensional} we showed that the Balmer spectrum for the family of finite abelian $p$-groups is infinite dimensional.
\end{Rem}

\begin{Rem}\label{rem-ess-finite-refl-standard}
    In the context of \cref{cor-ess-finite-invlim}, the tt-category $\D{\cat{U}}^c$ is standard by \cref{prop:standardtt} and \cref{thm:spcfinite}, so that the computation of the Balmer spectrum yields a classification of all thick ideals.
\end{Rem}

\begin{Rem}\label{rem-invlimit-shortcomings}
    Although the previous result gives a description of $\Spc(\D{\cat{U}}^c)$ as a profinite space, this description is somewhat inexplicit, since the inverse limit may be difficult to understand without a more concrete description. In the next part we will tackle this shortcoming, by providing additional conditions on the filtration which ensure that the inverse limit may be described in simple terms.
\end{Rem}

Nonetheless we are able to classify all thick ideals which are finitely generated in terms of certain subsets of $\pi_0\cat{U}$.

\begin{Prop}\label{prop-thick-building}
    Let $\cat U$ be a subcategory satisfying \cref{hyp2} and admitting an essentially finite reflective filtration $\{\cat{U}[n]\}_{n \geq 0}$, and let $X, Y \in \D{\cat{U}}^c$. Then $X \in \thickt{Y}$ if and only if $\hsupp(X) \subseteq \hsupp(Y)$.
\end{Prop}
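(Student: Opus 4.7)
The forward implication is immediate from \cref{lem:hsupp}(e), which records that homological support is monotone under containment in thick ideals. So the task is to prove the converse.

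My plan is to reduce to the essentially finite case via the filtration and then invoke \cref{prop:spcfinite}. By \cref{prop-counit}, there exists $n_0$ large enough so that for all $n \geq n_0$, both counit maps $(i_n)_! i_n^* X \to X$ and $(i_n)_! i_n^* Y \to Y$ are isomorphisms, and moreover both $i_n^* X$ and $i_n^* Y$ are compact objects of $\D{\cat{U}[n]}^c$. Fix such an $n$.

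Next I would transfer the support hypothesis along $i_n^*$. By \eqref{eq:hsupprestriction}, we have $\hsupp(i_n^* X) = (\pi_0 i_n)^{-1}\hsupp(X)$ and similarly for $Y$, so the assumption $\hsupp(X) \subseteq \hsupp(Y)$ immediately implies $\hsupp(i_n^* X) \subseteq \hsupp(i_n^* Y)$ as subsets of $\pi_0\cat{U}[n]$. Since $\cat{U}[n]$ is essentially finite, \cref{prop:spcfinite} applies and yields
\[
    i_n^* X \in \thickt{i_n^* Y} \subseteq \D{\cat{U}[n]}^c.
\]

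Finally I would push this containment back up to $\cat{U}$. The functor $(i_n)_! \cong q_n^*$ is strong symmetric monoidal and exact, hence sends the thick ideal generated by $i_n^* Y$ into the thick ideal generated by $(i_n)_! i_n^* Y \cong Y$. Combining with the counit isomorphism gives
\[
    X \cong (i_n)_! i_n^* X \in (i_n)_!\thickt{i_n^* Y} \subseteq \thickt{(i_n)_! i_n^* Y} = \thickt{Y},
\]
which concludes the proof. There is no real obstacle here beyond assembling the existing pieces; the key point is that the reflective filtration hypothesis supplies both a strong monoidal pushforward $(i_n)_! = q_n^*$ (making the transfer of thick ideals possible) and essential finiteness of each $\cat{U}[n]$ (which lets us apply the support classification of \cref{prop:spcfinite}).
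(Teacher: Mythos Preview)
Your proof is correct and follows the same overall strategy as the paper: reduce to the essentially finite filtration piece $\cat{U}[n]$, invoke \cref{prop:spcfinite} there, and then transport back to $\cat{U}$. The difference lies in the final transport step. The paper takes a detour through the Balmer spectrum: it establishes the identity $\hsupp(X)=(\pi_0 q_n)^{-1}\hsupp(X_n)$ via \cref{lem:groupprimes_naturality}, uses surjectivity of $\pi_0 q_n$ to deduce $\hsupp(X_n)\subseteq\hsupp(Y_n)$, then pulls back along $\varphi_n=\Spc(q_n^*)$ to get $\supp(X)\subseteq\supp(Y)$, and finally appeals to standardness of $\D{\cat U}^c$ (\cref{rem-ess-finite-refl-standard}) to conclude. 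Your argument is more direct: since $(i_n)_!\cong q_n^*$ is a tt-functor, it automatically carries $\thickt{i_n^*Y}$ into $\thickt{(i_n)_!i_n^*Y}$, and the counit isomorphisms finish the job without ever touching $\Spc(\D{\cat U}^c)$ or needing standardness. The paper's route has the side benefit of recording the formula $\hsupp(X)=(\pi_0 q_n)^{-1}\hsupp(X_n)$ (labelled \eqref{align:prop-thick-building}), which is reused in the proof of \cref{thm:fgttideals}; your streamlined argument does not produce this byproduct, but for the proposition as stated it is the cleaner path.
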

\begin{proof}
    The forward implication is contained in \cref{lem:hsupp}, so we prove the reverse direction. Given $X, Y \in \D{\cat{U}}^c$, by \cref{thm-colimit}, there is an $n \geq 0$ so that $X \simeq q_n^*X_n$ and $Y \simeq q_n^*Y_n$ for some $X_n, Y_n \in \D{\cat{U}[n]}^c$. Write $\varphi_n = \Spc(q_n^*)$ for the induced map on spectra. Since $\cat{U}[n]$ is essentially finite, by \cref{thm:spcfinite} the map $\mathfrakp_{-}^{\cat{U}[n]}\colon \pi_0\cat{U}[n] \to \Spc(\D{\cat{U}[n]}^c)$ is a homeomorphism and hence \begin{equation}\label{eq:suppispullback}
         \supp(X) = \varphi_n^{-1}\supp(X_n) = \varphi_n^{-1}(\mathfrakp_{-}^{\cat{U}[n]}\hsupp(X_n)).
     \end{equation}
     By \cref{lem:groupprimes_naturality}, there is a commutative diagram
        \[
            \xymatrix{\pi_0\cat{U} \ar[r]^-{\mathfrakp_{-}^\cat{U}} \ar[d]_{\pi_0q_n} & \Spc(\D{\cat{U}}^{c}) \ar[d]^{\varphi_n} \\
            \pi_0\cat{U}[n] \ar[r]_-{\mathfrakp_{-}^{\cat{U}[n]}}^-{\simeq} & \Spc(\D{\cat{U}[n]}^{c})}.
        \]
     So by applying $(\mathfrakp_{-}^\cat{U})^{-1}$ to \eqref{eq:suppispullback} and using injectivity of $\mathfrakp_{-}^{\cat{U}[n]}$, we obtain
        \begin{align}\label{align:prop-thick-building}
        \begin{split}
            \hsupp(X) &= (\mathfrakp_{-}^\cat{U})^{-1}\supp(X) \\ 
            &= (\mathfrakp_{-}^\cat{U})^{-1}\varphi_n^{-1}\mathfrakp_{-}^{\cat{U}[n]}\hsupp(X_n) \\ 
            & = (\pi_0q_n)^{-1}(\mathfrakp_{-}^{\cat{U}[n]})^{-1}\mathfrakp_{-}^{\cat{U}[n]}\hsupp(X_n) \\ 
            & = (\pi_0q_n)^{-1}\hsupp(X_n).
        \end{split}
        \end{align}
     Since $\pi_0q_n$ is surjective, it follows that $(\pi_0q_n)(\hsupp(X)) = \hsupp(X_n)$ and similarly for $Y$. 
     By the assumption that $\hsupp(X) \subseteq \hsupp(Y)$, we deduce that $\hsupp(X_n) \subseteq \hsupp(Y_n)$, and hence that $\supp(X) \subseteq \supp(Y)$ by \eqref{eq:suppispullback}. From general tt-geometry and \cref{rem-ess-finite-refl-standard}, it follows that $X \in \thickt{Y}$ as claimed.
\end{proof}

We can extend \cref{prop-thick-building} to a classification of the lattice of finitely generated thick ideals $\FIdl(\D{\cat{U}}^c)$ in $\D{\cat{U}}^c$ as follows.
\begin{Def}\label{defn-asym}
 Suppose that $\cat{U}$ is equipped with an essentially finite, reflective filtration $\{\cat{U}[n]\}_{n \geq 0}$ with reflections $q_n \colon \cat U \to \cat U[n]$. 
 We say that a subset $S\subseteq \pi_0\cat{U}$ is \emph{asymptotic} if
 there exists a natural number $n$ and a subset $S'\subseteq \pi_0\cat{U}[n]$ such that
 $S=(\pi_0q_n)^{-1}(S')$. We write $\asym(\cat{U})$ for the lattice of
 asymptotic subsets of $\pi_0\cat{U}$ with meet given by intersection and join given by union. 
\end{Def}

\begin{Rem}\label{rem-independent}
    In the above context it is clear that every essentially finite subcategory of $\cat{U}$ is contained in some $\cat{U}[n]$.  From that we see that any two essentially finite reflective filtrations give the same collection of asymptotic subsets. In particular, the notation $\asym(\cat{U})$ is justified since it is independent of the choice of filtration.
\end{Rem}

\begin{Rem}
    Notice that the asymptotic subsets arise from the filtration pieces:
    \[
        \asym(\cat{U}) = \textstyle\bigcup_{n \geq 0} (\pi_0q_n)^{-1}(\mathcal{P}(\pi_0\cat{U}[n])),
    \]
    where $\mathcal{P}(\pi_0\cat{U}[n])$ denotes the set of all subsets of $\pi_0\cat{U}[n]$.
\end{Rem}

\begin{Thm}\label{thm:fgttideals}
    Let $\cat U$ be a subcategory satisfying \cref{hyp2}. If $\cat{U}$ has an essentially finite, reflective filtration $\{\cat{U}[n]\}_{n \geq 0}$, then the homological support gives an isomorphism of lattices
    \[
        \hsupp\colon \FIdl(\D{\cat{U}}^c) \xrightarrow{\sim} \asym(\cat{U}).
    \]
\end{Thm}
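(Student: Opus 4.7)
The plan is to define the inverse lattice map $\thickt{X}\mapsto\hsupp(X)$, using the standard identification $\FIdl(\D{\cat U}^c)$ with the lattice of principal thick ideals via $\thickt{X_1,\ldots,X_k}=\thickt{X_1\oplus\cdots\oplus X_k}$. For well-definedness, by \cref{thm-colimit} any compact $X\in\D{\cat U}^c$ is isomorphic to $q_n^\ast X_n$ for some $n$ and some $X_n\in\D{\cat U[n]}^c$; the computation carried out at \eqref{align:prop-thick-building} in the proof of \cref{prop-thick-building} then gives $\hsupp(X)=(\pi_0 q_n)^{-1}\hsupp(X_n)$, which belongs to $\asym(\cat{U})$. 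Dependence only on the ideal follows from \cref{lem:hsupp}, since $Y\in\thickt{X}$ forces $\hsupp(Y)\subseteq\hsupp(X)$, so $\hsupp(\thickt X)=\hsupp(X)$.

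Injectivity is immediate from \cref{prop-thick-building}: if $\hsupp(X)=\hsupp(Y)$, then $X\in\thickt Y$ and $Y\in\thickt X$, hence $\thickt X=\thickt Y$. For surjectivity, let $S\in\asym(\cat U)$, so $S=(\pi_0 q_n)^{-1}(S')$ for some $n$ and some $S'\subseteq\pi_0\cat U[n]$. Since $\cat U[n]$ is essentially finite, $S'$ is finite, so the object $X_n\coloneqq\bigoplus_{G\in S'}\chi_{G,k}$ is well-defined and compact in $\D{\cat U[n]}^c$ by \cref{lem:chi-compact}. By construction $\hsupp(X_n)=S'$, so $X\coloneqq q_n^\ast X_n\cong (i_n)_! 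X_n\in\D{\cat U}^c$ satisfies $\hsupp(X)=(\pi_0 q_n)^{-1}(S')=S$ by \eqref{eq:hsupprestriction}.

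It remains to check that the bijection preserves meets and joins. Joins are easy: $\thickt X\vee\thickt Y=\thickt{X\oplus Y}$, and $\hsupp(X\oplus Y)=\hsupp(X)\cup\hsupp(Y)$ by \cref{lem:hsupp}. For meets, I would show that $\thickt X\cap\thickt Y=\thickt{X\otimes Y}$. The inclusion $\supseteq$ holds since $X\otimes Y\in\thickt X\cap\thickt Y$ (each factor is an ideal). Conversely, any $Z\in\thickt X\cap\thickt Y$ satisfies $\hsupp(Z)\subseteq\hsupp(X)\cap\hsupp(Y)=\hsupp(X\otimes Y)$ by \cref{lem:hsupp}, and \cref{prop-thick-building} then yields $Z\in\thickt{X\otimes Y}$. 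Applying $\hsupp$ and using \cref{lem:hsupp} once more gives $\hsupp(\thickt X\cap\thickt Y)=\hsupp(X)\cap\hsupp(Y)$, as required.

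The main conceptual work has already been done in \cref{prop-thick-building}; the only residual subtlety is verifying that meets of principal ideals stay principal, which the argument above extracts cleanly from \cref{prop-thick-building} without having to invoke standardness of $\D{\cat U}^c$ directly (though it also follows a posteriori from \cref{prop:standardtt}(c) applied to the filtered colimit in \cref{thm-colimit}).
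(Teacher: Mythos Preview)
Your proof is correct and follows essentially the same approach as the paper: both reduce well-definedness to \eqref{align:prop-thick-building}, injectivity to \cref{prop-thick-building}, and surjectivity to producing a compact object in $\D{\cat U[n]}^c$ with prescribed support and pulling it back along $q_n^*$. The only genuine difference is in verifying the lattice structure: the paper observes abstractly that $\hsupp = (\mathfrakp_{-}^{\cat U})^{-1}\circ\supp$ and inherits the lattice-map property from Balmer's universal support (which implicitly uses standardness, cf.~\cref{rem-ess-finite-refl-standard}), whereas you prove directly that $\thickt{X}\cap\thickt{Y}=\thickt{X\otimes Y}$ via \cref{prop-thick-building}. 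Your route is slightly more self-contained and, as you note, sidesteps the need to invoke standardness separately; the paper's route is shorter but leans on the general tt-machinery.
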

\begin{proof}
Firstly, we note that the homological support of any compact object is an asymptotic subset by \eqref{align:prop-thick-building}. Since the Balmer support $\supp(-)$ is a lattice map and $\hsupp = (\mathfrakp_{-}^{\cat{U}})^{-1}\supp$, we deduce that $\hsupp(-)$ is also a lattice map. As such, we have a well-defined lattice map
\[\hsupp\colon \FIdl(\D{\cat{U}}^c) \to \asym(\cat{U}).\]
This is injective by \cref{prop-thick-building}, so it remains to show surjectivity. Suppose that $S$ is an asymptotic subset of $\pi_0\cat{U}$ so that $S = (\pi_0q_n)^{-1}(S')$ for some natural number $n$ and subset $S' \subseteq \pi_0\cat{U}[n]$. By \cref{thm:spcfinite}, we therefore know that $S' = \hsupp(X)$ for some $X \in \D{\cat{U}[n]}^c$. Now $q_n^*X \simeq (i_n)_!X$ is compact by \cref{prop-gen-fun-preserve-compacts}(b), and $\hsupp(q_n^*X) = (\pi_0q_n)^{-1}\hsupp(X) = S$ by \eqref{eq:hsupprestriction} which concludes the proof.
\end{proof}

\begin{Rem}\label{rem:spectralclosure}
    Formally, in the  situation of \cref{thm:fgttideals}, the bounded distributive lattice $\FIdl(\D{\cat{U}}^c)$ and thus also $\asym(\cat{U})$ determine the lattice $\Idl(\D{\cat{U}}^c)$ of all thick ideals in $\D{\cat{U}}^c$, see \cite[Theorem 3.1.9]{KockPitsch2017}. Indeed, writing $\mathrm{Id}(\cat L)$ for the lattice of ideals in a bounded distributive lattice $\cat L$, there are bijections
        \[
            \Idl(\D{\cat{U}}^c) \cong \mathrm{Id}(\FIdl(\D{\cat{U}}^c)) \cong \mathrm{Id}(\asym(\cat{U})).
        \]
    Via Stone duality, this means that $\pi_0\cat{U}$ along with the collection of asymptotic subsets determines the spectrum of $\D{\cat{U}}^c$. However, in general $\mathrm{Id}(\asym(\cat{U}))$ does not embed into the set of subsets of $\pi_0\cat{U}$ via homological support, in contrast to the description of finitely generated thick ideals in terms of asymptotic subsets. For example, for $\cat U = \cat{C}_p$ the family of cyclic $p$-groups, $\hsupp$ cannot distinguish between $\D{\cat{U}}^c$ and $\mathfrakp_{\infty}$ which are both supported on all of $\pi_0\cat{U}$, see \cref{exa:cyclicp}. 
    
    Reexpressed in the context of \cref{rem-bad-unions}, we are observing that the open subsets of the spectrum are not determined by restricting to $\pi_0\cat{U}$. Consequently, we need to enlarge $\pi_0\cat{U}$ to obtain a parametrization of all thick ideals of $\D{\cat{U}}^c$; this is the subject of the next part.
\end{Rem}

\newpage
\part{The Balmer spectrum for families with profinite filtrations}
We begin this part by introducing a new type of prime ideal in $\D{\cat U}^c$ which we call profinite group primes. These new prime ideals are classified by an extension of $\pi_0\cat{U}$ which we denote by $\pi_0\hCU$. We then define a profinite group prime map and show that it is injective if $\cat U$ admits a special type of reflective filtration which we call profinite, see \cref{prop:profinitemap-inj}. If $\cat U$ admits a suitable profinite reflective filtration, we can equip $\pi_0\hCU$ with a profinite topology which is constructed by realizing $\pi_0\hCU$ as an inverse limit of the filtration pieces (\cref{thm-Uhat-is-a-lim}). As our main result of this part, we show that the profinite group map is then a homeomorphism, and we apply this result to compute the Balmer spectrum for $r$-submultiplicative families. 

As a guide for the reader, the following diagram summarizes the main results of this part together with references. For a family $\cat{U}$ of $r$-generated groups which has an essentially finite, reflective filtration, we will construct maps
\[\pi_0\cat{U} \hookrightarrow \pi_0\hCU \xrightarrow{\sim} \Spc(\D{\cat{U}}^c).\] 
These maps induce the indicated bijections, demonstrating the various different perspectives on the classification of (finitely generated) thick ideals of $\D{\cat{U}}^c$ which we establish:
        \[
            \begin{tikzcd}[ampersand replacement=\&, column sep=large]
                {\begin{Bmatrix}
                    \text{thick ideals} \\
                    \text{of } \D{\cat{U}}^c
                \end{Bmatrix}} \arrow[r, "\hsupp", "\sim\, (\ref{Thm-profinite-rgenerated})"'] \&
                {\begin{Bmatrix}
                    \text{open subsets} \\
                    \text{of } \pi_0\hCU
                \end{Bmatrix}} 
                    \& 
                 \\
                {\begin{Bmatrix}
                    \text{finitely generated} \\
                    \text{thick ideals} \\
                    \text{of } \D{\cat{U}}^c
                \end{Bmatrix}} \arrow[r, "\hsupp", "\sim\, (\ref{Thm-profinite-rgenerated})"'] \arrow[u, hookrightarrow] \&
                {\begin{Bmatrix}
                    \text{clopen subsets} \\
                    \text{of } \pi_0\hCU
                \end{Bmatrix}} \arrow[r, "\pi_0\cat{U} \cap -", "\sim\, (\ref{cor-recover-asym})"'] \arrow[u, hookrightarrow]
                    \& 
                \asym(\cat{U}). 
            \end{tikzcd}
        \]
We remind the reader that \cref{hyp} is in place throughout.

\section{Profinite group primes}\label{sec-profinite-group-primes}

In this section we show that for a family $\cat{U}$, there are tt-primes associated to each finitely topologically generated profinite group built out of the groups in $\cat{U}$.
Let us start by setting up some notation and definitions. 

\begin{Def}\label{defn-hCG}
 We define $\hCG$ to be the class of finitely topologically generated profinite groups. Recall that a topological group $G$ is topologically generated by a set $S$ of elements if the closure of the subgroup of $G$ generated by $S$ is $G$. As usual, we regard $\hCG$ as a category, with the conjugacy classes of surjective homomorphisms as the morphisms.  We also define $\hCG\ip{r}$ to be the subcategory of groups that can be generated topologically by a set with at most $r$ elements. 
\end{Def}

\begin{Rem}\label{rem-hCG}
 It is known~\cite{nise:fgpgi, nise:fgpgii} that for $G\in\hCG$ the open subgroups are the same as the subgroups of finite index, and thus that all homomorphisms between groups in $\hCG$ are automatically continuous. (This is a hard theorem depending on the classification of finite simple groups, but we only use it for convenience; it could be avoided by including continuity in our definition of $\hCG$.)  Just as in \cref{lem-Hom-finite}, if $G\in\hCG$ and $H\in\cat{G}$ then $\mathrm{Hom}(G,H)$ and $\Hom_{\hCG}(G,H)$ are finite.
\end{Rem}

\begin{Def}\label{defn-hCU}
 Let $\cat{U}\subseteq\cat{G}$ be a subcategory.  
 For $G\in\hCG$, we let $\cat{N}(G)$ be the set of open normal subgroups of
 $G$, and put 
    \[
    \cat{N}(G;\cat{U})\coloneqq \{N\in\cat{N}(G)\mid G/N\in\cat{U}\}.
    \]  
    We then set
 \[ 
 \hCU \coloneqq \{G\in\hCG\mid \cat{N}(G;\cat{U}) \text{ is cofinal in } \cat{N}(G)\} = 
     \left\{G\mid G \cong \lim_{N\in\cat{N}(G;\cat{U})} G/N \right\}
 \]
 and refer to it as the \emph{profinite extension} of $\cat U$.
\end{Def}

We now provide some examples of profinite extensions for simple families $\cat{U}\subseteq\cat{G}$.

\begin{Exa}\label{Exa-Cyclicp-Uhat}
    Let $\cat{U}  =\cat{C}_p$ denote the collection of cyclic $p$-groups for a prime number $p$. Then $\widehat{\cat{C}}_p$ consists of the cyclic $p$-groups, plus the profinite group $\Z_p$ of $p$-adic integers. We can make $\widehat{\cat{C}}_p$ into a poset by declaring that $G \gg H$ if there exists a surjective group homomorphism $G \twoheadrightarrow H$. A representation of this poset can be seen in \cref{fig:prk1}.
    \begin{figure}[h!]
    \centering
\begin{tikzcd}[column sep={2.3cm, between origins}, row sep={1.3cm, between origins}]
0   & \Z/p \arrow[l, two heads]& \Z/p^2 \arrow[l, two heads]& \Z/p^3 \arrow[l, two heads]&\phantom{A}\cdots\phantom{A} \arrow[l, two heads] & \Z_p \arrow[l, two heads]
\end{tikzcd}
\caption{The poset associated to the profinite extension $\widehat{\cat{C}}_p$ for the family $\cat{C}_p$ of cyclic $p$-groups.}
\label{fig:prk1}
\end{figure}
\end{Exa}

\begin{Exa}\label{Exa-pRank2-Uhat}
    Let $\cat{U}  =\fabprk{2}$ denote the collection of abelian $p$-groups of $p$-rank at most $2$ for a prime number $p$. Then $\hfabprk{2}$ consists of the groups in $\fabprk{2}$, plus groups isomorphic to $\Z_p {\times} \Z/p^n$ for each $n \geqslant 0$, plus the group $\Z_p {\times} \Z_p$. The associated poset to $\hfabprk{2}$, defined as in \cref{Exa-Cyclicp-Uhat}, can be seen in \cref{fig:prk2}.
    \begin{figure}[h!]
    \centering
\begin{tikzcd}[column sep={2.3cm, between origins}, row sep={1.3cm, between origins}]
     &  & &                     & & \Z_p {\times} \Z_p \arrow[d, two heads] \arrow[dl, two heads]\\
     &  & &                     &\phantom{\frac{A}{A}}\iddots \phantom{\frac{A}{A}} \arrow[dl, two heads]& \vdots \arrow[d, two heads]\\
     &  & & \Z/p^3 {\times} \Z/p^3 \arrow[d, two heads] \arrow[dl, two heads] & \phantom{A}\ldots \phantom{A} \arrow[l, two heads]& \Z_p {\times} \Z/p^3 \arrow[d, two heads] \arrow[l, two heads]\\
     &  &\Z/p^2 {\times} \Z/p^2 \arrow[dl, two heads] \arrow[d, two heads]& \arrow[l, two heads]\Z/p^3 {\times} \Z/p^2\arrow[d, two heads] \arrow[dl, two heads] \arrow[l, two heads]& \phantom{A}\ldots\phantom{A} \arrow[l, two heads]& \Z_p {\times} \Z/p^2 \arrow[d, two heads] \arrow[l, two heads]\\
     & \Z/p {\times} \Z/p \arrow[dl, two heads] \arrow[d, two heads] & \arrow[l, two heads]\Z/p^2 {\times} \Z/p \arrow[d, two heads] \arrow[dl, two heads]& \arrow[l, two heads] \Z/p^3 {\times} \Z/p \arrow[d, two heads]\arrow[dl, two heads] & \phantom{A}\ldots \phantom{A} \arrow[l, two heads]& \Z_p {\times} \Z/p\arrow[d, two heads] \arrow[l, two heads]\\
0   & \Z/p \arrow[l, two heads]& \Z/p^2 \arrow[l, two heads]& \Z/p^3 \arrow[l, two heads]&\phantom{A}\ldots \phantom{A} \arrow[l, two heads]& \Z_p \arrow[l, two heads]
\end{tikzcd}
\caption{The poset associated to the profinite extension $\hfabprk{2}$ for the family $\fabprk{2}$ of abelian $p$-groups of $p$-rank at most $2$.}
\label{fig:prk2}
\end{figure}
\end{Exa}

\begin{Exa}
    If there are no infinite chains of non-injective epimorphisms for groups in $\cat{U}$, then $\cat{U}=\hCU$. In particular this applies to any essentially finite subcategory $\cat U$.
\end{Exa}

\begin{Exa}\label{exa:elabp_completion}
    Let $\cat{U}=\cat{E}_p$ be the family of elementary abelian $p$-groups. We claim that $\widehat{\cat E_p}=\cat E_p$. Indeed, any $G \in \hCG$  surjects continuously onto only finitely many groups in $\cat{E}_p$, and so the limit defining $G \in \widehat{\cat{E}_p}$ stabilizes at a finite stage. Therefore for the family $\cat{E}_p$ there is no difference between profinite group primes and group primes. 
\end{Exa}

\begin{Rem}
 If $G \in \cat U$, then $\cat{N}(G;\cat{U})$ admits 
 a minimal element, so $\cat U \subseteq \hCU$.   
\end{Rem}

\begin{Rem}\label{rem:Uhat_cofinal}
    Note that $\cat{N}(G;\cat{U})$ is cofinal in the filtered set $\cat{N}(G)$, so $\cat{N}(G;\cat{U})$ is itself filtered. 
\end{Rem}

Recall from \cref{hyp} that we always assume that $\cat{U}$ is widely closed. The following shows that the wide closure condition extends to the
profinite context.

\begin{Lem}\label{lem-profinite-wide}
 Let $H_0\xleftarrow{\alpha_0}G\xrightarrow{\alpha_1}H_1$ be morphisms with $G\in\hCU$ and $H_0,H_1\in\cat{U}$. Then the image of the combined morphism
 $G\to H_0\times H_1$ also lies in $\cat{U}$.
\end{Lem}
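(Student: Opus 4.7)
The strategy is to reduce the profinite statement to the finite one (namely, the wide closure hypothesis \cref{hyp}) by exploiting the cofinality built into the definition of $\hCU$. First I would pick set-theoretic representatives $\alpha_0, \alpha_1$ of the given conjugacy classes; these are continuous surjective homomorphisms from $G$ to the finite groups $H_0, H_1$ (continuity is automatic by \cref{rem-hCG}). Since $H_0$ and $H_1$ are finite, and hence discrete as topological groups, each kernel $\ker(\alpha_i)$ has finite index in $G$ and is therefore an open normal subgroup of $G$. Consequently $M \coloneqq \ker(\alpha_0) \cap \ker(\alpha_1)$ is also an open normal subgroup of $G$, so $M \in \cat{N}(G)$.

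Now the hypothesis $G \in \hCU$ means precisely that $\cat{N}(G;\cat{U})$ is cofinal in $\cat{N}(G)$, so there exists $N \in \cat{N}(G;\cat{U})$ with $N \leq M$. By construction $G/N \in \cat{U}$, and both $\alpha_0$ and $\alpha_1$ factor through the projection $\pi \colon G \twoheadrightarrow G/N$, yielding surjective homomorphisms $\beta_i \colon G/N \to H_i$ with $\beta_i \circ \pi = \alpha_i$.

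At this point $G/N, H_0, H_1$ all lie in $\cat{U}$, so I can apply the standing wide-closedness assumption (\cref{hyp}) to the span $H_0 \xleftarrow{\beta_0} G/N \xrightarrow{\beta_1} H_1$: the image of the combined homomorphism $(\beta_0, \beta_1) \colon G/N \to H_0 \times H_1$ lies in $\cat{U}$. Since $\pi$ is surjective, this image coincides with the image of $(\alpha_0, \alpha_1) \colon G \to H_0 \times H_1$, giving the desired conclusion. The isomorphism type of this image is independent of the chosen representatives $\alpha_i$ (different choices yield conjugate subgroups of $H_0 \times H_1$), so the argument is compatible with working modulo conjugacy.

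The only non-formal ingredient is the cofinality step, which is exactly what the definition of $\hCU$ is designed to provide; there is no genuine obstacle here, since the hard input (automatic continuity and the equivalence of openness with finite index) has already been absorbed into \cref{rem-hCG}.
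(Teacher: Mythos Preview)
Your argument is correct and follows exactly the same route as the paper: intersect the two kernels to get an element of $\cat{N}(G)$, use cofinality of $\cat{N}(G;\cat{U})$ to pass to a smaller $N$ with $G/N\in\cat{U}$, and then apply the wide-closure hypothesis to the finite span $H_0\leftarrow G/N\rightarrow H_1$. The paper's proof is simply a terser version of yours, omitting the remarks on representatives and conjugacy-independence.
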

\begin{proof}
 The group $\ker(\alpha_0)\cap\ker(\alpha_1)$ lies in $\cat{N}(G)$, and
 $G\in\hCU$ so we can choose $N\in\cat{N}(G;\cat{U})$ with
 $N\leq\ker(\alpha_0)\cap\ker(\alpha_1)$.  We now have a diagram
 $H_0\xleftarrow{}G/N\xrightarrow{}H_1$ in $\cat{U}$ to which we can apply the wide
 closure axiom to obtain the desired conclusion.
\end{proof}

We now give the central definition of this section. 

\begin{Def}\label{defn-profinite-groupprimes}
  For $G\in\hCU$ and $X\in\A{\cat{U}}$ we define 
 \[ 
 X(G) \coloneqq \colim_{N\in\cat{N}(G;\cat{U})} X(G/N). 
 \]
 This is a filtered colimit by \cref{rem:Uhat_cofinal}, and so is an exact functor of $X$.  It is
 also clear that $(X\otimes Y)(G)=X(G)\otimes Y(G)$.  Thus, the subcategory 
 \[ 
 \pri_G \coloneqq \{X\in\D{\cat{U}}^c\mid H_*(X)(G) =0\}
 \]
 is a prime thick ideal in $\D{\cat{U}}^c$. In analogy with \cref{def:groupprimes}, we call these \emph{profinite group primes}.
\end{Def}

\begin{Rem}\label{rem-ev-const}
 In many cases of interest, when $X\in\D{\cat{U}}^c$ and $G\in\hCU$, the
 colimit system defining $H_*(X)(G)$ will be eventually
 constant.  See \cref{prop-ev-const}(b) for a result of this type.
\end{Rem}

We next extend the group prime map \eqref{eq:groupprimemap} to the profinite case.

\begin{Cons}\label{cons:hsuppextension}
Let $\cat U$ be a subcategory satisfying \cref{hyp2}.
The construction of profinite group primes provides a map 
\begin{equation}\label{eq-profinite-group-prime-map}
    \mathfrakp_{-}=\mathfrakp_{-}^{\hCU}\colon \pi_0\hCU \to \Spc(\D{\cat{U}}^c), \quad G \mapsto \mathfrakp_G.
\end{equation}
This map extends the group prime map of \eqref{eq:groupprimemap}, in that the diagram
\[
\begin{tikzcd}
    \pi_0\cat{U} \ar[dr, "\mathfrakp_{-}^{\cat{U}}"] \ar[d, hookrightarrow] & \\
    \pi_0\hCU \ar[r, "\mathfrakp_{-}^{\hCU}"'] & \Spc(\D{\cat{U}}^c)
\end{tikzcd}
\]
commutes. Moreover, the homological support (\cref{def:hsupp}) then extends to a well-defined function taking values in subsets of $\pi_0\hCU$, by setting 
    \[
        \hsupp_{\hCU}(X) \coloneqq \{G \in \pi_0\hCU \mid H_*(X)(G) \neq 0\} 
    \]
for all $X \in \D{\cat U}^c$. Since the intersection of  $\hsupp_{\hCU}(X)$ with $\pi_0\cat U$ coincides with the usual homological support $\hsupp(X)$, we will mostly drop the subscript from the notation for the \emph{extended homological support} function.
\end{Cons}

\begin{Rem}\label{rem:exthsupp}
    Note that by definition, for any $X \in \D{\cat U}^c$, we have
        \begin{align*}
            \hsupp_{\hCU}(X) & = \{G \in \pi_0\hCU \mid X \notin \pri_G\} \\ 
                & = \{G \in \pi_0\hCU \mid \pri_G \in \supp(X)\} \\
                & = (\mathfrakp_{-}^{\hCU})^{-1}(\supp(X)),
        \end{align*}
    where $\supp(X)$ denotes the universal support of $X$ (see \eqref{eq:ttclassification}). 
\end{Rem}

The following example shows that in general the profinite group prime map \eqref{eq-profinite-group-prime-map} is not bijective. However, in the following sections we provide conditions on $\cat{U}$ which ensure that it is bijective, and demonstrate that these conditions are satisfied for a wide range of natural examples, see \cref{sec-bounded}. 

\begin{Exa}
    Continuing \cref{exa:elabp_completion}, we saw in \cref{thm-elementary} that the Balmer spectrum of $\D{\cat{E}_p}^c$ contains a prime ideal which is not a group prime, and hence the profinite group prime map \eqref{eq-profinite-group-prime-map} is not surjective in this case.
\end{Exa}

\section{Profinite filtrations}\label{sec-profinite-filtrations}
In this section we introduce a special kind of reflective filtration, which we call \emph{profinite}, in which the reflections are compatible with the profinite extension $\hCU$ of $\cat{U}$. We will show that if $\cat U$ admits such a filtration then its profinite extension admits a concrete description in terms of the inverse limit presentation of \cref{cor-ess-finite-invlim}. In the case that the filtration is $r$-generated and essentially finite, we then equip $\hCU$ with a natural profinite topology. We will use these results in the next section to study the profinite group prime map of \eqref{eq-profinite-group-prime-map} and identify the Balmer spectrum for a large collection of families $\cat{U}$.

\begin{Def}\label{defn-filt-profinite}
 A reflective filtration $\{\cat{U}[n]\}_{n\geq 0}$ of $\cat{U}$ is
 \emph{profinite} if each subcategory $\cat{U}[n]$ is reflective in $\hCU$ (and not just in $\cat{U}$), that is, the inclusion $\cat U[n] \to \hCU$ admits a left adjoint $q_n\colon \hCU \to \cat{U}[n]$.  Note that the restriction of $q_n$ to $\cat{U} \to \cat{U}[n]$ (\cref{rem-nested-reflection}) agrees with the original reflection.
\end{Def}

The profinite condition lets us write any $G\in \hCU$ as a filtered limit of its reflections. 

\begin{Prop}\label{prop-ev-const}
     Let $\{\cat{U}[n]\}_{n\geq 0}$ be a profinite reflective filtration of $\cat{U}$ with reflections $q_n \colon \hCU \to \cat U[n]$ and inclusions $i_n \colon \cat U[n]\to \hCU$.
 \begin{enumerate}
     \item  For any $G\in\hCU$ the groups $\ker(G\to q_n(G))$ are cofinal in
 $\cat{N}(G;\cat{U})$, so $G\cong \lim_nq_n(G)$ and $X(G)\cong \colim_n X(q_n(G))$ 
 for all $X\in\A{\cat{U}}$.
 \item For any $X\in\D{\cat{U}}^c$, there exists $n\geq 0$ such
 that:
 \begin{itemize}
 \item[(i)] the counit map $q_n^*i_n^*X \to X$ is an isomorphism; 
 \item[(ii)] $i_n^*X$ is compact;
 \item[(iii)] $H_*(X)(G)\cong H_*(X)(q_n(G))$ for all $G\in\hCU$.
 \end{itemize}
 In particular, the colimit system defining $H_*(X)(G)$ is eventually constant.
 \end{enumerate}
\end{Prop}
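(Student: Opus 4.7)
Proof proposal.

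\textbf{Part (a).} First I would show that for any $G\in\hCU$ the kernel $K_n\coloneqq\ker(G\to q_n(G))$ lies in $\cat{N}(G;\cat{U})$: indeed $G/K_n\cong q_n(G)\in\cat{U}[n]\subseteq\cat{U}$, which is a finite discrete quotient of $G$, so $K_n$ is open normal in $G$ (using \cref{rem-hCG}). For cofinality, take any $N\in\cat{N}(G;\cat{U})$. Then $G/N\in\cat{U}$ is a finite group, so by the exhaustiveness $\cat{U}=\bigcup_n\cat{U}[n]$ there exists $n$ with $G/N\in\cat{U}[n]$. Here I crucially use that the filtration is \emph{profinite}: the reflection $q_n$ is defined on all of $\hCU$, so by the universal property of the adjunction $q_n\dashv i_n$ (with $i_n\colon\cat{U}[n]\hookrightarrow\hCU$), the quotient $G\twoheadrightarrow G/N$ factors uniquely through $G\twoheadrightarrow q_n(G)$, forcing $K_n\leq N$. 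Thus the $K_n$ are cofinal in $\cat{N}(G;\cat{U})$, giving $G\cong\lim_n G/K_n\cong\lim_n q_n(G)$, and since the colimit defining $X(G)$ is cofinal along this subsystem we obtain $X(G)\cong\colim_n X(q_n(G))$.

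\textbf{Part (b).} Since the filtration is reflective on $\cat{U}$, we have $(i_n)_!\cong q_n^*\colon\D{\cat{U}[n]}\to\D{\cat{U}}$ by \cref{con-functors}. Therefore (i) and (ii) are immediate from \cref{prop-counit} applied to the restricted filtration of $\cat{U}$.

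For (iii), fix $n$ so that (i) holds, i.e., $X\simeq q_n^*i_n^*X$ in $\D{\cat{U}}$. For any $m\geq n$ we have $\cat{U}[n]\subseteq\cat{U}[m]$, and since both $q_n$ and $q_n\circ q_m$ are left adjoint to the inclusion $\cat{U}[n]\hookrightarrow\hCU$, uniqueness of adjoints gives $q_n\circ q_m=q_n$ on $\hCU$. Hence, using that $q_n^*$ is just precomposition with $q_n$, for any $G\in\hCU$ and $m\geq n$ we have
\[
H_*(X)(q_m(G))\cong H_*(i_n^*X)(q_n(q_m(G)))=H_*(i_n^*X)(q_n(G)),
\]
so the defining colimit system is eventually constant with value $H_*(i_n^*X)(q_n(G))$. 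Applying the same identity at the index $n$ itself (using $q_n(q_n(G))=q_n(G)$) and combining with part (a) yields $H_*(X)(G)\cong H_*(X)(q_n(G))$, as required.

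\textbf{Anticipated difficulty.} The only subtle step is the cofinality argument in (a), where it is essential that the reflection $q_n$ extends from $\cat{U}$ to all of $\hCU$; without this, a normal subgroup $N$ with $G/N\in\cat{U}[n]$ would not be comparable to $K_n$. Everything else is a formal consequence of \cref{prop-counit} together with the fact that compositions of reflections onto a nested subcategory coincide with the outermost reflection.
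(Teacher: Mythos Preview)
Your proposal is correct and follows essentially the same approach as the paper. Both arguments establish cofinality in (a) by using exhaustiveness of the filtration together with the adjunction property of the profinite reflection $q_n$ on $\hCU$, and both deduce (b)(iii) from \cref{prop-counit} plus the identity $q_n\circ q_m=q_n$ for $m\geq n$, concluding via part (a) that the colimit system stabilizes at $H_*(X)(q_n(G))$.
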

\begin{proof}
    Suppose that $M\in\cat{N}(G;\cat{U})$.  Then $G/M\in\cat{U}$, so for sufficiently
 large $n$ we have $G/M\in\cat{U}[n]$, so the map $G\to G/M$ must factor
 through $q_n(G)$, so $\ker(G\to q_n(G))\leq M$. Part (a) follows
 from this. For part (b), recall that by \cref{prop-counit} there exists an $n$ such that the counit map $q_n^*i_n^*X \to X$ is an isomorphism and $i_n^*X$ is compact. Thus $X$ is in the image of $q_n^*\colon \D{\cat{U}[n]}^c\to\D{\cat{U}}^c$. 
 Using this and the fact that $q_n^*$ is exact and so commutes with passage to homology, 
 we have 
 \[
 H_*(X)(q_m(G)) \cong H_*(q_n^*i_n^*X)(q_m(G)) \cong H_*(X)(i_n q_nq_m(G))
 \] 
 which is the same as $H_*(X)(q_n(G))$ when $m\geq n$, so the colimit system defining $H_*(X)(G)$ is eventually constant with value $H_*(X)(q_n(G))$. 
\end{proof}

Most of the reflective filtrations that we have considered are profinite.  As a first step in proving this, we can do essentially the same construction as in \cref{defn-std-filt}, but with profinite groups:
\begin{Def}\label{defn-std-filt-profinite}
 For any $G\in\hCG$, we put 
 \begin{align*}
  \cat{K}_n(G) & \coloneqq \{N\lhd G\mid |G/N|\leq n\}; \\
  K_n(G) & \coloneqq \bigcap_{N\in\cat{K}_n(G)} N = 
    \ker\left(G \to \prod_{N\in\cat{K}_n(G)} G/N\right). \\
 \end{align*}
\end{Def}
With these definitions in place we are ready to prove the following result.

\begin{Prop}\label{prop-invlim}
 The subgroup $K_n(G)$ is open and normal in $G$, so the quotient
 $G/K_n(G)$ is finite.  Moreover, $G$ is the inverse limit of the groups
 $G/K_n(G)$. 
\end{Prop}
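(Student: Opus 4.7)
The plan is to first show that $\cat{K}_n(G)$ is finite, then deduce that $K_n(G)$ is open and normal with finite quotient, and finally establish the inverse limit description by a cofinality argument.

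For the finiteness of $\cat{K}_n(G)$: any $N\in\cat{K}_n(G)$ has finite index in $G$, and by the Nikolov--Segal theorem cited in \cref{rem-hCG} is therefore open. Such an $N$ is the kernel of a surjective continuous homomorphism $G\to G/N$ onto a finite group of order at most $n$. Since $G$ is topologically finitely generated, any continuous homomorphism from $G$ to a fixed finite group is determined by the images of a chosen finite generating set, and so there are only finitely many such homomorphisms; combined with the fact that there are only finitely many isomorphism classes of finite groups of order $\leq n$, this yields the finiteness of $\cat{K}_n(G)$.

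With finiteness in hand, $K_n(G)$ is a finite intersection of open normal subgroups, hence itself open and normal. The map
\[
    G/K_n(G) \hookrightarrow \prod_{N\in\cat{K}_n(G)} G/N
\]
embeds $G/K_n(G)$ into a finite product of finite groups, so $G/K_n(G)$ is finite.

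For the inverse limit description, I would invoke the standard characterization of profinite groups as $G\cong \lim_{U} G/U$ with $U$ ranging over all open normal subgroups. The key observation is that the family $\{K_n(G)\}_{n\geq 1}$ is cofinal in this inverse system: given any open normal subgroup $U$, the quotient $G/U$ is finite of some order $m$, so $U\in\cat{K}_m(G)$ and hence $K_m(G)\leq U$. Since a cofinal subsystem computes the same limit, this gives $G\cong \lim_n G/K_n(G)$. The only nontrivial input to the whole argument is the Nikolov--Segal theorem ensuring that finite-index equals open in this setting; the rest is formal profinite group theory.
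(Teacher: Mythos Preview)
Your proof is correct and follows essentially the same approach as the paper's. The only cosmetic difference is that the paper uses Cayley's theorem to embed every group of order $\leq n$ into $\Sigma_n$, thereby reducing the finiteness of $\cat{K}_n(G)$ to the finiteness of the single set $\mathrm{Hom}(G,\Sigma_n)$, whereas you enumerate isomorphism classes of targets separately; the cofinality arguments for the inverse limit are identical in spirit.
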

\begin{proof}
 By Cayley's theorem, any group of order at most $n$ embeds in the symmetric group $\Sigma_n$, so every group $N\in\cat{K}_n(G)$ is the kernel of some homomorphism $G\to\Sigma_n$.  As $G$ is finitely generated there are only finitely many such homomorphisms (\cref{rem-hCG}), so $\cat{K}_n(G)$ is finite.  It follows that $K_n(G)$ has finite index (and so is open) in $G$, and it is clearly normal. Now let $U\leq G$ be any open subgroup.  Let $N$ be the kernel of the resulting map from $G$ to the finite group of permutations of the set $G/U$, so $N\leq U$ and $N$ again has finite index.  If the index of $N$ is $n$ then we have $K_n(G)\leq N\leq U$.  This shows that the subgroups $K_n(G)$ are cofinal among open normal subgroups of $G$, so $G\cong\lim_n G/K_n(G)$.
\end{proof}

\begin{Rem}\label{rem-hCU-tower}
 If $G\in\hCU$ then we can take $N_1$ to be any finite index subgroup of $G$ with $G/N_1 \in \cat{U}$ and recursively choose open
 normal subgroups $N_{n}\leq K_{n}(G)\cap N_{n-1}$ with $G/N_{n}\in\cat{U}$ for $n \geq 2$.
 We then find that the sequence of subgroups $N_n$ is cofinal in $\cat{N}(G;\cat{U})$, so
 $G\cong\lim_nG/N_n$ and $X(G)\cong\colim_nX(G/N_n)$ for all $X\in\A{\cat{U}}$.
\end{Rem}

We now turn to verifying that several types of reflective filtration are automatically profinite.
\begin{Lem}\label{lem-profinite-refl}
 If $\cat{U}\subseteq\cat{G}$ and $\cat{U}$ is reflective in $\CFG$, then
 $\cat{U}$ is also reflective in $\hCG$.  Similarly, if $\cat{U}\subseteq\cat{G}\ip{r}$
 and $\cat{U}$ is reflective in $\CFG\ip{r}$, then $\cat{U}$ is also reflective
 in $\hCG\ip{r}$.
\end{Lem}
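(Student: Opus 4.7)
The strategy is to produce the reflection of each $G \in \hCG$ explicitly by applying the given reflection $q\colon \CFG \to \cat{U}$ to the tower of finite quotients of $G$, and showing that the resulting tower of reflections stabilizes. By \cref{prop-invlim} we may write $G \cong \lim_n G_n$ where $G_n = G/K_n(G)$ is finite, so the transition maps $G_m \twoheadrightarrow G_n$ (for $m \geq n$) are morphisms in $\CFG$. Applying $q$ yields a tower $\cdots \twoheadrightarrow q(G_2) \twoheadrightarrow q(G_1)$ of surjections in $\cat{U}$ (surjectivity of the transitions follows because $G_n \twoheadrightarrow q(G_n)$ is surjective and $q$ is a functor).

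The crucial step, and the main obstacle, is to show this tower stabilizes. Pick topological generators $x_1,\ldots,x_r$ of $G$ and let $F_r$ be the free group on those symbols, so $F_r \in \CFG$. The evident abstract homomorphism $F_r \to G$ has dense image, so its composite $F_r \to G \twoheadrightarrow G_n$ is surjective onto the finite discrete group $G_n$. The universal property of $q$ on $\CFG$ applied to $F_r \to G_n \to q(G_n)$ then yields a surjection $q(F_r) \twoheadrightarrow q(G_n)$, providing the uniform bound $|q(G_n)| \leq |q(F_r)| < \infty$. Since the orders $|q(G_n)|$ are also nondecreasing in $n$ and bounded by this constant, they stabilize at some $n_0$; for $m \geq n \geq n_0$ the surjections $q(G_m) \twoheadrightarrow q(G_n)$ between finite groups of equal order are then isomorphisms.

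Set $q(G) \coloneqq q(G_{n_0}) \in \cat{U}$, with unit $\eta_G\colon G \twoheadrightarrow G_{n_0} \twoheadrightarrow q(G_{n_0})$. To check the universal property, let $f\colon G \to H$ be a morphism in $\hCG$ with $H \in \cat{U}$. By \cref{rem-hCG}, $f$ is continuous with open kernel, so factors through $G_n$ for some $n \geq n_0$; the universal property of $q(G_n)$ in $\CFG$ then produces a factorization through $q(G_n) \cong q(G)$, which is unique (up to conjugation in $H$) because $\eta_G$ is surjective. This establishes reflectivity of $\cat{U}$ in $\hCG$. The $r$-generated variant requires no new argument: all the groups $F_r$, $G_n$ and $q(G_n)$ already lie in $\CFG\ip{r}$, so replacing $\CFG$ by $\CFG\ip{r}$ throughout yields the reflectivity of $\cat{U}$ in $\hCG\ip{r}$.
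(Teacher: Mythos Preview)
Your proof is correct and follows essentially the same approach as the paper: both use a dense map $F_r\to G$ to bound the order of any $\cat{U}$-quotient of $G$ by $|q(F_r)|$, thereby forcing the reflection to be detected at a finite level. The paper is marginally more direct---it sets $m=|q(F_r)|$ and defines $\widehat{q}(G)=q(G/K_m(G))$ outright rather than arguing that the tower $(q(G_n))_n$ stabilises---but the key idea and all the ingredients coincide.
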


\begin{proof}
 We will prove the first claim; the proof of the second is essentially the same. Let $q\colon \CFG\to\cat{U}$ be the reflection. Now consider a finitely topologically generated group $G\in\hCG$, which means that for some $r$ we have a homomorphism $\alpha\colon F_r\to G$ from the free group on $r$ elements to $G$ with dense image. The group $q(F_r)\in\cat{U}$ is finite, with order $m$ say.  Consider a surjective homomorphism $\phi\colon G\to H$ with $H\in\cat{U}$.  As $\alpha$ has dense image, the composite $\phi\alpha\colon F_r\to H$ must be surjective, so it must factor through $q(F_r)$ by the adjunction property of $q$. Therefore $|H|\leq|q(F_r)|=m$, so $K_m(G)\leq\ker(\phi)$.  This means that $\phi$ factors through the finite group $G/K_m(G)$, and thus also through the group $q(G/K_m(G))$. We can therefore define a reflection $\widehat{q}\colon \hCG\to\cat{U}$ by $\widehat{q}(G)\coloneqq q(G/K_m(G))$.
\end{proof}

\begin{Prop}\label{prop-essfin-profinite}
    Let $\cat{V}$ be reflective in $\cat U$. If $\cat V$ is essentially finite, then $\cat{V}$ is also reflective in $\hCU$.
\end{Prop}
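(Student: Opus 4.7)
The approach is to verify the criterion of \cref{prop-refl}(c): for every $G\in\hCU$ there exists a smallest normal subgroup $N(G)\lhd G$ with $G/N(G)\in\cat V$. Write $q\colon\cat U\to\cat V$ for the given reflection.

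First, I would form the cofiltered inverse system $(q(G/L))_{L\in\cat N(G;\cat U)}$ of groups in $\cat V$, whose transition maps $q(G/L)\twoheadrightarrow q(G/L')$ for $L\leq L'$ are surjective since by \cref{prop-refl}(d) the reflection $q$ lifts to a functor $\widetilde{\cat U}\to\widetilde{\cat V}$ between the subcategories of surjective homomorphisms. Because $\cat V$ is essentially finite, the orders $|q(G/L)|$ are bounded above and non-increasing along this cofiltered system, hence they stabilize: there exists $L_0\in\cat N(G;\cat U)$ such that every transition $q(G/L)\twoheadrightarrow q(G/L_0)$ with $L\leq L_0$ is in fact an isomorphism (a surjection between finite groups of equal order).

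Now define $N(G)\lhd G$ to be the kernel of the composite $G\twoheadrightarrow G/L_0\twoheadrightarrow q(G/L_0)$, so that $G/N(G)\cong q(G/L_0)\in\cat V$. For minimality, suppose $M\lhd G$ satisfies $G/M\in\cat V$. Then $G/M$ is finite so $M$ is open, and hence $M\cap L_0$ is open and normal in $G$; by $G\in\hCU$ and the cofinality of \cref{rem:Uhat_cofinal}, there exists $L\in\cat N(G;\cat U)$ with $L\leq M\cap L_0$. The induced surjection $G/L\twoheadrightarrow G/M$ is a morphism in $\cat U$ whose target lies in $\cat V$, so by the universal property of $q$ it factors through $G/L\twoheadrightarrow q(G/L)$. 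Using the stabilization, $q(G/L)=q(G/L_0)\cong G/N(G)$, whence the projection $G\twoheadrightarrow G/M$ factors through $G\twoheadrightarrow G/N(G)$, yielding $N(G)\leq M$. Applying \cref{prop-refl} then shows $\cat V$ is reflective in $\hCU$.

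The main obstacle is the stabilization step: it genuinely requires both that $\cat V$ be essentially finite (to bound the orders) and that $q$ preserve surjections (so the system consists of surjections, allowing a strictly decreasing chain of orders to terminate). Without either hypothesis, the cofiltered system need not stabilize and $N(G)$ would not be well defined.
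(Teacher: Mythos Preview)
Your proof is correct. The approach differs from the paper's, though both are short.

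The paper proceeds more directly by first fixing $n=\max\{|H|:H\in\cat V\}$ and invoking the subgroup $K_n(G)$ from \cref{defn-std-filt-profinite} together with \cref{prop-invlim}: since any surjection $G\twoheadrightarrow H$ with $H\in\cat V$ has $|H|\leq n$, its kernel contains $K_n(G)$; choosing any $N\in\cat N(G;\cat U)$ with $N\leq K_n(G)$, every such surjection factors through $G/N$ and hence through $q(G/N)$, which is declared to be $\widehat q(G)$. By contrast, you avoid $K_n(G)$ entirely and argue intrinsically that the cofiltered system $(q(G/L))_{L\in\cat N(G;\cat U)}$ of surjections in $\cat V$ must stabilise because the orders are bounded; your $L_0$ then plays the role of the paper's $N$. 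Your route is more self-contained (it does not rely on \cref{prop-invlim}), while the paper's route is a one-liner once that machinery is in place.

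One small technical remark: your final appeal to \cref{prop-refl} is formally stated there only for $\cat S\subseteq\cat T\subseteq\CFG$, whereas here $\cat T=\hCU$ sits inside $\hCG$ rather than $\CFG$. This is harmless, since the implication $(c)\Rightarrow(b)$ in that proposition uses nothing about $\CFG$ beyond the fact that morphisms are (conjugacy classes of) surjective group homomorphisms, but it is worth saying so explicitly; the paper itself is equally informal on this point.
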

\begin{proof}
    Write $q \colon \cat U \to \cat V$ for the reflection. Let $n$ be the maximum of the orders of the groups in $\cat{V}$, and let $N$ be any open normal subgroup of $G\in \hCU$ contained in $K_n(G)$ such that $G/N\in\cat{U}$.  It is clear that any surjective homomorphism from $G$ to a group in $\cat{V}$ factors through $G/N$, and thus also through the group $q(G/N)$. We can therefore define a reflection $\widehat{q}\colon \hCU \to \cat V$ by $\widehat{q}(G)\coloneqq q(G/N)$.
\end{proof}

\begin{Cor}\label{cor-std-filt-profinite}
 Suppose that $\{\cat{U}[n]\}_{n\geq 0}$ is a reflective filtration of
 $\cat{U}$.  Suppose that any of the following conditions are satisfied:
 \begin{enumerate}
  \item $\cat{U}[n]$ is reflective in $\CFG$; or
  \item $\cat{U}[n]$ is essentially finite; or
  \item there exists $r$ such that $\cat{U}\subseteq\cat{G}\ip{r}$ and
   $\cat{U}[n]$ is reflective in $\CFG\ip{r}$. 
 \end{enumerate}
 Then the filtration is profinite. 
 In particular, the filtrations provided by \cref{prop-std-filt} are
 profinite. 
\end{Cor}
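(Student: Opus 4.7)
The plan is to reduce each of the three cases to one of the two technical results that were just established, namely \cref{lem-profinite-refl} and \cref{prop-essfin-profinite}. In each case, the task is to produce a left adjoint to the inclusion $\cat{U}[n] \hookrightarrow \hCU$; once this is done, \cref{rem-nested-reflection} guarantees that the restriction of that left adjoint to $\cat{U}$ agrees with the original reflection $q_n$, so the filtration is indeed profinite in the sense of \cref{defn-filt-profinite}.

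For case (b) there is essentially nothing to do: since $\cat{U}[n]$ is reflective in $\cat{U}$ and essentially finite, \cref{prop-essfin-profinite} directly gives a reflection $\hCU \to \cat{U}[n]$, and by construction this restricts to $q_n$ on $\cat{U}$. For cases (a) and (c) the strategy is to first extend the given reflection from $\CFG$ (resp.\ $\CFG\langle r\rangle$) to $\hCG$ (resp.\ $\hCG\langle r\rangle$) using \cref{lem-profinite-refl}, and then restrict to $\hCU$. For case (c), I will note that the hypothesis $\cat{U} \subseteq \cat{G}\langle r\rangle$ implies $\hCU \subseteq \hCG\langle r\rangle$, so the restriction makes sense; once the restriction lands inside $\cat{U}[n]$ (which it automatically does because $\cat{U}[n] \subseteq \cat{U} \subseteq \hCU$), \cref{rem-nested-reflection} again applies and yields the required reflection $\hCU \to \cat{U}[n]$.

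For the final sentence, I will separately check that both filtrations produced by \cref{prop-std-filt} fall under one of (a)--(c). The filtration in part (a) of \cref{prop-std-filt} is shown there to have each $\cat{U}[n]$ reflective in $\CFG$, which is our condition (a); the $r$-generated restriction in that same statement and the filtration in part (b) have each $\cat{U}[n]$ reflective in $\CFG\langle r\rangle$, which is our condition (c). Hence \cref{cor-std-filt-profinite} applies in either case.

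The main obstacle is essentially notational rather than mathematical: one must check carefully that the reflection constructed at the level of $\hCG$ (or $\hCG\langle r\rangle$) restricts to a functor $\hCU \to \cat{U}[n]$, i.e.\ that it sends objects of $\hCU$ to objects of $\cat{U}[n]$ and not merely to objects of $\cat{U}[n] \subseteq \cat{U}$ viewed inside $\hCG$. This is immediate from the fact that $\cat{U}[n] \subseteq \cat{U}$, but it is the only place where one has to pay attention to the interplay between $\cat{U}$, $\hCU$, and the ambient $\hCG$.
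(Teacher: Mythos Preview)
Your proposal is correct and follows exactly the same route as the paper: case~(b) is handled by \cref{prop-essfin-profinite}, cases~(a) and~(c) by \cref{lem-profinite-refl} followed by restriction to $\hCU$, and the final sentence is checked against the statement of \cref{prop-std-filt}. The only remark is that your ``main obstacle'' paragraph is a non-issue: the reflection produced by \cref{lem-profinite-refl} already has codomain $\cat{U}[n]$ by construction, so there is nothing to verify about where the restriction lands.
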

\begin{proof}
Part (b) follows from \cref{prop-essfin-profinite}, while (a) and (c) follow from \cref{lem-profinite-refl} by restricting the reflection to $\hCU$.
\end{proof}

Now that we have a good amount of examples of profinite reflective filtrations we can discuss some consequences. We start by noting that if $\cat U$ admits a profinite reflective filtration $\{\cat U[n]\}_{n\geq 0}$, then we can combine the reflections into a map  
\[
q\coloneqq (q_n)\colon \pi_0 \hCU \to \lim_n \pi_0 \cat U[n], \quad [G] \mapsto ([q_n G])_n
\] 
which we will show is always injective, see \cref{prop:Uhat-injects-in-a-lim}. We then investigate conditions on $\cat U$ which ensure that the above map is bijective. This culminates in \cref{thm-Uhat-is-a-lim}, which will be a key step in describing the 
Balmer spectrum for $r$-submultiplicative families in the next section. In order to prove injectivity of $q$, it will be important to understand in what sense an element of $\hCU$ is determined by the groups in $\cat U$ which it surjects onto. To this end, we make the following definition:

\begin{Def}\label{defn-quot-set}
 For $G\in\hCU$, we define $\cat{U}_{\ll G}$ to be the set of groups in $\cat{U}$
 that admit a surjective homomorphism from $G$.
\end{Def}

We prove that $\cat{U}_{\ll G}$ characterizes both the existence and behaviour of maps in $\hCU$. 

\begin{Lem}\label{lem-tow-maps}
 Suppose that $G,H\in\hCU$.
 \begin{enumerate}
  \item There is a surjective homomorphism $G\to H$ if and only if $\cat{U}_{\ll G}\supseteq\cat{U}_{\ll H}$.
  \item There is an isomorphism $G\to H$ if and only if $\cat{U}_{\ll G}=\cat{U}_{\ll H}$.
  \item Any endomorphism $G\to G$ in $\hCU$ is an isomorphism.
 \end{enumerate}
\end{Lem}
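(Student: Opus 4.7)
For part (a), the forward direction is immediate: given a surjection $G \twoheadrightarrow H$ and any surjection $H \twoheadrightarrow K$ with $K \in \cat{U}$, composing yields a surjection $G \twoheadrightarrow K$. For the converse, I would exploit the fact that $H \cong \lim_{N \in \cat{N}(H;\cat{U})} H/N$ since $H \in \hCU$. Setting $S_N \coloneqq \{\alpha\colon G \twoheadrightarrow H/N\}$, the hypothesis $\cat{U}_{\ll H} \subseteq \cat{U}_{\ll G}$ ensures each $S_N$ is nonempty, and $S_N$ is finite by \cref{lem-Hom-finite}. The indexing system $\cat{N}(H;\cat{U})$ is cofiltered under inclusion by \cref{rem:Uhat_cofinal}, so the standard compactness argument shows $\lim_N S_N \neq \varnothing$; a compatible family $(\alpha_N)_N$ in this limit assembles to a continuous homomorphism $\alpha\colon G \to H$. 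Its image $\alpha(G) \leq H$ is closed (as $G$ is compact) and surjects onto each $H/N$, so it is dense, and therefore all of $H$.

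The main obstacle is part (c), the Hopfian property for finitely topologically generated profinite groups. Given a surjective endomorphism $\phi$ of $G$, I would use the filtration $G \cong \lim_n G/K_n(G)$ from \cref{prop-invlim}, where $K_n(G) = \bigcap_{N \in \cat{K}_n(G)} N$ and $\cat{K}_n(G)$ is the \emph{finite} set of open normal subgroups of $G$ of index at most $n$. The assignment $N \mapsto \phi^{-1}(N)$ preserves the index (the map $G/\phi^{-1}(N) \to G/N$ induced by $\phi$ is an isomorphism by surjectivity of $\phi$), so it restricts to a self-map of $\cat{K}_n(G)$. This self-map is injective, since $\phi(\phi^{-1}(N)) = N$ by surjectivity of $\phi$; as $\cat{K}_n(G)$ is finite, it must be a bijection. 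Consequently $\phi^{-1}(K_n(G)) = K_n(G)$, so $\phi$ descends to a surjective, hence bijective, endomorphism of the finite group $G/K_n(G)$. Thus $\ker(\phi) \subseteq K_n(G)$ for every $n$, and $\bigcap_n K_n(G) = 1$ forces $\ker(\phi) = 1$.

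Finally, part (b) follows formally from (a) and (c): the forward direction is obvious, and for the converse, (a) supplies surjections $f\colon G \twoheadrightarrow H$ and $g\colon H \twoheadrightarrow G$, whose composite $g \circ f$ is a surjective endomorphism of $G$, hence an isomorphism by (c). This forces $f$ to be injective, and thus an isomorphism in $\hCU$.
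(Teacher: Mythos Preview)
Your proof is correct and follows essentially the same strategy as the paper: part~(a) via an inverse-limit/compactness argument to produce a surjection with dense (hence closed, hence full) image, part~(c) via the filtration by the subgroups $K_n(G)$ and a counting argument on the finite quotients, and part~(b) formally from~(a) and~(c). The only cosmetic differences are that the paper passes to a cofinal \emph{sequential} tower (from \cref{rem-hCU-tower}) rather than the full cofiltered system $\cat{N}(H;\cat{U})$, and in~(c) argues directly that $\gamma(K_n(G))\leq K_n(G)$ rather than showing $N\mapsto\phi^{-1}(N)$ permutes $\cat{K}_n(G)$; one small correction is that your finiteness citation should be \cref{rem-hCG} rather than \cref{lem-Hom-finite}, since $G$ here is profinite rather than a discrete finitely generated group.
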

\begin{proof}
 If there is a surjective homomorphism $G\to H$ then it is clear that
 $\cat{U}_{\ll G}\supseteq\cat{U}_{\ll H}$, and if $G\cong H$ then it is clear that
 $\cat{U}_{\ll G}=\cat{U}_{\ll H}$.

 For the converse, suppose that $\cat{U}_{\ll G}\supseteq\cat{U}_{\ll H}$.  We can
 choose a tower $(H/N_k)_k$ as in \cref{rem-hCU-tower} so that $H \cong \lim_k H/N_k$.  Let
 $\mathrm{Epi}(G,H/N_k)$ denote the set of surjective homomorphisms
 $G\to H/N_k$.  This is non-empty by assumption, and finite because $G$
 is finitely generated and $|H/N_k|<\infty$.  We note that the inverse limit of any tower of finite non-empty sets is again non-empty.  Using this, we can construct a homomorphism
 $\alpha\colon G\to H$ such that each composite $G\to H\to H/N_k$ is
 surjective, or equivalently the image of $\alpha$ is dense. Since $\alpha$ is continuous (see \cref{rem-hCG}) and $G$ is quasi-compact, the image $\alpha(G)\leq H$ is quasi-compact, and so closed (as $H$ is Hausdorff). This shows that $\alpha$ is in fact surjective, proving~(a).

 We next discuss~(c).  Any endomorphism of $G$ is represented by a
 surjective homomorphism $\gamma\colon G\to G$.  From the definition of
 $K_n(G)$ (see \cref{defn-std-filt-profinite}) we see that $\gamma(K_n(G))\leq K_n(G)$ and so $\gamma$ induces a
 surjective endomorphism of the finite quotient group $G/K_n(G)$.
 This must be an isomorphism by counting, and $G\cong \lim_nG/K_n(G)$,
 so $\gamma$ is an isomorphism as claimed.

 Finally, we consider the reverse implication in (b).  If $\cat{U}_{\ll G}=\cat{U}_{\ll H}$ then we can use~(a) to
 obtain morphisms $G\xrightarrow{\alpha}H\xrightarrow{\beta}G$.  Claim~(c) shows that
 $\alpha\beta$ and $\beta\alpha$ must be isomorphisms, and it follows that $\alpha$
 and $\beta$ are isomorphisms, as required.
\end{proof}

\begin{Cor}\label{prop:Uhat-injects-in-a-lim}
    Let $\{\cat U[n]\}_{n\geq 0}$ be a profinite reflective filtration of $\cat U$.
    Then the map induced by the reflections 
    \[
    q\coloneqq (q_n)\colon \pi_0 \hCU \hookrightarrow \lim_n \pi_0 \cat U[n], \quad [G] \mapsto ([q_n G])_n
    \]
    is injective.
\end{Cor}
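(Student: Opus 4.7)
The plan is to reduce injectivity to the criterion of \cref{lem-tow-maps}(b), which characterizes isomorphism in $\hCU$ purely in terms of the set of finite quotients lying in $\cat U$. Concretely, I want to show that if $G, H \in \hCU$ satisfy $q_n(G) \cong q_n(H)$ in $\cat U[n]$ for every $n \geq 0$, then $\cat U_{\ll G} = \cat U_{\ll H}$, whence $G \cong H$.

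First I would fix $K \in \cat U_{\ll G}$, i.e., a group $K \in \cat U$ equipped with a surjection $\alpha\colon G \twoheadrightarrow K$. Since $\cat U = \bigcup_n \cat U[n]$, there is some $n$ with $K \in \cat U[n]$. Because the filtration is profinite, $\cat U[n]$ is reflective in $\hCU$ with reflection $q_n$, so the adjunction supplies a factorization $\alpha\colon G \twoheadrightarrow q_n(G) \twoheadrightarrow K$. Now using the hypothesis $q_n(G) \cong q_n(H)$, we obtain a surjection $q_n(H) \twoheadrightarrow K$, and precomposing with the unit $H \twoheadrightarrow q_n(H)$ yields a surjection $H \twoheadrightarrow K$. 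Hence $K \in \cat U_{\ll H}$, and by symmetry $\cat U_{\ll G} = \cat U_{\ll H}$.

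By \cref{lem-tow-maps}(b) this forces $G \cong H$ in $\hCU$, giving injectivity of $q$. There is no real obstacle here: the argument only uses the universal property of the reflections $q_n$ on $\hCU$ (which is precisely the content of ``profinite'' in \cref{defn-filt-profinite}), exhaustiveness $\cat U = \bigcup_n \cat U[n]$, and \cref{lem-tow-maps}. The only subtle point to keep in mind is that the isomorphism $q_n(G) \cong q_n(H)$ is taken in the category $\cat U[n]$ (i.e., up to conjugation), but this is harmless since we only transport surjectivity along it.
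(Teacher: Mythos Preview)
Your proof is correct and follows essentially the same approach as the paper: both reduce injectivity to \cref{lem-tow-maps}(b) by showing that $q_n(G)\cong q_n(H)$ for all $n$ forces $\cat U_{\ll G}=\cat U_{\ll H}$. The paper phrases this via the identity $\cat U_{\ll q_n(G)}=\cat U_{\ll G}\cap\cat U[n]$ (whence $\cat U_{\ll G}=\bigcup_n\cat U_{\ll q_n(G)}$), while you unfold the same content as an explicit factorization through the reflection; the substance is identical.
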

\begin{proof}
    Notice that $\cat{U}_{\ll q_n(G)}=\cat{U}_{\ll G}\cap\cat{U}[n]$ and hence
 $\cat{U}_{\ll G}=\bigcup_n\cat{U}_{\ll q_n(G)}$. Therefore by \cref{lem-tow-maps}, we see that given $G,H \in \hCU$, there is an isomorphism $G \to H$ if and only if $q_n(G) \cong q_n(H)$ for all $n$. 
\end{proof}

We now turn to proving surjectivity of the map $q$.
In order to do this, we need the following criterion. Recall the definition of wide subgroups from \cref{not-families}.
\begin{Prop}\label{prop-quot-set-props}
 Consider a subset $\cat{X}\subseteq\pi_0\cat{U}$.  There is an object
 $G\in\hCU$ with $\cat{X}=\pi_0\cat{U}_{\ll G}$ if and only if $\cat{X}$ has the following properties:
 \begin{enumerate}
  \item there is a number $r$ with $\cat{X}\subseteq\cat{U}\ip{r}$;
  \item $\cat{X}$ is non-empty, and closed under quotients in $\cat{U}$; and
  \item for any $H,K\in\cat{X}$ there is a wide subgroup 
   $L\leq H\times K$ with $L\in\cat{X}$.
 \end{enumerate}
\end{Prop}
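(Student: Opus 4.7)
For the \emph{necessity} direction, I would assume $\cat{X} = \pi_0\cat{U}_{\ll G}$ for some $G \in \hCU$. Property (a) holds because $G$ is topologically generated by some finite set of $r$ elements, and images of these generators under any epimorphism $G \twoheadrightarrow H$ generate $H$, so $H \in \cat{U}\langle r \rangle$. Property (b) is immediate: $\cat X$ is nonempty because $\cat{N}(G;\cat{U})$ is cofinal in $\cat{N}(G)$, and closedness under quotients follows by composing epimorphisms. For (c), given $H, K \in \cat{X}$ with epimorphisms $H \twoheadleftarrow G \twoheadrightarrow K$, the image $L$ of the combined map $G \to H \times K$ is a wide subgroup, and \cref{lem-profinite-wide} shows $L \in \cat{U}$; since $G \twoheadrightarrow L$ we also have $L \in \cat{X}$.

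For the \emph{sufficiency} direction, the plan is to construct $G$ as a sequential inverse limit of groups in $\cat{X}$. First, I would observe that $\cat{X}$ is countable: every group in $\cat{X}$ is a quotient of the free group $F_r$, and there are only countably many normal subgroups of $F_r$ of finite index. Enumerate the isomorphism classes as $H_1, H_2, \ldots$, and build a tower
\[
    G_1 \twoheadleftarrow G_2 \twoheadleftarrow G_3 \twoheadleftarrow \ldots
\]
in $\cat{X}$ as follows. Set $G_1 = H_1$. Given $G_n \in \cat{X}$, apply property (c) to $G_n$ and $H_{n+1}$ to obtain a wide subgroup $L \leq G_n \times H_{n+1}$ with $L \in \cat{X}$, and take $G_{n+1} = L$ (using wideness, the projections give epimorphisms $G_{n+1} \twoheadrightarrow G_n$ and $G_{n+1} \twoheadrightarrow H_{n+1}$).

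Define $G \coloneqq \varprojlim_n G_n$. I would then verify four things. (i) $G$ is topologically $r$-generated: by property (a), each $G_n$ is $r$-generated; let $Y_n \subseteq G^r$ be the (closed, nonempty) subset of tuples whose image in $G_n$ generates $G_n$. Since surjections carry generators to generators, $Y_{n+1} \subseteq Y_n$, and compactness of $G^r$ gives $\bigcap_n Y_n \neq \emptyset$. (ii) $G \in \hCU$: any open normal $N \lhd G$ has finite quotient, so the map $G \twoheadrightarrow G/N$ factors through some $G_n \in \cat{U}$, showing $\cat{N}(G;\cat{U})$ is cofinal in $\cat{N}(G)$. (iii) $\cat{X} \subseteq \cat{U}_{\ll G}$: each $H_n$ is a quotient of $G_n$ and hence of $G$. (iv) $\cat{U}_{\ll G} \subseteq \cat{X}$: any epimorphism $G \twoheadrightarrow H$ with $H \in \cat{U}$ finite factors through some $G_n$, so $H$ is a quotient of $G_n \in \cat{X}$, and $H \in \cat{X}$ by (b).

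The main subtlety I expect lies in the construction of the tower: a naive attempt to define $G$ as $\varprojlim_{N \in \mathcal{N}} F_r/N$ with $\mathcal{N} = \{N \lhd F_r \,:\, F_r/N \in \cat{X}\}$ runs into the issue that $\mathcal{N}$ need not be closed under finite intersections, because property (c) only guarantees the \emph{existence} of some wide subgroup in $\cat{X}$, not the specific diagonal image $F_r/(N_1 \cap N_2)$. Sidestepping this via the sequential enumeration—possible thanks to the $r$-generated hypothesis (a), which ensures $\cat{X}$ is countable—is the key device that makes everything go through cleanly.
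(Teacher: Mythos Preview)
Your proposal is correct and follows essentially the same route as the paper: enumerate $\cat{X}$, build the tower $G_{n+1}\leq H_{n+1}\times G_n$ using property~(c), take the inverse limit, and verify the four points. The only notable difference is in step~(i): the paper shows $G$ is topologically $r$-generated by applying \cref{lem-tow-maps}(a) to $\widehat{F_r}$ (producing a surjection $\widehat{F_r}\twoheadrightarrow G$), whereas you give a direct compactness argument on the nested closed sets $Y_n\subseteq G^r$; both are valid and amount to the same inverse-limit-of-nonempty-finite-sets trick.
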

\begin{proof}
 First suppose that $\cat{X}=\pi_0\cat{U}_{\ll G}$ for some $G\in\hCU$.  Then
 $G\in\hCU\ip{r}$ for some $r$, which implies~(a).  Property~(b)
 is clear.  If $H,K\in\cat{X}$ then we can choose surjective
 homomorphisms $H\xleftarrow{\alpha}G\xrightarrow{\beta}K$ and put
 $L=\img(\ip{\alpha,\beta}\colon G\to K\times L)$; then $L$ is a wide subgroup that
 is contained in $\cat{X}$; so property~(c) is also satisfied.

 Conversely, suppose we have a subset $\cat{X}\subseteq\pi_0\cat{U}$ satisfying~(a)
 to~(c).  As $\pi_0\cat{G}$ is countable, we
 can choose a sequence of groups $H_i\in\cat{X}$ containing a representative for each of the isomorphism classes of objects in $\cat{X}$. Take $G_0=H_0$ and then recursively
 choose a wide subgroup $G_{i+1}\leq H_{i+1}\times G_i$ for each $i$, so
 we have projections $G_i\leftarrow G_{i+1}\to H_{i+1}$.
 Let $G$ be the inverse limit of the groups $G_i$.  As
 $G_i\in\cat{X}\subseteq\cat{U}\ip{r}$ for all $i$, by applying \cref{lem-tow-maps}(a) to the profinite completion of $F_r$, the free group on $r$ generators, we deduce that $G\in\hCU\ip{r}$. If $T\in\cat{U}$ is a finite quotient of $G$,
 then the map $G\to T$ must factor through $G_i$ for some $i$. Since
 $G_i\in\cat{X}$ and $\cat{X}$ is closed under quotients in $\cat{U}$ we have $T\in\cat{X}$.
 Conversely, if $T\in\cat{X}$ then $T\cong H_i$ for some $i$ so we have a
 surjective homomorphism $G\to G_i\xrightarrow{q_i}H_i\cong T$ so
 $T\in\cat{U}_{\ll G}$.  We therefore have $\pi_0\cat{U}_{\ll G}=\cat{X}$ as required.
\end{proof}

Using the above results we can give another description of $\pi_0\hCU$ which will be instrumental in calculating the Balmer spectrum for $r$-submultiplicative families. 

\begin{Thm}\label{thm-Uhat-is-a-lim}
    Let $\cat{U} \subseteq \cat{G}$ be a subcategory in which all groups are $r$-generated for a fixed $r \geq 0$, and suppose that $\cat{U}$ has a profinite reflective filtration $\{\cat U[n]\}_{n\geq 0}$.
    Then the map induced by the reflections 
    \[
    q = (q_n)\colon \pi_0 \hCU \xrightarrow{\sim} \lim_n \pi_0 \cat U[n], \quad [G] \mapsto ([q_n G])_n
    \]
    is a bijection.
\end{Thm}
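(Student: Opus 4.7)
Injectivity is \cref{prop:Uhat-injects-in-a-lim}, so the real content is surjectivity: given a coherent sequence $([G_n])_n \in \lim_n \pi_0\cat{U}[n]$, so that $q_n(G_m) \cong G_n$ for all $m \geq n$, one must produce $G \in \hCU$ with $q_n(G) \cong G_n$ for every $n$. The plan is to realize $G$ by specifying its quotient set $\pi_0\cat{U}_{\ll G}$ and invoking the characterization of \cref{prop-quot-set-props}. The natural candidate is
\[
\cat X \coloneqq \{[H] \in \pi_0\cat{U} \mid \exists\, n \geq 0 \text{ with } H \in \cat{U}[n] \text{ and } G_n \gg H\},
\]
so the first task is to check that $\cat X$ satisfies conditions (a)--(c) of \cref{prop-quot-set-props}.

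Condition (a) is immediate from the hypothesis that every group in $\cat{U}$ is $r$-generated. For (b), $\cat X$ is non-empty (e.g.\ $[G_0] \in \cat X$), and if $[H] \in \cat X$ at level $n$ and $H \gg K$ with $K \in \cat{U}[m]$, then at level $n' = \max(n,m)$ the composition $G_{n'} \gg G_n \gg H \gg K$ shows $[K] \in \cat X$; here the surjection $G_{n'} \gg G_n$ comes from the reflection unit combined with $q_n(G_{n'}) \cong G_n$. For (c), given $[H], [K] \in \cat X$ one may take a common level $n$ with $H, K \in \cat{U}[n]$ and $G_n \gg H, K$, choose lifts $\alpha \colon G_n \twoheadrightarrow H$ and $\beta \colon G_n \twoheadrightarrow K$, and let $L$ be the image of $\langle \alpha, \beta \rangle \colon G_n \to H \times K$; then $L$ is a wide subgroup and lies in $\cat{U}$ by \cref{hyp}. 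Placing $L$ in some $\cat{U}[m]$ with $m \geq n$ shows $[L] \in \cat X$. \cref{prop-quot-set-props} then furnishes $G \in \hCU$ with $\pi_0\cat{U}_{\ll G} = \cat X$.

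The second and harder half is to show that the $G$ so produced satisfies $q_n(G) \cong G_n$ in $\cat{U}[n]$. Since $[G_n] \in \cat X = \pi_0\cat{U}_{\ll G}$, there is a surjection $G \twoheadrightarrow G_n$, which by reflectivity factors as $G \twoheadrightarrow q_n(G) \twoheadrightarrow G_n$. To produce a surjection in the other direction, I would first establish the identity $\cat X \cap \pi_0\cat{U}[n] = \{[H] \in \pi_0\cat{U}[n] \mid G_n \gg H\}$: given $[H] \in \cat X$ witnessed at some level $m$, applying $q_n$ to the witnessing surjection $G_m \twoheadrightarrow H$ yields, when $m \geq n$ and $H \in \cat{U}[n]$, a surjection $G_n \cong q_n(G_m) \twoheadrightarrow q_n(H) = H$, and the case $m < n$ reduces to this by going through $G_n \gg G_m$. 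Specialising to $H = q_n(G)$ delivers the desired surjection $G_n \twoheadrightarrow q_n(G)$, and two mutual surjections between finite groups force an isomorphism $q_n(G) \cong G_n$.

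The main obstacle is this second half: verifying that the canonical candidate $G$ has the correct reflections. The construction of $G$ via \cref{prop-quot-set-props} is essentially formal once the three conditions on $\cat X$ are checked, but the compatibility identity $\cat X \cap \pi_0\cat{U}[n] = \{[H] \in \pi_0\cat{U}[n] \mid G_n \gg H\}$ is where the coherence of the sequence $(G_n)$ with the reflections is really used, and it is this identity that pins down $q_n(G)$ as exactly $G_n$ rather than as merely some quotient or overgroup of $G_n$.
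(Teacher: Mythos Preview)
Your proposal is correct and follows essentially the same route as the paper: define $\cat X$ as the downward closure of $\{G_n\}$, verify the three conditions of \cref{prop-quot-set-props} to produce $G\in\hCU$, and then check $q_n(G)\cong G_n$ via mutual surjections. The only difference is cosmetic: for the second half the paper argues directly on the single element $q_n(G)\in\cat X$ (some $G_m\gg q_n(G)$, apply $q_n$) rather than first formulating the intermediate identity $\cat X\cap\pi_0\cat U[n]=\pi_0(\cat U[n])_{\ll G_n}$, so what you flag as the ``harder half'' is in fact a two-line step.
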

\begin{proof} 
Injectivity holds by \cref{prop:Uhat-injects-in-a-lim}.
To show surjectivity, consider an element $([G_n])_n \in \lim_n \pi_0 \cat U[n]$ so that $q_n(G_{n+1})\cong G_n$. Set $\cat X$ to be the downward closure of $\{G_n \mid n\geq 0 \}$. We now apply \cref{prop-quot-set-props} to this $\cat X$ to obtain $G \in \hCU$ such that $\pi_0\cat{U}_{\ll G}=\cat X$: in order to do this, we need to verify conditions (a)--(c) from \cref{prop-quot-set-props}. Conditions (a) and (b) are clear. For condition (c), given $H, K \in \cat{X}$ there is a natural number $i$ such that $G_i$ surjects onto both $H$ and $K$. The image of the combined homomorphism $G_i \to H \times K$ is then a wide subgroup which lies in $\cat{X}$ (see \cref{eg-implies-widely-closed}) as required. 

We now claim that $q_n(G)\cong G_n$ for all $n$, showing that the above map is surjective. To prove the claim note that $G \gg G_n$ and so by adjunction $q_n(G)\gg G_n$. On the other hand, $G\gg q_n(G)$ and by definition of $\cat X$ there is some $m$ such that $G_m\gg q_n(G)$. Applying $q_n$ to this homomorphism, we obtain that $G_n\cong q_n(G_m)\gg q_n(G)$, and so $q_n(G)\cong G_n$, as claimed.
\end{proof}

\begin{Exa}
    We observe that the previous theorem applies to $r$-submultiplicative families by \cref{cor-std-filt-profinite}.
\end{Exa}

Finally, we use \cref{thm-Uhat-is-a-lim} to endow $\hCU$ with a natural profinite topology under the additional assumption that the filtration on $\cat{U}$ is essentially finite. 
 
\begin{Def}\label{defn-Uhat-topology}
    Let $\cat{U} \subseteq \cat{G}$ be a subcategory in which all groups are $r$-generated for a fixed $r \geq 0$. Assume in addition that $\cat{U}$ has an essentially finite, reflective filtration $\{\cat U[n]\}_{n\geq 0}$; in particular, the filtration is profinite by \cref{cor-std-filt-profinite}, with reflections $q_n \colon \hCU \to \cat U[n]$. Considering each finite set $\pi_0\CU[n]$ as a discrete space, we give 
        \[
            \pi_0\hCU\xrightarrow{\sim} \lim_n \pi_0\cat{U}[n]
        \]
    the inverse limit topology afforded by \cref{thm-Uhat-is-a-lim}. Explicitly, for every $n$ and $S\subseteq\pi_0\cat{U}[n]$ we have an open set $(\pi_0q_n)^{-1}(S)\subseteq\pi_0\hCU$, and these sets form a basis for the topology on $\pi_0\hCU$. 
\end{Def}

\begin{Rem}\label{rem:profinitetopology}
     In the above context it is clear that every essentially finite subcategory of $\cat U$ is contained in some $\cat U[n]$. From this we see that the induced topology on $\pi_0\hCU$ is independent of the choice of filtration. Moreover, the space $\pi_0\hCU$ is profinite, i.e., quasi-compact, Hausdorff, and totally disconnected.
\end{Rem}

We record some basic features of the topology on $\pi_0\hCU$.
\begin{Lem}\label{lem-hatU-topology}
    Let $\cat{U}$ be as in \cref{defn-Uhat-topology}. 
    \begin{enumerate}
        \item The following are equivalent for a subset $U \subseteq \pi_0\hCU$:
        \begin{itemize}
            \item[(i)] $U$ is clopen (i.e., closed and open);
            \item[(ii)] $U$ is quasi-compact and open;
            \item[(iii)] there exists $n \in \bbN$ and $S \subseteq \pi_0\cat{U}[n]$ such that $U = (\pi_0q_n)^{-1}(S)$.
        \end{itemize}
        \item A subset of $\pi_0\hCU$ is open if and only if it is Thomason, i.e., a union of subsets with quasi-compact open complement.
        \item Every clopen subset of $\pi_0\hCU$ is the closure of its intersection with $\pi_0\cat{U}$. In particular, the subset $\pi_0\cat{U}$ is dense in $\pi_0\hCU$.
    \end{enumerate}
\end{Lem}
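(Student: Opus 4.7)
The plan is to leverage the identification $\pi_0\hCU \cong \lim_n \pi_0 \cat U[n]$ and its inverse limit topology from \cref{defn-Uhat-topology} throughout. For part (a), the implication (iii) $\Rightarrow$ (i) is immediate: each basic set $(\pi_0 q_n)^{-1}(S)$ is the preimage of a subset of the discrete finite space $\pi_0\cat U[n]$ under the continuous map $\pi_0 q_n$, and therefore clopen. The implication (i) $\Rightarrow$ (ii) then follows from the quasi-compactness of $\pi_0\hCU$ recorded in \cref{rem:profinitetopology}, since a closed subset of a quasi-compact space is quasi-compact. For (ii) $\Rightarrow$ (iii), I will write a quasi-compact open $U$ as a finite union of basic opens $(\pi_0 q_{n_i})^{-1}(S_i)$, and then promote all indices to a common $n = \max_i n_i$ by appealing to the reflection identity $q_{n_i} \cong q_{n_i}^n \circ q_n$, where $q_{n_i}^n\colon \cat U[n] \to \cat U[n_i]$ is the reflection supplied by \cref{rem-nested-reflection} applied to $\cat U[n_i] \subseteq \cat U[n]$; the union can then be rewritten as $(\pi_0 q_n)^{-1}(S)$ for a suitable $S \subseteq \pi_0 \cat U[n]$.

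Part (b) follows formally from (a). Since $\pi_0\hCU$ is Hausdorff by \cref{rem:profinitetopology}, every quasi-compact subset is closed, so subsets with quasi-compact open complement are exactly the clopen subsets. Openness thus coincides with being a union of clopen sets, which, in view of the basis supplied by part (a), is precisely the Thomason condition as formulated in the statement.

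For part (c), I first establish density of $\pi_0\cat U$: for any $[H] \in \pi_0\hCU$, every basic open neighborhood $(\pi_0 q_n)^{-1}(\{[q_n H]\})$ contains the point $[q_n H]$, and $q_n(H) \in \cat U[n] \subseteq \cat U$ lies in $\pi_0\cat U$. Now let $U = (\pi_0 q_n)^{-1}(S)$ be clopen as in (a). The inclusion $\overline{U \cap \pi_0\cat U} \subseteq U$ is automatic since $U$ is closed. For the reverse inclusion, given $[H] \in U$ and a basic neighborhood $V = (\pi_0 q_m)^{-1}(\{[q_m H]\})$ with $m \geq n$, the reflection identity $q_n \cong q_n \circ q_m$ shows $V \subseteq U$, and as in the density argument $[q_m H]$ lies in $V \cap \pi_0\cat U$, hence in $V \cap U \cap \pi_0\cat U$; this exhibits $[H]$ as a limit point of $U \cap \pi_0\cat U$.

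None of these steps presents a substantive obstacle, since the topology on $\pi_0\hCU$ is set up precisely to make the statements hold. The main technical ingredient is the compatibility $q_n \cong q_n \circ q_m$ for $m \geq n$, which is a direct consequence of the universal property of reflections combined with the nested inclusions $\cat U[n] \subseteq \cat U[m] \subseteq \hCU$ via \cref{rem-nested-reflection}.
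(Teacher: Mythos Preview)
Your proposal is correct and follows essentially the same approach as the paper: the same compactness/Hausdorff argument for (a)(i)$\Leftrightarrow$(ii), the same finite-union-then-common-index argument for (ii)$\Rightarrow$(iii), and the same use of the point $q_m(H)\in\pi_0\cat{U}$ to witness density in (c). The only cosmetic differences are that the paper deduces density of $\pi_0\cat{U}$ as a special case of the clopen statement rather than proving it first, and in (c) tests against arbitrary basic opens $(\pi_0 q_{n'})^{-1}(S')$ rather than the neighborhood basis $\{(\pi_0 q_m)^{-1}(\{[q_m H]\})\}_{m\geq n}$, but your version is equally valid.
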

\begin{proof}
    Since $\pi_0\hCU$ is a quasi-compact Hausdorff space, a subset is closed if and only if it is quasi-compact, which gives the equivalence of $(i)$ and $(ii)$ of $(a)$. If $S\subseteq\pi_0\cat{U}[n]$ then $(\pi_0q_n)^{-1}(S)$ and $(\pi_0q_n)^{-1}(\pi_0\cat{U}[n] \setminus S)$ are open and complementary, so $(\pi_0q_n)^{-1}(S)$ is clopen.  Conversely, suppose that $U\subseteq\pi_0\hCU$ is quasi-compact and open.  Then $U$ can be written as a finite union of basic open subsets, say $U=\bigcup_{i<m}(\pi_0q_{n_i})^{-1}(S_i)$ for some subsets $S_i\subseteq\pi_0\cat{U}[n_i]$.  Let $n$ be the maximum of the indices $n_i$, let $W_i$ be the preimage of $S_i$ in $\pi_0\cat{U}[n]$, and put $W=\bigcup_{i<m}W_i\subseteq\pi_0\cat{U}[n]$. We then find that $U=(\pi_0q_n)^{-1}(W)$ which completes the proof of (a). Since the quasi-compact opens form a basis for the topology on $\pi_0\hCU$, (a) implies (b).
    
    For (c), consider a basic open set $U=(\pi_0q_n)^{-1}(S)$ and a point $x\in U$.  We must show that $x$ lies in the closure of $U\cap\pi_0\cat{U}$.  Consider another basic open set $U'=(\pi_0q_{n'})^{-1}(S')$ containing $x$; we must show that $U'\cap U\cap\pi_0\cat{U}$ is nonempty.  Put $m=\max(n,n')$ and $y=q_m(x)\in\pi_0\cat{U}[m]\subseteq\pi_0\cat{U}$.  We then find that $y\in U'\cap U\cap\pi_0\cat{U}$ as required. The density of $\pi_0\cat{U} \subseteq \pi_0\hCU$ follows by taking the clopen subset $\pi_0\hCU$.
\end{proof}

We can recover the asymptotic subsets (\cref{defn-asym}) of $\pi_0\cat{U}$ from the topology on $\pi_0\hCU$ as follows:
\begin{Cor}\label{cor-recover-asym}
    Let $\cat{U}$ be as in \cref{defn-Uhat-topology}. There is an isomorphism of lattices
    \[
            \begin{tikzcd}[ampersand replacement=\&,column sep=small]
                \pi_0\cat{U} \cap -\colon
                {\begin{Bmatrix}
                    \text{clopen subsets} \\
                    \text{of } \pi_0\hCU
                \end{Bmatrix}}
                    \& 
                \asym(\cat{U}).
                    \arrow["\sim", from=1-1, to=1-2]
            \end{tikzcd}
        \]
\end{Cor}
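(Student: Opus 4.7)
The plan is to verify that the map $U \mapsto U \cap \pi_0\cat{U}$ is a well-defined lattice homomorphism, bijective, with inverse essentially given by $S \mapsto (\pi_0\hat{q}_n)^{-1}(S')$ whenever $S = (\pi_0 q_n)^{-1}(S')$. Write $\hat{q}_n \colon \hCU \to \cat{U}[n]$ for the reflection on the profinite extension and note that its restriction to $\cat{U}$ is the usual reflection $q_n$.

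First I would check that the map is well-defined and lands in $\asym(\cat{U})$. By \cref{lem-hatU-topology}(a), any clopen $U \subseteq \pi_0\hCU$ has the form $U = (\pi_0 \hat{q}_n)^{-1}(S')$ for some $n$ and $S' \subseteq \pi_0\cat{U}[n]$. Intersecting with $\pi_0\cat{U}$ and using $\hat{q}_n|_{\cat{U}} = q_n$ gives $U \cap \pi_0\cat{U} = (\pi_0 q_n)^{-1}(S')$, which is asymptotic by \cref{defn-asym}. That the map preserves binary meets and joins (and sends $\pi_0\hCU$ to $\pi_0\cat{U}$ and $\varnothing$ to $\varnothing$) is immediate, since intersection with $\pi_0\cat{U}$ commutes with arbitrary unions and intersections.

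For surjectivity, given $S \in \asym(\cat{U})$, write $S = (\pi_0 q_n)^{-1}(S')$ for some $S' \subseteq \pi_0\cat{U}[n]$, and set $\widetilde{S} \coloneqq (\pi_0 \hat{q}_n)^{-1}(S')$. This is clopen by \cref{lem-hatU-topology}(a), and clearly maps to $S$ under intersection with $\pi_0\cat{U}$. Injectivity is exactly \cref{lem-hatU-topology}(c): every clopen subset of $\pi_0\hCU$ is recovered as the closure of its intersection with the dense subset $\pi_0\cat{U}$, so if two clopens have the same intersection with $\pi_0\cat{U}$, they have the same closure and thus coincide.

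Since the map is a bijective lattice homomorphism, its inverse is automatically a lattice homomorphism and the claim follows. There is no real obstacle here; the genuine content was already established in \cref{lem-hatU-topology} (classifying clopens and asserting density of $\pi_0\cat{U}$), so this corollary is essentially a dictionary between the two descriptions.
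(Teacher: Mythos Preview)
Your proof is correct and follows essentially the same approach as the paper: both use \cref{lem-hatU-topology}(a) to identify clopens as preimages $(\pi_0 q_n)^{-1}(S')$ (hence establishing well-definedness and surjectivity via the compatibility of the reflections on $\hCU$ and $\cat{U}$), and \cref{lem-hatU-topology}(c) for injectivity. Your version is slightly more explicit about the lattice-homomorphism property and the notational distinction between $\hat{q}_n$ and its restriction $q_n$, but the argument is the same.
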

\begin{proof}
    Recall that the reflection $\cat U \to \cat U[n]$ is the restriction of $q_n \colon \hCU \to \cat U[n]$, see \cref{defn-filt-profinite}. It then follows that an asymptotic subset of $\pi_0\cat{U}$ is of the form $(\pi_0q_n)^{-1}(S)\cap\pi_0\cat{U}$ for some $n$ and $S\subseteq\pi_0\cat{U}[n]$. \cref{lem-hatU-topology}(a) then shows that the above map is surjective, while injectivity follows from \cref{lem-hatU-topology}(c).
\end{proof}

As we will show in the next section, the reason to consider the extension from asymptotic subsets of $\pi_0\cat{U}$ to clopen subsets of $\pi_0\hCU$ is that $\pi_0\hCU$ is large enough to accommodate a classification of all thick ideals in terms of their homological support; compare with \cref{rem:spectralclosure}.

\section{Balmer spectra for \texorpdfstring{$r$}{r}-submultiplicative families}\label{sec-bounded}

Our goal in this section is to describe the Balmer spectrum for families satisfying:

\begin{Hyp}\label{hyp3}
     Let $\cat{U}$ be a subcategory satisfying \cref{hyp2} and in which all groups are $r$-generated for a fixed $r \geq 0$. Assume in addition that $\cat{U}$ has an essentially finite, reflective filtration $\{\cat U[n]\}_{n\geq 0}$; this is moreover profinite by \cref{cor-std-filt-profinite}, with reflections $q_n\colon \hCU \to \cat{U}[n]$.
\end{Hyp}

\begin{Exa}
    By \cref{cor-std-filt-profinite} $r$-submultiplicative families satisfy \cref{hyp3}.
\end{Exa}

We begin with the compatibility of the profinite group prime map with the maps induced by the reflections, which holds in slightly greater generality.

\begin{Lem}\label{lem:groupprimesmaps-compactible}
    Let $\cat U$ be a subcategory satisfying \cref{hyp2} and admitting a profinite reflective filtration $\{\cat{U}[n]\}_{n\geq 0}$. Then the maps \eqref{eq:groupprimemap} and \eqref{eq-profinite-group-prime-map} are compatible in the sense that there is a commutative diagram
      \[
    \begin{tikzcd}[column sep=2cm]
        \pi_0 \widehat{\cat U} \arrow[r,"\mathfrakp_{-}^{\hCU}"] \arrow[d, "q=(q_n)"'] & \Spc(\D{\cat U}^c) \arrow[d,"(\Spc(q_n^*))"]\\
        \lim_n \pi_0 \cat U[n] \arrow[r, "\lim_n\mathfrakp_{-}^{\cat U[n]}"'] & \lim_n \Spc(\D{\cat U[n]}^c).
    \end{tikzcd}
    \]
\end{Lem}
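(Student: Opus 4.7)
The plan is to trace both paths around the square by unpacking the definitions of the various maps on a group $G \in \hCU$, and then to verify that the resulting pair of prime ideals in $\D{\cat U[n]}^c$ agree for each $n$. Going right and then down sends $G$ to the tuple $((q_n^*)^{-1}(\mathfrakp_G^{\hCU}))_n$, while going down and then right sends $G$ to $(\mathfrakp_{q_n(G)}^{\cat U[n]})_n$. So the commutativity of the diagram reduces, level by level in $n$, to the claim that for every $Y \in \D{\cat U[n]}^c$ one has
\[
q_n^*(Y) \in \mathfrakp_G^{\hCU} \iff Y \in \mathfrakp_{q_n(G)}^{\cat U[n]},
\]
and this in turn will follow from a natural isomorphism $H_*(q_n^* Y)(G) \cong H_*(Y)(q_n G)$.

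To establish this isomorphism, I would unfold the left-hand side using \cref{defn-profinite-groupprimes}:
\[
H_*(q_n^* Y)(G) = \colim_{N \in \cat N(G;\cat U)} H_*(Y)(q_n(G/N)),
\]
using the identification $(q_n^* Y)(H) = Y(q_n H)$ for $H \in \cat U$ (which is just $q_n^* = (-)\circ q_n$). I would then apply \cref{prop-ev-const}(a) to replace the filtered system $\cat N(G;\cat U)$ by the cofinal subsystem of kernels $\ker(G \to q_k G)$, so that $G/\ker(G \to q_k G) \cong q_k G$ and the colimit becomes $\colim_k H_*(Y)(q_n(q_k G))$.

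The key step is then to observe that for $k \geq n$ one has $q_n \circ q_k \cong q_n$ as functors $\hCU \to \cat U[n]$. This is a formal consequence of the uniqueness of adjoints: the inclusion $\cat U[n] \hookrightarrow \hCU$ factors through $\cat U[k]$, and taking left adjoints (which requires profiniteness of the filtration, \cref{defn-filt-profinite}) shows that $q_n$ agrees with the composite of $q_k$ with the restricted reflection $q_n|_{\cat U[k]}\colon \cat U[k] \to \cat U[n]$ from \cref{rem-nested-reflection}. Consequently the colimit system is eventually constant with value $H_*(Y)(q_n G)$, yielding the required isomorphism. No step here is genuinely delicate; the only thing to be careful about is the cofinality argument, which is why profiniteness of the filtration is the essential hypothesis.
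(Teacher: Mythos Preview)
Your proposal is correct and follows essentially the same approach as the paper: both fix $n$ and $G$, reduce to showing $(q_n^*)^{-1}(\mathfrakp_G) = \mathfrakp_{q_n G}$, use \cref{prop-ev-const} to replace the colimit over $\cat N(G;\cat U)$ by the cofinal system $(q_m G)_m$, and then invoke $q_n q_m G \cong q_n G$ for $m \geq n$ to collapse the colimit. The paper's proof is simply a more compressed version of your argument.
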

\begin{proof}
    Fix $n\geq 0$ and $G \in\pi_0 \widehat{\cat U}$. Unravelling the definitions and using \cref{prop-ev-const}, we see that
    \begin{align*}
        \Spc(q_n^*)(\mathfrakp_G) & = \{X \mid (q_n^*X)(G)\simeq 0\}\\
                                  & = \{X \mid \colim_m (q_n^*X)(q_m G)\simeq 0\}\\
                                  & = \{X \mid \colim_m X(q_nq_m G)\simeq 0\}\\
                                  & = \{X \mid X(q_n G)\simeq 0\} =\mathfrakp_{q_nG},  
    \end{align*}
    where in the penultimate equality we used that $q_nq_m G\cong q_nG$ for $m\geq n$.  
\end{proof}

We now show that, for $\cat{U}$ with a profinite reflective filtration, the set of profinite group primes injects into the Balmer spectrum, extending \cref{prop:groupprimes}.

\begin{Prop}\label{prop:profinitemap-inj}
    Let $\cat U$ be a subcategory satisfying \cref{hyp2} and admitting a profinite reflective filtration $\{\cat U[n]\}_{n\geq 0}$. Then the map of \eqref{eq-profinite-group-prime-map}
    \[
    \mathfrakp_{-}^{\hCU}\colon \pi_0\hCU \hookrightarrow \Spc(\D{\cat{U}}^c)
    \]
    is injective.
\end{Prop}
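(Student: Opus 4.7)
The plan is to bootstrap from the injectivity of the ordinary group prime map for the essentially finite filtration pieces $\cat{U}[n]$, using the commutative square provided by \cref{lem:groupprimesmaps-compactible} to lift the information back to $\hCU$. This is an assembly argument from tools already established earlier in the excerpt.

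More concretely, suppose $G, H \in \pi_0\hCU$ satisfy $\mathfrakp_G^{\hCU} = \mathfrakp_H^{\hCU}$. Applying the right-hand vertical map $(\Spc(q_n^*))$ from the square in \cref{lem:groupprimesmaps-compactible} and using commutativity, I obtain the equality $\mathfrakp_{q_n(G)}^{\cat{U}[n]} = \mathfrakp_{q_n(H)}^{\cat{U}[n]}$ in $\Spc(\D{\cat{U}[n]}^c)$ for every $n$. Since $\cat{U}$ satisfies \cref{hyp2}, \cref{rem-hyps-refl-fil} ensures that each $\cat{U}[n]$ does as well, so \cref{prop:groupprimes} applies to each filtration stage and guarantees that $\mathfrakp_{-}^{\cat{U}[n]}\colon \pi_0\cat{U}[n] \to \Spc(\D{\cat{U}[n]}^c)$ is injective. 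Hence $q_n(G) \cong q_n(H)$ in $\pi_0\cat{U}[n]$ for all $n$.

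Finally, assembling these isomorphisms across all $n$, the images of $G$ and $H$ under the comparison map $q = (q_n)\colon \pi_0\hCU \to \lim_n \pi_0\cat{U}[n]$ coincide. By \cref{prop:Uhat-injects-in-a-lim}, which gives injectivity of $q$ under the standing profinite reflective filtration hypothesis, we conclude $G \cong H$ in $\hCU$, as required. There is no genuine obstacle here: the only subtle point is verifying that all cited results apply in the present setting, and in particular that \cref{hyp2} passes to the filtration pieces so that \cref{prop:groupprimes} is available at each level $n$.
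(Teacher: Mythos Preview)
Your proof is correct and takes essentially the same approach as the paper: both use the commutative square of \cref{lem:groupprimesmaps-compactible}, the injectivity of the group prime maps $\mathfrakp_{-}^{\cat{U}[n]}$ from \cref{prop:groupprimes}, and the injectivity of $q$ from \cref{prop:Uhat-injects-in-a-lim}. The paper phrases the argument slightly more abstractly (noting that the bottom horizontal map is a limit of injective maps and that the right vertical is bijective by \cref{cor-lim-spectra}, then concluding by commutativity), while you do an explicit element chase, but the content is identical.
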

\begin{proof}
    Consider the commutative diagram of \cref{lem:groupprimesmaps-compactible}. The left vertical arrow is injective by \cref{prop:Uhat-injects-in-a-lim}, and the right vertical arrow is bijective by \cref{cor-lim-spectra}. The bottom horizontal map is a limit of injective maps by \cref{prop:groupprimes} and so is injective. The claim then follows by the commutativity of the diagram.
\end{proof} 

We now come to the main result of this section:
\begin{Thm}\label{Thm-profinite-rgenerated}
    Let $\cat{U}$ be as in \cref{hyp3}. Then the profinite group map \eqref{eq-profinite-group-prime-map} is a homeomorphism
        \[
            \mathfrakp_{-}\colon \pi_0 \hCU \xrightarrow{\sim} \Spc(\D{\cat{U}}^c) 
        \]
    when we endow $\pi_0 \hCU$ with the inverse limit topology as in \cref{defn-Uhat-topology}. Moreover, $\D{\cat U}^c$ is standard in the sense of \cref{def:standardtt} and the extended homological support (\cref{cons:hsuppextension}) induces bijections
    \begin{align*}
        \{\text{finitely generated thick ideals of } \D{\cat{U}}^c\} & \xrightarrow{\sim} 
        \{\text{clopen subsets of } \pi_0\hCU\} \\
        \{\text{thick ideals of } \D{\cat{U}}^c\} & \xrightarrow{\sim} 
        \{\text{open subsets of } \pi_0\hCU\} \\
        \{\text{prime ideals of } \D{\cat{U}}^c\} & \xrightarrow{\sim} 
        \{\text{complements of points in } \pi_0\hCU\} 
    \end{align*}
    extending the bijection of \cref{thm:fgttideals}. 
\end{Thm}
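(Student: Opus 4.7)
The plan is to assemble three main ingredients already in place so that the profinite group prime map is realized as an inverse limit of homeomorphisms. First, I would invoke the commutative square of \Cref{lem:groupprimesmaps-compactible}: the left vertical map $q = (q_n)$ is a homeomorphism by \Cref{thm-Uhat-is-a-lim} together with the very definition of the topology on $\pi_0\hCU$ (\Cref{defn-Uhat-topology}); the right vertical map is a homeomorphism by \Cref{cor-lim-spectra}; and the bottom horizontal map is the inverse limit of the level-wise homeomorphisms $\mathfrakp_{-}^{\cat U[n]}$ furnished by \Cref{thm:spcfinite} applied to each essentially finite $\cat U[n]$. Inverse limits of homeomorphisms between finite discrete sets are homeomorphisms, so $\mathfrakp_{-}^{\hCU}$ is a homeomorphism as well.

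Standardness of $\D{\cat U}^c$ then follows immediately by combining \Cref{thm-colimit}, which presents $\D{\cat U}^c \simeq \colim_n \D{\cat U[n]}^c$ as a filtered colimit of tt-categories, with the fact that each $\D{\cat U[n]}^c$ is standard (\Cref{thm:spcfinite}) and that standardness is preserved under filtered colimits (\Cref{prop:standardtt}(c)).

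With standardness in hand, I would deduce the three bijections from Balmer's general classification \cite[Theorem 4.10]{Balmer2005}: thick ideals of $\D{\cat U}^c$ correspond to Thomason subsets of $\Spc(\D{\cat U}^c)$ via $\cat J \mapsto \supp(\cat J)$, which \Cref{rem:exthsupp} identifies with the extended homological support. Translating along the homeomorphism $\mathfrakp_{-}^{\hCU}$ and using \Cref{lem-hatU-topology}: Thomason subsets of $\pi_0\hCU$ are exactly open subsets (part (b)), yielding the bijection for all thick ideals; complements of quasi-compact opens, which are precisely the supports of finitely generated thick ideals, correspond to clopens (part (a)), giving the finitely generated case and refining \Cref{thm:fgttideals} via \Cref{cor-recover-asym}. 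Finally, since $\pi_0\hCU$ is profinite and hence Hausdorff, distinct prime ideals of $\D{\cat U}^c$ are incomparable, so $\supp(\mathfrakq) = \Spc(\D{\cat U}^c) \setminus \{\mathfrakq\}$ transports to a complement of a single point of $\pi_0\hCU$. The main subtlety — really the only point requiring care — is that the topology on $\pi_0\hCU$ from \Cref{defn-Uhat-topology} was engineered precisely so that these translations via \Cref{lem-hatU-topology} go through cleanly; no serious new obstacle is expected since the technical backbone (continuity of $\Spc$ under filtered colimits, the essentially finite case, and the construction of the profinite topology on $\pi_0\hCU$) is already in place.
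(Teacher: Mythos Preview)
Your proposal is correct and follows essentially the same route as the paper: assemble the commutative square of \Cref{lem:groupprimesmaps-compactible}, identify three of its sides as homeomorphisms via \Cref{thm-Uhat-is-a-lim}, \Cref{cor-lim-spectra}, and \Cref{thm:spcfinite}, then deduce standardness from \Cref{prop:standardtt}(c) (equivalently \Cref{rem-ess-finite-refl-standard}) and read off the three bijections from Balmer's classification together with \Cref{rem:exthsupp} and \Cref{lem-hatU-topology}. The only cosmetic difference is that the paper packages the standardness reference as \Cref{rem-ess-finite-refl-standard} rather than citing \Cref{thm-colimit} and \Cref{prop:standardtt}(c) directly.
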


\begin{proof}
  By \cref{lem:groupprimesmaps-compactible}, there is a commutative diagram 
    \[
    \begin{tikzcd}[column sep=2cm]
        \pi_0 \widehat{\cat U} \arrow[r, "\mathfrakp_{-}^{\hCU}"] \arrow[d, "\sim","q=(q_n)"'] & \Spc(\D{\cat U}^c) \arrow[d,"\sim"',"(\Spc(q_n^*))"]\\
        \lim_n \pi_0 \cat U[n] \arrow[r, "\sim"', "\lim_n \mathfrakp^{\cat U[n]}_{-}"] & \lim_n \Spc(\D{\cat U[n]}^c).
    \end{tikzcd}
    \]
    The right vertical arrow is a homeomorphism by \cref{cor-lim-spectra}, the left vertical map is a homeomorphism by \cref{thm-Uhat-is-a-lim}, and the bottom horizontal map is a homeomorphism by \cref{thm:spcfinite}. Therefore, $\mathfrakp_{-}^{\hCU}$ is a homeomorphism as well. 
    
    In light of \cref{rem:exthsupp}, this exhibits the pair $(\pi_0\hCU,\hsupp_{\hCU})$ as the universal support datum in the sense of Balmer \cite[Theorem 3.2]{Balmer2005}. Since $\D{\cat{U}}^c$ is standard by \cref{rem-ess-finite-refl-standard}, the general classification theorem \eqref{eq:ttclassification} then establishes bijections
    \begin{align*}
        \{\text{thick ideals of } \D{\cat{U}}^c\} & \xrightarrow{\sim} 
        \{\text{Thomason subsets of } \pi_0\hCU\} \\
        \{\text{finitely generated thick ideals of } \D{\cat{U}}^c\} & \xrightarrow{\sim}
        \{\text{closed Thomason subsets of } \pi_0\hCU\}
    \end{align*}    
    induced by $\hsupp_{\hCU}$. It remains to observe that a subset of $\pi_0\hCU$ is (closed and) Thomason if and only if it is (closed and) open, see \cref{lem-hatU-topology}. Finally, as already observed in \cref{rem:isolation_criterion}, the support of a prime ideal $\mathfrakp$ is in general given by the complement of $\{\mathfrak{q}\mid \mathfrak{q} \supseteq \mathfrakp\}$. Since $\pi_0\hCU$ is Hausdorff, there are no non-trivial containments between prime ideals, so $\{\mathfrak{q}\mid \mathfrak{q} \supseteq \mathfrakp\}$ is a singleton. The classification of prime ideals in terms of the extended homological support follows. 
\end{proof}

\begin{Rem}\label{rem-bad-unions}
    Recall from \cref{thm:fgttideals} that the finitely generated thick ideals in $\D{\cat{U}}^c$ are classified by the asympotic subsets of $\pi_0\cat{U}$ via the homological support, and that these are in bijection with the clopen subsets of $\pi_0\hCU$ by \cref{cor-recover-asym}. Moreover, the open subsets of $\pi_0\hCU$, i.e., the unions of clopen subsets, classify the thick ideals of $\D{\cat{U}}^c$ by the previous theorem. Therefore, one might wonder whether the unions of asymptotic subsets of $\pi_0\cat{U}$ also biject with all thick ideals. This is not the case, as already observed via an explicit counterexample in \cref{rem:spectralclosure}.
\end{Rem}

\begin{Rem}
    One can also give a lattice theoretic approach to the classification of thick ideals in $\D{\cat{U}}^c$ as briefly discussed in \cref{rem:spectralclosure}. From this perspective, it suffices to compute the frame of ideals in the lattice $\asym(\cat{U})$; equivalently by \cref{cor-recover-asym}, the frame of ideals in the lattice of clopen subsets of $\pi_0\hCU$. Let $\mathsf{L}$ be the lattice of clopen sets of $\pi_0\hCU$, let $\mathsf{F}$ be the frame of ideals in $\mathsf{L}$, and let $\mathsf{F}'$ be the frame of open sets of $\pi_0\hCU$: we will show that $\mathsf{F}$ and $\mathsf{F}'$ are isomorphic.  For any ideal $I\subseteq \mathsf{L}$, we note that $I$ is a collection of clopen subsets of $\pi_0\hCU$, and we let $\phi(I)$ be the union of all those sets.  This defines a map $\phi\colon \mathsf{F}\to \mathsf{F}'$. In the opposite direction, given an open subset $U\subseteq\pi_0\hCU$, we define $\psi(U)$ to be the set of clopen subsets of $U$, which is an ideal in $\mathsf{L}$.  As the clopen subsets form a basis for the topology, we have $\phi(\psi(U))=U$.  Conversely, suppose that we start with an ideal $I\subseteq \mathsf{L}$.  It is immediate that $I\leq\psi(\phi(I))$.  In the opposite direction, suppose we have $V\in\psi(\phi(I))$, so $V$ is a quasi-compact open subset and is contained in the union of all the sets in $I$.  By quasi-compactness, $V$ is contained in the union of some finite family of sets in $I$.  As $I$ is an ideal it is closed downwards and under finite unions, so $V\in I$.  This shows that $I=\psi(\phi(I))$, and we conclude that $\phi$ and $\psi$ are mutually inverse isomorphisms of posets and hence also of frames. 
\end{Rem}

We now apply \cref{Thm-profinite-rgenerated} to compute the Balmer spectrum of $\D{\cat{U}}^c$ for various prominent choices of $\cat{U}$.
\begin{Exa}\label{ex-cyclic-groups-prime-order}
    Let $\Cprime$ denote the collection of cyclic groups of prime order together with the trivial group. Every group in this family is $1$-generated, and the family is downwards closed. Thus combining \cref{rem-res-filt} with the filtration from \cref{prop-std-filt}(b) for $\cat U$ the family of cyclic groups, we obtain an essentially finite, reflective filtration for $\Cprime$. Concretely, this reflection for $n\geqslant 1$ satisfies
    \begin{equation}\label{eq-reflection-Cpr}
    \Cprime[n] = \{C \in \Cprime \mid |C| \leq n\} \quad \text{and} \quad
    q_n(C_p)=
    \begin{cases}
    C_p  & \mathrm{if}\; p \leqslant n \\ 
    1  & \mathrm{if} \; p >n.\\
    \end{cases}
    \end{equation}
    As sets, $\widehat{\Cprime}=\Cprime$ because any sequence of groups and non-injective epimorphisms in $\Cprime$ 
    has length less than or equal to two. Therefore, by \cref{Thm-profinite-rgenerated},
    the map 
    \[
    \mathfrakp_{-}  \colon \pi_0\widehat{\Cprime}=\pi_0\Cprime \to \Spc(\D{\Cprime}^c)
    \]
    is a homeomorphism when $\pi_0\Cprime = \pi_0\widehat{\Cprime}$ is given the profinite topology, so it remains to identify this profinite topology.

    A basis of opens for the topology is given by the subsets of the form $(\pi_0q_n)^{-1}(S)\subseteq\pi_0\Cprime$, and these are precisely the clopen subsets in $\pi_0\Cprime$ by \cref{lem-hatU-topology}. Using \eqref{eq-reflection-Cpr} we see that if a subset $A \subseteq \pi_0\Cprime$ is clopen and $1 \in A$ then the complement of $A$ is finite, and if $1 \notin A$ then $A$ is finite. The arbitrary unions of these basis elements are precisely the subsets $U \subseteq \pi_0\Cprime$ that satisfy that if $1 \in U$ then the complement of $U$ is finite. This shows that $\pi_0\Cprime$ with the profinite topology is homeomorphic to the one-point compactification of the infinite discrete set $\pi_0\Cprime\setminus \{1\}$, with $\mathfrakp_1$ as the accumulation point. See \cref{exa:cyclicprimes} for a representation of this space.
\end{Exa}

\begin{Exa}\label{ex:cyclic-p-groups}
    Let $\cat{C}_p$ denote the collection of cyclic $p$-groups for a fixed prime number $p$; this is a $1$-submultiplicative family. For convenience we will use additive notation. As we saw in \cref{Exa-Cyclicp-Uhat}, $\widehat{\cat{C}}_p$ contains $\cat{C}_p$ plus the profinite group $\Z_p$. We apply \cref{Thm-profinite-rgenerated} and obtain a homeomorphism $\pi_0 \widehat{\cat{C}}_p\cong\Spc(\D{\cat{C}_p}^c)$. In order to identify the profinite topology, we note that the reflections and the filtration of \cref{prop-std-filt}(b) satisfy for $n\geqslant 1$ that
    \[
    \cat{C}_p[n]= \{\Z/p^m \mid p^m \leqslant n \} \quad
    q_{p^n}(\Z/p^m)=  \Z/p^{\min(m,n)} \quad \mathrm{and}\quad q_{p^n}(\Z_p)=\Z/p^n
    \]
    for all $m\geq 0$. Thus if a subset $A \subseteq \pi_0\widehat{\cat{C}}_p$ is clopen and $\Z_p \in A$ then the complement of $A$ is finite, and if $\Z_p \notin A$ then $A$ is finite. As in the previous example, this means that the profinite topology on $\pi_0\widehat{\cat{C}}_p$ is the one-point compactification of the discrete space $\pi_0\cat{C}_p$, with $\Z_p$ as the accumulation point. In contrast to the previous example, here the accumulation point is not a group prime, but a profinite group prime instead. See \cref{exa:cyclicp} for a representation of this space.
\end{Exa}

For the next example, let $\bbN^+$ denote the one-point compactification of $\bbN$ with compactification point denoted by $\infty \in \bbN^+$. We extend the linear order of $\bbN$ to $\bbN^+$ by declaring $\infty > n$ for all $n \in \bbN$.

\begin{Thm}\label{Thm-abelian-p-groups-rank-r}
    Fix a prime number $p$ and $r\in \bbN$, and let $\fabprk{r}$ be the family of abelian $p$-groups of $p$-rank at most $r$, or equivalently, the family of $r$-generated finite abelian $p$-groups. The spectrum $\Spc(\sfD(\fabprk{r})^c)$ is homeomorphic to the subspace
\[
\hatS_{\leq r}\coloneqq\{v=(v_1,\ldots, v_r)\in (\bbN^+)^r \mid \infty \geq v_1 \geq \ldots \geq v_r \geq 0\} \subseteq (\bbN^+)^r,
\]
which is profinite.
\end{Thm}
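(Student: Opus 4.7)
The plan is to obtain this as a consequence of \cref{Thm-profinite-rgenerated}, by identifying $\pi_0\widehat{\fabprk{r}}$ with $\hatS_{\leq r}$ as topological spaces. First I would verify that $\fabprk{r}$ satisfies \cref{hyp3}. It is $r$-submultiplicative in the sense of \cref{defn-r-submult}: all its groups are $r$-generated by definition of $p$-rank; the trivial group lies in $\fabprk{r}$; and if $G \leq G_0 \times G_1$ with $G_0, G_1 \in \fabprk{r}$ and $G$ is $r$-generated, then $G$ is an $r$-generated abelian $p$-group, hence of $p$-rank $\leq r$. In particular it is unital, so the tensor unit of $\D{\fabprk{r}}$ is compact by \cref{rem-unit-fg-projective}, and \cref{prop-std-filt}(b) provides an essentially finite reflective filtration. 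Thus \cref{Thm-profinite-rgenerated} yields a homeomorphism $\mathfrakp_{-}\colon \pi_0\widehat{\fabprk{r}} \xrightarrow{\sim} \Spc(\D{\fabprk{r}}^c)$, with $\pi_0\widehat{\fabprk{r}}$ endowed with the inverse limit topology of \cref{defn-Uhat-topology}.

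Next, I would describe $\widehat{\fabprk{r}}$ explicitly via the structure theory of finitely generated abelian pro-$p$ groups of rank at most $r$. Any such group is isomorphic to a unique product $\prod_{i=1}^{r} \Z_p/p^{v_i}$ with $\infty \geq v_1 \geq v_2 \geq \ldots \geq v_r \geq 0$, where by convention $\Z_p/p^\infty \coloneqq \Z_p$. Moreover every such product lies in $\widehat{\fabprk{r}}$ since it is the inverse limit of its finite quotients, which are abelian $p$-groups of $p$-rank at most $r$. This gives a bijection
\[
\Theta\colon \hatS_{\leq r} \xrightarrow{\sim} \pi_0\widehat{\fabprk{r}}, \qquad (v_1,\ldots,v_r) \mapsto \prod_{i=1}^{r} \Z_p/p^{v_i}.
\]

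It remains to show that $\Theta$ is a homeomorphism. Using the standard filtration of \cref{defn-std-filt}, one computes that $\fabprk{r}[n]$ consists precisely of the abelian $p$-groups of $p$-rank $\leq r$ and exponent $\leq p^{m(n)}$, where $m(n) = \lfloor \log_p n \rfloor$, and the reflection $q_n$ sends $\prod_i \Z_p/p^{v_i}$ to $\prod_i \Z/p^{\min(v_i, m(n))}$. Under $\Theta$ this corresponds to the truncation map $(v_i)_i \mapsto (\min(v_i,m(n)))_i$ from $\hatS_{\leq r}$ to its finite subset $\{v : v_1 \leq m(n)\}$. Since the truncation maps $\min(-,m)\colon \bbN^+ \to \{0,1,\ldots,m\}$ are continuous and their preimages generate the product topology on $(\bbN^+)^r$, the basic clopen sets $(\pi_0 q_n)^{-1}(S)$ from \cref{defn-Uhat-topology} correspond bijectively to a basis of clopen sets for the subspace topology on $\hatS_{\leq r} \subseteq (\bbN^+)^r$. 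Hence $\Theta$ is a homeomorphism, and composing with $\mathfrakp_{-}$ gives the desired identification $\Spc(\D{\fabprk{r}}^c) \cong \hatS_{\leq r}$. Profiniteness is immediate from \cref{rem:profinitetopology} or from the fact that $\hatS_{\leq r}$ is closed in the compact Hausdorff totally disconnected space $(\bbN^+)^r$.

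The main technical point is the explicit computation of the filtration pieces $\fabprk{r}[n]$ and their reflections, and the careful comparison of the inverse limit topology with the subspace topology from $(\bbN^+)^r$; the rest is a direct application of \cref{Thm-profinite-rgenerated} combined with the classification of finitely generated abelian pro-$p$ groups of bounded rank.
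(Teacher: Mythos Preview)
Your proposal is correct and follows essentially the same strategy as the paper: verify that $\fabprk{r}$ satisfies \cref{hyp3}, apply \cref{Thm-profinite-rgenerated}, and identify the resulting profinite space with $\hatS_{\leq r}$ via the parametrization of groups by decreasing sequences of exponents.

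The one genuine difference is in how the identification with $\hatS_{\leq r}$ is carried out. You appeal to the structure theory of finitely generated abelian pro-$p$ groups to classify $\pi_0\widehat{\fabprk{r}}$ directly, obtaining the bijection $\Theta$ in one stroke, and then check the topology. The paper instead stays entirely at the level of finite groups: it identifies each finite stage $\pi_0\fabprk{r}^{\leq l}$ with the finite set $\CS_{\leq r}^{\leq l}$ of decreasing sequences bounded by $l$, checks that these identifications intertwine the reflections $q_{p^l}$ with the truncation maps $m_l$, and then separately verifies that the map $m\colon\hatS_{\leq r}\to\lim_l\CS_{\leq r}^{\leq l}$ is a homeomorphism. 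Your route is slightly quicker but imports the classification of finitely generated $\Z_p$-modules as an external fact; the paper's route is more self-contained, using only the classification of finite abelian $p$-groups, and in effect rederives the pro-$p$ classification as a by-product of computing the inverse limit.
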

\begin{proof}
The family $\fabprk{r}$ is an $r$-submultiplicative family and the filtration coming from \cref{cor-std-filt-profinite} is given by filtering by $p$-exponent, $\fabprk{r}[p^l]= \fabprk{r}^{\leq l}$, where $\fabprk{r}^{\leq l}$ consists of all those finite abelian $p$-groups of $p$-rank at most $r$ and $p$-exponent less than or equal to $p^l$.
By \cref{Thm-profinite-rgenerated}, the Balmer spectrum of $\D{\fabprk{r}}^c$ is profinite and homeomorphic to $\pi_0 \hfabprk{r} \cong \lim_l \pi_0 \fabprk{r}^{\leq l}$, so we just need to identify the set $\pi_0 \hfabprk{r}$ and its profinite topology.

Set $v = (v_1,\ldots, v_r)$. We consider the discrete subsets of $\hatS_{\leq r}$
\[
\CS_{\leq r}=\{v \in \hatS_{\leq r}\mid v_1 <\infty \} \quad \mathrm{and}\quad
\CS_{\leq r}^{\leq l}=\{v \in \CS_{\leq r} \mid v_1 \leq l\}
\]
and define functions $m_l \colon \hatS_{\leq r}\to \CS_{\leq r}^{\leq l}$ by $m_l(v)_i=\min(l,v_i)$.
We define a map $\phi\colon \CS_{\leq r} \xrightarrow{} \pi_0\fabprk{r}$ by
\[
\phi(v_1,\ldots,v_r) \coloneqq \left[\bigoplus_i \bbZ/{p^{v_i}}\right].
\] 
The restriction $\phi_l$ of $\phi$ to $\CS_{\leq r}^{\leq l}$ induces a bijection $\CS_{\leq r}^{\leq l} \xrightarrow{\cong} \pi_0 \fabprk{r}^{\leq l}$ for each $l \geqslant 0$. For each $s \geqslant l$, the following diagram commutes

\[
    \begin{tikzcd}
        \CS_{\leq r}^{\leq s} \arrow[r, "\cong"', "\phi_s"] \arrow[d,"m_l"'] & \pi_0 \fabprk{r}^{\leq s} \arrow[d,"q_{p^l}"]\\
        \CS_{\leq r}^{\leq l} \arrow[r, "\cong"', "\phi_l"] & \pi_0 \fabprk{r}^{\leq l}.
    \end{tikzcd}
    \]
Therefore by taking limits, the maps $\phi_l$ induce homeomorphisms 
    \[
        \lim_l \CS_{\leq r}^{\leq l} \cong \lim_l \pi_0 \fabprk{r}^{\leq l} \cong \pi_0 \hfabprk{r}
    \]
of profinite spaces. Lastly, the maps $m_l$ induce a map $m \colon \hatS_{\leq r} \to  \lim_l \CS_{\leq r}^{\leq l}$, and we will now show that this map is a homeomorphism to conclude the proof of the theorem.

A point of $\lim_l \CS_{\leq r}^{\leq l}$ is represented by a sequence $b$ of points $b^l = (b^l_1,\ldots, b^l_r) \in \CS_{\leq r}^{\leq l}$ compatible with the maps $m_l$. By looking at each of the $r$ coordinates of the points $b_l$ and of points in $\hatS_{\leq r}$ separately we see that the map $m$ is bijective.

A basis of open sets for the topology of $\bbN^+$ is given by the sets of the form $\{a\}$, and intervals $[a, \infty]$ for each $a \in \bbN$. Thus a basis for the topology of $\hatS_{\leq r}$ is given by sets of the form $A=\hatS_{\leq r} \cap (A_1 \times \dots \times A_r)$, where each $A_i$ is either $\{a_i\}$ or $[a_i, \infty]$. Let $A$ be an element of this basis and let $n$ be the maximum of the $a_i$. We note that $A=m_{n+1}^{-1}(S)$ for some subset $S \subseteq \CS_{\leq r}^{\leq n+1}$, and thus the map $m$ sends $A$ to an open subset of $\lim_l \CS_{\leq r}^{\leq l}$. This shows that the map $m$ is open and continuous, and therefore a homeomorphism.
\end{proof}
 
The space $\hatS_{\leq r}$ can also be obtained by iterating disjoint unions and one-point compactifications.

\begin{Cor}\label{cor:abelian-p-groups-rank-r}
$\hatS_{\leq 0} = \{*\}$, and for each $r \geqslant 1$, $\hatS_{\leq r} \cong (\coprod_{n \in \bbN} \hatS_{\leq r-1})^+$.
\end{Cor}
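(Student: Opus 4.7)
The proof will go by induction on $r$. The base case $\hatS_{\leq 0} = \{*\}$ is immediate from the definition, since a tuple of length zero is the empty tuple. For $r \geq 1$, the key idea is to stratify $\hatS_{\leq r}$ by the value of the last (hence smallest) coordinate $v_r$. Since the tuples are weakly decreasing, $v_r = \infty$ forces $v_i = \infty$ for all $i$, so the stratum $\{v_r = \infty\}$ consists of the single point $v_\infty \coloneqq (\infty, \ldots, \infty)$. The strata $\{v \in \hatS_{\leq r} \mid v_r = n\}$ for $n \in \bbN$ then exhaust the complement $\hatS_{\leq r} \setminus \{v_\infty\}$.

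The first main step is to exhibit, for each $n \in \bbN$, a homeomorphism
\[
\psi_n \colon \hatS_{\leq r-1} \xrightarrow{\sim} \{v \in \hatS_{\leq r} \mid v_r = n\}, \quad (u_1, \ldots, u_{r-1}) \mapsto (u_1 + n, \ldots, u_{r-1} + n, n),
\]
where by convention $\infty + n = \infty$. This is well-defined (the resulting tuple remains weakly decreasing with last entry $n$), bijective with inverse $(v_1, \ldots, v_r) \mapsto (v_1 - n, \ldots, v_{r-1} - n)$, and continuous because translation by $n$ is a homeomorphism from $\bbN^+$ onto the subspace $\{x \in \bbN^+ \mid x \geq n\}$. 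Moreover, each stratum $\{v_r = n\}$ is open in $\hatS_{\leq r}$ since $\{n\}$ is isolated in $\bbN^+$. Assembling these $\psi_n$ yields a homeomorphism
\[
\coprod_{n \in \bbN} \hatS_{\leq r-1} \xrightarrow{\sim} \hatS_{\leq r} \setminus \{v_\infty\}
\]
onto an open subspace.

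The last step is to verify that $v_\infty$ plays the role of the compactifying point. By \cref{Thm-abelian-p-groups-rank-r}, $\hatS_{\leq r}$ is profinite, and in particular compact Hausdorff; so $\{v_\infty\}$ is closed and its complement is locally compact Hausdorff. A basic open neighborhood of $v_\infty$ in the subspace topology from $(\bbN^+)^r$ has the form $\{v \in \hatS_{\leq r} \mid v_r > N\}$ for some $N \in \bbN$ (using that $v_i \geq v_r$ for all $i$), and its complement $\bigsqcup_{n=0}^{N} \{v_r = n\}$ is a finite disjoint union of profinite spaces, hence compact. Conversely, any compact subset of $\coprod_{n \in \bbN} \hatS_{\leq r-1}$ maps under $v \mapsto v_r$ to a compact, and therefore finite, subset of the discrete space $\bbN$, so it is contained in finitely many summands. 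This matches the local topology at $v_\infty$ with the one-point compactification topology, completing the identification. I do not anticipate any substantive obstacle: the argument is essentially a direct verification once the stratification by $v_r$ is chosen, with everything reducing to elementary features of the topology of $\bbN^+$ combined with the profiniteness established in \cref{Thm-abelian-p-groups-rank-r}.
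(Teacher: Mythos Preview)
Your proof is correct and follows essentially the same approach as the paper: both stratify $\hatS_{\leq r}$ by the last coordinate $v_r$, use the same shift map $(u_1,\ldots,u_{r-1})\mapsto(u_1+n,\ldots,u_{r-1}+n,n)$ to identify each stratum with $\hatS_{\leq r-1}$, and invoke compact Hausdorffness from \cref{Thm-abelian-p-groups-rank-r} to conclude that the whole space is the one-point compactification of the complement of $(\infty,\ldots,\infty)$. The paper's write-up is slightly terser (it appeals directly to the general fact that a compact Hausdorff space is the one-point compactification of the complement of any point), whereas you verify the neighbourhood base at $v_\infty$ explicitly; also note that the ``induction'' framing is unnecessary, since the identification for each $r$ does not use the case $r-1$.
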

\begin{proof}
    For each $r \geqslant 1$, let $\rho \colon \hatS_{\leq r} \to \bbN^+$ denote the projection to the $r$th coordinate of $(\bbN^+)^r$. Then $\rho^{-1}(\infty)$ consists of a single point, $(\infty, \dots, \infty)$. Each other preimage $\rho^{-1}(n)$ for $n \in \bbN$ is both open and closed by continuity. For each $n \in \bbN$, the map 
    \begin{align*}
        \hatS_{\leq r-1} \quad \to & \quad\rho^{-1}(n) \\
        (v_1,\ldots, v_{r-1}) \; \mapsto & \;(v_1+n,\ldots, v_{r-1}+n, n)
    \end{align*}
    is a homeomorphism. This shows that $\hatS_{\leq r} \setminus \{(\infty, \dots, \infty)\}$ is homeomorphic to $\coprod_{n \in \bbN} \hatS_{\leq r - 1}$ and is open in $\hatS_{\leq r}$. Since $\hatS_{\leq r}$ is compact Hausdorff, it is therefore homeomorphic to the one-point compactification of $\hatS_{\leq r} \setminus \{(\infty, \dots, \infty)\}$.
\end{proof}

\begin{Exa}\label{ex-abelian-p-groups-rank-2}
    Let $ \fabprk{2}$ denote the family of abelian $p$-groups of $p$-rank at most $2$. Then the poset associated to the profinite extension $\hfabprk{2}$ can be depicted as in \cref{fig:prk2}. Therefore the Balmer spectrum $\Spc(\D{\fabprk{2}}^c)$, which is homeomorphic to $\pi_0\hfabprk{2}$ by \cref{Thm-profinite-rgenerated}, can be depicted as in \cref{fig:phenomena2}. Under the homeomorphism to $\hatS_{\leq 2}$ of \cref{Thm-abelian-p-groups-rank-r}, the profinite group prime $\mathfrakp_{\Z_p \times \Z/{p^n}}$ corresponds to the point $(\infty, n) \in \hatS_{\leq 2}$, and $\mathfrakp_{\Z_p \times \Z_p}$ corresponds to $(\infty, \infty) \in \hatS_{\leq 2}$.
\end{Exa} 

We can also obtain a description of the Balmer spectrum for the family $\fabrk{r}$ of $r$-generated finite abelian groups, as a product of the spectra that we just computed for a single prime in \cref{Thm-abelian-p-groups-rank-r}.

\begin{Cor}\label{cor-abelian-groups-rank-r}
    For $r\in \bbN$, the Balmer spectrum for the family $\fabrk{r}$ of $r$-generated finite abelian groups is 
        \[
            \Spc(\sfD(\fabrk{r})^c)  \cong \prod_p \Spc(\sfD(\fabprk{r})^c) \cong \prod_p \hatS_{\leq r}.
        \]
\end{Cor}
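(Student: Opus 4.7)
The plan is to apply \cref{Thm-profinite-rgenerated} to $\fabrk{r}$ and use the primary decomposition of finite abelian groups to identify $\pi_0\widehat{\fabrk{r}}$ with a product over primes. First I would verify that $\fabrk{r}$ is $r$-submultiplicative in the sense of \cref{defn-r-submult}: by definition it consists of $r$-generated groups and contains the trivial group, and any $r$-generated subgroup of a product $G_0 \times G_1$ with $G_0, G_1 \in \fabrk{r}$ is itself abelian and $r$-generated, hence lies in $\fabrk{r}$. By \cref{prop-std-filt}(b) and \cref{cor-std-filt-profinite}, $\fabrk{r}$ then satisfies \cref{hyp3}, so \cref{Thm-profinite-rgenerated} yields a homeomorphism
\[
    \pi_0\widehat{\fabrk{r}} \xrightarrow{\sim} \Spc(\sfD(\fabrk{r})^c).
\]

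Next, by \cref{rem:profinitetopology} the topology on $\pi_0\widehat{\fabrk{r}}$ is independent of the choice of essentially finite reflective filtration, so I would work instead with the convenient filtration by exponent, $\fabrk{r}^{\leq n!} \coloneqq \{G \in \fabrk{r} \mid n! \cdot G = 0\}$, whose reflection is $G \mapsto G/n!G$. Each piece is essentially finite, and the filtration is profinite by \cref{cor-std-filt-profinite}(b). The primary decomposition $G \cong \bigoplus_p G_{(p)}$ furnishes a natural bijection
\[
    \pi_0\fabrk{r}^{\leq n!} \;\cong\; \prod_{p \leq n} \pi_0\fabcap{v_p(n!)}{r},
\]
compatible with the reflection maps, because under the decomposition $G \mapsto G/n!G$ corresponds factorwise to $G_{(p)} \mapsto G_{(p)}/p^{v_p(n!)}G_{(p)}$, which is precisely the reflection used in the proof of \cref{Thm-abelian-p-groups-rank-r}.

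Passing to inverse limits, and using that limits commute with arbitrary products in topological spaces together with the fact that $v_p(n!) \to \infty$ for each fixed prime $p$, I would obtain
\[
    \pi_0\widehat{\fabrk{r}} \;\cong\; \lim_n \prod_{p \leq n} \pi_0\fabcap{v_p(n!)}{r} \;\cong\; \prod_p \lim_l \pi_0\fabcap{l}{r} \;\cong\; \prod_p \pi_0\hfabprk{r}.
\]
Combined with the homeomorphism $\pi_0\hfabprk{r} \xrightarrow{\sim} \hatS_{\leq r}$ from \cref{Thm-abelian-p-groups-rank-r}, this yields the desired chain of homeomorphisms.

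The main obstacle is ensuring that the commutation of the inverse limit with the product over primes is a homeomorphism, not just a bijection. This is resolved by noting that for $p > n$ the factor $\pi_0\fabcap{v_p(n!)}{r} = \pi_0\fabcap{0}{r}$ is a singleton, so the finite product over $\{p \leq n\}$ coincides with the full product over all primes (after adjoining trivial factors). The resulting cofiltered system then splits as a product of per-prime systems, and the basic clopen subsets described in \cref{lem-hatU-topology}(a) on each side correspond under the evident projections, giving the required homeomorphism.
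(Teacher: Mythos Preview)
Your proposal is correct and follows essentially the same approach as the paper: apply \cref{Thm-profinite-rgenerated} to the $r$-submultiplicative family $\fabrk{r}$, use primary decomposition to split each filtration piece as a product over primes with all but finitely many factors trivial, then swap the inverse limit with the product and invoke \cref{Thm-abelian-p-groups-rank-r} on each factor. The only cosmetic difference is your choice of filtration (exponent dividing $n!$) versus the paper's standard filtration from \cref{defn-std-filt}; as you note, \cref{rem:profinitetopology} makes this irrelevant.
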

\begin{proof}
    The family $\fabrk{r}$ is $r$-submultiplicative. Thus, we can apply \cref{Thm-profinite-rgenerated}. For any prime number $p$, we consider the family $\fabprk{r}$ of finite abelian $p$-groups of $p$-rank at most $r$. Note that $\fabrk{r}$ is not equivalent to the product of the categories $\fabprk{r}$ for varying $p$. It is however equivalent to the \emph{weak product} of the $\fabprk{r}$'s, that is, the full subcategory of $\prod_p \fabprk{r}$ of objects whose projection to $\fabprk{r}$ is the trivial group for all but finitely many $p$.

    The reflections $q_n$ constructed in \cref{defn-std-filt} for the families $\fabrk{r}$ and $\fabprk{r}$ commute with the projections $\fabrk{r} \to \fabprk{r}$. The subcategory $\fabprk{r}[n]$ consists of just the trivial group as long as $p > n$. This means that for each $n$, all but finitely many of the terms $\fabprk{r}[n]$ are trivial, and so $\fabrk{r}[n]$ is equivalent to the product $\prod_p \fabprk{r}[n]$, and not just the weak product. Therefore we obtain homeomorphisms
    \begin{align*}
    \label{eq-cyclicSpc}
        \Spc(\D{\fabrk{r}}^c)  & \cong \lim_n \pi_0 \fabrk{r}[n]
                             \cong \lim_n \prod_p \pi_0 \fabprk{r}[n] &\\
                             & \cong \prod_p \lim_n \pi_0 \fabprk{r}[n] 
                             \cong \prod_p \Spc(\D{\fabprk{r}}^c)
                             \cong \prod_p \hatS_{\leq r},
    \end{align*} 
    from \cref{Thm-profinite-rgenerated} and \cref{Thm-abelian-p-groups-rank-r} as desired.
\end{proof}

\begin{Exa}\label{ex-abelian-p-groups-rank-1}
\Cref{cor-abelian-groups-rank-r} applies in particular to the family $\cat{C} = \fabrk{1}$ of finite cyclic groups, yielding $\Spc(\D{\cat{C}}^c) \cong \prod_p \mathbb{N}^+$.
\end{Exa}

\begin{Rem}
    Due to the product topology, in $\Spc(\D{\fabrk{r}}^c)$ the group prime $\mathfrakp_1$ is an accumulation point of the group primes $\mathfrakp_{C_p}$ for varying $p$. This is the same phenomenon that occurred for the subfamily $\Cprime$ as described in \cref{ex-cyclic-groups-prime-order}.
\end{Rem}

\begin{Rem}
    Note that the proof of the last corollary uses the fact that we are restricting to $r$-generated finite abelian groups. In fact, we do not currently know whether the spectrum $\Spc(\sfD(\fab)^c)$ for the family of all finite abelian groups is the product of the spectra $\Spc(\sfD(\fabp)^c)$ for finite abelian $p$-groups.
\end{Rem}

\subsection{Consequences for Cantor--Bendixson rank}\label{ssec:cbrank}

We end this section by showing that the Balmer spectrum for the family of finite abelian $p$-groups has infinite Cantor--Bendixson rank. For convenience, we briefly recall the definition of Cantor--Bendixson rank; for further details, we refer the reader to~\cite[Section 4.3]{DST2019}.

\begin{Rec}\label{rec:CBrank}
    Let $X$ be a topological space. The \emph{Cantor--Bendixson derivative} $\delta X$ of $X$ is defined as the subspace of non-isolated points in $X$. For an ordinal $\alpha$, one then uses transfinite recursion to define $\delta^{\alpha}X$ via
        \[
            \delta^{\alpha}X \coloneqq
                \begin{cases}
                    X & \text{if } \alpha = 0; \\
                    \delta\delta^{\alpha -1}X & \text{if } \alpha \text{ is a successor ordinal}; \\
                    \bigcap_{\beta < \alpha} \delta^{\beta}X & \text{if } \alpha \text{ is a limit ordinal}.
                \end{cases}
        \]
    Finally, the \emph{Cantor--Bendixson rank} of a non-empty space $X$ is defined as
        \[
            \CB(X) \coloneqq 
                \begin{cases}
                    \sup\SET{\alpha}{\delta^{\alpha}X \neq \emptyset} & \text{if the supremum exists}; \\
                    \infty & \text{otherwise},
                \end{cases}
        \]
    while $\CB(\emptyset) \coloneqq -1$ by definition. Here, we use the convention that $\infty$ is larger than any ordinal.
\end{Rec}

As a consequence of \cref{cor:abelian-p-groups-rank-r}, we obtain:

\begin{Cor}\label{cor:CBrank_boundedrank}
    For any $r \geq 0$, the spectrum $\Spc(\D{\fabprk{r}}^c)$ has Cantor--Bendixson rank precisely $r$.
\end{Cor}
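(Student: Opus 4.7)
The proof proceeds by induction on $r$, leveraging the recursive description $\hatS_{\leq r} \cong (\coprod_{n \in \bbN} \hatS_{\leq r-1})^+$ from \cref{cor:abelian-p-groups-rank-r}. The base case $r = 0$ is immediate: $\hatS_{\leq 0} = \{*\}$ is a single isolated point, so $\delta \hatS_{\leq 0} = \emptyset$ and hence $\CB(\hatS_{\leq 0}) = 0$.

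For the inductive step, the key assertion is that $\delta \hatS_{\leq r} \cong \hatS_{\leq r-1}$, which combined with the inductive hypothesis yields $\delta^k \hatS_{\leq r} \cong \hatS_{\leq r-k}$ for all $0 \leq k \leq r$. To establish this, I would prove two elementary lemmas about the Cantor--Bendixson derivative:
\begin{enumerate}
    \item For a topological disjoint union, $\delta(\coprod_{n} X_n) = \coprod_{n} \delta X_n$, since points in each summand retain their isolation status.
    \item For a non-compact locally compact Hausdorff space $Y$, the one-point compactification satisfies $\delta(Y^+) = (\delta Y)^+$.
\end{enumerate}

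The content of the second lemma is that the compactifying point $\infty$ is never isolated in $Y^+$ when $Y$ is non-compact: every basic neighborhood of $\infty$ has the form $\{\infty\} \cup (Y \setminus K)$ for $K \subseteq Y$ compact, and $Y \setminus K$ is non-empty by non-compactness of $Y$; meanwhile each $y \in Y$ is isolated in $Y^+$ precisely when it is isolated in $Y$, as $Y$ sits inside $Y^+$ as an open subspace. One then checks that the subspace topology on $\{\infty\} \cup \delta Y \subseteq Y^+$ agrees with the topology on $(\delta Y)^+$, using that compact subsets $K \subseteq Y$ meet $\delta Y$ in compact subsets and conversely.

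Applying these two facts to $\hatS_{\leq r} \cong (\coprod_{n \in \bbN} \hatS_{\leq r-1})^+$, where the disjoint union is non-compact (being an infinite disjoint union of non-empty spaces), we obtain
\[
\delta \hatS_{\leq r} \;\cong\; \Bigl(\coprod_{n \in \bbN} \delta \hatS_{\leq r-1}\Bigr)^+ \;\cong\; \Bigl(\coprod_{n \in \bbN} \hatS_{\leq r-2}\Bigr)^+ \;\cong\; \hatS_{\leq r-1},
\]
where the middle step uses the inductive hypothesis applied at one lower level (with the convention that $\hatS_{\leq -1} = \emptyset$, interpreting $\emptyset^+ = \{*\}$ so that the $r=1$ case works out). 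Iterating, $\delta^r \hatS_{\leq r} \cong \hatS_{\leq 0} = \{*\}$ is non-empty, while $\delta^{r+1} \hatS_{\leq r} = \emptyset$, giving $\CB(\hatS_{\leq r}) = r$. I do not anticipate substantive obstacles; the only subtlety is verifying that the non-compactness hypothesis holds at each stage of the iteration, which is automatic since the derived spaces remain infinite disjoint unions of non-empty compact pieces until the very last derivative.
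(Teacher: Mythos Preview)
Your proof is correct and follows essentially the same approach as the paper: both argue by induction on $r$ using the recursive description $\hatS_{\leq r} \cong (\coprod_{n}\hatS_{\leq r-1})^+$ from \cref{cor:abelian-p-groups-rank-r}. The paper compresses the inductive step into the single identity $\CB((\coprod_{n}\hatS_{\leq r-1})^+) = \CB(\hatS_{\leq r-1})+1$ without further justification, whereas you unpack this via the two lemmas on how $\delta$ interacts with disjoint unions and one-point compactifications, thereby establishing the finer statement $\delta\hatS_{\leq r}\cong\hatS_{\leq r-1}$.
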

\begin{proof}
    We argue by induction on $r$. When $r=0$, the spectrum is a point, thus providing the base of the induction. For the induction step, we use the description of \cref{cor:abelian-p-groups-rank-r} to see that for $r>0$:
        \[
            \CB(\hatS_{\leq r}) = \CB\left (\left (\coprod_{n \in \bbN}\hatS_{\leq r-1}\right )^+\right) = \CB(\hatS_{\leq r-1})+1 = r,
        \]
    by induction hypothesis. This implies the claim in light of \cref{Thm-abelian-p-groups-rank-r}.
\end{proof}

\begin{Cor}\label{coro-CB-rank}
    For any family $\cat U \subset \cat G$ containing the family of all finite abelian $p$-groups $\cat{A}(p)$, the Cantor--Bendixson derivative $\delta^{\omega}\Spc(\D{\cat U}^c) \neq \emptyset$. In particular, $\CB(\Spc(\D{\cat U}^c))\geqslant \omega$.
\end{Cor}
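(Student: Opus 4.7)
The plan is to combine the Cantor--Bendixson rank computation for bounded $p$-rank families (\cref{cor:CBrank_boundedrank}) with a quasi-compactness argument applied to the Balmer spectrum. First, since quotients of finite abelian $p$-groups are again finite abelian $p$-groups, and quotients of $r$-generated abelian $p$-groups remain $r$-generated, the subcategories $\fabprk{r} \subseteq \cat{A}(p) \subseteq \cat{U}$ are downwards closed at each stage. Applying \cref{cor:verdierquotient} to these inclusions yields topological embeddings
\begin{equation*}
    \Spc(\sfD(\fabprk{r})^c) \hookrightarrow \Spc(\sfD(\cat{A}(p))^c) \hookrightarrow \Spc(\sfD(\cat{U})^c).
\end{equation*}
Note that $\cat{U}$ satisfies \cref{hyp2} automatically since $1 \in \cat{A}(p) \subseteq \cat{U}$, so the hypotheses of the cited corollary are met.

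Next, I would record the elementary observation that for any topological subspace inclusion $Y \hookrightarrow X$, one has $\delta^{\alpha}Y \subseteq \delta^{\alpha}X$ for every ordinal $\alpha$. Indeed, if $y \in Y$ were isolated in $X$, then $\{y\}$ would be open in $X$ and hence in $Y$, so $y$ would be isolated in $Y$; this gives the base case $\delta Y \subseteq \delta X$, and the general case follows by transfinite induction using the fact that the subspace topology on $\delta^{\alpha}Y$ coincides with the one induced from $\delta^{\alpha}X$. Combined with $\CB(\Spc(\sfD(\fabprk{r})^c)) = r$ from \cref{cor:CBrank_boundedrank}, this shows that
\begin{equation*}
    \delta^{r}\Spc(\sfD(\cat{U})^c) \;\supseteq\; \delta^{r}\Spc(\sfD(\fabprk{r})^c) \;\neq\; \emptyset
\end{equation*}
for every natural number $r$.

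Finally, I would invoke quasi-compactness of the spectral space $\Spc(\sfD(\cat{U})^c)$. The set of isolated points in any topological space is open (as a union of open singletons), so each Cantor--Bendixson derivative $\delta^{r}\Spc(\sfD(\cat{U})^c)$ is closed. We therefore obtain a decreasing family of non-empty closed subsets of a quasi-compact space, which has the finite intersection property and thus non-empty intersection. Since
\begin{equation*}
    \delta^{\omega}\Spc(\sfD(\cat{U})^c) \;=\; \bigcap_{r < \omega}\delta^{r}\Spc(\sfD(\cat{U})^c),
\end{equation*}
this gives the claim, and $\CB(\Spc(\sfD(\cat{U})^c)) \geq \omega$ follows. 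No step presents a real obstacle; the key input is the exact computation $\CB(\hatS_{\leq r}) = r$ from \cref{cor:CBrank_boundedrank}, and the passage from the finite ranks $r$ to the limit ordinal $\omega$ is handled cleanly by quasi-compactness of the Balmer spectrum together with the fact that Cantor--Bendixson derivatives are closed.
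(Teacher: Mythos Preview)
Your proof is correct and follows essentially the same approach as the paper: embed $\Spc(\sfD(\fabprk{r})^c)$ into $\Spc(\sfD(\cat{U})^c)$ via \cref{cor:verdierquotient}, use the monotonicity of Cantor--Bendixson derivatives under embeddings together with \cref{cor:CBrank_boundedrank} to get $\delta^r\Spc(\sfD(\cat{U})^c)\neq\emptyset$ for all $r$, and then invoke quasi-compactness to conclude. The only difference is that the paper cites \cite[4.3.2(2)]{DST2019} for the embedding/CB-rank inequality, whereas you supply the short direct argument yourself.
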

\begin{proof}
     For each $r$, the inclusion $\fabprk{r} \subset \cat{A}(p) \subseteq \cat{U}$ induces a map on Balmer spectra, which is an embedding by \cref{cor:verdierquotient}, as $\fabprk{r}$ is downwards closed. Recall that embeddings of topological spaces give an inequality on Cantor--Bendixson ranks, see~\cite[4.3.2(2)]{DST2019}. \Cref{cor:CBrank_boundedrank} thus implies that $\delta^r\Spc(\D{\cat U}^c) \neq \emptyset$ for all $r \in \bbN$. Since the spectrum is quasi-compact, we deduce that their intersection is non-empty as well, i.e., $\delta^{\omega}\Spc(\D{\cat U}^c) \neq \emptyset$. Therefore, the Cantor--Bendixson rank of $\Spc(\D{\cat{U}}^c)$ is $\geqslant \omega$. 
\end{proof}

\newpage
\addcontentsline{toc}{part}{Appendices}
\appendix

\appendix

\section{Rational global homotopy theory}\label{appendix}

In this appendix we introduce the $\infty$-category of global spectra following the approach of \cite{LNP} and record some properties that it enjoys. We then introduce the category of rational global spectra as a Verdier quotient and show that this construction agrees with the original definition of Schwede given in \cite{Schwedebook}.

\subsection{Partially lax limits}
The definition of the category of global spectra will make use of the notion of partially lax limit. We will recall some of the key features of this construction and refer the interested reader to \cite{Berman}, \cite{GHN}, \cite{LNP} and \cite{GlobalTMF} for a more detailed account of this material. We will freely use the language of $\infty$-categories as developed in \cite{HTT,HA}.

\begin{Rec}
    Given functors $F,G\colon \cat I \to \Cat_\infty$, there is a notion of lax natural transformation $\eta \colon F \Rightarrow G$. Informally, this is a coherent collection of squares
    \[
    \begin{tikzcd}
        {F(i)} &  {F(j)} \\
        {G(i)} & {G(j)}
        \arrow["\eta_j",from=1-2, to=2-2]
        \arrow["F(f)", from=1-1, to=1-2]
        \arrow["G(f)"', from=2-1, to=2-2]
        \arrow["\eta_i"', from=1-1, to=2-1]
        \arrow["\epsilon", shorten <=7pt, shorten >=7pt, Rightarrow, from=2-1, to=1-2]
    \end{tikzcd}
    \]
    together with 2-morphisms $\epsilon\colon G(f) \circ \eta_i \Rightarrow \eta_j\circ  F(f) $. If the 2-morphisms $\epsilon$ are all equivalences, then $\eta$ defines a natural transformation.
\end{Rec}

\begin{Def}\label{def-laxlim}
    We define the \emph{lax limit} of a functor $F \colon \cat I \to \Cat_\infty$ to be the $\infty$-category of lax cones
    \[
    \laxlim_{\cat I} F \coloneqq \mathrm{Nat}^{\mathrm{lax}}(\Delta(\ast), F),
    \]
   that is, lax natural transformations from the constant functor on the terminal $\infty$-category to $F$. If $\cat I$ is marked by a subcategory $\cat W$, we define the \emph{partially lax limit} of $F$
\[
\laxlimdag_{\cat I,\cat W} F \subset \laxlim_{\cat I} F 
\]
to be the full subcategory spanned by those lax cones whose restriction to $\cat W\subset \cat I$ is strictly natural.
\end{Def}

\begin{Rem}\label{rem-laxlim-sections}
It follows from \cite[Remark 2.7]{GlobalTMF} that there is an equivalence 
\[
\laxlim_{\cat I} F \simeq \Fun_{\cat I}(\cat I,\mathrm{Un}^{\mathrm{co}}{F}).
\]
In other words, the lax limit of $F$ is equivalent to the $\infty$-category of sections of the cocartesian unstraightening of $F$. Furthermore, if $\cat I$ is marked by $\cat W$, then a lax cone is in $\laxlimdag F$ if and only if the associated section sends maps in $\cat W$ to cocartesian edges in $\mathrm{Un}^{\mathrm{co}}{F}$.
\end{Rem}

\begin{Exa}\label{ex-constant-laxlim}
    Consider the constant functor $\Delta(\cat C)\colon \cat I \to \Cat_\infty$ on an $\infty$-category $\cat C$. Its cocartesian unstraightening is given by the projection $\cat C \times \cat I \to \cat C$ and so it follows from \cref{rem-laxlim-sections} that 
    \[
    \laxlim_{\cat I} \Delta(\cat C)=\Fun_{\cat I}(\cat I, \cat C \times \cat I)\simeq\Fun(\cat I, \cat C).
    \]
\end{Exa}
We will also need to discuss how partially lax limits behave with respect to symmetric monoidal structures. 

\begin{Rec}\label{rec-laxlim-sym-mon}
     Given a functor $F \colon \cat I \to \CAlg(\PrL)$ and a subcategory $\cat W\subseteq \cat I$, we can form the partially lax limit marked at $\cat{W}$ 
     \[
     \laxlimdag_{\cat I, \cat W} F\in \CAlg(\PrL),
     \]
     see \cite[Section 5]{LNP}.
     This construction defines a functor 
     \[
      \laxlimdag_{\cat I, \cat W} \colon \Fun(\cat I, \CAlg(\PrL)) \to \CAlg(\PrL).
     \]
     By the discussion in \cite[Remarks 5.1 and 5.2]{LNP}, the symmetric monoidal structure on the partially lax limit has the following universal property: for any $\cat D \in \CAlg(\PrL)$, postcomposition with the projection functors induces an equivalence
     \[
     \Fun^{\mathrm{L},\otimes}(\cat D, \laxlimdag_{\cat I, \cat W}F)\xrightarrow{\sim} \laxlimdag_{i\in \cat I, \cat W} \Fun^{\mathrm{L}, \otimes}(\cat D, F(i)),
     \]
     where $\Fun^{\mathrm{L}, \otimes}$ denotes colimit preserving and symmetric monoidal functors. In particular, the inclusion 
     \begin{equation}\label{eq-inclusions}
     \laxlimdag_{\cat I, \cat W} F \subseteq \laxlim_{\cat I} F
     \end{equation}
     is symmetric monoidal. 
\end{Rec}

\subsection{Global spectra}
We are finally ready to introduce the $\infty$-category of global spectra. Unlike the rest of the paper where we restrict to families of finite groups, several of the definitions and constructions in this appendix work more generally for global families $\cat F$ of compact Lie groups, i.e., collections of compact Lie groups closed under subgroups and quotients, so we will write this appendix in this generality. Throughout all subgroups will be implicitly assumed to be closed.
\begin{Def}\label{def-global-orbits}
Let $\cat F$ be a global family of compact Lie groups.
\begin{enumerate}
\item We let $\Glo{\cat F}$ denote the \emph{global orbit} $\infty$-\emph{category} of \cite[Definition 6.1]{LNP} with isotropy in $\cat F$, whose objects are the groups $G\in\cat F$. In this category, the space of morphisms $ H\to  G$ is the homotopy orbit space $\Hom(H,G)_{hG}$, where $\Hom(H,G)$ is the space of continuous homomorphisms from $H$ to $G$, and $G$ acts on this by conjugation.  
\item We denote by $\Orb{\cat F}\subseteq \Glo{\cat F}$ the wide subcategory spanned by those morphisms which are represented by an injective group homomorphism. 
\end{enumerate}
\end{Def}

\begin{Rem}\label{rem-homotopy-category}
    Given $G,H\in \cat F$, there is a decomposition 
    \[
    \Map_{\Glo{\cat F}}(G,H)\simeq \coprod_{[\alpha]\in\Map_{\cat F}(G,H)} B C(\alpha),
    \]
    where $C(\alpha)$ denotes the centralizer of the image of $\alpha$, see \cite[Proposition 2.5]{Korschgen}. It follows that the morphisms in $\Ho(\Glo{\cat F})$ from $H$ to $G$ are given by conjugacy classes of group homomorphisms $\alpha\colon H\to G$. 
\end{Rem}

\begin{Rec}
    For any compact Lie group $G$, we write $\Sp_G$ for the $\infty$-category of genuine $G$-spectra: for an explicit model we can take the underlying $\infty$-category associated to the category of orthogonal $G$-spectra (indexed on a complete $G$-universe) with the stable model structure, see \cite[Chapter III.4]{MM}. The $\infty$-category $\Sp_G$ is stable and presentably symmetric monoidal with symmetric monoidal structure induced by the smash product of orthogonal $G$-spectra \cite[Chapter II.3]{MM}. We refer the reader to \cite{LMSM} and \cite[Chapter 3]{Schwedebook} for more details on equivariant stable homotopy theory.
\end{Rec}

\begin{Cons}\label{cons-functor-Spbullet}
    By \cite[Proposition 10.4 and Corollary 10.6]{LNP}, there is a functor 
    \[
    \Sp_\bullet \colon \Glo{\cat F}^{\op} \to \CAlg(\PrL)
    \]
    which sends $G$ to the $\infty$-category $\Sp_G$ of genuine $G$-spectra, and a morphism $ \alpha\colon  H \to G$ to the restriction-inflation functor $\alpha^* \colon \Sp_G \to \Sp_H$.
\end{Cons}

\begin{Def}\label{def-global-spectra}
    Let $\cat F$ be a global family of compact Lie groups. We define the $\infty$-category of $\cat F$-\emph{global spectra} $\Spgl{\cat F}$ as the partially lax limit of the functor $\Sp_\bullet$ marked at $\Orb{\cat F}$, and we will write
    \[
    \Spgl{\cat F}\coloneqq \laxlimdag_{\Glo{\cat F}^{\op}, \Orb{\cat F}^{\op}} \Sp_\bullet \in \CAlg(\PrL).
    \]
    Informally, we can think of a global spectrum $X \in\Spgl{\cat F}$ as consisting of the following data: 
    \begin{itemize}
        \item a genuine $G$-spectrum $\res_G X$ for all $G \in \cat F$;
        \item compatible maps $f_\alpha \colon \alpha^* \res_GX \to \res_H X$ in $\Sp_H$ for all continuous group homomorphism $\alpha \colon H \to G$;
    \end{itemize}
    subject to the condition that $f_\alpha$ is an equivalence whenever $\alpha$ is injective. 
\end{Def}

\begin{Rem}\label{rem-agrees-schwede}
    By the main result of \cite{LNP}, this model of global spectra agrees with Schwede's model constructed via the global model structure on orthogonal spectra; also see the remark at the beginning of part II in \cite{LNP}. Therefore we are free to use and cite all the results from \cite{Schwedebook}. We also note that the category of global spectra, with respect to a global family of finite groups, admits a model in terms of spectral Mackey functors, see \cite[Theorem A]{Lenz}.
\end{Rem}

\begin{Rem}\label{rem-symm-mon}
    One of the advantages of describing global spectra as a partially lax limit is that we obtain a symmetric monoidal structure without needing to restrict attention to multiplicative global families. The discussion in \cref{rec-laxlim-sym-mon} shows that the symmetric monoidal structure on $\Spgl{\cat F}$ is such that the projection functors $\Spgl{\cat F}\to \Sp_G$ are symmetric monoidal and colimit preserving for all $G\in \cat F$. 
\end{Rem}

We next relate global spectra to the category of genuine $G$-spectra and then discuss fixed points functors in this setting. We will use this material later to show that the category of global spectra is compactly generated.

\begin{Cons}\label{cons-restrictions}
     We just noted that for any $G \in \cat F$, there is a symmetric monoidal colimit preserving functor
    \[
    \res_G \colon \Spgl{\cat F}\to \Sp_G 
    \]
    with right adjoint denoted by $R_G$.
    As limits in a partially lax limit are calculated pointwise, we deduce that $\res_G$ also preserves all limits, and that it admits a left adjoint $L_G$. We therefore have an adjoint triple
    \[
    \begin{tikzcd}[column sep=large, row sep=large]
        \Spgl{\cat F} \arrow[r,"\res_G" description] & \Sp_G, \arrow[l, yshift=2mm, "L_G"'] \arrow[l, yshift=-2mm, "R_G"]
    \end{tikzcd}
    \]
    where $L_G$ and $R_G$ are canonically oplax and lax symmetric monoidal, respectively. We warn the reader that the functors $L_G$ and $R_G$ are in general not fully faithful. However by \cite[Theorem 4.5.1]{Schwedebook} they are fully faithful if $G$ is the trivial group $1$. 
\end{Cons}

\begin{Cons}\label{cons-geometric-fixed-points}
    Recall that for any compact Lie group $G$, there is a $G$-fixed points functor $(-)^G \colon \Sp_G \to \Sp$ and a $G$-geometric fixed points functor $\Phi^G \colon \Sp_G \to \Sp$, see for instance \cite{MM} and \cite{LMSM}. The latter functor is symmetric monoidal and colimit preserving. We can construct an explicit model as follows.  We let $\mathcal{P}_G$ be the family of proper subgroups of $G$, and recall that there is an essentially unique $G$-space $E\mathcal{P}_G$ such that $(E\mathcal{P}_G)^G=\emptyset$ and $(E\mathcal{P}_G)^H$ is contractible for all $H<G$.  If we write $\widetilde{E}\mathcal{P}_G$ for the unreduced suspension of $E\mathcal{P}_G$, then we have
    \[  \Phi^G X\simeq (X \otimes \widetilde{E}\mathcal{P}_G)^G. \]
    If the group $G$ belongs to our global family $\cat F$, we can consider the composites
    \[
    \Spgl{\cat F} \xrightarrow{\res_G}\Sp_G \xrightarrow{(-)^G} \Sp \;\;\mathrm{and} \;\;\Spgl{\cat F} \xrightarrow{\res_G}\Sp_G \xrightarrow{\Phi^G} \Sp
    \] 
    which by a slight abuse of notation we still denote by $(-)^G$ and $\Phi^G$, respectively. 
\end{Cons}

\begin{Lem}\label{lem-jointly-conservative}
 The following collections of functors are jointly conservative:
  \begin{enumerate}
      \item $\{\res_G \colon \Spgl{\cat F}\to \Sp_G\}_{G \in \cat F}$;
      \item $\{(-)^G\colon \Spgl{\cat F}\to \Sp\}_{G\in \cat F}$;
      \item $\{\Phi^G\colon \Spgl{\cat F}\to \Sp\}_{G \in\cat F}$. 
  \end{enumerate}
\end{Lem}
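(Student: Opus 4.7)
The plan is as follows. Part (a) follows directly from the description of the partially lax limit. By \cref{rem-laxlim-sections}, a global spectrum $X \in \Spgl{\cat F}$ corresponds to a section of the cocartesian unstraightening of $\Sp_\bullet$, and the functor $\res_G$ simply extracts the value of this section at $G \in \cat F$. A section is zero if and only if it is pointwise zero, so if $\res_G X \simeq 0$ for all $G \in \cat F$, then $X \simeq 0$.

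Parts (b) and (c) are both proved by reducing to (a) via a classical joint conservativity statement for $G$-spectra. I spell out the argument for (b); the argument for (c) is identical after replacing categorical fixed points with geometric fixed points. Suppose $X^G = (\res_G X)^G \simeq 0$ for all $G \in \cat F$. Fix an arbitrary $K \in \cat F$; by (a), it suffices to show that $\res_K X \simeq 0$. For any closed subgroup $H \leq K$, we have $H \in \cat F$ since $\cat F$ is closed under subgroups (being a global family), and the inclusion $H \hookrightarrow K$ defines a morphism in $\Orb{\cat F}$. Since the partially lax limit defining $\Spgl{\cat F}$ is marked at $\Orb{\cat F}^{\op}$, the structure morphism $\res_H^K \res_K X \to \res_H X$ is an equivalence of $H$-spectra. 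Combining this with the standard identification of genuine fixed points under restriction along a subgroup inclusion, we obtain
\[
(\res_K X)^H \simeq (\res_H^K \res_K X)^H \simeq (\res_H X)^H = X^H \simeq 0.
\]
The classical fact that $\{(-)^H\}_{H \leq K}$ is jointly conservative on $\Sp_K$ (see \cite[Proposition 3.4.9]{Schwedebook}) then yields $\res_K X \simeq 0$, as desired.

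For (c), the same argument applies verbatim using $\Phi^H$ in place of $(-)^H$, together with the classical joint conservativity of $\{\Phi^H\}_{H \leq K}$ on $\Sp_K$; this follows from the isotropy separation cofibre sequences $(E\mathcal{P}_H)_+ \to S^0 \to \widetilde{E}\mathcal{P}_H$ and an induction on the subgroup lattice of $K$.

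The only real obstacle in this proof is the bookkeeping for the markedness condition in the partially lax limit. Once one unpacks that restriction along an injective homomorphism is an honest equivalence (rather than merely a lax map) in $\Spgl{\cat F}$, the identification $(\res_K X)^H \simeq X^H$ becomes automatic, and the argument reduces to the well-known joint conservativity results for equivariant spectra.
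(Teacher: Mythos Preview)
Your proof is correct and follows essentially the same approach as the paper: part~(a) is immediate from the partially lax limit description, and parts~(b) and~(c) reduce to~(a) via the marked structure maps $\res_H^K \res_K X \simeq \res_H X$ together with the classical joint conservativity of fixed points (respectively geometric fixed points) on $\Sp_K$. The paper cites \cite[Proposition~6.13]{MNN} for the geometric fixed point conservativity rather than sketching the isotropy separation induction, but the content is the same.
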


\begin{proof}
    Part (a) is clear by the definition of partially lax limits. For part (b) assume that $(\res_G X)^G\simeq 0$ for all $G\in\cat F$; we want to show that $X \simeq 0$. By part (a) it suffices to show that $\res_G X \simeq 0$ for all $G\in \cat F$. In other words, we need to prove that $(\res_G X)^H\simeq 0$ for any $G \in \cat F$ and subgroup $i_H \colon H \leq G$.  
    To this end recall from \cref{def-global-spectra} that the global spectrum $X$ has a structure map $f_{i_H}\colon i^*_H \res_G X \xrightarrow{\sim} \res_H X$. Using this and our assumption, we see that
    \[
    (\res_G X)^H \simeq (i^*_H \res_G X)^H \xrightarrow[\sim]{(f_{i_H})^H} (\res_H X)^H \simeq 0,
    \]
    as required. The argument for (c) is similar to that of (b); the key ingredient is that for a fixed $G \in \cat F$, the family of functors $\{\Phi^H \res^G_H \colon \Sp_G \to \Sp_H \to \Sp\}_{H \leq G}$ is jointly conservative, see for instance \cite[Proposition 6.13]{MNN}.
\end{proof}

\begin{Prop}\label{prop-global-compact-generation}
    Let $\cat F$ be a global family of compact Lie groups. Then $\Spgl{\cat F}$ is compactly generated with a set of compact generators given by $\{L_G \unit \mid G \in\cat F\}$.
\end{Prop}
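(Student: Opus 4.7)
The strategy has two steps: first show that each $L_G\unit$ is compact, then show that the collection is jointly generating. Both follow rather formally from the adjoint triple $L_G \dashv \res_G \dashv R_G$ of \cref{cons-restrictions} together with the joint conservativity results of \cref{lem-jointly-conservative}.

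For compactness: the functor $\res_G$ admits both a left and a right adjoint, so in particular it preserves all colimits, and therefore filtered colimits. Consequently its left adjoint $L_G$ preserves compact objects. Since $\Sp_G$ is compactly generated with tensor unit $\unit_G$ compact (in fact, $\unit_G = G/G_+$ is one of the standard compact generators), we conclude that $L_G\unit \in \Spgl{\cat F}$ is compact for every $G \in \cat F$.

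For generation: because each $L_G\unit$ is compact, it suffices to verify that the set $\{L_G\unit\mid G \in \cat F\}$ has trivial right orthogonal complement in the stable $\infty$-category $\Spgl{\cat F}$. Let $X \in \Spgl{\cat F}$ be such that $\mathrm{map}_{\Spgl{\cat F}}(L_G\unit, X) \simeq 0$ for all $G \in \cat F$. By the adjunction $L_G \dashv \res_G$, this gives
\[
0 \simeq \mathrm{map}_{\Spgl{\cat F}}(L_G\unit, X) \simeq \mathrm{map}_{\Sp_G}(\unit_G, \res_G X) \simeq (\res_G X)^G = X^G
\]
in $\Sp$, where the third equivalence identifies the derived $G$-fixed points with the mapping spectrum out of the unit in genuine $G$-spectra. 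Applying \cref{lem-jointly-conservative}(b), which asserts that the collection of functors $\{(-)^G\colon \Spgl{\cat F}\to \Sp\}_{G\in\cat F}$ is jointly conservative, we conclude that $X \simeq 0$, as required.

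The conceptual work has already been done in \cref{lem-jointly-conservative}; the only content beyond that is the observation that $L_G$ preserves compacts and that mapping spectra out of $L_G\unit$ compute genuine fixed points. There is no substantial obstacle in the present proposition itself.
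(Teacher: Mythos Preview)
Your proof is correct and follows essentially the same approach as the paper: compactness from $\res_G$ preserving colimits (hence $L_G$ preserving compacts), and generation by adjunction plus \cref{lem-jointly-conservative}(b). You give a bit more detail than the paper does, but the argument is the same.
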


\begin{proof}
    The objects $L_G \unit$ are compact since $\res_G$ preserves all colimits. To show that they generate assume that $0 \simeq \Map_{\Spgl{\cat F}}(L_G \unit, X) \simeq (\res_GX)^G$ for all $G \in \cat F$; we want to show that $X \simeq 0$. This now follows from \cref{lem-jointly-conservative}(b). 
\end{proof}

\subsection{Rational global spectra}
Now we are finally ready to introduce the main category of interest for this section. 
\begin{Def}\label{def-rational-gl-sp}
    For a prime number $p$, write $\unit{\sslash}p\coloneqq\mathrm{cone}(\unit \xrightarrow{\cdot p} \unit ) \in \Spgl{\cat F}$. We define the $\infty$-category of \emph{rational} $\cat F$-\emph{global spectra} $\Spgl{\cat F}^\bbQ$ as the Verdier quotient of $\Spgl{\cat F}$ by the localizing ideal generated by 
    \[
        \{\unit{\sslash}p \mid p\; \mathrm{prime}\}\subseteq \Spgl{\cat F}.
    \]
\end{Def}

\begin{Rem}\label{rem-quotient-monoidal}
    As the $\infty$-category of rational global spectra is defined as a Verdier quotient by a localizing ideal, it follows from \cite[Proposition 2.2.1.9]{HA} that it admits a symmetric monoidal structure making the functor $L_\bbQ \colon \Spgl{\cat F}\to \Spgl{\cat F}^\bbQ$ into a symmetric monoidal functor of presentably symmetric monoidal $\infty$-categories; in particular $\Spgl{\cat F}^\bbQ \in \CAlg(\PrL)$.
\end{Rem}

Schwede in \cite[Theorem 4.5.28]{Schwedebook} defines the category of rational global spectra as the Bousfield localization of $\Spgl{\cat F}$ at the class of \emph{rational} $\cat F$-\emph{global equivalences}, namely those maps $f \colon X \to Y$ such that
\[
\pi_*^G(\res_G(f))\otimes \bbQ \colon \pi_*^G(\res_G(X))\otimes \bbQ \to \pi_*^G(\res_G(Y)) \otimes \bbQ
\]
is an equivalence for all $G \in \cat F$. The next result reconciles the two definitions.

\begin{Prop}\label{prop-Qglsp-bousfield}
    The $\infty$-category $\Spgl{\cat F}^\bbQ$ is the Bousfield localization of $\Spgl{\cat F}$ at the class of rational $\cat F$-global equivalences. In particular $X \in \Spgl{\cat F}$ belongs to $\Spgl{\cat F}^\bbQ$ if and only if $\pi_*^G(\res_G X)$ is rational for all $G \in \cat F$.
\end{Prop}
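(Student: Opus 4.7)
The plan is to show that the Verdier quotient $L_\bbQ \colon \Spgl{\cat F} \to \Spgl{\cat F}^\bbQ$ is a smashing Bousfield localization whose equivalences are precisely the rational $\cat F$-global equivalences, from which both assertions follow. For the smashing property, I would observe that by \cref{prop-global-compact-generation} the tensor unit $\unit$ is compact in $\Spgl{\cat F}$, so each $\unit{\sslash}p$ is both compact and dualizable, with dual $\Sigma^{-1}\unit{\sslash}p$. Consequently, the right orthogonal complement of $\loct{\unit{\sslash}p \mid p}_\otimes$ is precisely the subcategory $\{X \mid X \otimes \unit{\sslash}p \simeq 0 \text{ for all primes } p\}$, which is closed under arbitrary colimits. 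This forces the localization to be smashing, and hence $L_\bbQ(-) \simeq - \otimes L_\bbQ\unit$, with kernel $\ker(- \otimes L_\bbQ\unit)$.

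Next I would pass to the pointwise picture using the restriction functors. By \cref{cons-restrictions}, each $\res_G \colon \Spgl{\cat F} \to \Sp_G$ is symmetric monoidal and colimit preserving, and manifestly sends $\unit{\sslash}p$ to $\unit_G{\sslash}p$. It follows that $\res_G(L_\bbQ\unit)$ is a smashing idempotent on $\Sp_G$ whose kernel contains each $\unit_G{\sslash}p$; by uniqueness this coincides with the $G$-equivariant rationalization $L_\bbQ^G\unit_G$, yielding a natural equivalence $\res_G \circ L_\bbQ \simeq L_\bbQ^G \circ \res_G$. Combined with the joint conservativity of $\{\res_G\}_{G \in \cat F}$ (\cref{lem-jointly-conservative}(a)), an object $X$ is $L_\bbQ$-local iff $\res_G X$ is $L_\bbQ^G$-local for every $G \in \cat F$, iff $\pi_*^H(\res_G X)$ is rational for every $H \leq G \in \cat F$. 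Since $\cat F$ is closed under subgroups and $\res_H X \simeq i_H^*\res_G X$ by the partially lax condition, this simplifies to $\pi_*^G(\res_G X)$ being rational for all $G \in \cat F$, establishing the ``in particular'' claim. The same chain of equivalences applied to $\cof(f)$ shows that $f$ is inverted by $L_\bbQ$ iff $L_\bbQ^G \res_G \cof(f) \simeq 0$ for all $G \in \cat F$, iff $\pi_*^G(\res_G f) \otimes \bbQ$ is an isomorphism for all $G \in \cat F$---that is, iff $f$ is a rational $\cat F$-global equivalence.

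The technical crux of the argument is the intertwining $\res_G \circ L_\bbQ \simeq L_\bbQ^G \circ \res_G$, which rests on $\res_G$ being strong monoidal and cocontinuous together with the matching of generating cells $\unit{\sslash}p \mapsto \unit_G{\sslash}p$; once this is in place, everything else follows by joint conservativity and standard properties of rationalization on $\Sp_G$.
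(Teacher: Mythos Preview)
Your proof is correct but follows a different route from the paper. The paper argues more directly: it invokes the general fact (via \cite{BlumbergGepnerTabuada}) that a Verdier quotient by a localizing subcategory is a Bousfield localization at the maps with cofibre in that subcategory, then computes that $Z$ is local iff $\iHom(\unit{\sslash}p,Z)\simeq\text{fib}(p\cdot 1_Z)$ vanishes for every prime $p$, and reads off rationality of $\pi_*^G(\res_G Z)$ from this. In particular the paper does \emph{not} establish the smashing property here; that is deferred to \cref{cor-smashing+compactly-gen}, where it is proved via the explicit colimit $\unit_\bbQ=\colim(\unit\xrightarrow{2}\unit\xrightarrow{3}\cdots)$. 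Your argument front-loads smashing via dualizability of $\unit{\sslash}p$ and then reduces everything to the known equivariant rationalizations through the intertwining $\res_G\circ L_\bbQ\simeq L_\bbQ^G\circ\res_G$; this buys you \cref{cor-smashing+compactly-gen} essentially for free, at the cost of a slightly longer path to the proposition itself.

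One small point to tighten: your ``by uniqueness'' step for the intertwining needs both containments of acyclics. As written you only check that $\loct{\unit_G{\sslash}p}$ lies in the kernel of $-\otimes\res_G(L_\bbQ\unit)$. For the reverse you need $\cof(\unit_G\to\res_G L_\bbQ\unit)=\res_G\cof(\unit\to L_\bbQ\unit)\in\loct{\unit_G{\sslash}p}$, which follows because $\res_G$ is a symmetric monoidal left adjoint sending the generators $\unit{\sslash}p$ to $\unit_G{\sslash}p$ and hence maps $\loct{\unit{\sslash}p}$ into $\loct{\unit_G{\sslash}p}$. You identify exactly these ingredients in your final paragraph, so this is just a matter of making the ``uniqueness'' claim explicit.
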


\begin{proof}
    Set $\cat{L} = \loct{\unit{\sslash}p \mid p \; \mathrm{prime}}$ so that $\Spgl{\cat F}^{\bbQ}\simeq \Spgl{\cat F}/ \cat{L}$, and write $L_\bbQ\colon \Spgl{\cat F}\to \Spgl{\cat F}^\bbQ$ for the quotient functor. Then by~\cite[Proposition 5.6]{BlumbergGepnerTabuada} the $\infty$-category $\Spgl{\cat F}^\bbQ$ is the Bousfield localization of $\Spgl{\cat F}$ at the class $S$ consisting of those maps whose cofibre lies in $\cat{L}$. 
    
    By definition, an object $Z$ is $S$-local if and only if $\Map_{\Spgl{\cat F}}(L,Z) \simeq 0$ for all $L \in \cat{L}$, if and only if $\iHom(\unit{\sslash}p, Z) \simeq 0$ for all $p$ prime, where $\iHom$ denotes the internal hom in $\Spgl{\cat F}$.  By applying $\iHom(-,Z)$ to the cofibre sequence $\unit\xrightarrow{p}\unit\to\unit{\sslash}p$ we see that $\iHom(\unit{\sslash}p, Z)=\text{fibre}(p \cdot 1_Z)$, and it follows that $Z$ is $S$-local if and only if every prime number $p$ acts invertibly on $\res_G Z$ for all $G \in \cat F$. This shows that $Z$ is $S$-local if and only if $\pi_*^G(\res_G Z)$ is rational for all $G\in \cat F$. Finally note that a map $f \colon X \to Y$ in $\Spgl{\cat F}$ is an $S$-local equivalence if and only if it is sent to an equivalence by $L_\bbQ$, which in turn is equivalent to $\pi_*^G(\mathrm{cone}(f))$ being torsion for all $G \in \cat F$. 
\end{proof}

\begin{Exa}\label{ex-Q-glo-sphere}
    Consider the following colimit
    \[
    \unit_\bbQ\coloneqq \colim (\unit \xrightarrow{2}\unit \xrightarrow{3}\unit \xrightarrow{4} \unit \xrightarrow{5} \unit \to \ldots) \in \Spgl{\cat F}.
    \]
     Since the functor $\pi_*^G(\res_G(-))$ commutes with filtered colimits, we see that 
     \[
     \pi_*^G(\res_G \unit_\bbQ)\cong\pi_*^G(\res_G \unit)\otimes \bbQ.
     \]
     It then follows that $\unit_\bbQ=L_\bbQ(\unit)$, the rationalization of the global sphere. Finally, we observe that $\Phi^G \unit_\bbQ \in \Sp$ is the rational sphere for all $G\in \cat F$.
\end{Exa}

\begin{Cor}\label{cor-smashing+compactly-gen}
    The rationalization functor $L_\bbQ \colon \Spgl{\cat F} \to \Spgl{\cat F}$ is a smashing localization, that is $L_\bbQ(-) \simeq \unit_\bbQ \otimes -$. In particular, the $\infty$-category $\Spgl{\cat F}^\bbQ$ is compactly generated by 
    \[
    \{L_G^\bbQ \unit \coloneqq L_\bbQ L_G \unit \mid G \in \cat F\}.
    \]
\end{Cor}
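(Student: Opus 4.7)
The plan is to first establish that $L_\bbQ$ is smashing by producing a natural equivalence $L_\bbQ(X) \simeq \unit_\bbQ \otimes X$. Starting from the colimit description of \cref{ex-Q-glo-sphere},
\[
\unit_\bbQ \simeq \colim\bigl(\unit \xrightarrow{2} \unit \xrightarrow{3} \unit \xrightarrow{4} \cdots\bigr),
\]
the exactness of the tensor product yields
\[
\unit_\bbQ \otimes X \simeq \colim\bigl(X \xrightarrow{2} X \xrightarrow{3} X \xrightarrow{4} \cdots\bigr),
\]
and there is a canonical map $\eta_X \colon X \to \unit_\bbQ \otimes X$ induced by the unit $\unit \to \unit_\bbQ$. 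This map is the candidate for the rationalization.

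Next I would verify two properties of $\eta_X$. First, the target $\unit_\bbQ \otimes X$ is rational in the sense of \cref{prop-Qglsp-bousfield}: since $\res_G$ is symmetric monoidal and colimit-preserving, and $\pi_*^G$ commutes with filtered colimits of genuine $G$-spectra, we compute
\[
\pi_*^G\bigl(\res_G(\unit_\bbQ \otimes X)\bigr) \cong \pi_*^G(\res_G X) \otimes \bbQ,
\]
which is rational for every $G\in\cat{F}$. Second, $\eta_X$ is a rational $\cat{F}$-global equivalence since on $\pi_*^G\res_G$ it is precisely the classical rationalization map of graded abelian groups. By the universal property of the Bousfield localization in \cref{prop-Qglsp-bousfield}, $\eta_X$ coincides with the unit of the rationalization, so $L_\bbQ(X) \simeq \unit_\bbQ \otimes X$. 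Because tensoring with $\unit_\bbQ$ manifestly preserves all colimits, this shows $L_\bbQ$ is smashing.

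For the second statement, smashing localizations have the property that the inclusion $\Spgl{\cat F}^\bbQ \hookrightarrow \Spgl{\cat F}$ preserves all colimits (in particular, filtered ones), and so by adjunction its left adjoint $L_\bbQ$ preserves compact objects. Combined with \cref{prop-global-compact-generation}, this shows each $L_G^\bbQ\unit = L_\bbQ L_G\unit$ is compact in $\Spgl{\cat F}^\bbQ$. To see that they jointly generate, suppose $Y \in \Spgl{\cat F}^\bbQ$ satisfies $\Map_{\Spgl{\cat F}^\bbQ}(L_G^\bbQ\unit, Y) \simeq 0$ for all $G \in \cat F$; by the adjunction $L_\bbQ \dashv (\text{inclusion})$ composed with $L_G \dashv \res_G$, this forces $(\res_G Y)^G \simeq 0$ for all $G$, hence $Y \simeq 0$ by \cref{lem-jointly-conservative}(b). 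No serious obstacle is expected; the only subtle point is the verification that $\pi_*^G\res_G$ commutes with the filtered colimit defining $\unit_\bbQ \otimes X$, which is standard for genuine equivariant spectra.
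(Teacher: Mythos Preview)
Your proposal is correct and follows essentially the same approach as the paper: both verify that $\unit_\bbQ \otimes X$ is rational and that $X \to \unit_\bbQ \otimes X$ is a rational $\cat F$-global equivalence via the filtered-colimit computation of \cref{ex-Q-glo-sphere}, invoke the universal property of Bousfield localization, and then deduce compact generation from the fact that the inclusion $\Spgl{\cat F}^\bbQ \subseteq \Spgl{\cat F}$ preserves colimits. Your treatment is simply more explicit where the paper says ``a similar calculation'' and ``preserves compact generators.''
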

\begin{proof}
    A similar calculation as in \cref{ex-Q-glo-sphere} shows that $X \otimes \unit_\bbQ$ has rational $G$-homotopy groups for all $G \in \cat F$, and that the canonical map $X \to\unit_\bbQ \otimes X$ is a rational $\cat F$-global equivalence for all $X \in \Spgl{\cat F}$. By the universal property of Bousfield localization, we deduce that $L_\bbQ \simeq \unit_\bbQ \otimes -$. From this we deduce that the inclusion $\Spgl{\cat F}^\bbQ\subseteq \Spgl{\cat F}$ preserves all colimits, and so the left adjoint $L_\bbQ \colon \Spgl{\cat F} \to \Spgl{\cat F}^\bbQ$ preserves compact generators, giving the final claim (see \cref{prop-global-compact-generation}).
\end{proof}

\begin{Lem}\label{lem-geom-fix-rational-eq}
    Consider a map $f \colon X \to Y$ in $\Spgl{\cat F}$. Then $f$ is a rational $\cat F$-global equivalence if and only if $\Phi^G f$ is a rational equivalence for all $G \in \cat F$.
\end{Lem}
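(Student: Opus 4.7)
The plan is to reduce the lemma to the rational tom Dieck splitting already invoked elsewhere in this appendix (the proof of \cref{thm-algebraic-model} cites \cite[Corollary 3.4.28]{Schwedebook} for exactly this purpose). That result provides, for any compact Lie group $G$ and any $X \in \Spgl{\cat F}$, a natural isomorphism
\[
\pi_*^G(\res_G X)\otimes \bbQ \;\cong\; \prod_{(H)\leq G} \pi_*(\Phi^H X)^{W_G H}\otimes \bbQ,
\]
where the product runs over conjugacy classes of closed subgroups of $G$ with finite Weyl group. The naturality of this decomposition is the engine of both directions.

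For the ``only if'' direction, I would fix $G\in\cat F$ and observe that the factor corresponding to $H=G$ is simply $\pi_*(\Phi^G X)\otimes\bbQ$, since $W_G G$ is trivial. As the splitting is natural in $X$, a rational equivalence $\pi_*^G(\res_G f)\otimes\bbQ$ must be a rational equivalence factorwise; in particular $\pi_*(\Phi^G f)\otimes\bbQ$ is an equivalence, so $\Phi^G f$ is a rational equivalence.

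For the converse, closure of $\cat F$ under subgroups ensures that every $H\leq G$ with $G\in\cat F$ also lies in $\cat F$. Using the structure equivalence $\res_H X \simeq i_H^*\res_G X$ built into the partially lax cone defining $X \in \Spgl{\cat F}$, together with the model of $\Phi^H$ recalled in \cref{cons-geometric-fixed-points}, I would identify $\Phi^H(\res_G X)\simeq\Phi^H X$ canonically. Hence $\Phi^H(\res_G f)$ is a rational equivalence by hypothesis, so every factor on the right-hand side of the splitting is a rational equivalence, and therefore so is $\pi_*^G(\res_G f)\otimes\bbQ$. This is precisely the defining condition for $f$ to be a rational $\cat F$-global equivalence (\cref{prop-Qglsp-bousfield}).

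The main point to be careful about will be the compatibility between the two models of $\Phi^H$ in play (the global one from \cref{glo-geo-fix-points} versus the pointwise $(\res_G X\otimes \widetilde E\mathcal{P}_H)^H$ accessed via restriction) and the fact that the splitting ranges only over subgroups with finite Weyl group; both subtleties are handled uniformly by the rational version \cite[Corollary 3.4.28]{Schwedebook}, so no extra argument should be needed beyond quoting that result.
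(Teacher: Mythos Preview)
Your argument via the rational tom Dieck splitting is correct, but the paper proves this lemma by a much more elementary route that avoids any external input. The paper passes to the cone $C$ of $f$, uses that rationalization is smashing (\cref{cor-smashing+compactly-gen}) to write $L_\bbQ C\simeq\unit_\bbQ\otimes C$, and then appeals to the joint conservativity of the geometric fixed points functors (\cref{lem-jointly-conservative}) together with the fact that each $\Phi^G$ is symmetric monoidal and sends $\unit_\bbQ$ to the rational sphere (\cref{ex-Q-glo-sphere}). Thus $L_\bbQ C\simeq 0$ iff $\Phi^G(\unit_\bbQ)\otimes\Phi^G C\simeq 0$ for all $G\in\cat F$, iff $\Phi^G C$ is rationally trivial for all $G$.

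The trade-off is this: your approach imports the rational tom Dieck splitting \cite[Corollary 3.4.28]{Schwedebook}, which is a substantial structural result, whereas the paper's argument is entirely internal to the appendix and uses only the monoidal and conservativity properties already established there. Your route does have the virtue of making the relationship between genuine and geometric fixed points explicit, and it is essentially the same computation the paper later performs in the proof of \cref{thm-algebraic-model}; but for the present lemma it is heavier than necessary. Note also that your proof requires the identification $\Phi^H(\res_G X)\simeq\Phi^H X$ via the cocartesian structure maps, an extra step the paper's cone argument sidesteps entirely.
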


\begin{proof}
      Write $C$ for the cone of $f$, so that $f$ is a rational $\cat F$-global equivalence if and only if
      $L_\bbQ C\simeq \unit_\bbQ \otimes C \simeq 0$. Using \cref{lem-jointly-conservative}(b), this is equivalent to $\Phi^G \unit_\bbQ \otimes \Phi^G C \simeq 0$ for all $G \in \cat F$. By \cref{ex-Q-glo-sphere} that $\Phi^G \unit_\bbQ$ is the rational sphere, so $\Phi^G \unit_\bbQ \otimes \Phi^G C \simeq 0$ is equivalent to $\Phi^G C$ being rationally zero for all $G \in \cat F$.
\end{proof}

\bibliographystyle{alpha}
\bibliography{reference}

\end{document}